\DeclareFontFamily{U}{mathx}{\hyphenchar\font45}
\DeclareFontShape{U}{mathx}{m}{n}{
      <5> <6> <7> <8> <9> <10>
      <10.95> <12> <14.4> <17.28> <20.74> <24.88>
      mathx10
      }{}
\DeclareSymbolFont{mathx}{U}{mathx}{m}{n}
\DeclareMathAccent{\widecheck}{0}{mathx}{"71}
\DeclareMathAccent{\wideparen}{0}{mathx}{"75}
\definecolor{my-link}{rgb}{0.5,0.0,0.0}
\definecolor{my-blue}{rgb}{0.0,0.0,0.6}
\definecolor{my-red}{rgb}{0.5,0.0,0.0}
\definecolor{my-green}{rgb}{0.2,0.5,0.2}
\definecolor{darkgreen}{rgb}{0.0,0.5,0.0}
\definecolor{darkblue}{rgb}{0.0,0.0,0.3}
\definecolor{light-gray}{gray}{0.7}
\theoremstyle{plain}
\DeclareRobustCommand{\chiup}{{\mathpalette\irchi\relax}}
\newcommand{\irchi}[2]{\raisebox{\depth}{$#1\chi$}} 
\def\ncouple{\mathcal{H}}
\def\beqn{\begin{eqnarray}}
\def\eeqn{\end{eqnarray}}
\def\beq{\begin{equation}}
\def\eeq{\end{equation}}
\DeclarePairedDelimiter\floor{\lfloor}{\rfloor}
\newtheorem{theorem}{Theorem}[section]
\newtheorem*{definition*}{Definition}
\newtheorem{corollary}[theorem]{Corollary} 
\newtheorem{lemma}[theorem]{Lemma} 
\newtheorem*{lemma*}{Lemma}
\newtheorem{proposition}[theorem]{Proposition}
\theoremstyle{remark}
\newtheorem{remark}[theorem]{Remark}
\theoremstyle{definition}
\numberwithin{figure}{section}
\numberwithin{equation}{section}
\def\R{\mathbb R}
\def\Z{\mathbb Z}
\def\to{\rightarrow}
\def\dlim[#1][#2]{\lim_{#1 \to #2, #1 \neq #2}}
\def\Var{\text{$\mathbb{V}$ar}}
\newcommand{\be}{\begin{equation}}
\newcommand{\ee}{\end{equation}}
\newcommand{\lzb}{\llbracket}   
\newcommand{\rzb}{\rrbracket}   
\providecommand{\abs}[1]{\vert#1\vert}
\def\w{\omega} 
\def\wt{\widetilde}    \def\wc{\widecheck}
\newcommandx{\addmath}[2][1=]{\todo[linecolor=red,backgroundcolor=red!25,bordercolor=red,#1]{#2}}
\newcommandx{\fixtext}[2][1=]{\todo[linecolor=blue,backgroundcolor=blue!25,bordercolor=blue,#1]{#2}}
\newcommandx{\note}[2][1=]{\todo[linecolor=yellow,backgroundcolor=yellow!25,bordercolor=yellow,#1]{#2}}
\def\cD{\mathcal{D}}
\def\cL{\mathcal{L}}
\newcommand\abullet{\hspace{0.6pt}{\raisebox{1pt}{\scaleobj{0.6}{\bullet}}}\hspace{0.8pt}}  
\newcommand\bbullet{{{\scaleobj{0.6}{\bullet}}}} 
\newcommand\babullet{{\raisebox{0.5pt}{\scaleobj{0.6}{\bullet}}}} 
\newcommand\brbullet{{\raisebox{-1pt}{\scaleobj{0.5}{\bullet}}}} 
\def\evec{e}    
\def\tspb{\hspace{0.9pt}}
\title[Coalescence of semi-infinite polymers]{Coalescence and total-variation distance\\ of semi-infinite inverse-gamma polymers}
\author[F.~Rassoul-Agha]{Firas Rassoul-Agha}
\address{Firas Rassoul-Agha\\ University of Utah\\  Mathematics Department\\ 155S 1400E\\   Salt Lake City, UT 84112\\ USA.}
\email{firas@math.utah.edu}
\urladdr{http://www.math.utah.edu/~firas}
\thanks{F.\ Rassoul-Agha was partially supported by National Science Foundation grants DMS-1811090 and DMS-2054630 and MPS-Simons Fellowship grant 823136}
\author[T.~Sepp\"al\"ainen]{Timo Sepp\"al\"ainen} \address{Timo Sepp\"al\"ainen\\ University of Wisconsin--Madison\\  Department of Mathematics\\ 480 Lincoln Drive \\  Madison, WI 53706\\ USA.}  \email{seppalai@math.wisc.edu}\urladdr{http://www.math.wisc.edu/~seppalai} \thanks{T.~Sepp\"al\"ainen was partially supported by National Science Foundation grants  DMS-1854619 and DMS-2152362 and by the Wisconsin Alumni Research Foundation.}  
\author[X.~Shen]{Xiao Shen}
\address{Xiao Shen\\ University of Utah\\  Mathematics Department\\ 155S 1400E\\   Salt Lake City, UT 84112\\ USA.}
\email{xiao.shen@utah.edu}
\urladdr{https://www.xshen.org}
\keywords{Coalescence, coupling, directed polymer, fluctuation exponent, hyperbolicity, Kardar-Parisi-Zhang, random growth model}
\subjclass[2020]{60K35, 60K37} 
\date{\today}
\begin{document}

\begin{abstract}
We show that two semi-infinite positive temperature polymers coalesce on the scale predicted by KPZ (Kardar-Parisi-Zhang) universality. The two polymer paths have the same asymptotic direction and evolve  in the same environment, independently until coalescence. If they start at distance $k$ apart, their coalescence occurs on the scale $k^{3/2}$. It follows that the total variation distance of two semi-infinite polymer measures decays on this same scale. Our results are  upper and lower  bounds on probabilities and expectations that match, up to constant factors and occasional logarithmic corrections. Our proofs are done in the context of the solvable inverse-gamma polymer model, but without appeal to integrable probability. With minor modifications, our proofs give also bounds on transversal fluctuations of the polymer path. Since the free energy of a directed polymer is a discretization of a stochastically forced viscous Hamilton-Jacobi equation, our results suggest that the hyperbolicity phenomenon of such equations obeys the KPZ exponent. 
\end{abstract}

\maketitle

\allowdisplaybreaks


\tableofcontents

\addtocontents{toc}{\protect\setcounter{tocdepth}{1}}
\section{Introduction}
This paper focuses on a probability model for nearest-neighbor up-right random walk paths on the two-dimensional square lattice. The lattice vertices are assigned independent and identically distributed random variables called {\sl weights}, and the {\sl energy} of a path is defined as the sum of the weights along the path. The {\sl point-to-point quenched polymer measures} are probability measures on admissible paths connecting pairs of sites. The probability of a path is proportional to the exponential of its energy. 

This model is known as the {\sl two-dimensional directed lattice polymer with bulk disorder} and was introduced 
in the statistical physics literature by Huse and Henley \cite{Hus-Hen-85} in 1985 to represent the domain wall in the ferromagnetic Ising
model with random impurities. 
This model is expected to be a member of  the Kardar-Parisi-Zhang (KPZ) universality class and has been extensively studied over the past three decades, becoming a paradigmatic model in the field of nonequilibrium statistical mechanics. 
See the surveys \cite{Com-17,Gia-07,Hol-09,Cor-12,Cor-16,Hal-Tak-15,Qua-12,Qua-Spo-15,Zyg-22}.

The {\sl directed last-passage percolation model} (LPP)  on the square lattice is a {\sl zero-temperature} version of the random polymer model. In LPP, we consider the {\sl ground states}, which are admissible paths that maximize the energy, and are referred to as {\sl geodesics}. This particular LPP model with up-right nearest-neighbor lattice paths is also called the {\sl corner growth model}. 

In LPP, a path that starts from a given lattice vertex and only moves up or right is called a {\sl semi-infinite geodesic} if each finite piece of the path is a geodesic between its endpoints. The existence, directedness, and uniqueness or non-uniqueness of semi-infinite geodesics have been well studied and understood (see \cite{Geo-Ras-Sep-17-ptrf-1,Geo-Ras-Sep-17-ptrf-2,Jan-Ras-Sep-22-} for details). Notably, it has been demonstrated in \cite{Geo-Ras-Sep-17-ptrf-2} that these semi-infinite geodesics can be obtained as limits of finite geodesics, as the endpoint moves off towards infinity in a particular direction. Furthermore, it has been shown in the same paper that semi-infinite geodesics starting at different vertices but having the same asymptotic direction eventually coalesce, i.e., they intersect and then move together. 

The study of semi-infinite polymer measures in the case of random directed lattice  polymers was carried out in \cite{Jan-Ras-20-aop,Geo-etal-15}. Similar to LPP, \cite{Jan-Ras-20-aop} established that semi-infinite polymer measures that start from different vertices and share the same asymptotic velocity can be coupled in such a way that their paths coalesce with probability one. As a consequence, the marginals of any two semi-infinite polymer measures that correspond to the same asymptotic velocity are asymptotic to each other. This phenomenon, known as {\sl hyperbolicity}, has been found to be linked to various phenomena such as {\sl stochastic synchronization} and the {\sl one force--one solution principle} (see, for example, \cite{Bak-Li-19,Jan-Ras-Sep-22-1F1S-}). In this work, our focus is on providing precise quantitative bounds on the convergence rates, showcasing how this hyperbolicity obeys the KPZ exponents.
Currently, such sharp estimates are only available in the so-called {\sl solvable} cases, where the weight distribution is chosen in a specific way, allowing for explicit analytic computations. 

The only known solvable LPP models are the ones with either exponential  or geometric weight distribution. 
In the only known solvable directed polymer model, the weights have a negative  log-gamma distribution. This solvable directed polymer model was first introduced by the second author in \cite{Sep-12-corr} and has since been referred to as the {\sl inverse-gamma} or {\sl log-gamma} {\sl polymer}.


Our main contributions in this paper are sharp quantitative bounds on the rates of coalescence of the coupled paths and convergence of the marginals in the inverse-gamma polymer model. The corresponding estimates for LPP with exponential weights were obtained in \cite{Bas-Sar-Sly-19} using integrable probability methods, and in \cite{Sep-She-20} using coupling with stationary versions of the model, which relies less on the solvability of the model. In this paper, we adopt the latter approach and further develop it to handle the additional layer of randomness that arises in the case of semi-infinite polymer measures, where the random environment only determines the path measures. Along the way, we provide various new estimates on the exit point of stationary polymers and we improve one existing estimate, namely the last inequality in \eqref{improved}.

\subsection*{Organization of the paper.}
In Section \ref{sec:main}, we present the setting and our main results concerning the coalescence point, total variation distance, and transversal fluctuations. The connection to hyperbolicity in stochastic Hamilton-Jacobi equations  is addressed briefly in  Remark \ref{rmk:hyp}. Exit time estimates in the stationary inverse-gamma polymer are a crucial tool in our proofs. We introduce the stationary polymer in Section \ref{stat_poly} and provide the exit time estimates in Section \ref{exit_sec}. The proofs of the coalescence results are presented in Section \ref{dual_coal}, while the proofs of the total variation distance estimates can be found in Section \ref{TV_dist}. The proofs of the transversal fluctuations results are provided in Section \ref{sec:trans}. Various auxiliary results are gathered in the appendixes.

\subsection*{Notation and conventions.} 
Subscripts indicate restricted subsets of the reals and integers: for example,   
 $\Z_{>0}=\{1,2,3,\dotsc\}$ and $\Z_{>0}^2=(\Z_{>0})^2$ is the strictly positive first quadrant of the planar integer lattice.    
 
 On $\R^2$ we have the following conventions for points $x=(x_1,x_2)$ and $y=(y_1,y_2)$.   Coordinatewise order: $x\le y$ iff  $x_1\le y_1$  and $x_2\le y_2$.    The $\ell^1$ norm is $\abs{x}_1=\abs{x_1}+\abs{x_2}$.  The origin of $\R^2$ is denoted by both $0$ and $(0,0)$.  The two standard basis vectors are $\evec_1=(1,0)$ and  $\evec_2=(0, 1)$. 
 
 For integers $m\le n$, the integer interval is denoted by $\lzb m,n\rzb=\{m,m+1,\dotsc,n\}$. For planar points 
 $a\le b$  in $\Z^2 $,  $\lzb a, b\rzb =\{x\in\Z^2: a\le x\le b\}$ is the rectangle in $\Z^2$ with corners $a$ and $b$.
The northeast boundary of a rectangle $[\![a, b ]\!]$, denoted by $\partial^{\textup{NE}}[\![a, b]\!]$, is the set of  vertices $v\in [\![a, b ]\!]$ such that $v\cdot e_1 = b\cdot e_1$ or $v\cdot e_2 = b\cdot e_2$. 
 $\lzb a, b\rzb$ is an integer line segment in $\Z^2$  if $a$ and $b$ are on the same horizontal or vertical line. In particular, 
   $\lzb a - e_1, a\rzb $ and $\lzb a - e_2, a\rzb $ denote unit edges.
 
The total variation distance between two probability measures $\mu$ and $\nu$ on $(\Omega, \mathcal{F})$ is 
$d_{\textup{TV}}(\mu, \nu)  = \sup_{A\in \mathcal{F}}|\mu(A) - \nu(A)|.$ For a probability measure $\mu$, $X\sim\mu$ means the random variable $X$ has distribution $\mu$.

\addtocontents{toc}{\protect\setcounter{tocdepth}{2}}
\section{Main results}\label{sec:main}

\subsection{Directed polymer model} 
Let $\{Y_z\}_{z\in \mathbb{Z}^2}$ be a collection of positive weights on the sites of the planar integer square lattice. For vertices $u\leq v$ in $\mathbb{Z}^2$, $\mathbb{X}_{u,v}$ denotes the collection of up-right paths  $x_\bbullet = \{x_i\}_{0\leq i \leq n}$ where $n = |u-v|_1$, $x_0 = u$, $x_n = v$ and $x_{i+1}-x_i\in\{e_1,e_2\}$ for all $i\in \lzb 0, n-1\rzb $.
Define the \textit{point-to-point polymer partition function} between the two vertices $u\leq v$ by 
$$Z_{u,v} = \sum_{x_\brbullet \in \mathbb{X}_{u,v}} \prod_{i=0}^{|u-v|_1} Y_{x_i}.$$
We use the convention $Z_{u,v} =  0$ if $u\leq v$ fails. The \textit{quenched polymer measure} is a probability measure on the set $\mathbb{X}_{u,v}$ and is defined by 
$$Q_{u,v}\{x_{\bbullet} \} = \frac{1}{Z_{u,v}} \prod_{i=0}^{|u-v|_1} Y_{x_i}.$$
In general, the positive weights $\{Y_z\}_{z\in \mathbb{Z}^2}$ can be seen as a random environment if they are chosen as i.i.d.~positive random variables defined on some probability space $(\Omega, \mathcal{F}, \mathbb{P})$. 
Under the moment assumption 
$$\mathbb{E}[|\log Y_x|^p] < \infty \quad  \text{ for some $p>2$},$$
 there exists a concave, positively homogeneous, nonrandom continuous function $\Lambda: \mathbb{R}^2_{\geq 0} \rightarrow \mathbb{R}$ that satisfies the \textit{shape theorem} (see \cite[Section 2.3]{Jan-Ras-20-aop}):
\be\label{La8} \lim_{n\rightarrow \infty} \sup_{z\in \Z^2_{\geq 0} : |z|_1 \geq n} \frac{|\log Z_{0, z} - \Lambda(z)|}{|z|_1} = 0 \qquad \text{$\mathbb{P}$-almost surely}.\ee
$\Lambda$ is  called the (\textit{limiting}) \textit{free energy density} or, by analogy with stochastic growth models, the    \textit{shape function}.  
Regularity  properties of $\Lambda$ such as strict convexity or differentiability are not known in general. 

Fix a base point $v\in\Z^2$ and let $x_N\ge v$ in $\Z^2$  be a sequence of lattice points going to infinity in a deterministic  direction $\xi$, i.e.\ $x_N/|x_N|_1\xrightarrow[N\to \infty]{} \xi/|\xi|_1$. The $\xi$-directed semi-infinite polymer measure is obtained as the weak limit 
\be\label{Q41} Q_{v, x_N} \xrightharpoonup[N\to\infty]{} \Pi^{\xi}_v,\ee
provided this weak limit exists $\mathbb{P}$-a.s. The probability measure $\Pi^{\xi}_v$ is the  quenched path measure of a random walk in a random environment (RWRE) on $\Z^2$   started at $v$. An RWRE is Markov chain whose transition probability depends on the environment in a translation-covariant way.  In the polymer case these  transition probabilities are given by limiting ratios of partition functions.
If the shape function $\Lambda$ (as a function of directions) has sufficient local regularity around the direction $\xi$,  then the limiting measure $\Pi^\xi_v$ exists  \cite[Theorem 3.8]{Jan-Ras-20-aop}.  

\subsection{Inverse-gamma polymer}
This paper focuses exclusively on the  \textit{inverse-gamma polymer}.  A real random variable $X$ has the inverse-gamma distribution with shape parameter $\mu\in(0,\infty)$, abbreviated as $X\sim \text{Ga}^{-1}(\mu)$, if its reciprocal $X^{-1}$ has the gamma distribution with shape parameter $a$. Equivalently, $X$ has probability density function  
$$f_{X}(x) = \frac{1}{\Gamma(\mu)} x^{-1-\mu}e^{-x^{-1}}\mathbbm{1}_{(0, \infty)}(x)$$
where $\Gamma(a) = \int_0^\infty s^{a-1}e^{-s} ds$ is the gamma function. 
The inverse-gamma polymer is defined by letting $\{Y_z\}_{z\in \mathbb{Z}^2}$ be i.i.d.~inverse-gamma distributed random variables.   We will fix the shape parameter $\mu$ in the rest of the paper. While many of the constants in the proofs depend on $\mu$, we will not explicitly mention this fact.

In the current state of the subject, $\Lambda$ in \eqref{La8}  can be  written down explicitly only in the  inverse-gamma case.  Then  the   regularity of $\Lambda$ required  for \eqref{Q41} can be verified explicitly.  Hence   for each given direction $\xi$ in the open
first quadrant and each initial vertex $v\in\Z^2$, the measure $\Pi^{\xi}_v$ exists almost surely \cite[Theorem 7.1]{Geo-etal-15}. Its transition probability is given in equation \eqref{polymerRWRE} below. 

Let $\Psi_0$ and 
$\Psi_1$ be the digamma and  trigamma functions,  defined by  $\Psi_0(z) = \frac{d}{dz} \log\Gamma(z)$ and  $\Psi_1(z) =\Psi_0'(z)= \frac{d^2}{dz^2} \log\Gamma(z)$.
In the study of the inverse-gamma polymer, it is convenient  to index the spatial directions $\xi$ by the parameter ${{\rho}} \in (0, \mu)$ through 
\begin{equation}\label{char_dir}
\xi[{{\rho}}] =  \Big(\tfrac{\Psi_1({{\rho}})}{\Psi_1({{\rho}}) + \Psi_1({{\mu - \rho}})}, \tfrac{\Psi_1({{\mu-\rho}})}{\Psi_1({{\rho}}) + \Psi_1({{\mu - \rho}})}\Big). 
\end{equation}
We call $\xi[{{\rho}}]$ the \textit{characteristic direction} associated to the parameter ${{\rho}}$.  This notion acquires its full meaning when we discuss the stationary inverse-gamma polymer in Section \ref{stat_poly}.
The formula for the shape function $\Lambda$ is cleanest  in terms of the characteristic direction: from (2.16) in \cite{Sep-12-corr}
\[\Lambda(\xi[\rho]) = -\tfrac{\Psi_1({{\rho}})}{\Psi_1({{\rho}}) + \Psi_1({{\mu - \rho}})}\cdot  \Psi_0( \mu-\rho)  -  \tfrac{\Psi_1({{\mu-\rho}})}{\Psi_1({{\rho}}) + \Psi_1({{\mu - \rho}})}  \Psi_0(\rho).\]    
Throughout the paper, $N$ is a scaling parameter
that goes to infinity. We define the particular sequence of lattice points 
\be \label{v_N} v_N = \bigl(\floor{N\xi[\rho] \cdot e_1},\floor{N \xi[\rho] \cdot e_2}  \bigr)\;\in\; \Z_{\ge0}^2  \ee
that  go to infinity in the characteristic direction $\xi[{{\rho}}]$. We simplify the notation for the semi-infinite polymer distribution to $\Pi_v^\rho=\Pi_v^{\xi[\rho]}$.

\begin{figure}[t]
\captionsetup{width=0.8\textwidth}
\begin{center}
 
\begin{tikzpicture}[>=latex, scale=1]

\draw[color=lightgray,line width = 0.3mm, ->] (0,-1)--(0,4);
\draw[color=lightgray,line width = 0.3mm, ->] (0,-1)--(5,-1);

\draw[lightgray,  line width = 0.5mm ]  (0,2) -- (3.5,2)--(3.5,-1);


\draw[ fill=black] (3,3)circle(1.2mm);

\draw[color=black, dotted, line width = 1.2mm, ->] (0,0.3)-- (0.5,0.3) --(0.5,1)-- (1,1) -- (1,1.5)--(2,1.5)--(2.5,1.5) -- (2.5,3) -- (4,3) -- (4,4) ;

\draw[color=gray, dotted, line width = 1.2mm] (1.3,-1)-- (1.3,-0.5) --(1.8,-0.5) -- (1.8, 0) -- (2.3,0) -- (2.3,1) -- (3,1) -- (3,3);

\fill[color=white] (0,-1)circle(1.7mm); 
\draw[ fill=lightgray](0,-1)circle(1mm);
\node at (-0.7,0-1) {$(0,0)$};

\draw[ fill=black] (3,3)circle(1.2mm);
\draw[ fill=white](3,3)circle(0.7mm);

\fill[color=white] (3.5,2)circle(1.7mm); 
\draw[ fill=lightgray](3.5,2)circle(1mm);
\node at (4.1,2) {$v_N$};

\draw[color=lightgray,line width = 0.3mm, ->] (0+7,-1)--(0+7,4);
\draw[color=lightgray,line width = 0.3mm, ->] (0+7,-1)--(5+7,-1);

\draw[lightgray,  line width = 0.5mm ]  (7,2) -- (3.5+7,2)--(3.5+7,-1);


\draw[color=black, dotted, line width = 1.2mm, ->] (0+7,-0.3) --(0.5+7, -0.3)-- (0.5+7,0.2) --(1+7,0.2)-- (1+7,0.5) --(1+7,1) -- (1+7,1) -- (1.5+7 ,1)-- (1.5+7,1.5)--(2+7,1.5)--(2.5+7,1.5) -- (2.5+7,2)  -- (2.5+7,3) -- (3+7+1,3) -- (3+7+1,4);
\draw[color=gray, dotted, line width = 1.2mm] (0.7+7,-1) -- (8, -1) -- (8, -0.5) -- (8.5, -0.5) -- (8.5, 0.7) -- (9, 0.7) -- (9,1.5);
\fill[color=white] (0+7,-1)circle(1.7mm); 
\draw[ fill=lightgray](0+7,-1)circle(1mm);
\node at (-0.7+7,0-1) { $(0,0)$};

\draw[ fill=black] (2+7,1.5)circle(1.2mm);
\draw[ fill=white](2+7,1.5)circle(0.7mm);

\fill[color=white] (3.5+7,2)circle(1.7mm); 
\draw[ fill=lightgray](3.5+7,2)circle(1mm);
\node at (4.1+7,2) {$v_N$};

\draw[ fill=lightgray](0, 0.3)circle(1mm);

\node at (1.3, -1.4) {\footnotesize$r N^{2/3}$};

\node at (-0.65, 0.3) {\footnotesize$r N^{2/3}$};
 
\draw[ fill=lightgray](1.3, -1)circle(1mm);

\draw[ fill=lightgray](7, -0.3)circle(1mm);

\draw[ fill=lightgray](0.7+7, -1)circle(1mm);
\node at (0.8+7, -1.4) {\footnotesize${\delta N^{2/3}}$};

\node at (-0.98+7.3, -0.3) {\footnotesize${\delta N^{2/3}}$};

\end{tikzpicture}

\end{center}
\caption{\small These pictures  illustrate the likely events which are  the complements of the rare events bounded in Theorems \ref{d_coal} and \ref{r_coal}. The open circle marks the  coalescence point of two $\xi[\rho]$-directed semi-infinite polymer paths.  On the left $r$ is large and the initial points are far apart on the scale $N^{2/3}$. Consequently the two paths are unlikely to coalesce before exiting the rectangle. On the right $\delta$ is small and coalescence inside the rectangle is likely.  }
\label{sec2fig1}
\end{figure}
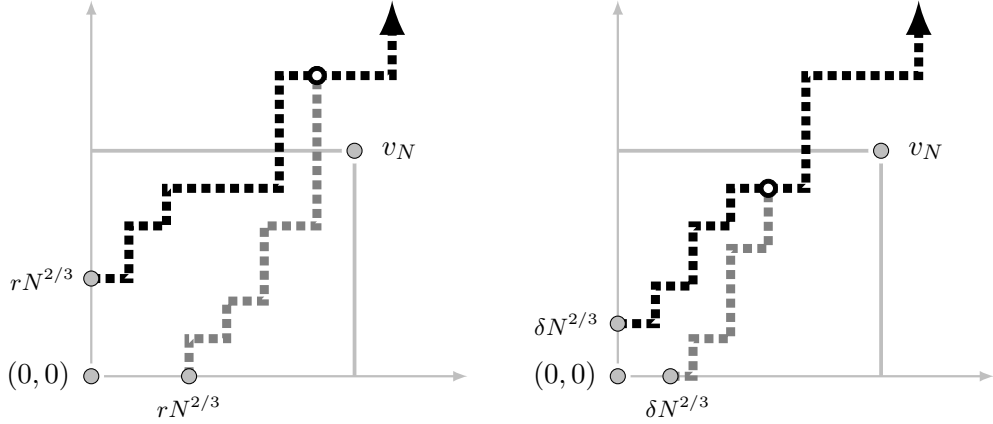

\subsection{Coalescence bounds}
For two initial vertices $a,b\in\Z^2$, let $\ncouple^\rho_{a, b}$ denote the classical coupling measure of the Markov chains $\Pi_a^\rho$ and $\Pi_b^\rho$, as defined by Thorisson \cite[Chapter 2]{Tho-00}.  
Under the distribution $\ncouple^\rho_{a, b}$, the two paths evolve jointly as a Markov chain on $\Z^2\times\Z^2$ with marginal distributions  $\Pi_a^\rho$ and $\Pi_b^\rho$. The joint transition probability is defined on $\Z^2\times\Z^2$ so that the two paths move 
independently until they meet, after which they move together. When this meeting  happens we say that the two paths coalesced.
By \cite[Theorem A.1]{Jan-Ras-20-aop}, for a given $\rho$, coalescence happens $\ncouple^\rho_{a, b}$-almost surely, for almost every environment.


We quantify the speed of coalescence by specifying the lattice subset in which the coalescence first happens.  
For $A\subset \mathbb{Z}^2$, let $\Gamma^A$ denote the collection of pairs of
semi-infinite up-right paths in $\Z^2$ that first meet at a vertex inside the set $A$.
Then, $\ncouple^\rho_{a,b}\bigl(\Gamma^{[\![0, v_N]\!]}\bigr)$ is the quenched probability that the coalescence of the paths from $a$ and $b$ happens  inside the set $[\![0, v_N]\!]$. Similarly, 
$\ncouple^\rho_{a,b}\bigl(\Gamma^{\mathbb{Z}^2\setminus [\![0, v_N]\!]}\bigr)$ is  
the quenched probability that the coalescence happens   outside  $[\![0, v_N]\!]$. 
The two theorems below give upper and lower bounds on the expectations of these quenched probabilities in two distinct cases: when   the initial  points are close together  and when they are  far apart  on the scale $N^{2/3}$.

\begin{theorem}\label{d_coal}
Let $\varepsilon \in(0,\mu/2)$. There exist positive constants $C_1, C_2, N_0, \delta_0$ depending only on $\varepsilon$ such that for each $\rho \in [\varepsilon, \mu-\varepsilon]$, $N\geq N_0$ and $N^{-2/3}\leq \delta \leq \delta_0$, we have 
$$C_1\delta \leq \mathbb{E} \Big[ \ncouple^\rho_{\floor{\delta N^{2/3}}e_1,\floor{\delta N^{2/3}}e_2}\Big(\Gamma^{\mathbb{Z}^2\setminus [\![0, v_N]\!]}\Big) \Big] \leq C_2|\log \delta|^{10}\delta .$$ 
\end{theorem}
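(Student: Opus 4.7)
The plan is to translate the non-coalescence event into an exit-point problem on $\partial^{\textup{NE}}\lzb 0, v_N\rzb$, and then to reduce it to the stationary inverse-gamma polymer estimates of Section~\ref{exit_sec}. Write $A = \floor{\delta N^{2/3}} e_1$ and $B = \floor{\delta N^{2/3}} e_2$, and let $Z^A, Z^B \in \partial^{\textup{NE}}\lzb 0, v_N\rzb$ denote the first boundary-crossing sites of the two coupled paths under $\ncouple^\rho_{A,B}$. Since $|A|_1 = |B|_1$, the two up-right paths sit on a common anti-diagonal at every step; a case check shows that two adjacent up-right paths cannot swap order without meeting, so non-coalescence inside $\lzb 0, v_N\rzb$ is equivalent to $\{Z^A \neq Z^B\}$. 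The task reduces to estimating $\mathbb{E}\bigl[\ncouple^\rho_{A,B}(Z^A \neq Z^B)\bigr]$ from above and below.

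\textbf{Upper bound.} The idea is to transfer control from the semi-infinite polymer $\Pi^\rho_v$ to the stationary inverse-gamma polymer with boundary parameter $\rho$. For a stationary polymer rooted at a bulk vertex, the exit point through $\partial^{\textup{NE}}\lzb 0, v_N\rzb$ concentrates on scale $N^{2/3}$ around the characteristic exit point, and the probability that the exit lies at any given lattice site on $\partial^{\textup{NE}}$ is $O(N^{-2/3})$ inside the concentration window. This is exactly the content of the exit-point bounds of Section~\ref{exit_sec}. Rooting the stationary polymer at $A$ and at $B$, a union bound over the at most $|A - B|_1 \asymp \delta N^{2/3}$ possible distinct-exit configurations yields an annealed upper bound of order $\delta$. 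The reduction from the stationary to the semi-infinite measure $\Pi^\rho_v$ generates the $|\log \delta|^{10}$ correction because the transition probabilities of $\Pi^\rho_v$ are given by Busemann-type ratios whose tails are only polynomially integrable.

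\textbf{Lower bound.} For the matching $C_1\delta$ bound, I would exhibit an event of $\mathbb{P}$-probability $\gtrsim \delta$ on which the two exit points are forced apart. Using anti-concentration of the stationary-polymer exit point on scale $N^{2/3}$ --- namely that the exit-point distribution places mass $\gtrsim \delta$ on any sub-interval of width $\delta N^{2/3}$ in the bulk of the concentration window --- one selects two disjoint bands of $\partial^{\textup{NE}}\lzb 0, v_N\rzb$ of width $\asymp \delta N^{2/3}$ that with probability $\gtrsim \delta$ capture the exits from $A$ and from $B$ respectively. On this event the two exit points cannot coincide, so coalescence must occur outside $\lzb 0, v_N\rzb$.

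\textbf{Main obstacle.} The upper bound is the delicate step. Transferring exit-point sharpness from the stationary polymer to $\Pi^\rho_v$ requires comparing the two measures jointly in the environment, and the Busemann-ratio transitions of $\Pi^\rho_v$ carry only polynomial moment control. Tracking these tails while preserving the linear-in-$\delta$ scaling is what produces the $|\log \delta|^{10}$ logarithmic correction and is the technically demanding part of the argument; the lower bound, by contrast, follows more routinely once anti-concentration of the stationary exit point on scale $N^{2/3}$ is in hand.
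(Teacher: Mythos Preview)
Your reduction of non-coalescence inside $\lzb 0,v_N\rzb$ to the event $\{Z^A\neq Z^B\}$ is correct, but the subsequent upper-bound argument has a genuine gap. The phrase ``a union bound over the at most $|A-B|_1\asymp\delta N^{2/3}$ possible distinct-exit configurations'' does not correspond to a valid estimate: the exit points $Z^A$ and $Z^B$ range over all of $\partial^{\textup{NE}}\lzb 0,v_N\rzb$, and their separation is \emph{not} bounded by the initial separation $\delta N^{2/3}$ of the starting points --- paths that begin close can spread apart on the KPZ scale $N^{2/3}$ before exiting. A pointwise bound $\mathbb{P}(Z^A=z)=O(N^{-2/3})$ summed over all $z$ gives only $O(1)$, and there is no event of cardinality $\delta N^{2/3}$ whose union is $\{Z^A\neq Z^B\}$. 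To control $\mathbb{P}(Z^A\neq Z^B)$ directly you would need joint control of $(Z^A,Z^B)$ in the \emph{same} environment, which is precisely the difficulty.

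The paper's route avoids this entirely via a \emph{duality} between the forward polymer tree and a backward (dual) polymer tree built from the same Busemann increments (Section~\ref{intro_dual}). The forward paths from $A$ and $B$ fail to coalesce inside $\lzb 0,v_N\rzb$ if and only if some backward polymer $\wc X^{\rho,x}_\bbullet$ started from a point $x\in\partial^{\textup{NE}}\lzb 0,v_N\rzb$ threads between them, i.e.\ hits the southwest boundary within $\lzb -\delta N^{2/3},\delta N^{2/3}\rzb$ of the origin. This converts the problem \emph{exactly} into the stationary exit-time event $\bigcup_{x}\{|\wt\tau_{0,x}|\le\delta N^{2/3}\}$, whose annealed probability is bounded above by Theorem~\ref{joint_d} and below by Theorem~\ref{dupper}. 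No comparison or transfer between $\Pi^\rho$ and a stationary measure is needed: Proposition~\ref{stat_iid} says the backward semi-infinite polymer \emph{is} a stationary polymer in distribution. The $|\log\delta|^{10}$ factor does not come from any such transfer; it arises inside the proof of Theorem~\ref{dupper}, from the random-walk comparison (the perturbation $\lambda=\rho\pm rN^{-1/3}$ with $r=|\log\delta|$ and the accompanying union bounds). Your lower-bound sketch has a similar issue: forcing the two exits into disjoint bands requires joint control you have not established, whereas the duality yields the lower bound immediately from the single-path bound $\mathbb{E}[\max_x Q^\rho_{0,x}\{|\tau|\le\delta N^{2/3}\}]\ge C\delta$.
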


\begin{remark} The restriction $\delta \geq N^{-2/3}$ is needed only for the lower bound of the theorem and only for the trivial reason that   the expectation vanishes when  $\delta < N^{-2/3}$ because then the two paths start together at the origin.  
\end{remark}

\begin{theorem} \label{r_coal}
Let $\varepsilon \in (0, \mu/2)$. There exist positive constants $C_1, C_2, r_0, c_0, N_0$ that depend only on $\varepsilon$ such that for each $\rho \in [\varepsilon, \mu-\varepsilon]$, $N\geq N_0$ and $r_0 \leq r \leq c_0N^{1/3}$, we have  
$$e^{-C_1 r^3} \leq \mathbb{E} \Big[ \ncouple^\rho_{\floor{r N^{2/3}}e_1,\floor{r N^{2/3}}e_2}\Big(\Gamma^{[\![0, v_N]\!]}\Big) \Big] \leq e^{-C_2 r^3}.$$ 
\end{theorem}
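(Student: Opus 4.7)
\textbf{Proof proposal for Theorem \ref{r_coal}.}

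The plan is to reduce both the upper and lower bounds to the two-sided exit-point estimates for the stationary inverse-gamma polymer that will be established in Section~\ref{exit_sec}. Recall that in the stationary model (Section~\ref{stat_poly}) with boundary parameter $\rho$ along the south and west axes, the quenched polymer path from $(0,0)$ to $v_N$ has a well-defined exit point $\xi_N$ from the axes, and KPZ scaling predicts $\mathbb{P}(\xi_N \ge t N^{2/3}) \asymp e^{-c t^3}$ for $t$ in a suitable range. The input I will use from Section~\ref{exit_sec} is precisely the matching upper and lower bounds of this form. The second ingredient is a monotone coupling that relates the coalescence point of two $\xi[\rho]$-semi-infinite polymer paths in the bulk to an exit-point event of a stationary polymer.

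For the upper bound, I would argue as follows. If the two paths from $\floor{rN^{2/3}}e_1$ and $\floor{rN^{2/3}}e_2$ coalesce at some $z^\star \in [\![0,v_N]\!]$, then the path starting from $\floor{rN^{2/3}}e_1$ must have moved at least $rN^{2/3}$ units in the $e_2$-direction before $z^\star$, and similarly the other path must have moved at least $rN^{2/3}$ units in the $e_1$-direction. Since $z^\star \in [\![0,v_N]\!]$, the longitudinal distance is at most of order $N$. Sandwiching the semi-infinite polymer measures between stationary polymer measures with parameter $\rho$ (using monotonicity in the boundary parameter), this event implies that the exit point of a suitably chosen stationary polymer path terminating at (or beyond) $v_N$ exceeds $rN^{2/3}$. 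The exit-point upper bound of Section~\ref{exit_sec} then yields $\mathbb{E}\bigl[\ncouple^\rho_{\floor{rN^{2/3}}e_1,\floor{rN^{2/3}}e_2}(\Gamma^{[\![0,v_N]\!]})\bigr]\le e^{-C_2 r^3}$. The range $r\le c_0 N^{1/3}$ is exactly what is needed to keep the exit-point large deviation in the Gaussian/cubic regime.

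For the lower bound, I would construct a favorable event directly via the exit-point lower bound: with probability at least $e^{-C_1 r^3}$ the stationary polymer's exit point lies in an interval of length $\Theta(rN^{2/3})$ along each axis. The same monotonicity coupling as above turns this into coalescence of the two semi-infinite polymer paths inside $[\![0,v_N]\!]$: the stationary boundary conditions force the two quenched path distributions to concentrate on paths that meet within the rectangle. A small calculation then transfers the annealed exit estimate to the expected quenched coalescence probability.

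The hard part will be Step~1, the reduction of the quenched coalescence event to a stationary exit-point event. Unlike the zero-temperature LPP case, here the paths are random under $\ncouple^\rho$ even after conditioning on the environment, so the correspondence between coalescence and exit points is not pointwise but must be mediated through partition-function ratios and the RWRE transition probabilities given by \eqref{polymerRWRE}. The duality machinery developed in Section~\ref{dual_coal} (the ``dual'' of a forward polymer path, which is itself a semi-infinite polymer path in a reflected environment) is exactly what turns this mediated correspondence into a sharp comparison: the forward paths from the axis points coalesce in $[\![0,v_N]\!]$ if and only if their dual counterparts exit the axes at certain positions, and the dual object is a stationary polymer whose exit statistics are controlled by Section~\ref{exit_sec}. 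Once this correspondence is set up, the $e^{-Cr^3}$ bounds follow essentially from the stationary exit estimates, with an extra logarithmic bookkeeping to average over the randomness of the quenched path measures; controlling this averaging without losing the sharp cubic exponent is the main technical point.
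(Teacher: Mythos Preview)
Your final paragraph lands on the right mechanism: the duality between the forward semi-infinite polymers and a family of backward (dual) polymers that are themselves stationary polymers with parameter $\rho$. But the earlier paragraphs describe a different and less precise approach---transversal fluctuation of the forward paths plus a sandwiching argument---that is \emph{not} what the paper does and would be hard to make sharp. In particular, your upper-bound heuristic (``the path from $\floor{rN^{2/3}}e_1$ must have moved at least $rN^{2/3}$ in the $e_2$-direction'') is a statement about forward transversal fluctuation, and turning that into an exit-point event for a stationary polymer via monotonicity alone is not straightforward.

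The paper's actual reduction is an \emph{exact} equivalence, not an implication: the two forward polymers from $\floor{rN^{2/3}}e_1$ and $\floor{rN^{2/3}}e_2$ coalesce inside $[\![0,v_N]\!]$ if and only if \emph{every} dual backward polymer started from a point of $\partial^{\textup{NE}}[\![0,v_N]\!]$ hits the southwest boundary of $[\![0,v_N]\!]$ at $\ell^1$-distance $\ge rN^{2/3}$ from the origin. This is a purely combinatorial fact about the spanning-tree structure of the coupled paths (Remark~\ref{duality}); the dual paths shield the square $[\![0,\floor{rN^{2/3}}]\!]^2$ from the forward tree. Since the dual backward polymers are, in distribution, a coupled family of stationary $\rho$-polymers, this yields the distributional identity
\[
\ncouple^\rho_{\floor{rN^{2/3}}e_1,\floor{rN^{2/3}}e_2}\bigl(\Gamma^{[\![0,v_N]\!]}\bigr)
\;\stackrel{d}{=}\;
\wt Q^\rho_{0,\partial^{\textup{NE}}[\![0,v_N]\!]}\Bigl(\bigcap_{x\in\partial^{\textup{NE}}[\![0,v_N]\!]}\{|\wt\tau_{0,x}|\ge rN^{2/3}\}\Bigr).
\]
From here both bounds are immediate: the upper bound drops the intersection to a single $x$ and uses the annealed exit upper bound of Theorem~\ref{r_up_low}; the lower bound needs the intersection over \emph{all} $x$, which is Theorem~\ref{joint_r} (a union bound over the $O(N)$ boundary points against the quenched tail $e^{-Cr^2N^{1/3}}$). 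There is no ``extra logarithmic bookkeeping'' here---the log factors you mention belong to Theorem~\ref{d_coal}, not this one---and the restriction $r\le c_0 N^{1/3}$ is needed only for the lower bound.
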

\begin{remark}
Again, the upper bound $r\leq c_0 N^{1/3}$ is only needed for the lower bound in the theorem. 
\end{remark}

The estimates above do not depend on starting the paths on an antidiagonal. 
The following corollary gives two of the four additional estimates.  The other two follow from the theorems. Also, $e_1$ and $e_2$ are interchangeable by symmetry. 
\begin{corollary}\label{h_start}
Let $\varepsilon \in(0,\mu/2)$. There exist positive constants $C, N_0, \delta_0, r_0$ that depend only on $\varepsilon$ such that for each $\rho \in [\varepsilon, \mu-\varepsilon]$, $N\geq N_0$, $r\geq r_0$ and $N^{-2/3}\leq \delta \leq \delta_0$, we have 
\[\mathbb{E} \Big[ \ncouple^\rho_{0,\floor{rN^{2/3}}e_1}\Big(\Gamma^{[\![0, v_N]\!]}\Big) \Big] \leq e^{-C r^3} \quad\text{and}\quad
\mathbb{E} \Big[ \ncouple^\rho_{0,\floor{\delta N^{2/3}}e_1}\Big(\Gamma^{\mathbb{Z}^2\setminus [\![0, v_N]\!]}\Big) \Big] \geq C\delta .\]
\end{corollary}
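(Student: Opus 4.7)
Both bounds are horizontal-axis analogues of the antidiagonal bounds in Theorems \ref{d_coal} and \ref{r_coal}. Because the $\ell^1$-separation between the two starting points in the corollary is $sN^{2/3}$ with $s\in\{r,\delta\}$ (half that of the corresponding theorem) and each inequality points in the unfavorable direction under this monotonicity, neither bound follows from the theorems by a simple comparison; instead I repeat their arguments after replacing the antidiagonal initial configuration by the horizontal one.

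\textbf{First inequality.} The key geometric observation is that if a vertex $z\in[\![0,v_N]\!]$ lies on both $P_0$ (the semi-infinite polymer from $0$) and $P_1$ (the one from $\lfloor rN^{2/3}\rfloor e_1$), then the difference of the transversal deviations of $P_0$ at step $|z|_1$ and of $P_1$ at step $|z|_1-\lfloor rN^{2/3}\rfloor$ from their respective characteristic rays equals the deterministic vector $\lfloor rN^{2/3}\rfloor(e_1-\xi[\rho])$, of magnitude comparable to $rN^{2/3}$ uniformly for $\rho\in[\varepsilon,\mu-\varepsilon]$ (since $e_1-\xi[\rho]=(\xi_2,-\xi_2)$ and $\xi_2$ is bounded away from $0$). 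By the triangle inequality, coalescence of $P_0$ and $P_1$ at $z\in[\![0,v_N]\!]$ forces at least one of the two paths to have transversal fluctuation of order $rN^{2/3}$ at $\ell^1$-length at most $N$ from its start. The transversal-fluctuation estimates developed in Section \ref{sec:trans} bound the expected $\ncouple^\rho$-probability of such an event by $e^{-Cr^3}$; a dyadic union bound over the $\ell^1$-level of $z$, with the exponent $t^3/n^2$ worst at $n\sim N$, preserves the exponential form of the bound (with adjusted constant).

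\textbf{Second inequality.} Here the plan mirrors the lower-bound proof of Theorem \ref{d_coal}: I would show that with probability at least $C\delta$ the two paths $P_0$ and $P_1$ from $0$ and $\lfloor \delta N^{2/3}\rfloor e_1$ exit $\partial^{\textup{NE}}[\![0,v_N]\!]$ at distinct vertices, certifying non-coalescence inside the rectangle. The main input is the quantitative exit-point estimate from Section \ref{exit_sec}: the exit point of a stationary $\xi[\rho]$-polymer from $[\![0,v_N]\!]$ has fluctuation scale $N^{2/3}$ but is sensitive at scale one to a shift of the initial vertex, so two polymers planted at vertices separated by $\lfloor \delta N^{2/3}\rfloor e_1$ produce distinct exit points with probability of order $\delta$. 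The main obstacle I anticipate is that the Theorem \ref{d_coal} lower bound is formulated for antidiagonally placed initial vertices, so adapting it requires the corresponding exit-point comparison for stationary polymers planted on the same horizontal line; I would handle this by translating the pair $(0,\lfloor \delta N^{2/3}\rfloor e_1)$ by $\lfloor \delta N^{2/3}\rfloor e_2$ so that one polymer sits at a corner of a shifted rectangle, apply the estimates of Section \ref{exit_sec} to the shifted configuration, and undo the shift using translation invariance of the environment.
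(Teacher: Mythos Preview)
The paper takes a completely different route: both inequalities are proved via the forward/backward duality of Section~\ref{intro_dual}. Coalescence of the forward paths from $0$ and $\lfloor sN^{2/3}\rfloor e_1$ inside $[\![0,v_N]\!]$ is equivalent (on the dual side) to every backward polymer from $\partial^{\textup{NE}}[\![0,v_N]\!]$ having exit time $\tau\notin[1,sN^{2/3}]$. For the first inequality the paper shows the complement has probability $\ge 1-e^{-Cr^3}$ by exhibiting a specific endpoint $v_N-qrN^{2/3}e_2$ whose dual polymer exits in $[1,rN^{2/3}]$ with that probability (argued as in Lemma~\ref{lemmaab}, but via an endpoint perturbation rather than a parameter perturbation). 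For the second inequality the paper lower-bounds the probability that some dual path has $\tau\in[1,\delta N^{2/3}]$ by $C\delta$, repeating the pigeonhole argument from the lower bound of Theorem~\ref{dupper} with the input \eqref{exit47} replaced by the one-sided version \eqref{lb_1r}.

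Your first-inequality argument has a concrete gap: Section~\ref{sec:trans} proves \emph{lower} bounds on transversal fluctuations (Theorem~\ref{fluc_lb}), not the upper bounds you need. An $e^{-Cr^3}$ upper bound on the annealed probability that a semi-infinite polymer deviates by $crN^{2/3}$ from its characteristic ray somewhere inside $[\![0,v_N]\!]$ is not stated in the paper; it could in principle be extracted from the exit-time estimates of Section~\ref{exit_sec} via the identification of $\Pi^\rho$ with a stationary polymer in Proposition~\ref{stat_iid}, but this is not what you cited and requires a separate argument. Your geometric observation and dyadic summation are fine once that input is supplied, and this route would then work, though it is heavier than the paper's three-line duality reduction.

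Your second-inequality plan has a more serious problem. Translating the pair $(0,\lfloor\delta N^{2/3}\rfloor e_1)$ by $\lfloor\delta N^{2/3}\rfloor e_2$ does not produce the antidiagonal configuration of Theorem~\ref{d_coal}, and the statement that the exit point is ``sensitive at scale one to a shift of the initial vertex'' with probability of order $\delta$ is exactly the content to be proved, not an available estimate from Section~\ref{exit_sec}. The paper's duality reduction converts this directly into a one-sided exit-time lower bound of the form $\mathbb{E}[Q^\rho_{0,x}\{1\le\tau\le\delta N^{2/3}\}]\ge C\delta$ for some $x$ on the northeast boundary, which is then handled by the same pigeonhole argument as in Theorem~\ref{dupper}.
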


By planar monotonicity and a change of variable, our estimates can also be stated for two semi-infinite polymer paths that start at fixed locations.  If the initial points are of order  $k$ apart, then their meeting takes place on the scale $k^{3/2}$, as captured in the  corollary below. We shift the rectangle with the initial points so that the constants do not depend at all on the initial points.  The coordinatewise minimum of two lattice points $a=(a_1,a_2)$ and $b=(b_1,b_2)$ is denoted by 
$a\wedge b=(a_1\wedge b_1, a_2\wedge b_2)$.

\begin{corollary}\label{cor:coal} 
Let $\varepsilon \in (0, \mu/2)$ and $a\ne  b$ in $\mathbb{Z}^2$.  Let  $k= |a-b|_1 \ge 1$. There exist positive constants $C_1, C_2, r_0, c_0$ that depend only on $\varepsilon$ such that for each $\rho \in [\varepsilon, \mu-\varepsilon]$, $k\geq 1$, $r \geq r_0$ and $\delta \geq c_0 k^{-1/2}$ we have  
\begin{align*}C_1r^{-2/3} &\leq \mathbb{E} \Big[ \ncouple^\rho_{a,b}\Big(\Gamma^{\mathbb{Z}^2\setminus \{a\wedge b\,+\, [\![0, v_{rk^{3/2}}]\!]\}}\Big) \Big] \leq C_2(\log r )^{10}r^{-2/3}\quad\text{and} \\
e^{-C_2 \delta^{-2}} &\leq \mathbb{E} \Big[ \ncouple^\rho_{a,b}\Big(\Gamma^{ a\wedge b\,+\, [\![0,  v_{\delta k^{3/2}}]\!]}\Big) \Big] \leq e^{-C_1 \delta^{-2}}.
\end{align*}
\end{corollary}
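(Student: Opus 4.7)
The plan is to derive Corollary \ref{cor:coal} from Theorems \ref{d_coal} and \ref{r_coal} (together with Corollary \ref{h_start}) via translation invariance of the environment, a change of the scaling parameter $N$, and planar monotonicity to handle starting configurations not matching the canonical form of those results.

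First, using the shift invariance of the i.i.d.\ weights $\{Y_z\}$, I would replace $(a,b)$ by $(a-a\wedge b,\,b-a\wedge b)$ to reduce to the case $a\wedge b=(0,0)$. Then $\Gamma^{a\wedge b+R}$ becomes $\Gamma^R$ for every set $R$, so the rectangles in the corollary become $\lzb 0,v_{rk^{3/2}}\rzb$ and $\lzb 0,v_{\delta k^{3/2}}\rzb$. Since $a\ne b$ and $a\wedge b=0$, after possibly swapping $(a,b)$ and reflecting across the diagonal, $(a,b)$ is either antidiagonal with $a=(a_1,0)$, $b=(0,b_2)$, $a_1+b_2=k$, or has one endpoint at $0$ and the other in $\Z_{\ge0}^2\setminus\{0\}$.

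Second, I perform the change of scale. For the first inequality, set $N:=rk^{3/2}$, so that $N^{2/3}=r^{2/3}k$ and the separation $k$ equals $\delta'N^{2/3}$ with $\delta':=r^{-2/3}$. In the symmetric antidiagonal case where $\floor{\delta'N^{2/3}}=\floor{k/2}$ (separation of order $k$), Theorem \ref{d_coal} yields an upper bound $C_2|\log\delta'|^{10}\delta'\le C_2'(\log r)^{10}r^{-2/3}$ and lower bound $C_1\delta'=C_1r^{-2/3}$. For the second inequality, set $N:=\delta k^{3/2}$, so that $k=r'N^{2/3}$ with $r':=\delta^{-2/3}$, and Theorem \ref{r_coal} yields $e^{-C_i(r')^3}=e^{-C_i\delta^{-2}}$ for $i=1,2$. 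The theorem hypotheses $\delta'\ge N^{-2/3}$, $\delta'\le\delta_0$, $r'\ge r_0$, $r'\le c_0N^{1/3}$ translate respectively into $k\ge 1$, $r\ge r_0$ (new constant), $\delta\le\delta_0$ (new constant, absorbable), and $\delta\ge c_0 k^{-1/2}$, exactly as stated. The on-axis canonical configurations are handled analogously via Corollary \ref{h_start}.

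Third, when $(a,b)$ is not in one of the canonical configurations of Theorems \ref{d_coal}, \ref{r_coal}, or Corollary \ref{h_start} --- for instance, an asymmetric antidiagonal pair with $a_1\ne b_2$, or $a=0$ with $b$ strictly inside $\Z_{\ge 0}^2$ --- I would invoke planar monotonicity of the polymer to sandwich the coupling probability between two canonical configurations whose separations are of order $k$. The main obstacle is exactly this monotonicity step: the events $\Gamma^R$ and $\Gamma^{\Z^2\setminus R}$ depend on the first meeting location of the two coupled paths, not merely the paths themselves, so the standard path monotonicity under shifts of endpoints must be promoted to a comparison of first-meeting events. I expect this can be established by a direct coupling argument in the spirit of the exit-point machinery developed earlier in the paper, after which the bounds of the previous step transfer up to constant factors, completing the proof.
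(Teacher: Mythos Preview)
Your approach matches the paper's own, which offers no proof beyond the sentence ``By planar monotonicity and a change of variable.'' Your change-of-variable computation is correct: with $N=rk^{3/2}$ the separation $k$ corresponds to $\delta'\asymp r^{-2/3}$ in Theorem~\ref{d_coal}, and with $N=\delta k^{3/2}$ it corresponds to $r'\asymp\delta^{-2/3}$ in Theorem~\ref{r_coal}, producing the exponents $r^{-2/3}$ and $\delta^{-2}$.

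The monotonicity you flag as the main obstacle is not a genuine difficulty once you work in the tree coupling of Section~\ref{intro_dual} rather than the abstract classical coupling. Under that coupling the paths $\{X^{\rho,z}\}$ are ordered in the northwest--southeast sense, and an up-right path cannot leave and re-enter a coordinate rectangle $R=\lzb 0,v_N\rzb$. Hence if the outer pair $(ke_1,ke_2)$ coalesces inside $R$, every pair sandwiched between them does too; this gives the upper bound of the first line and the lower bound of the second directly from Theorems~\ref{d_coal} and~\ref{r_coal}. For the two remaining ``inner'' bounds, after the shift $a\wedge b\to 0$ one has $a=(a_1,0)$, $b=(0,b_2)$, and the path from $0$ lies strictly between those from $a$ and $b$; consequently the first meeting of $a$ and $b$ is exactly the later of the first meetings of $(0,a)$ and of $(0,b)$ along the path from $0$. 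Since $\max(a_1,b_2)\ge k/2$, Corollary~\ref{h_start} (applied with axial separation $\floor{k/2}$) supplies the lower bound of the first line and the upper bound of the second. No new coupling lemma for first-meeting events is required.
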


The next result gives   tail bounds for the quenched probability of fast  coalescence, of optimal exponential order.

\begin{theorem} \label{r_coal_tail}
Fix $\varepsilon \in (0, \mu/2)$. There exist positive constants $C_1, C_2, C_3, C_4, r_0, c_0, N_0$ that depend only on $\varepsilon$ such that for each $\rho \in [\varepsilon, \mu-\varepsilon]$, $N\geq N_0$ and $r_0 \leq r \leq c_0N^{1/3}$, we have  
\begin{align*} 
e^{-C_1 r^3} &\leq \mathbb{P} \Big( \ncouple^\rho_{\floor{r N^{2/3}}e_1,\floor{r N^{2/3}}e_2}\Big(\Gamma^{[\![0, v_N]\!]}\Big) \geq 1-e^{-C_2 r^2 N^{1/3}} \Big) \\
&\leq \mathbb{P}\Big( \ncouple^\rho_{\floor{r N^{2/3}}e_1,\floor{r N^{2/3}}e_2}\Big(\Gamma^{[\![0, v_N]\!]}\Big) \geq e^{-C_3 r^2 N^{1/3}} \Big)  \leq e^{-C_4 r^3}.
\end{align*}
\end{theorem}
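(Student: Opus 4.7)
The plan is to prove the three inequalities left to right, combining the expectation bounds of Theorem \ref{r_coal} with the stationary-polymer exit-time estimates of Section \ref{exit_sec}. Write $a=\fl{rN^{2/3}}e_1$, $b=\fl{rN^{2/3}}e_2$, and $p=\ncouple^\rho_{a,b}\bigl(\Gamma^{\lzb 0,v_N\rzb}\bigr)$. The middle inequality is pure monotonicity: choosing $C_2,C_3$ and $r_0,N_0$ so that $e^{-C_2 r_0^2 N_0^{1/3}}+e^{-C_3 r_0^2 N_0^{1/3}}\le 1$, one has $1-e^{-C_2 r^2 N^{1/3}}\ge e^{-C_3 r^2 N^{1/3}}$ throughout the stated range of $r$ and $N$, and hence $\{p\ge 1-e^{-C_2 r^2 N^{1/3}}\}\subseteq\{p\ge e^{-C_3 r^2 N^{1/3}}\}$.

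For the upper bound, a direct Markov application to Theorem \ref{r_coal} only yields $\mathbb{P}(p\ge e^{-Cr^3})\le e^{-C'r^3}$, which is far from enough since in the regime $r\le c_0 N^{1/3}$ we have $r^2 N^{1/3}\gg r^3$. Instead, I would translate $\{p\ge e^{-C_3 r^2 N^{1/3}}\}$ into an atypical environment event. Decomposing $p$ as a sum over first-meeting vertices $z\in\lzb 0,v_N\rzb$ and using the Markov structure of the coupling $\ncouple^\rho_{a,b}$, one sees that if $p$ exceeds this threshold then at least one of the quenched semi-infinite polymer measures $\Pi^\rho_a,\Pi^\rho_b$ must assign quenched probability at least $e^{-cr^2 N^{1/3}}$ to paths making a transversal deviation of order $rN^{2/3}$ from the characteristic direction $\xi[\rho]$. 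Such a lower bound on a quenched large-deviation probability is itself a rare event in the environment, to be controlled by the exit-point upper bounds of Section \ref{exit_sec}: comparison with the stationary inverse-gamma polymer converts the quenched tail into a distributional tail on the stationary exit point, which has $\mathbb{P}$-probability at most $e^{-C_4 r^3}$.

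For the lower bound I would construct an explicit environment event $E$ with $\mathbb{P}(E)\ge e^{-C_1 r^3}$ on which $p\ge 1-e^{-C_2 r^2 N^{1/3}}$ holds deterministically. A natural candidate is the event that, in an auxiliary stationary inverse-gamma polymer rooted near the origin with boundary weights tuned to direction $\xi[\rho]$, the exit point from a chosen boundary axis falls in a cell of bounded size located at distance $\Theta(rN^{2/3})$ between the projections of $a$ and $b$. The exit-point lower bounds from Section \ref{exit_sec} yield $\mathbb{P}(E)\ge e^{-C_1 r^3}$. On $E$, a partition-function comparison forces both $\Pi^\rho_a$ and $\Pi^\rho_b$ to concentrate on paths funneled through a common narrow region inside $\lzb 0,v_N\rzb$, after which a diffusive estimate applied to the quenched polymer measures within this funnel delivers the claimed coalescence bound $1-e^{-C_2 r^2 N^{1/3}}$.

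The principal obstacle is the quantitative dichotomy itself: proving the gap between the ``probably not coalescing'' and ``probably coalescing'' regimes is exponentially narrow in $r^2 N^{1/3}$, not merely in $r^3$. This forces one to use \emph{quenched} tail bounds on stationary inverse-gamma polymer exit points that are sharper than the annealed bounds, which is one of the new contributions of the paper. A further complication relative to the exponential LPP analogue \cite{Sep-She-20} is that the quenched polymer measure is diffuse rather than a single geodesic, so converting between environment events and quenched coalescence probabilities requires an extra integration handled via partition-function comparisons with auxiliary boundary conditions rather than the purely geometric comparisons available at zero temperature.
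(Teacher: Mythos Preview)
Your middle inequality is fine, but both outer bounds miss the paper's central device: the \emph{exact planar duality} between the forward semi-infinite polymer tree and the backward one (Section~\ref{intro_dual}, Remark~\ref{duality}). The key identity, equation~\eqref{r_dual_r}, is
\[
\ncouple^\rho_{\floor{r N^{2/3}}e_1,\floor{r N^{2/3}}e_2}\bigl(\Gamma^{\lzb 0,v_N\rzb}\bigr)
\;\stackrel{d}{=}\;
\wt{Q}^\rho_{0,\partial^{\textup{NE}}\lzb 0,v_N\rzb}\Bigl(\bigcap_{x\in\partial^{\textup{NE}}\lzb 0,v_N\rzb}\{|\wt\tau_{0,x}|\ge rN^{2/3}\}\Bigr),
\]
which converts the coalescence probability \emph{exactly} into a joint exit-time probability for the coupled dual stationary polymers. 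Once you have this, both outer inequalities are immediate from Theorem~\ref{r_up_low} (the intersection is bounded above by any single term, giving the upper bound) and Theorem~\ref{joint_r} (union bound over $x$, giving the lower bound). There is no separate ``funneling plus diffusive estimate'' argument and no need to extract a transversal-deviation event from a first-meeting decomposition.

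Your outlined approach has a real gap. For the upper bound, the claim ``if $p\ge e^{-C_3 r^2 N^{1/3}}$ then one of $\Pi^\rho_a,\Pi^\rho_b$ assigns quenched probability $\ge e^{-cr^2 N^{1/3}}$ to a large transversal deviation'' is not justified: coalescence inside the box does not force either marginal path to stray far from its own characteristic ray, since the characteristic rays from $a$ and $b$ already cross inside $\lzb 0,v_N\rzb$. For the lower bound, the step ``on $E$, a diffusive estimate delivers coalescence probability $1-e^{-C_2 r^2 N^{1/3}}$'' is where all the work lies, and you have not indicated how to get a quenched bound of this strength on the \emph{coupled} coalescence event from a statement about a single stationary exit point. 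The duality replaces both of these difficulties with a one-line distributional identity.
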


\medskip 

\subsection{Coupling and total variation distance}
 Since the quenched non-coalescence probability $\ncouple^\rho_{a,b}(\Gamma^{\mathbb{Z}^2\setminus(a\wedge b\,+\, [\![0, \,  v_{rk^{3/2}}]\!])})$ is nonincreasing in $r$,  Corollary \ref{cor:coal} implies the almost sure convergence  $\ncouple^\rho_{a,b}(\Gamma^{\mathbb{Z}^2\setminus(a\wedge b\,+\, [\![0, \,  v_{rk^{3/2}}]\!])})\to0$ as $r\to\infty$.  This says that   the   polymer distributions $\Pi^\rho_a$  and $\Pi^\rho_b $ couple almost surely. 
 To state this precisely, let 
 $\chiup_N=\chiup_N(\gamma)$ denote the vertex
where a semi-infinite up-right path $\gamma$ started inside $[\![0, v_N]\!]$  first meets the northeast boundary $\partial^{\textup{NE}}[\![0, v_N]\!]$. 
If $(\gamma^a, \gamma^b)$ denote the paths under $\ncouple^\rho_{a,b}$, then for $a,b\in\Z_{\ge0}^2$ we have 
\be\label{H500} \ncouple^\rho_{a,b}\{\chiup_N(\gamma^a)=\chiup_N(\gamma^b) \text{ for large enough } N\}=1.
\ee
 The standard coupling inequality (stated in \eqref{coupineq} in Section \ref{TV_dist}) implies that the total variation distance between the distributions induced on $\partial^{\textup{NE}}[\![0, v_N]\!]$
converges to zero almost surely:
\be\label{dTV6} 
\lim_{N\to\infty}  d_{\textup{TV}}\bigl( \Pi^\rho_a\{\chiup_N \in \abullet\tspb\}\,,\, \Pi^\rho_b\{\chiup_N \in \abullet\tspb\} \bigr)  \;=\;0 
\quad \mathbb P\text{-a.s.} 
\ee

The next two theorems establish bounds on this convergence.  In the same spirit as in the earlier results,  when the initial  points are close on the scale $N^{2/3}$, the  total variation distance on the northeast boundary of a rectangle of size  $N$  is small.  In the opposite  case  the starting points are far apart on the scale $N^{2/3}$ and  the total variation distance is close to $1$.


\begin{theorem}\label{tv_d_upper}
Let $\varepsilon\in (0, \mu/2)$.  There exist finite strictly positive constants $\delta_0, N_0, C$ that  depend on $\varepsilon$ such that, whenever $0< \delta \leq \delta_0$, $N\geq N_0$ and $\rho \in [\varepsilon, \mu-\varepsilon]$, 
$$\mathbb{E}\Big[d_{\textup{TV}}\Big(\Pi^{{\rho}}_{\floor{\delta N^{2/3}}e_1} (\chiup_N \in \abullet), \Pi^{{\rho}}_{\floor{\delta N^{2/3} }e_2} (\chiup_N \in \abullet)\Big)\Big] \leq C |\log \delta|^{10}\delta.  $$
\end{theorem}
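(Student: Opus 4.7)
The plan is to derive the bound as a direct corollary of Theorem \ref{d_coal} via the classical coupling inequality. Set $a=\floor{\delta N^{2/3}}e_1$ and $b=\floor{\delta N^{2/3}}e_2$, and let $(\gamma^a,\gamma^b)$ denote the coupled pair of semi-infinite polymer paths distributed under $\ncouple^\rho_{a,b}$. In every fixed environment the random variables $\chiup_N(\gamma^a)$ and $\chiup_N(\gamma^b)$ realize the marginal laws $\Pi^\rho_a(\chiup_N\in\abullet)$ and $\Pi^\rho_b(\chiup_N\in\abullet)$, so the coupling inequality gives the quenched estimate
$$d_{\textup{TV}}\bigl(\Pi^\rho_a(\chiup_N\in\abullet),\,\Pi^\rho_b(\chiup_N\in\abullet)\bigr)\;\le\;\ncouple^\rho_{a,b}\bigl(\chiup_N(\gamma^a)\ne\chiup_N(\gamma^b)\bigr).$$

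The remaining step is the $\ncouple^\rho_{a,b}$-almost sure set identity
$$\bigl\{\chiup_N(\gamma^a)\ne\chiup_N(\gamma^b)\bigr\}\;=\;\Gamma^{\mathbb{Z}^2\setminus[\![0,v_N]\!]},$$
which is valid because coalescence occurs almost surely by \cite[Theorem A.1]{Jan-Ras-20-aop}. The forward inclusion is immediate: once the paths coalesce at a vertex inside $[\![0,v_N]\!]$ they coincide thereafter and thus cross $\partial^{\textup{NE}}[\![0,v_N]\!]$ at a common vertex. For the reverse inclusion, observe that the segment of $\gamma^a$ from $a$ to $\chiup_N(\gamma^a)$ lies entirely in $[\![0,v_N]\!]$, and the same for $\gamma^b$; if the two exit vertices agree, the two paths share at least one vertex inside the rectangle, which forces their first meeting point to lie in $[\![0,v_N]\!]$. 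Taking $\mathbb{P}$-expectation of the coupling inequality and invoking the upper bound of Theorem \ref{d_coal} then yields the claimed bound $C|\log\delta|^{10}\delta$ in the regime $N^{-2/3}\le\delta\le\delta_0$. The residual range $0<\delta<N^{-2/3}$ is trivial, since there $a=b=0$, the two quenched laws coincide, and the total variation distance vanishes identically.

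What could be called the main obstacle is really already absorbed into Theorem \ref{d_coal}: the KPZ-scale coalescence estimate, proved via stationary-polymer exit-time arguments, is where all the work lies. Once that input is available, the total variation statement reduces to the one-line coupling argument above, and the $|\log\delta|^{10}$ factor is inherited verbatim from the coalescence bound. In particular, any later improvement of that logarithmic correction in Theorem \ref{d_coal} would immediately sharpen Theorem \ref{tv_d_upper} by the same factor.
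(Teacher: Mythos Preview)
Your proof is correct and follows essentially the same route as the paper: apply the coupling inequality \eqref{coupineq} to the coupled paths under $\ncouple^\rho_{a,b}$, observe that $\{\chiup_N(\gamma^a)\ne\chiup_N(\gamma^b)\}\subset\Gamma^{\mathbb{Z}^2\setminus[\![0,v_N]\!]}$, and invoke Theorem~\ref{d_coal}. The paper only uses the one-sided inclusion (which suffices), whereas you also verify the reverse inclusion and explicitly dispose of the trivial range $0<\delta<N^{-2/3}$; both are harmless additions.
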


\begin{theorem}\label{tv_r_upper}
Let $\varepsilon\in (0, \mu/2)$. There exist finite positive constants $r_0, N_0, C$ depending on $\varepsilon$ such that whenever $N\geq N_0$, $r_0 \leq r \leq N^{1/3}$ and $\rho \in [\varepsilon, \mu-\varepsilon]$, we have 
$$ \mathbb{E}\Big[d_{\textup{TV}}\Big(\Pi^{{\rho}}_{\floor{r N^{2/3}}e_1} (\chiup_N \in \abullet), \Pi^{{\rho}}_{\floor{r N^{2/3}} e_2} (\chiup_N \in \abullet)\Big)\Big] \geq 1- e^{-C r^{3}}.  $$
\end{theorem}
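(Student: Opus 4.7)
The lower bound on expected total variation distance will come from exhibiting a single event on the boundary that the two exit distributions put very unequal mass on. Let $A^{\textup{E}} = \{v \in \partial^{\textup{NE}}[\![0, v_N]\!] : v \cdot e_1 = v_N \cdot e_1\}$ be the east edge and $A^{\textup{N}} = \partial^{\textup{NE}}[\![0, v_N]\!] \setminus A^{\textup{E}}$ the north edge. Using $d_{\textup{TV}}(\mu, \nu) \geq \mu(A^{\textup{E}}) - \nu(A^{\textup{E}})$ and linearity of expectation, it suffices to establish the annealed bounds
\[
\mathbb{E}\bigl[\Pi^{{\rho}}_{\floor{r N^{2/3}} e_1}(\chiup_N \in A^{\textup{N}})\bigr] \leq \tfrac{1}{2} e^{-C r^3}, \qquad \mathbb{E}\bigl[\Pi^{{\rho}}_{\floor{r N^{2/3}} e_2}(\chiup_N \in A^{\textup{E}})\bigr] \leq \tfrac{1}{2} e^{-C r^3},
\]
for a constant $C>0$ depending only on $\varepsilon$.

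The heuristic behind these bounds is geometric. The line through $\floor{r N^{2/3}} e_1$ in direction $\xi[\rho]$ meets $\partial^{\textup{NE}}[\![0, v_N]\!]$ on the east edge, at distance of order $r N^{2/3}$ south of the corner $v_N$, with proportionality constant equal to the slope $(\xi[\rho] \cdot e_2)/(\xi[\rho] \cdot e_1)$. This slope is uniformly bounded above and below on $\rho \in [\varepsilon, \mu - \varepsilon]$, so there exists $c_\varepsilon > 0$ such that a polymer path from $\floor{r N^{2/3}} e_1$ that exits on the north edge must have transverse deviation at least $c_\varepsilon r N^{2/3}$ from its characteristic line. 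The analogous statement holds with the roles of $e_1$ and $e_2$, and of the two edges, interchanged.

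Such transversal fluctuation estimates for the semi-infinite polymer, with the optimal cubic-exponential tail in the deviation scale, are precisely the content of the results established in Section \ref{sec:trans}. By translation invariance of the environment, the polymer measure $\Pi^\rho_{\floor{r N^{2/3}} e_i}$ seen from its starting point is the polymer measure at the origin inside the translated rectangle $[\![0, v_N]\!] - \floor{r N^{2/3}} e_i$, which still has both side-lengths of order $N$ as long as $r \leq c_0 N^{1/3}$ for a sufficiently small $c_0 > 0$. The annealed tail $e^{-C r^3}$ then applies uniformly in the claimed range and delivers the two estimates above.

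The main obstacle is to align the geometry: the transversal fluctuation results from Section \ref{sec:trans} are phrased for a polymer exiting through the northeast boundary of a rectangle whose diagonal is the characteristic direction from the origin, whereas here the starting point is shifted off-diagonal by $\Theta(r N^{2/3})$. The off-diagonal shift both translates the characteristic exit point toward one of the two edges of $\partial^{\textup{NE}}[\![0, v_N]\!]$ and creates a buffer of width $\Theta(r N^{2/3})$ to the other edge; it is this buffer that the transversal fluctuation must exceed for the rare event, and the cubic tail for fluctuations is what converts the linear-in-$r$ buffer into the cubic-in-$r$ probability bound. The uniformity in $\rho \in [\varepsilon, \mu - \varepsilon]$ comes from continuity of $\rho \mapsto \xi[\rho]$ on the compact parameter interval.
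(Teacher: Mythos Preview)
Your reduction is correct and is essentially the paper's argument: pick the event $A^{\textup{E}}$ (equivalently the north edge), use $d_{\textup{TV}}\geq \Pi^\rho_{\floor{rN^{2/3}}e_1}(\chiup_N\in A^{\textup{E}}) - \Pi^\rho_{\floor{rN^{2/3}}e_2}(\chiup_N\in A^{\textup{E}})$, and take expectations. The paper phrases the two inputs as quenched tail bounds on an event of $\mathbb P$-probability $\ge 1-e^{-Cr^3}$ rather than as annealed bounds, but the two formulations are equivalent here.

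The gap is in your citation. Section~\ref{sec:trans} proves only Theorem~\ref{fluc_lb}, which is a \emph{lower} bound on the transversal fluctuation of the \emph{finite} point-to-point polymer (the midpoint is unlikely to be within $\delta N^{2/3}$ of the diagonal). What you need is an \emph{upper} tail for the deviation of the \emph{semi-infinite} polymer from its characteristic line --- the opposite direction, for a different object --- and that estimate is not in Section~\ref{sec:trans}. The correct route, and the one the paper takes, is Proposition~\ref{stat_iid}, which identifies the law of $\Pi^\rho_z$ on $[\![0,v_N]\!]$ with the stationary polymer $Q^{\rho,\textup{NE}}_{z,v_N}$, together with Corollary~\ref{sec3cor} (from Theorem~\ref{r_up_low}). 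After a $180^\circ$ rotation this yields
\[
\mathbb{P}\bigl(\Pi^\rho_{\floor{rN^{2/3}}e_1}(\chiup_N\in A^{\textup{N}})\geq e^{-C_1 r^2 N^{1/3}}\bigr)\leq e^{-C_2 r^3},
\]
and the symmetric bound for $\Pi^\rho_{\floor{rN^{2/3}}e_2}$ and $A^{\textup{E}}$. Since $r\leq N^{1/3}$ forces $r^2N^{1/3}\geq r^3$, your two annealed bounds follow and the rest of your argument goes through.
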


The proofs of the two theorems are given in Section \ref{TV_dist}. 

\begin{remark}[Hyperbolicity in stochastic equations]  \label{rmk:hyp} 
 The free energy of a directed polymer can be viewed as a discretization of a stochastically forced viscous Hamilton-Jacobi equation.
This connection goes back to \cite{Hus-Hen-Fis-85,Imb-Spe-88}. In this vein, semi-infinite polymer measures can be used to construct stationary eternal solutions to such equations. Article \cite{Bak-Li-19} treats  a semidiscrete case and \cite{Jan-Ras-Sep-22-1F1S-}  the KPZ equation.    In particular, the limit \eqref{dTV6} is a version of {\sl hyperbolicity} that appears in   {\sl stochastic synchronization} (also called the {\sl one force--one solution principle}) of such equations. This is the positive temperature analogue of the inviscid phenomenon whereby action minimizers are asymptotic to each other in the infinite past.  See for example Theorem 4.4 of \cite{Bak-Li-19}.   Our results above show that, in the case at hand,  this form of hyperbolicity obeys the KPZ wandering exponent. On universality grounds one can predict that this is  true in some generality in one space dimension  for stochastically forced viscous Hamilton-Jacobi equations with nonlinear Hamiltonians.


\end{remark}

\subsection{Transversal fluctuations}
Finally, we present a result concerning the transversal fluctuation of the finite i.i.d.\ polymer. This result is derived by making a slight modification to the proof of the upper bound for fast coalescence, as stated in Theorem \ref{d_coal}.
It is expected for the midpoint of polymer from $(0,0)$ to $(N, N)$ to fluctuate around the diagonal on the scale $N^{2/3}$.  The upper bound on the transversal fluctuation was first proved in the work \cite{Sep-12-corr}, and we provide here the lower bound, i.e.\ we show that it is rare for the midpoint of the polymer to be too close to the diagonal. 

To state the result, let us introduce some notation. Let $\{\textsf{mid}\leq k\}$ denote the collection of directed paths between $-v_N$ and $v_N$ that intersect the $\ell^\infty$ ball of radius $k$, centered at the origin.

\begin{theorem}\label{fluc_lb}
Let $\varepsilon\in (0, \mu/2)$.  There exist finite strictly positive constants $\delta_0, N_0, C$ that  depend on $\varepsilon$ such that, whenever $0< \delta \leq \delta_0$, $N\geq N_0$ and $\rho \in [\varepsilon, \mu-\varepsilon]$, 
$$\mathbb{E}\Big[Q_{-v_N,v_N}\{\textup{\textsf{mid}} \leq \delta N^{2/3}\} \Big] \leq C |\log \delta|^{10}\delta.  $$
\end{theorem}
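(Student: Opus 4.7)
The plan is to mimic the upper-bound proof of Theorem \ref{d_coal}, where the relevant quantity is now a transversal crossing probability for the point-to-point polymer from $-v_N$ to $v_N$ rather than a coalescence probability of two semi-infinite polymers. The matching $C|\log\delta|^{10}\delta$ scaling in the two statements points to a common mechanism, namely the stationary polymer exit-point estimates of Section \ref{exit_sec}.

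First I would reduce the midpoint event to a single antidiagonal crossing. Every up-right path from $-v_N$ to $v_N$ crosses $L_0 := \{z\in\Z^2:z_1+z_2=0\}$ exactly once, and a short check using up-right monotonicity shows that such a path meets the $\ell^\infty$-ball $B_\delta$ of radius $\delta N^{2/3}$ around the origin if and only if its $L_0$-crossing lies at a point $(a,-a)$ with $|a|\le \delta N^{2/3}$. Partitioning by the crossing point yields
\[
Q_{-v_N,v_N}\{\textup{\textsf{mid}}\le\delta N^{2/3}\} \;=\; \sum_{a\in\Z,\,|a|\le \delta N^{2/3}}\frac{Z_{-v_N,(a,-a)}\,Z_{(a,-a),v_N}}{Y_{(a,-a)}\,Z_{-v_N,v_N}}.
\]

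Second I would realize this crossing distribution, up to a controlled coupling error, as the exit-point distribution of the stationary polymer with parameter $\rho$. By the Burke property of the inverse-gamma polymer and the construction of Busemann functions, the free-energy increments along $L_0$ generated by the bulk weights in $[\![-v_N,0]\!]$ are asymptotically inverse-gamma with parameters matching the stationary boundary weights whose characteristic direction is $\xi[\rho]$. Endowed with these boundary weights on $L_0$, the polymer from $L_0$ to $v_N$ is essentially a stationary polymer with parameter $\rho$, and the event $\{\textup{\textsf{mid}}\le\delta N^{2/3}\}$ for the original polymer becomes the event that this stationary polymer exits its boundary within $\delta N^{2/3}$ of the corner---the same pinning event that drives the upper bound in Theorem \ref{d_coal}.

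Third I would invoke the stationary exit-point estimate from Section \ref{exit_sec}: the expected probability that the stationary polymer's exit point lies within $\delta N^{2/3}$ of the corner is at most $C|\log\delta|^{10}\delta$. This is the refined bound advertised in the introduction as ``the last inequality in \eqref{improved}'', and together with the comparison above it yields the theorem. The main obstacle is making the comparison fully rigorous: the induced boundary weights on $L_0$ are not exactly inverse-gamma, and they are correlated with the bulk environment on the $v_N$ side; both the distributional mismatch and the correlation must be absorbed into error terms that preserve the $|\log\delta|^{10}\delta$ scaling. These errors are handled via the same monotone-sandwich and coupling arguments that Section \ref{exit_sec} develops in support of the coalescence proof, now adapted to a one-sided polymer structure rather than a coalescing pair.
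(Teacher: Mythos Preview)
Your geometric reduction to the antidiagonal $L_0$ is correct, but it is precisely the choice the paper avoids, and for a reason you have not addressed. The paper reduces instead to horizontal and vertical crossing events $\{\textsf{mid}_i\le\delta N^{2/3}\}$, $i=1,2$. This matters because the sharp $\delta$ bound (as opposed to $\sqrt\delta$) comes from factoring the small-exit-time event into two independent pieces, one over positive indices and one over negative, and multiplying their probabilities (see \eqref{ind_prod}). That factorization rests on Theorem~B.4 of \cite{Bus-Sep-22-ejp}, which gives independence of Busemann increments with two different parameters on a \emph{horizontal} line to the left and right of a given point. The paper explicitly notes in the remark following Theorem~\ref{fluc_lb} that the antidiagonal analogue of this independence is not known, and that this is why the point-to-line endpoint bound only achieves $\sqrt\delta$. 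Your antidiagonal approach would run into the same obstruction.

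Your step two is also too loose to carry the argument. You cannot package the crossing distribution as the exit distribution of a single stationary polymer: the ratios $Z_{-v_N,(a,-a)}/Z_{-v_N,(a-1,-a+1)}$ are neither inverse-gamma nor independent of the bulk above $L_0$. The paper's actual mechanism is different. It writes the log of the ratio of restricted partition functions as a two-sided multiplicative walk $M_n'$ whose steps are ratios of \emph{two} families of increments, one pointing toward $v_N$ and one coming from $-v_N$, along a horizontal line. Each family is then sandwiched, via Lemma~\ref{lm:A78}-type monotonicity arguments, between stationary increments with perturbed parameters $\lambda=\rho+rN^{-1/3}$ and $\eta=\rho-rN^{-1/3}$ coming from NE- and SW-boundary stationary polymers. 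This yields bounding walks $M_n^{\pm}$ with i.i.d.\ steps, to which random-walk estimates apply. Finally, your reference to ``the last inequality in \eqref{improved}'' is misplaced: that is the large-exit-time bound, whereas the relevant input here is the small-exit-time estimate, Theorem~\ref{dupper}, and more precisely the random-walk analysis underlying \eqref{asdf1}--\eqref{asdf2}.
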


\begin{remark}
The midpoint transversal fluctuation can be generalized to other positions along the path, as long as they are order $N$ away from $-v_N$ and $v_N$.
\end{remark}

\begin{remark}
Our proof technique also yields the following lower bound on the fluctuation of the endpoint of the point-to-line polymer. Let $Q^{\textup{p2l}}_{0, N}$ denote the point-to-line quenched path measure on the collection of directed paths from $(0,0)$ to the anti-diagonal line $x+y = 2N$. And let $\{\textsf{end}\leq k\}$ denote the sub-collection of these paths that intersect the $\ell^\infty$ ball of radius $k$, centered at $(N, N)$. It holds that 
\begin{equation}\mathbb{E}\Big[Q^{\textup{p2l}}_{0, N}\{\textup{\textsf{end}} \leq \delta N^{2/3}\} \Big] \leq C |\log {\delta}|^{10}\sqrt{\delta}.
\end{equation}
We get the weaker $\sqrt\delta$ instead of $\delta$ because 
the antidiagonal version of the independence property of Busemann increments on horizontal or vertical lines for two different directions is not known. 
\end{remark}

\section{Stationary inverse-gamma polymer}\label{stat_poly}

One of the main tools we use in our proofs is a stationary version of the polymer model, which we now describe. 

The  stationary inverse-gamma polymer with southwest boundary is defined on a quadrant instead of the entire $\mathbb{Z}^2$. It requires a parameter parameter ${{\rho}}\in (0,\mu)$ and a base vertex $v\in \mathbb{Z}^2$. To each $z\in v+ \mathbb{Z}_{>0}^2$ we attach a weight $Y_z \sim \text{Ga}^{-1}(\mu)$. On the $e_1$- and $e_2$-boundary of $v+\mathbb{Z}^2_{\ge0}$, we place (edge) weights
\begin{align}\label{stat_weights}
\begin{aligned}
I^\rho_{v+ke_1}\sim \text{Ga}^{-1}(\mu-{{\rho}})\quad\text{and}\quad 
J^\rho_{v+ke_2}\sim \text{Ga}^{-1}({{\rho}}),\quad k\ge1.
\end{aligned}
\end{align}
All these weights in the quadrant are independent. 
We refer to the $Y$ weights as the \textit{bulk} weights and to the $I^\rho$ and $J^\rho$ weights as the $\rho$-boundary weights. Section \ref{Bus+Poly} below explains the reason behind thinking of $I^\rho$ and $J^\rho$ as edge weights instead of vertex weights.

We use the same  $\mathbb P$ to denote the joint distribution 
of the weights $(Y,I^\rho,J^\rho)$.   
For $w\in v+ \mathbb{Z}_{\geq 0}^2$, we define  the partition function of the stationary polymer by 
$$Z^{{{\rho}}}_{v, w} = \sum_{x_\bbullet\in \mathbb{X}_{v,w}} \prod_{i=0}^{|w-v|_1} \wt{Y}_{x_i},\text{ where for $x\in v+\mathbb Z^2_{\ge0}$,}\quad\wt{Y}_{x} = \begin{cases}
1 \quad & \text{if $x = v$,}\\
I^{{\rho}}_{x-e_1, x} \quad & \text{if $x\in v+\Z_{>0}e_1$,}\\
J^{{\rho}}_{x-e_2, x} \quad & \text{if $x\in v+\Z_{>0}e_2$,}\\
Y_x \quad & \text{for $x\in v+\Z^2_{>0}$.}
\end{cases}$$ 
The corresponding quenched polymer measure is defined as
$$Q^{{{\rho}}}_{v, w}(x_\bbullet) = \frac{1}{Z^{{{\rho}}}_{v, w}}  \prod_{i=0}^{|w-v|_1} \wt{Y}_{x_i}, \qquad x_\bbullet\in \mathbb{X}_{v,w}.$$



Next  we state the  theorem that  explains why the process $Z^\rho$  is called \textit{ratio-stationary}, or simply \textit{stationary}. 
For a subset $A\subset\mathbb Z^2$, let $A^>=\cup_{x\in A}(x+\mathbb Z^2_{>0})$.

\begin{theorem}[{\cite[Thm.\ 3.3]{Sep-12-corr} and \cite[Eqn.\ (3.6)]{Geo-etal-15}}]
\label{stat} 
Fix $\rho \in (0,\mu)$. For each $u\in v + (\mathbb Z_{>0}\times\mathbb Z_{\ge0})$,
$w\in v + (\mathbb Z_{\ge0}\times\mathbb Z_{>0})$,  and $x\in v+\mathbb Z^2_{>0}$ we have 
$$\frac{Z^\rho_{v, u}}{Z^\rho_{v, u- e_1}} \sim \textup{Ga}^{-1}(\mu-\rho), \quad \frac{Z^\rho_{v, w}}{Z^\rho_{v, w- e_2 }}\sim \textup{Ga}^{-1}(\rho),\quad
\text{and}\quad \frac1{Z^\rho_{v,x}/Z^\rho_{v,x-e_1}+Z^\rho_{v,x}/Z^\rho_{v,x-e_2}}\sim\textup{Ga}^{-1}(\mu).$$
Translation invariance: the distribution of the process 
\[\biggl\{\frac{Z^\rho_{v, z+u}}{Z^\rho_{v, z+u- e_1}}\,,\,\frac{Z^\rho_{v, z+w}}{Z^\rho_{v, z+w- e_2}}: u\in  \mathbb Z_{>0}\times\mathbb Z_{\ge0}, \; 
w\in  \mathbb Z_{\ge0}\times\mathbb Z_{>0}\biggr\}\] 
does not depend on the translation $z\in v+\mathbb Z^2_{\ge0}$.
Furthermore, let $A=\{y_i\}_{i\in \mathcal I}$ be any finite or infinite down-right path in $v+ \mathbb{Z}^2_{\geq 0}$, indexed by an interval $\mathcal I\subset\Z$. {\rm(}This means that each increment satisfies $y_{i+1} - y_i\in\{e_1,-e_2\}$.{\rm)} Then, the nearest-neighbor  ratios $\{Z^\rho_{v, y_{i+1}}/Z^\rho_{v, y_{i}}\}$ along the path and the weights $\bigl\{\bigl(Z^\rho_{v,x}/Z^\rho_{v,x-e_1}+Z^\rho_{v,x}/Z^\rho_{v,x-e_2}\bigr)^{-1}:x\in A^>\bigr\}$ are mutually independent.
\end{theorem}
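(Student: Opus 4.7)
The plan is to reduce the theorem to a single-vertex Burke-type corner identity and then transport it along down-right paths by induction.

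First I would verify the corner identity at the southwest-most interior vertex $x_0 = v+e_1+e_2$. By construction $U := I^\rho_{v+e_1}$, $V := J^\rho_{v+e_2}$, and $W := Y_{x_0}$ are independent with marginals $\text{Ga}^{-1}(\mu-\rho)$, $\text{Ga}^{-1}(\rho)$, $\text{Ga}^{-1}(\mu)$. The polymer recursion $Z^\rho_{v,x_0} = W(U+V)$ gives outgoing ratios $U' = W(U+V)/V$ and $V' = W(U+V)/U$. To make the map a bijection I would introduce the auxiliary third coordinate $W' := UV/(U+V)$, so that $\phi(U,V,W) = (U',V',W')$ is smooth on $(0,\infty)^3$. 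The Jacobian works out to $|\det D\phi| = W(U+V)/(UV)$, and the two substitution identities $1/U + 1/V = 1/W'$ and $1/U' + 1/V' = 1/W$ imply that $\exp(-1/U-1/V-1/W)$ and $\exp(-1/U'-1/V'-1/W')$ agree term for term. What remains is to check that the polynomial factor $U^{-\mu+\rho-1} V^{-\rho-1} W^{-\mu-1}$ transforms correctly under $\phi$, after dividing by the Jacobian, into $(U')^{-\mu+\rho-1} (V')^{-\rho-1} (W')^{-\mu-1}$. This is a finite algebraic check that works out cleanly.

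Second I would iterate along down-right paths. Start with the L-shape $A_0 = \{v+ke_1,\, v+ke_2 : k\ge 0\}$, whose edge ratios and corner quantities $\{Y_x : x\in A_0^>\}$ are independent by hypothesis. Each corner flip at a vertex $x$ replaces the triple (two current boundary edges meeting at $x$, bulk weight $Y_x$) by the output triple from Step 1, while the rest of the environment is untouched and independent of those three inputs. Hence the new configuration attached to the deformed path $A_1$ again has independent edge ratios with the prescribed marginals, and the bulk weights at $A_1^> = A_0^> \setminus \{x\}$ remain i.i.d.\ $\text{Ga}^{-1}(\mu)$ and independent of the new boundary. Since any prescribed down-right path $A$ in $v+\Z^2_{\ge 0}$ can be reached from $A_0$ by finitely many such flips applied in an order compatible with the partial order on vertices, the induction goes through.

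The theorem's corner quantity at $x$ coincides with $Y_x$ by the identity $(Z^\rho_{v,x}/Z^\rho_{v,x-e_1} + Z^\rho_{v,x}/Z^\rho_{v,x-e_2})^{-1} = Y_x$, which is immediate from the recursion. For $x \in A^>$, the weight $Y_x$ never enters any $Z^\rho_{v,y}$ with $y \in A$ (because $x \not\le y$ for every $y$ on a down-right path below $x$), so it is independent of the edge ratios along $A$ and of the other corner weights. Combined with the induction of Step 2 this gives the full joint independence. The single-edge marginals fall out by choosing $A$ to contain that edge, and translation invariance under $z$ is the observation that the joint law depends only on the shape of the down-right path and not on its location in the quadrant.

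The main obstacle is the algebraic bookkeeping in Step 1: one must balance three noninteger exponents $-\mu+\rho-1,\,-\rho-1,\,-\mu-1$ through the substitution $U = W'(U'+V')/V'$, $V = W'(U'+V')/U'$, $W = U'V'/(U'+V')$ and verify that the polynomial prefactors reassemble correctly after dividing by $|\det D\phi|$. Everything else is combinatorial propagation through local flips, using only the locality of the polymer recursion and the pre-existing independence of the untouched environment from the three inputs being transformed.
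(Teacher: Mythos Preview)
The paper does not supply its own proof of this theorem; it is quoted from \cite{Sep-12-corr} and \cite{Geo-etal-15}. Your outline is exactly the Burke-property argument used in those references: establish the single-cell distributional identity via a change of variables, then propagate it by corner flips along down-right paths. The Jacobian $|\det D\phi|=W(U+V)/(UV)$ and the involution structure you describe are both correct.

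There is one algebraic slip. The identity you claim in Step~3,
\[
\Bigl(\frac{Z^\rho_{v,x}}{Z^\rho_{v,x-e_1}} + \frac{Z^\rho_{v,x}}{Z^\rho_{v,x-e_2}}\Bigr)^{-1} = Y_x,
\]
does not follow from the recursion. From $Z^\rho_{v,x} = Y_x\bigl(Z^\rho_{v,x-e_1}+Z^\rho_{v,x-e_2}\bigr)$ one obtains instead
\[
\Bigl(\frac{Z^\rho_{v,x-e_1}}{Z^\rho_{v,x}} + \frac{Z^\rho_{v,x-e_2}}{Z^\rho_{v,x}}\Bigr)^{-1} = Y_x,
\]
with the ratios inverted. (The displayed formula in the theorem statement carries the same inversion; the intended corner quantity is $\tfrac{U'V'}{U'+V'}$, which equals $Y_x$ and is indeed $\text{Ga}^{-1}(\mu)$ since $1/U'+1/V'\sim\text{Ga}(\mu)$.) Once you make this correction, your Step~3 argument---that for $x\in A^>$ the weight $Y_x$ is independent of all $Z^\rho_{v,y}$ with $y\in A$ because $x\not\le y$ for any such $y$---is valid, and the full independence claim follows. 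Everything else in the proposal stands.
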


A key quantity in the coupling approach to polymers and LPP models is the exit time. 
For an up-right path $\gamma$, we define $\tau(\gamma)\in\mathbb{Z}\setminus \{0\}$ as the signed number of steps taken before the first turn, where the plus sign corresponds to $e_1$ steps and the minus sign to $e_2$ steps.  For example,   $\tau(\gamma) = -3$ means that the first four steps of $\gamma$ consist of three consecutive  $e_2$ steps followed by an $e_1$ step. 
For  $v, w \in \mathbb{Z}$, when additional clarity is needed, we use the notation $\tau_{v, w}$ to denote  the restriction of the function $\tau$ to the domain $\mathbb{X}_{v, w}$.   When the path $\gamma$ starts at the base vertex $v$ of the stationary polymer process, $|\tau|$ equals  the number of boundary weights seen by the path before it exits the boundary. This justifies  the term \textit{exit time} for  $\tau(\gamma)$.


With the function $\tau$, we define the restricted partition function $Z_{v,w}(a \leq\tau\leq b)$ similarly to $Z_{v,w}$, except that we sum only over the subset of paths $\{ x_\bbullet\in \mathbb{X}_{v,w}: a\leq \tau_{v,w}(x_\bbullet) \leq b\}$.

Because the weights on the boundary are stochastically larger than the bulk weights, the path prefers to stay on the boundary. For each  ${{\rho}}\in(0,\mu)$  the characteristic direction $\xi[{{\rho}}]$ is the unique direction  in which the pulls of the $e_1$- and $e_2$-boundaries balance out.   The sampled path   between the origin and $v_N$ tends to take order $N^{2/3}$  steps on the boundary.   Precise  exit time estimates are stated in in Section \ref{exit_sec}.

\medskip

The stationary inverse-gamma polymer with northeast boundary is analogous  to the previously defined model, except that it is defined on a third quadrant and uses boundary edge weights placed on the northeast boundary. Thus, it also requires a parameter ${{\rho}}\in (0,\mu)$ and a base vertex $v\in \mathbb{Z}^2$, but it is defined on the quadrant $v-\mathbb{Z}^2_{\ge0}$. To each $z\in v+ \mathbb{Z}_{<0}^2$ we attach a bulk (vertex) weight $Y_z \sim \text{Ga}^{-1}(\mu)$. On the $e_1$- and $e_2$-boundary of $v-\mathbb{Z}^2_{\ge0}$, we place edge weights
\begin{align}\label{stat_weights_1}
\begin{aligned}
I^{{\rho}}_{[\![v+(k-1)ke_1, v+ke_1]\!]} &=  I^{{\rho}}_{v+(k-1)ke_1, v+ke_1} \sim \text{Ga}^{-1}(\mu-{{\rho}}),\\ 
J^{{\rho}}_{[\![v+(k-1)ke_2, v+ke_2]\!]} &= J^{{\rho}}_{v+(k-1)ke_2, v+ke_2}\sim \text{Ga}^{-1}({{\rho}}),\quad k\le0.
\end{aligned}
\end{align}
All these weights in the quadrant are independent. Here too, we  use $\mathbb P$ to denote the joint distribution 
of $(Y,I^\rho,J^\rho)$ and write $Z^{\rho,\textup{NE}}_{u,v}$ and $Q^{\rho,\textup{NE}}_{u,v}$ for, respectively, the partition function and quenched measure for the polymer with northeast boundary. 
Precisely, for $u\in v-\mathbb{Z}_{\geq 0}^2$, define  
$$Z^{\rho,\textup{NE}}_{u, v} = \sum_{x_\bbullet\in \mathbb{X}_{u,v}} \prod_{i=0}^{|v-u|_1} \wt{Y}_{x_i},\text{ where for $x\in v-\mathbb Z^2_{\ge0}$,}\quad\wt{Y}_{x} = \begin{cases}
1 \quad & \text{if $x = v$,}\\
I^{{\rho}}_{x, x+e_1} \quad & \text{if $x\in v-\Z_{>0}e_1$,}\\
J^{{\rho}}_{x, x+e_2} \quad & \text{if $x\in v-\Z_{>0}e_2$,}\\
Y_x \quad & \text{for $x\in v-\Z^2_{>0}$.}
\end{cases}$$ 
The quenched polymer measure is defined by
$$Q^{\rho,\textup{NE}}_{u, v}(x_\bbullet) = \frac{1}{Z^{\rho,\textup{NE}}_{v, w}}  \prod_{i=0}^{|v-u|_1} \wt{Y}_{x_i}.$$

\begin{remark}
We work mostly with the stationary model with southwest boundary and, therefore, we only flesh out the location of the boundary when it is the northeast boundary that is being used.
\end{remark}

By symmetry, the analogous version of Theorem \ref{stat} holds for the stationary polymer with northeast boundary.

\section{Exit time estimates}\label{exit_sec}

In this section, we prove exit time estimates for the stationary polymer model with southwest boundary, introduced in Section \ref{stat_poly}.  These results will be used to derive the coalescence estimate in Section \ref{dual_coal} and the total variation bounds in Section \ref{TV_dist}.

The first theorem below concerns the case when the polymer paths have an unusually large exit time. The upper bound for the annealed measure is proved in \cite{Lan-Sos-22-a-,Emr-Jan-Xie-23-}. We improve this estimate into a bound for the quenched tail.
The related upper bound in the zero-temperature model is \cite[Theorem 2.4]{Bha-20}. The proof in \cite{Bha-20} uses a technical result from \cite[Theorem 10.5]{Bas-Sid-Sly-14-}.   
We will present a simpler proof in this paper.

\begin{theorem} \label{r_up_low}
Fix $\varepsilon\in(0,\mu/2)$. There exist positive constants $r_0$, $N_0$, $c_0$, and $C_i$, $i\in[\![1,6]\!]$, 
that depend only on $\varepsilon$ such that for all ${{\rho}} \in [\varepsilon, \mu-\varepsilon]$, $N\geq N_0$ and $r_0 \leq r \leq c_0N^{1/3}$, we have  
\begin{align}
e^{-C_1r^{3}} &\leq \mathbb{P}\Big(\min_{x\not\in [\![0, v_N]\!]} Q^\rho_{0, x}\{|\tau| > rN^{2/3}\} \geq 1-e^{-C_2r^2 N^{1/3}}\Big)\notag\\
& 
\leq \mathbb{P}\Big(Q^{{\rho}}_{0, v_N+(1,1)}\{|\tau| > rN^{2/3}\} \geq e^{-C_3r^2 N^{1/3}}\Big) \leq e^{-C_4r^{3}}\label{improved}
\end{align}
and
$$ e^{-C_5r^3} \leq \mathbb{E}\Big[\min_{x\not\in [\![0, v_N]\!]}Q^{{{\rho}}}_{0, v_N}\{|\tau| > rN^{2/3}\}  \Big]  \leq \mathbb{E}\Big[Q^{{\rho}}_{0, v_N+(1,1)}\{|\tau| > rN^{2/3}\}  \Big]\leq e^{-C_6 r^3}.$$
\end{theorem}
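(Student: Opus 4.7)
The plan is to adapt the coupling-based framework of \cite{Sep-She-20} (originally for exponential LPP) to the positive-temperature inverse-gamma setting, using the stationary polymer of Section \ref{stat_poly} and the Burke-type identities in Theorem \ref{stat} as the main input. The starting point is the identity
\[
Q^{{\rho}}_{0,v_N+(1,1)}\{|\tau|>rN^{2/3}\}=\frac{Z^{{\rho}}_{0,v_N+(1,1)}(|\tau|>rN^{2/3})}{Z^{{\rho}}_{0,v_N+(1,1)}},
\]
and the explicit factorization $Z^{{\rho}}_{0,v_N+(1,1)}(\tau=k)=\bigl(\prod_{i=1}^{|k|}\wt Y_{\bbullet}\bigr)\cdot Z_{w(k),v_N+(1,1)}$, where $w(k)$ is the first bulk vertex after leaving the boundary. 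By Theorem \ref{stat}, the boundary weights along each axis are i.i.d.\ inverse-gamma and are independent of the bulk partition functions $\{Z_{w,\,v_N+(1,1)}\}_{w\in\partial^{\textup{NE}}([\![0,v_N]\!])\cap\Z^2}$, which is the structural fact I will exploit throughout.

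For the annealed and quenched upper bounds in \eqref{improved}, I would exploit the curvature of the shape function $\Lambda$. Using the explicit formula for $\Lambda(\xi[{{\rho}}])$ recalled in Section \ref{stat_poly} together with the first-order identity $\partial_{e_1}\Lambda\big|_{\xi[{{\rho}}]}=-\Psi_0(\mu-{{\rho}})$, a second-order Taylor expansion yields
\[
-k\,\Psi_0(\mu-{{\rho}})+\Lambda(v_N-ke_1)-\Lambda(v_N)\approx -c\,\frac{k^2}{N},
\]
which, once $|k|>rN^{2/3}$, supplies a deterministic penalty of order $r^2N^{1/3}$ on $\log Z^{{\rho}}_{0,v_N+(1,1)}(\tau=k)$ relative to the typical scale $\Lambda(v_N)$ of $\log Z^{{\rho}}_{0,v_N+(1,1)}$. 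The stochastic corrections are: (i) an $O(\sqrt{k})$ fluctuation of the boundary sum $\sum_{i=1}^{|k|}\log\wt Y$ (i.i.d.\ by Theorem \ref{stat}), controlled by standard Cram\'er-type bounds; (ii) $N^{1/3}$-scale KPZ fluctuations of $\log Z_{w(k),v_N+(1,1)}$ and of $\log Z^{{\rho}}_{0,v_N+(1,1)}$, controlled by the right- and left-tail estimates for the inverse-gamma free energy assembled in the appendices. Absorbing (i) and (ii) into the deterministic $r^2N^{1/3}$ penalty on a high-probability event (of $\mathbb P$-probability at least $1-e^{-C r^3}$) gives the quenched bound $Q^{{\rho}}_{0,v_N+(1,1)}\{|\tau|>rN^{2/3}\}\le e^{-C_3 r^2 N^{1/3}}$, and the annealed bound follows by splitting $\mathbb E[Q^{{\rho}}_{0,v_N+(1,1)}\{|\tau|>rN^{2/3}\}]$ over the good event (contributing $e^{-C_3 r^2 N^{1/3}}\le e^{-C_6 r^3}$) and the bad event (contributing $\mathbb P(\text{bad})\cdot 1\le e^{-C_4 r^3}$).

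For the lower bounds, the plan is to produce an event on the $\rho$-boundary weights alone, of probability at least $e^{-C r^3}$, that forces $\min_{x\not\in[\![0,v_N]\!]}Q^{{\rho}}_{0,x}\{|\tau|>rN^{2/3}\}\ge 1-e^{-C r^2 N^{1/3}}$. A natural candidate is a Cram\'er tilt event on the first $\lfloor rN^{2/3}\rfloor$ boundary edges of each axis, asking that both boundary sums $\sum_{i=1}^{\lfloor rN^{2/3}\rfloor}\log I^{{\rho}}_{ie_1}$ and $\sum_{i=1}^{\lfloor rN^{2/3}\rfloor}\log J^{{\rho}}_{ie_2}$ exceed their means by a specific amount of order $rN^{2/3}\cdot rN^{-1/3}=r^2 N^{1/3}$, so that the boundary looks typical for a parameter $\rho'=\rho\pm\alpha r/N^{1/3}$ whose characteristic exit-point scale is exactly $rN^{2/3}$. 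The independence of the boundary from the bulk (Theorem \ref{stat}) then lets me combine this large-deviation event with a high-probability typical-bulk event and conclude, via the same Taylor-expansion computation in reverse, that on this combined event the paths under $Q^{{\rho}}_{0,x}$ must concentrate on exit times exceeding $rN^{2/3}$.

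The main obstacle is upgrading the annealed estimate of \cite{Lan-Sos-22-a-,Emr-Jan-Xie-23-} to the sharp quenched tail $e^{-C_3 r^2 N^{1/3}}$, i.e.\ showing that the ratio defining $Q^{{\rho}}\{|\tau|>rN^{2/3}\}$ concentrates below its \emph{annealed} mean on the $e^{-C r^2 N^{1/3}}$ scale. In the exponential-LPP work \cite{Bha-20} this step imports a technical input from \cite{Bas-Sid-Sly-14-}; the simplification promised here presumably comes from a direct pathwise comparison of the numerator and the denominator through a single stationary coupling, so that a single KPZ-scale free-energy fluctuation bound suffices to control both. Carefully uniformizing that fluctuation bound over the range of bulk endpoints $w(k)$ for $|k|>rN^{2/3}$, and tracking constants so that the final rate is $r^2 N^{1/3}$ rather than $r^{3/2} N^{1/3}$ or $r N^{1/3}$, is where the delicate work lies.
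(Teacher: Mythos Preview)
Your upper-bound plan is essentially the paper's: the curvature penalty from Propositions \ref{far_s}--\ref{close_s}, plus i.i.d.\ boundary fluctuations, plus a uniform-in-endpoint bulk free-energy bound, combined via a union bound over dyadic shells of exit points. The ``delicate work'' you flag is exactly Proposition \ref{interval_bd}, which controls $\max_{k}\{\log Z_{0,v_N+(-k,k)}-\Lambda(v_N+(-k,k))\}$ over an $N^{2/3}$-window by comparing ratio variables to a random walk with i.i.d.\ steps (via Lemma \ref{mono_ratio} and a perturbed-$\lambda$ stationary polymer); this is the simplification that replaces the input from \cite{Bas-Sid-Sly-14-}.

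For the lower bound, your direct Cram\'er conditioning on the first $\lfloor rN^{2/3}\rfloor$ boundary weights is closely related to but not quite what the paper does. The paper instead performs a change of measure on the interval $[\![arN^{2/3},brN^{2/3}]\!]$ of each axis (to parameters $\lambda=\rho+rN^{-1/3}$, $\eta=\rho-rN^{-1/3}$), shows the event has $\wt{\mathbb P}$-probability $\ge 1/2$ using only variance bounds and Chebyshev, and then pays the $e^{Cr^3}$ Radon–Nikodym cost via Cauchy--Schwarz (Proposition \ref{RNest}). This buys two things your sketch does not yet address: first, establishing $\wt{\mathbb P}(\text{event})\ge 1/2$ requires only second-moment control of $\log Z^{\rho}$ and $\log Z^{\lambda}$ rather than any further moderate-deviation input; second, the $\min_{x\notin[\![0,v_N]\!]}$ is handled by decomposing $\partial^{\textup{NE}}[\![0,v_N]\!]$ into a ``dark'' region near $v_N$ (where monotonicity via Lemma \ref{2_exit_ineq} reduces to a single endpoint $w_N$) and ``light'' regions (where Corollary \ref{sec3cor} applies directly). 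Your outline would need an analogue of this decomposition to pass from a single endpoint to the uniform statement.
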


Lemma \ref{nestedpoly} allows us to obtain the following corollary from Theorem \ref{r_up_low}. The proof of Corollary \ref{sec3cor} is by now standard and is summarized in Figure \ref{sec3fig4} and its caption.


\begin{corollary}\label{sec3cor}
Fix $\varepsilon\in(0,\mu/2)$. There exist positive constants $C_1, C_2 , C_3, r_0, N_0$ that depend only on $\varepsilon$ such that for for all ${{\rho}} \in [\varepsilon, \mu-\varepsilon]$, 
$N\geq N_0$ and $r\ge r_0$, we have  
$$\mathbb{P}\bigl(Q^{{\rho}}_{0, v_N- rN^{2/3}e_1}\{\tau \geq 1\} \geq e^{-C_1r^2 N^{1/3}}\bigr) \leq e^{-C_2r^{3}}$$
and
$$ \mathbb{E}\Big[Q^{{\rho}}_{0, v_N-rN^{2/3}e_1}\{\tau \geq 1\}  \Big]\leq e^{-C_3 r^3}.$$
The same result holds when $v_N-rN^{2/3}e_1$ is replaced by $v_N+rN^{2/3}e_2$.
\end{corollary}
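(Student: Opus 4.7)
The strategy is to reduce the corollary to the upper-tail bound of Theorem \ref{r_up_low} at the characteristic endpoint $v_N$, using Lemma \ref{nestedpoly} as the bridge. The geometric picture is that $v_N - rN^{2/3}e_1$ lies in a direction from the origin strictly more $e_2$-heavy than the characteristic direction $\xi[\rho]$, so for the $\rho$-stationary polymer targeting this endpoint the pull of the $e_2$-boundary dominates and the event $\{\tau \geq 1\}$ is atypical.

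My first step would be to apply Lemma \ref{nestedpoly} to the pair of $\rho$-stationary polymers from $0$ to the two ordered endpoints $v_N - rN^{2/3}e_1 \leq v_N$ in the same environment, producing a pointwise quenched comparison of the form
\[Q^{\rho}_{0,\,v_N - rN^{2/3}e_1}\{\tau \geq 1\} \;\leq\; Q^{\rho}_{0,\,v_N}\{\tau > rN^{2/3}\}.\]
The intuition behind the shift by $rN^{2/3}$ on the right is that under the nested coupling, the path ending at the farther-east point $v_N$ must begin with at least $rN^{2/3}$ more $e_1$-steps than the path ending at $v_N - rN^{2/3}e_1$, so a single initial $e_1$-step for the shorter target forces an excursion of length $\geq rN^{2/3}+1$ along the east boundary for the $v_N$-target. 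This is the containment illustrated by the figure referenced in the paper.

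Once the comparison above is available, the first inequality of the corollary is immediate from the upper-tail bound in \eqref{improved} applied at scale $rN^{2/3}$:
\[\mathbb{P}\Bigl(Q^{\rho}_{0,\,v_N - rN^{2/3}e_1}\{\tau \geq 1\} \geq e^{-C_1 r^2 N^{1/3}}\Bigr) \;\leq\; \mathbb{P}\Bigl(Q^{\rho}_{0,\,v_N}\{\tau > rN^{2/3}\} \geq e^{-C_1 r^2 N^{1/3}}\Bigr) \;\leq\; e^{-C_2 r^3}.\]
The annealed estimate then follows by splitting the expectation at the threshold $e^{-C_1 r^2 N^{1/3}}$: on the event where the quenched mass is below this threshold, its contribution to $\mathbb{E}[Q^{\rho}\{\tau \geq 1\}]$ is at most $e^{-C_1 r^2 N^{1/3}}$, which is dominated by $e^{-C_3 r^3}$ in the regime $r \leq c_0 N^{1/3}$ where Theorem \ref{r_up_low} is in force; on the complementary event of $\mathbb{P}$-probability at most $e^{-C_2 r^3}$, one bounds the quenched probability trivially by $1$. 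The parallel statement with $v_N + rN^{2/3}e_2$ in place of $v_N - rN^{2/3}e_1$ follows by swapping the roles of the two axes and applying the symmetric half of \eqref{improved}.

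The main technical step is the pointwise quenched comparison extracted from Lemma \ref{nestedpoly}; once that step is in hand, the rest is routine integration of a quenched tail bound into an annealed one. I expect the nesting itself to be the only substantive input, with the other manipulations amounting to bookkeeping against Theorem \ref{r_up_low}.
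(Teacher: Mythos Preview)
Your overall strategy---reduce to the large-exit-time upper bound of Theorem \ref{r_up_low}---is the right one, and the integration of the quenched tail into an annealed bound at the end is fine. But the key step, the claimed pointwise quenched inequality
\[
Q^{\rho}_{0,\,v_N - rN^{2/3}e_1}\{\tau \geq 1\} \;\leq\; Q^{\rho}_{0,\,v_N}\{\tau > rN^{2/3}\}
\]
in the \emph{same} environment, is false, and Lemma \ref{nestedpoly} does not produce it. That lemma compares a polymer with a \emph{shifted base point} whose boundary weights are the ratio variables of an outer polymer; it says nothing about two polymers from the same base to different endpoints. Your intuition that ``the path to $v_N$ must begin with at least $rN^{2/3}$ more $e_1$-steps than the path to $v_N-rN^{2/3}e_1$'' has no basis: there is no monotone coupling of paths from the same start to two different endpoints that forces this. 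A small computation already breaks the inequality: with $v_N=(2,1)$ and $m=1$, one checks that $Q^\rho_{0,(1,1)}\{\tau\ge 1\}\le Q^\rho_{0,(2,1)}\{\tau\ge 2\}$ reduces to $Y_{(1,1)}\le I^\rho_{(2,0)}J^\rho_{(0,1)}/(I^\rho_{(1,0)}+J^\rho_{(0,1)})$, which fails on a set of positive probability.

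The correct use of Lemma \ref{nestedpoly} (and what the paper does, as in Figure \ref{sec3fig4}) is to shift the \emph{base point}, not the endpoint. Place an outer $\rho$-stationary polymer at $-\lfloor rN^{2/3}\rfloor e_1$ and realize the polymer from $0$ as the nested polymer whose $e_2$-boundary weights are the ratio variables of the outer model. Lemma \ref{nestedpoly} then gives the \emph{equality}
\[
Q^{\rho}_{0,\,v_N-\lfloor rN^{2/3}\rfloor e_1}\{\tau\ge 1\}\;=\;Q^{\rho}_{-\lfloor rN^{2/3}\rfloor e_1,\,v_N-\lfloor rN^{2/3}\rfloor e_1}\bigl\{\tau\ge \lfloor rN^{2/3}\rfloor+1\bigr\},
\]
and the right-hand side, by shift invariance, has the same law as $Q^{\rho}_{0,v_N}\{\tau\ge \lfloor rN^{2/3}\rfloor+1\}$. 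Now Theorem \ref{r_up_low} applies directly. So the reduction is a \emph{distributional equality} via nesting and translation, not a pointwise comparison inequality.
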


The next theorem is about the polymer paths having unusually small exit times. 
The estimate improves upon the result from \cite{Bus-Sep-22-ejp} where these types of estimates were used to  rule out the existence of non-trivial bi-infinite polymer measures. This technique was first developed for the non-existence of bi-infinite geodesics in the corner growth model  \cite{Bal-Bus-Sep-20} and subsequently applied to coalescence estimates for semi-infinite geodesics in \cite{Sep-She-20}. 

\begin{theorem}\label{dupper}
Fix $\varepsilon\in(0,\mu/2)$. 
There exist positive constants $C_1,C_2, N_0, \delta_0$ that depend only on $\varepsilon$ such that for all ${{\rho}} \in [\varepsilon, \mu-\varepsilon]$, $N\geq N_0$, $N^{-2/3}<\delta \leq \delta_0$, we have 
\begin{align}\label{dupper:claim1}
    \mathbb{P}\Big(\max_{x\not\in [\![0, v_N]\!]} Q^\rho_{0,x}\{ |\tau| \leq \delta N^{2/3}\} \geq e^{-|\log \delta|^2 \sqrt \delta N^{1/3}} \Big) \leq C_1 |\log\delta|^{10}\delta
    \end{align}
and
\begin{align}\label{dupper:claim2}
C_1\delta\le\mathbb{E}\Big[\max_{x\not\in [\![0, v_N]\!]} Q^\rho_{0,x}\{|\tau|\leq \delta N^{2/3}\} \Big] \leq C_2 |\log \delta|^{10}\delta.
\end{align}
\end{theorem}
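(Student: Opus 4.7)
The assertion \eqref{dupper:claim1} is the main content; from it, the upper bound in \eqref{dupper:claim2} follows by splitting the expectation at the threshold $e^{-|\log\delta|^2\sqrt\delta N^{1/3}}$, which is much smaller than $\delta$ for $N\geq N_0$. The lower bound in \eqref{dupper:claim2} is obtained by specializing to $x=v_N+(1,1)$ (which lies outside $[\![0,v_N]\!]$): the annealed quantity $\mathbb{E}[Q^\rho_{0,v_N+(1,1)}\{|\tau|\leq\delta N^{2/3}\}]$ is the joint $\mathbb{P}\otimes Q^\rho$-probability that the path exits the boundary within $\delta N^{2/3}$ steps, and the local scaling of $\tau$ near the origin on scale $N^{2/3}$ (extractable from Theorem \ref{r_up_low}) yields a lower bound of order $\delta$.

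For \eqref{dupper:claim1} I would follow the perturbation-and-coupling strategy pioneered for LPP in \cite{Bal-Bus-Sep-20,Sep-She-20} and extended to the inverse-gamma polymer in \cite{Bus-Sep-22-ejp}. Using planar monotonicity of $\tau_{0,x}$ in the endpoint and a dyadic covering, the maximum over $x\notin[\![0,v_N]\!]$ reduces to a maximum over a polynomially-sized grid on $\partial^{\textup{NE}}[\![0,v_N]\!]$; a union bound over this grid contributes the $|\log\delta|^{10}$ factor. For each fixed $x$ I introduce a perturbed parameter $\lambda=\rho\pm c\sqrt\delta\,|\log\delta|^\alpha$ (with $\alpha$ chosen in the calibration below) and compare the $\rho$- and $\lambda$-stationary polymers coupled through a common bulk environment. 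Under $\lambda$, the characteristic direction $\xi[\lambda]$ is off by enough that the typical exit time to reach $v_N$ is much larger than $\delta N^{2/3}$.

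The key partition-function identity decomposes, for each $|k|\leq\delta N^{2/3}$,
\begin{equation*}
Z^\rho_{0,x}(\tau=k)=\Bigl(\prod_{j=1}^{|k|}W^\rho_j\Bigr)\cdot Z^{\text{bulk}}_{ke_i+e_{3-i},\,x},
\end{equation*}
with $i=1$ if $k>0$ and $i=2$ if $k<0$; here $W^\rho_j$ is the $j$-th boundary edge weight on the relevant axis and $Z^{\text{bulk}}$ uses only bulk weights. The analogous identity holds with $\lambda$ in place of $\rho$. The ratio $Z^\rho_{0,x}(|\tau|\leq\delta N^{2/3})/Z^\lambda_{0,x}(|\tau|\leq\delta N^{2/3})$ is then controlled by the maximum over $|k|\leq\delta N^{2/3}$ of the product $\prod_{j=1}^{|k|}W^\rho_j/W^\lambda_j$, which is a log-gamma random walk maximum bounded by Chernoff on a set of environments of probability $\ge 1-O(|\log\delta|^{10}\delta)$. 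The stationary-partition-function ratio $Z^\lambda_{0,x}/Z^\rho_{0,x}$ is controlled by the known $N^{1/3}$-scale concentration of $\log Z^\rho$. After these reductions, the bound on $Q^\rho_{0,x}\{|\tau|\leq\delta N^{2/3}\}$ passes through $Q^\lambda_{0,x}\{|\tau|\leq\delta N^{2/3}\}$, which is a left-tail event in the $\lambda$-model and is controlled by Theorem \ref{r_up_low} (with the inequality reversed to capture the complementary tail) at the shifted parameter. Calibrating $\alpha$ yields the exponent $|\log\delta|^2\sqrt\delta N^{1/3}$ with a bad-environment budget of $|\log\delta|^{10}\delta$.

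The main obstacle is this last calibration: translating the small-exit event under $\lambda$ into an application of the large-exit estimate of Theorem \ref{r_up_low} requires matching three scales simultaneously (the parameter shift $\lambda-\rho$, the exit threshold $\delta N^{2/3}$, and the quenched tail $e^{-cr^2N^{1/3}}$) together with their polylogarithmic corrections. Unlike the LPP case in \cite{Sep-She-20}, where a clean reversal identity converts small-exit into large-exit for a single geodesic, in the polymer setting a whole sum over $|k|\leq\delta N^{2/3}$ contributes and must be controlled uniformly; this uniform control is where the sub-polynomial $|\log\delta|^{10}$ loss ultimately arises.
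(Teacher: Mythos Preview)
Your outline has the right flavor (perturb the boundary parameter, couple, compare) but misses the structural idea that drives the paper's bound, and as written the comparison does not close.

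\textbf{The cancellation problem.} With a shift $\lambda=\rho+rN^{-1/3}$ (which is the correct KPZ perturbation; your $c\sqrt\delta|\log\delta|^\alpha$ is $N$-independent and off-scale), one does get $Q^\lambda_{0,x}\{|\tau|\le\delta N^{2/3}\}\le e^{-C_1r^2N^{1/3}}$ on a good event, via Corollary~\ref{sec3cor}. But the conversion factor $Z^\lambda_{0,x}/Z^\rho_{0,x}$ has mean shift $\sim C_2r^2N^{1/3}$ (this is exactly \eqref{exp_diff}), so your product bound gives only $Q^\rho\le e^{(C_2-C_1)r^2N^{1/3}}$, and nothing guarantees $C_1>C_2$. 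The paper never compares the full partition functions $Z^\rho$ and $Z^\lambda$ for this reason.

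\textbf{What the paper does instead.} After a nested-polymer shift (Lemma~\ref{nestedpoly}), the paper rewrites the one-sided small-exit probability as a statement about the \emph{two-sided} multiplicative walk $M_n^z$ of \eqref{M_walk}, whose steps are ratios $I^\rho_{(n,0)}/\widetilde I^z_{(n,1)}$ along a horizontal line. The event $\{Q^\rho(1\le\tau\le\delta N^{2/3})\text{ large}\}$ becomes $\{\max_{|n|\le\alpha rN^{2/3}}\log M_n^z\text{ small}\}$. Via Lemma~\ref{lm:A78} this walk is sandwiched between the walks $M_n^\lambda$ (for $n\ge1$) and $M_n^\eta$ (for $n\le0$), and the key input is the \emph{independence} of the Busemann increments $\{I^{\lambda,\textup{NE}}_{(i,1)}\}_{i\ge1}$ and $\{I^{\eta,\textup{NE}}_{(i,1)}\}_{i\le0}$ from \cite{Bus-Sep-22-ejp}. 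This independence turns the small-max probability into a \emph{product} of two random-walk small-max probabilities \eqref{ind_prod}, each of order $|\log\delta|^3\sqrt\delta$ by Proposition~\ref{rwest}, yielding $|\log\delta|^6\delta$. Without this two-sided structure one gets only $\sqrt\delta$; the paper itself notes this in the remark after Theorem~\ref{fluc_lb}. Your sketch contains neither the nested shift, the random-walk reformulation, nor the independence step.

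\textbf{The lower bound.} Your appeal to ``local scaling extractable from Theorem~\ref{r_up_low}'' is not a proof: Theorem~\ref{r_up_low} gives nothing about small exits. The paper's argument is a pigeonhole: from \eqref{exit47} one knows $\mathbb{E}[Q^\rho_{0,v_N'}\{|\tau|\le r_0N^{2/3}\}]\ge 1/2$, partitions $[-r_0,r_0]$ into $O(1/\delta)$ intervals of width $\delta$, picks the interval carrying mass $\ge C\delta$, and then uses a nested-polymer shift \eqref{justify} plus translation invariance to move that interval to $[0,\delta N^{2/3}]$ at the cost of varying the endpoint over $A_N\subset\partial^{\textup{NE}}[\![0,v_N]\!]$.
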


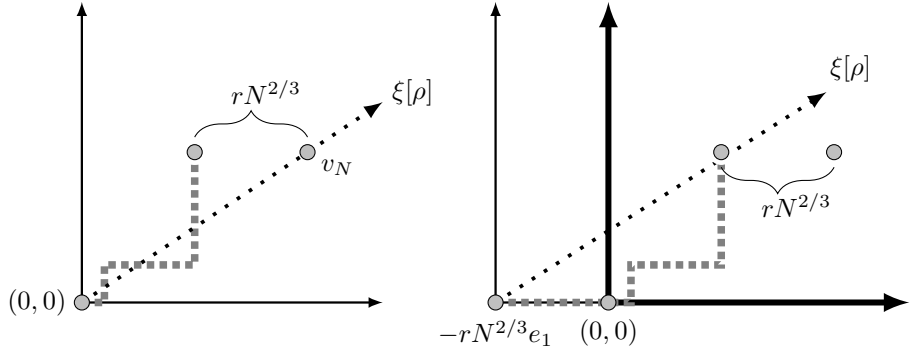
\begin{figure}[t]
\captionsetup{width=0.8\textwidth}
\begin{center}
 
\begin{tikzpicture}[>=latex, scale=1]

\draw[line width = 0.3mm, ->] (1,0)--(1,4);
\draw[line width = 0.3mm, ->] (1,0)--(5,0);

\draw[dotted, color=gray, line width = 1mm] (1,0)--(8.3-7,0) -- (8.3-7,0.5) --(9.5-7, 0.5) -- (9.5-7,2);

\fill[color=white] (2.5,2)circle(1.4mm);

\draw[ line width = 0.3mm, ->] (6.5,0)--(6.5,4);
\draw[line width = 0.3mm] (6.5,0)--(8,0);
\draw[line width = 0.8mm, ->] (8,0)--(12,0);
\draw[line width = 0.8mm, ->] (8,0)--(8,4);

\node at (5.4,2.8) {\small $\xi[{{\rho}}]$};

\draw[dotted, color=gray, line width = 1mm] (6.5,0)--(8.3,0) -- (8.3,0.5) --(9.5, 0.5) -- (9.5,2);

\draw[loosely dotted, line width = 0.5mm, ->] (1,0)--(5,2+2/3);

\draw[loosely dotted, line width = 0.5mm, ->] (6.5,0)--(6.5+0.8*5.5, 0.8*3.5);
\node at (6.5+0.8*5.9, 0.8*3.9) {\small $\xi[{{\rho}}]$};
\fill[color=white] (2.5,2)circle(1.7mm);
\draw[ fill=lightgray](2.5,2)circle(1mm);

\draw [decorate,decoration={brace,amplitude=10pt}, xshift=0pt,yshift=0pt]
(2.5,2+0.2) -- (4,2+0.2) ;

\node at (3.25+0.2,2.8) {\small $rN^{2/3}$};

\draw[ fill=lightgray](4,2)circle(1mm);
\node at (4.4,1.8) {\small $v_N$};

\draw[ fill=lightgray](1,0)circle(1mm);

\draw[ fill=lightgray](6.5,0)circle(1mm);

\draw[ fill=lightgray](11,2)circle(1mm);

\draw[ fill=lightgray](8,0)circle(1mm);



\draw [decorate,decoration={brace,amplitude=10pt, mirror}, xshift=0pt,yshift=0pt]
(6.5+3,-0.2+2) -- (8+3,-0.2+2) ;

\node at (7.5+3,-0.7+2) {\small $rN^{2/3}$};

\fill[color=white] (9.5,2)circle(1.7mm);
\draw[ fill=lightgray](9.5,2)circle(1mm);

\node at (0.4,0) {\small $(0,0)$};

\node at (8,-0.4) {\small $(0,0)$};

\node at (6.5,-0.4) {\small $-rN^{2/3}e_1$};

\end{tikzpicture}
 
\end{center}
\caption{\small  Sketch of Corollary \ref{sec3cor}. On the left is a path in  the event $\tau_{0, v_N- \floor{rN^{2/3}}e_1} \geq 1$.    On the right, a second base point is placed at  $- \floor{rN^{2/3}}e_1$ and the edge weights on the $e_2$-axis based at $0$ are determined by the ratio variables of the polymer based at $- \floor{rN^{2/3}}e_1$. By Lemma  \ref{nestedpoly}, $Q_{0, v_N- \floor{rN^{2/3}}e_1}\{\tau \geq 1\} = Q_{- \floor{rN^{2/3}}e_1, v_N - \floor{rN^{2/3}}e_1 }\{\tau \geq \floor{rN^{2/3}}+ 1\}$, and Theorem \ref{r_up_low} can be applied. }
\label{sec3fig4}
\end{figure}

We close this section by extending the above estimates to any coupling of stationary polymer measures. 
Let $\wt{Q}^\rho_{0, A}$ be any coupling of the measures $\{Q^\rho_{0,x}:x\in A\}$. This is then a probability measure on the product space $\prod_{y\in A} \mathbb{X}_{0, y}$. We view the elements of this product space as vectors and then for $x\in A$, the $x$-th coordinate of such a vector would be the path that ends at $x$. For $x\in A$, define $\{\wt \tau_{0,x} = k\}\subset \prod_{y\in A} \mathbb{X}_{0, y}$ to be the collection of vectors whose $x$-th coordinate is in $\{\tau = k\}$.

\begin{theorem}\label{joint_r}
Fix $\varepsilon \in (0, \mu/2)$. There exist positive constants $C_1, C_2, r_0, c_0, N_0$ that depend only on $\varepsilon$ such that for each $\rho \in [\varepsilon, \mu-\varepsilon]$, $N\geq N_0$ and $r_0 \leq r \leq c_0N^{1/3}$, we have  
$$\mathbb{P}\Big(\wt{Q}^{{\rho}}_{0,\partial^{\textup{NE}}[\![0, v_N]\!]}\Big( \bigcap_{x\in\partial^{\textup{NE}}[\![0, v_N]\!]}\{ |\wt \tau_{0,x}| \geq r N^{2/3}\}\Big) \geq 1-e^{-C_1 r^2 N^{1/3}}\Big)  \geq e^{-C_2 r^3}$$
and
$$ \mathbb{E}\Big[\wt{Q}^{{\rho}}_{0,\partial^{\textup{NE}}[\![0, v_N]\!]}\Big( \bigcap_{x\in\partial^{\textup{NE}}[\![0, v_N]\!]}\{ |\wt \tau_{0,x}| \geq r N^{2/3}\}\Big)\Big]  \geq e^{-C r^3}.$$
\end{theorem}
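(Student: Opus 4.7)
The plan is to deduce Theorem \ref{joint_r} from Theorem \ref{r_up_low} combined with the tautological fact that any coupling has prescribed marginals; no new KPZ-scale input is needed beyond what is already packaged in Theorem \ref{r_up_low}.

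First, I pick $N'=N-C_0$ for a fixed integer constant $C_0$ large enough that $v_{N'}\le v_N-(1,1)$, so that $A:=\partial^{\textup{NE}}[\![0,v_N]\!]\subset \mathbb{Z}^2\setminus[\![0,v_{N'}]\!]$. Applying Theorem \ref{r_up_low} at sink $v_{N'}$ (with $r$ rescaled by the bounded factor $(N/N')^{2/3}=1+O(1/N)$, which is absorbed into the constants) produces a $\mathbb{P}$-event $\mathcal{E}$, depending only on the weight environment, with $\mathbb{P}(\mathcal{E})\ge e^{-C_1 r^3}$, such that on $\mathcal{E}$, simultaneously for every $x\in A$,
\[
Q^{{\rho}}_{0,x}\{|\tau|>rN^{2/3}\}\ge 1-e^{-C_2 r^2 N^{1/3}}.
\]

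Next, I transfer this environment bound to the coupling. By the marginal property of $\wt{Q}^{{\rho}}_{0,A}$,
\[
\wt{Q}^{{\rho}}_{0,A}\{|\wt\tau_{0,x}|< rN^{2/3}\}\le Q^{{\rho}}_{0,x}\{|\tau|\le rN^{2/3}\},
\]
so that on $\mathcal{E}$ a union bound over $A$ gives
\[
\wt{Q}^{{\rho}}_{0,A}\Bigl(\bigcap_{x\in A}\{|\wt\tau_{0,x}|\ge rN^{2/3}\}\Bigr)\ge 1-|A|\,e^{-C_2 r^2 N^{1/3}}\ge 1-e^{-C_1' r^2 N^{1/3}},
\]
where the last step absorbs the polynomial factor $|A|\le 2N+1$ into the exponential using $r\ge r_0$ and $N\ge N_0$ (so that $r^2 N^{1/3}\gg \log N$). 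Intersecting with $\mathcal{E}$ gives the quenched lower bound. The annealed bound then follows directly:
\[
\mathbb{E}\Bigl[\wt{Q}^{{\rho}}_{0,A}\Bigl(\bigcap_{x\in A}\{|\wt\tau_{0,x}|\ge rN^{2/3}\}\Bigr)\Bigr]\ge \bigl(1-e^{-C_1' r^2 N^{1/3}}\bigr)\,\mathbb{P}(\mathcal{E})\ge \tfrac12 e^{-C_1 r^3}.
\]

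The only point requiring care is bookkeeping: choosing $N'$ so that $A$ lies strictly outside the rectangle used in Theorem \ref{r_up_low}, and verifying that the constants absorbed in passing from scale $N'$ to scale $N$ and in absorbing $|A|$ into the exponential remain uniform in $\rho\in[\varepsilon,\mu-\varepsilon]$. There is no genuine analytic obstacle, because the hard quenched exit-time estimate is precisely what Theorem \ref{r_up_low} already supplies, and the coupling plays no role in defining $\mathcal{E}$.
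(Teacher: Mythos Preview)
Your proposal is correct and follows essentially the same approach as the paper: invoke the quenched exit-time lower bound from Theorem \ref{r_up_low} to obtain an environment event on which every marginal $Q^\rho_{0,x}$, $x\in\partial^{\textup{NE}}[\![0,v_N]\!]$, has exit probability at least $1-e^{-Cr^2N^{1/3}}$, then union-bound over the $O(N)$ points and absorb the polynomial into the exponential. The only cosmetic difference is your introduction of $N'=N-C_0$ to force $\partial^{\textup{NE}}[\![0,v_N]\!]\subset\mathbb{Z}^2\setminus[\![0,v_{N'}]\!]$ so that the \emph{stated} form of Theorem \ref{r_up_low} applies directly; the paper instead appeals implicitly to the intermediate estimate \eqref{with_a} from the proof of Lemma \ref{rtail_lb}, which already gives the bound for the minimum over $\partial^{\textup{NE}}[\![0,v_N]\!]$ without any shift.
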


\begin{theorem}\label{joint_d}
Fix $\varepsilon\in(0, \mu/2)$. There exist positive constants $C, N_0, \delta_0$ that depend only on $\varepsilon$ such that for each $\rho \in [\varepsilon, \mu-\varepsilon]$, $N\geq N_0$, $K \geq 1$ and $0<\delta \leq \delta_0$, we have 
$$\mathbb{E}\Big[\wt{Q}^\rho_{0,\partial^{\textup{NE}}[\![0, v_N]\!]}\Big( \bigcup_{x\in\partial^{\textup{NE}}[\![0, v_N]\!]}\{ |\wt \tau_{0,x}| \leq \delta N^{2/3}\}\Big)\Big] \leq C |\log \delta|^{10}\delta.$$
\end{theorem}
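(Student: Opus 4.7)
The plan is to combine the strong quenched bound of Theorem~\ref{dupper}~\eqref{dupper:claim1} with a union bound over the $O(N)$ endpoints of $\partial^{\textup{NE}}[\![0, v_N]\!]$. The trivial regime $\delta N^{2/3} < 1$ is immediate: since $|\tau|\geq 1$ deterministically, the event $\{|\wt\tau_{0,x}|\leq\delta N^{2/3}\}$ is empty. So I assume $\delta > N^{-2/3}$ henceforth, and introduce the high-probability environment event
\[
B \;=\; \Bigl\{\max_{x\in\partial^{\textup{NE}}[\![0, v_N]\!]} Q^\rho_{0,x}\{|\tau|\leq\delta N^{2/3}\} \;\leq\; e^{-|\log\delta|^2\sqrt{\delta}\,N^{1/3}}\Bigr\},
\]
for which Theorem~\ref{dupper}~\eqref{dupper:claim1} (or a mild extension of it to boundary vertices, see below) gives $\mathbb{P}(B^c) \leq C_1 |\log\delta|^{10}\delta$.

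Since any coupling $\wt Q^\rho_{0,\partial^{\textup{NE}}[\![0, v_N]\!]}$ has marginals $Q^\rho_{0,x}$ at each $x$, on $B$ the union bound together with $|\partial^{\textup{NE}}[\![0, v_N]\!]|\leq N+1$ yields
\[
\wt Q^\rho_{0,\partial^{\textup{NE}}[\![0, v_N]\!]}\Bigl(\bigcup_{x}\{|\wt\tau_{0,x}|\leq\delta N^{2/3}\}\Bigr) \;\leq\; \sum_{x\in\partial^{\textup{NE}}[\![0, v_N]\!]} Q^\rho_{0,x}\{|\tau|\leq \delta N^{2/3}\} \;\leq\; (N+1)\,e^{-|\log\delta|^2\sqrt{\delta}\,N^{1/3}},
\]
while on $B^c$ I bound the integrand trivially by $1$. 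Combining, the expected coupling probability is at most $(N+1)\,e^{-|\log\delta|^2\sqrt{\delta}\,N^{1/3}} + C_1|\log\delta|^{10}\delta$. To absorb the first summand it suffices to verify $|\log\delta|^2\sqrt{\delta}\,N^{1/3}\geq \log N + |\log\delta|$ uniformly in $\delta\in(N^{-2/3},\delta_0]$ for $N\geq N_0$. A case split handles this: when $\delta \leq N^{-1/2}$, the bound $|\log\delta|\geq\tfrac12\log N$ forces $|\log\delta|^2\sqrt{\delta}\,N^{1/3}\geq \tfrac14(\log N)^2$; when $\delta > N^{-1/2}$, one has $\sqrt{\delta}\,N^{1/3}>N^{1/12}$ while $|\log\delta|\geq|\log\delta_0|$, and the desired inequality again holds for $N$ large enough.

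The main subtle point, rather than a genuine obstacle, is the small mismatch between the domain $x\notin[\![0, v_N]\!]$ in Theorem~\ref{dupper}~\eqref{dupper:claim1} and the domain $x\in\partial^{\textup{NE}}[\![0, v_N]\!]\subset[\![0, v_N]\!]$ needed here. I expect this to be patched either by applying Theorem~\ref{dupper} to the slightly enlarged rectangle $[\![0, v_N+(1,1)]\!]$ (whose complement contains $\partial^{\textup{NE}}[\![0, v_N]\!]$ after the obvious identification and a harmless adjustment of constants), or by inspecting the proof of Theorem~\ref{dupper} directly to confirm it already extends to boundary vertices. Beyond this cosmetic point the argument is an elementary consequence of the strong quenched tail bound \eqref{dupper:claim1} with no further probabilistic input, in direct parallel with how Theorem~\ref{joint_r} follows from Theorem~\ref{r_up_low}.
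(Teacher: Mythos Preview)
Your proposal is correct and follows essentially the same approach as the paper's proof: define the high-probability event $B$ from Theorem~\ref{dupper}, apply a union bound over the $O(N)$ boundary endpoints on $B$, and bound trivially on $B^c$. Your concern about the domain mismatch is well-founded but indeed only cosmetic: the reduction~\eqref{out_vs_bd} in the proof of Theorem~\ref{dupper} shows that the probability bound~\eqref{dupper:claim1} is in fact established first for $\max_{x\in\partial^{\textup{NE}}[\![0,v_N]\!]}$ and only then transferred to $\max_{x\notin[\![0,v_N]\!]}$, so the version you need is already available.
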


\subsection{Proof of Theorem \ref{r_up_low} }

The expectation bounds in Theorem \ref{r_up_low} follow directly from the tail bounds. We split the proof of the tail bounds  into the following two lemmas. 

\begin{lemma}\label{rtail_ub}
Fix $\varepsilon\in(0,\mu/2)$. There exist positive constants $C_1, C_2$, $r_0$, $N_0$
depending only on $\varepsilon$ such that for all ${{\rho}} \in [\varepsilon, \mu-\varepsilon]$, $N\geq N_0$ and $r\ge r_0$, we have  
\begin{align*}
&\mathbb{P}\Big(Q^\rho_{0, v_N}\{|\tau| > rN^{2/3}\} \geq e^{-C_1 r^2 N^{1/3}}\Big) \leq e^{-C_2r^{3}}.
\end{align*}
\end{lemma}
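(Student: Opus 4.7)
By the symmetry that swaps $\rho \leftrightarrow \mu-\rho$ and the two coordinate axes, it suffices to bound $\mathbb P(Q^\rho_{0,v_N}\{\tau > rN^{2/3}\} \geq e^{-C_1 r^2 N^{1/3}}) \leq e^{-C_2 r^3}$. I would decompose the restricted partition function by the exit step:
\[
Z^\rho_{0, v_N}(\tau = k) \;=\; \Big(\prod_{i=1}^{k} I^\rho_{ie_1}\Big)\, Z_{ke_1+e_2,\, v_N}, \qquad k\ge 1,
\]
where $Z_{u,v}$ denotes the bulk (no-boundary) i.i.d.\ polymer partition function. Lower-bounding $Z^\rho_{0, v_N}$ by its $\tau = 1$ contribution $I^\rho_{e_1}\, Z_{e_1+e_2,\, v_N}$ and cancelling the common factor $I^\rho_{e_1}$ gives
\[
Q^\rho_{0,v_N}\{\tau > rN^{2/3}\} \;\le\; \sum_{k > rN^{2/3}}^{\lfloor N\xi[\rho]\cdot e_1 \rfloor} R_k\, S_k,
\quad R_k := \prod_{i=2}^{k} I^\rho_{ie_1}, \quad S_k := \frac{Z_{ke_1+e_2,\, v_N}}{Z_{e_1+e_2,\, v_N}}.
\]

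Next, compute the typical size of each summand. The process $\log R_k$ is an i.i.d.\ random walk with mean $-(k-1)\Psi_0(\mu-\rho)$. The defining property of the characteristic direction is $\partial_{e_1}\Lambda(\xi[\rho]) = -\Psi_0(\mu-\rho)$, while strict concavity of $\Lambda$ across $\xi[\rho]$ supplies $\partial^2_{e_1 e_1}\Lambda(\xi[\rho]) = -\kappa_\rho < 0$. Taylor expanding $\Lambda$ at $\xi[\rho]$ and using the shape theorem for the bulk polymer,
\[
\mathbb E[\log S_k] \;=\; (k-1)\Psi_0(\mu-\rho) \;-\; \frac{(k^2-1)\kappa_\rho}{2N} \;+\; \text{lower order}.
\]
The linear parts cancel the random-walk mean, leaving $\mathbb E[\log(R_k S_k)] \approx -\kappa_\rho k^2/(2N)$, which at $k = r'N^{2/3}$ is $-\Theta(r'^2 N^{1/3})$ --- precisely the target quenched scale.

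For concentration, $\log R_k$ is sub-Gaussian by Chernoff (since $\log \operatorname{Ga}^{-1}$ has all exponential moments near the mean), so at $k = r'N^{2/3}$ and deviation $\tfrac14 \kappa_\rho r'^2 N^{1/3}$ we get $\mathbb P(\log R_k - \mathbb E[\log R_k] \ge \tfrac14\kappa_\rho r'^2 N^{1/3}) \leq e^{-c (r'^2 N^{1/3})^2/k} = e^{-cr'^3}$. For the bulk factors I would invoke the standard inverse-gamma KPZ tail bounds $\mathbb P(\log Z_{u,v} - \mathbb E[\log Z_{u,v}] \geq tN^{1/3}) \leq e^{-ct^{3/2}}$ and $\mathbb P(\log Z_{u,v} - \mathbb E[\log Z_{u,v}] \leq -tN^{1/3}) \leq e^{-ct^{3}}$; both applied at $t = r'^2$ give $e^{-cr'^3}$. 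Union bounding over the dyadic scales $r' = 2^j r$, $0 \le j \le \log_2(c_0 N^{1/3}/r)$, and over $k$ within each dyadic block (Doob's maximal inequality for the random walk, Lipschitz dependence of $\mathbb E[\log Z]$ on the endpoint for the bulk factor), one obtains that with probability at least $1 - e^{-cr^3}$ every summand at $k$ in the block around $r'N^{2/3}$ is bounded by $e^{-\kappa_\rho r'^2 N^{1/3}/8}$. Summing the resulting geometric series in $j$ then yields $Q^\rho_{0,v_N}\{\tau > rN^{2/3}\} \leq e^{-C_1 r^2 N^{1/3}}$ on this event.

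The main obstacle is obtaining a uniform KPZ upper-tail bound over the correlated family $\{\log Z_{ke_1+e_2, v_N}\}_{k}$, since all these quantities are functionals of the same environment. I expect this to reduce to a maximal version of the $e^{-ct^{3/2}}$ upper tail combined with the $O(1)$ Lipschitz dependence of $\log Z$ on each endpoint coordinate, so that dyadic discretization is essentially lossless. The structural heart of the argument is the cancellation between the boundary random-walk mean and the linear Taylor term of $\Lambda$ at $\xi[\rho]$ --- precisely the reason $\xi[\rho]$ is called the characteristic direction --- which is what promotes the annealed $e^{-Cr^3}$ into the much stronger quenched rate $e^{-Cr^2 N^{1/3}}$.
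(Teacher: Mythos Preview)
Your approach is essentially the same as the paper's: decompose by exit location, exploit the cancellation between the boundary random-walk mean and the linear Taylor term of $\Lambda$ at $\xi[\rho]$ to expose the curvature $-\Theta(s^2 N^{1/3})$, then combine sub-exponential concentration for the boundary walk with KPZ-type free-energy tails for the bulk piece and a union bound over unit blocks $s\le kN^{-2/3}\le s+1$. The paper works with the antidiagonal boundary (via Lemma~\ref{dia_vs_sw}) and lower-bounds the denominator by the full stationary $Z^\rho_{0,v_N}$ rather than the $\tau=1$ term, but these are cosmetic differences.

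The one place where you are vague and the paper is specific is exactly the step you flag as the ``main obstacle'': the uniform control of $\max_k\bigl(\log Z_{ke_1+e_2,v_N}-\text{centering}\bigr)$. Your suggestion of ``$O(1)$ Lipschitz dependence'' is not literally correct, since the nearest-neighbor ratios $Z_{(k-1)e_1+e_2,v_N}/Z_{ke_1+e_2,v_N}$ are random and not deterministically bounded. The paper handles this in Proposition~\ref{interval_bd} by a monotonicity comparison (Lemma~\ref{mono_ratio}): it introduces an auxiliary stationary polymer with a slightly perturbed parameter $\lambda=\rho-q_0\sqrt r\,N^{-1/3}$, shows that on a high-probability event the bulk ratios are dominated by the corresponding stationary ratios, and then uses that the stationary ratios are i.i.d.\ along a down-right path (Theorem~\ref{stat}) to reduce to a genuine random-walk maximal inequality. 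This stationary-comparison trick is the missing ingredient in your sketch; once you have it, the rest of your outline goes through as written.
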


\begin{lemma}\label{rtail_lb}
Fix $\varepsilon\in(0,\mu/2)$. There exist positive constants $C_1, C_2$, $r_0$, $N_0$, $c_0$ 
depending only on $\varepsilon$ such that for all ${{\rho}} \in [\varepsilon, \mu-\varepsilon]$, $N\geq N_0$ and $r_0 \leq r \leq c_0N^{1/3}$, we have  
\begin{align*}
& \mathbb{P}\Big(\min_{x\not\in [\![0, v_N]\!]} Q^\rho_{0, x}\{|\tau| > rN^{2/3}\} \geq 1-e^{-C_1r^2 N^{1/3}}\Big)\geq e^{-C_2 r^3}
\end{align*}

\end{lemma}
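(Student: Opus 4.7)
I would prove Lemma~\ref{rtail_lb} by a change-of-measure (tilting) argument, following the coupling-based strategy of \cite{Sep-She-20}. Let $\tilde\rho = \rho + c_0 r N^{-1/3}$ for a small constant $c_0 > 0$. Since $\Psi_1$ is strictly decreasing on $(0,\infty)$, the shift $\tilde\rho > \rho$ forces $\xi[\tilde\rho]\cdot e_1 < \xi[\rho]\cdot e_1$, so $v_N$ sits off the $\tilde\rho$-characteristic direction by $\Theta(r N^{2/3})$ in the direction that makes a long $e_1$-boundary excursion most favorable. Let $K = \lceil c_1 r N^{2/3}\rceil$ for a suitable $c_1 > 1$, and write $\mathbb{P}^{\tilde\rho}$ for the environment law obtained from $\mathbb{P} = \mathbb{P}^\rho$ by resampling the weights $I^\rho_{(k-1)e_1, ke_1}$ for $k \in [\![1, K]\!]$ as i.i.d.\ $\mathrm{Ga}^{-1}(\mu-\tilde\rho)$ variables, leaving every other weight alone.

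\emph{Step 1: the event is typical under the tilt.} Let $A$ denote the event in the lemma. I would show $\mathbb{P}^{\tilde\rho}(A) \geq 1/2$. Under $\mathbb{P}^{\tilde\rho}$ the first $K$ edges of the $e_1$-axis carry inverse-gamma weights of smaller shape parameter $\mu - \tilde\rho$, giving each such edge an expected log-boost of $\Psi_0(\mu-\rho) - \Psi_0(\mu-\tilde\rho) \approx \Psi_1(\mu-\rho)\, c_0 r N^{-1/3}$. A path with exit time $m \leq rN^{2/3}$ forgoes at least $(c_1 - 1) r N^{2/3}$ of these tilted edges, losing free energy of order $r^2 N^{1/3}$. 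Combining this mean free-energy deficit with the log-partition-function concentration estimates behind Lemma~\ref{rtail_ub} (applied in the tilted direction) produces the quenched bound $Q^\rho_{0, x}\{|\tau| \leq rN^{2/3}\} \leq e^{-C r^2 N^{1/3}}$ on an environment set of $\mathbb{P}^{\tilde\rho}$-probability at least $1/2$. Uniformity over $x \notin [\![0, v_N]\!]$ follows from planar monotonicity: $Q^\rho_{0,x}\{|\tau|\leq m\}$ is monotone in $x$ along the natural partial order (an FKG-type consequence of the positive weights), so the infimum reduces to a union bound over the $O(N)$ vertices of $\partial^{\textup{NE}}[\![0, v_N]\!]$, whose polynomial cost is absorbed by the exponential slack.

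\emph{Step 2: Cauchy--Schwarz transfer.} Let $L = d\mathbb{P}^\rho / d\mathbb{P}^{\tilde\rho}$; by construction $L$ depends only on the $K$ resampled weights. Cauchy--Schwarz yields
\[
\mathbb{P}^{\tilde\rho}(A)^2 = \bigl(\mathbb{E}^\rho[\mathbf{1}_A L^{-1}]\bigr)^2 \leq \mathbb{P}^\rho(A) \cdot \mathbb{E}^\rho[L^{-2}] = \mathbb{P}^\rho(A)\cdot \mathbb{E}^{\tilde\rho}[L^{-1}].
\]
Writing $f_a(x) = \Gamma(a)^{-1} x^{-1-a} e^{-1/x}$ for the $\mathrm{Ga}^{-1}(a)$ density and using the substitution $u = 1/x$, each factor of $\mathbb{E}^{\tilde\rho}[L^{-1}]$ equals
\[
\int_0^\infty \frac{f_{\mu-\tilde\rho}(x)^2}{f_{\mu-\rho}(x)}\, dx = \frac{\Gamma(\mu-\rho)\,\Gamma(\mu - 2\tilde\rho + \rho)}{\Gamma(\mu - \tilde\rho)^2},
\]
which is finite since $\mu - 2\tilde\rho + \rho > 0$ under the hypothesis $r \leq c_0 N^{1/3}$. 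A Taylor expansion in $\eta = \tilde\rho - \rho$ gives $1 + \Psi_1(\mu-\rho)\eta^2 + O(\eta^3)$; raising to the $K$-th power yields $\mathbb{E}^{\tilde\rho}[L^{-1}] \leq \exp(C K \eta^2) = \exp(C' r^3)$. Combined with Step~1, this gives $\mathbb{P}^\rho(A) \geq \tfrac14 e^{-C' r^3}$, as required.

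\emph{Main obstacle.} The most delicate step is Step~1, specifically the uniformity over $x \notin [\![0, v_N]\!]$. For a single fixed $x$ the quenched upper bound on short exits is obtainable from free-energy fluctuation estimates for the tilted stationary polymer (essentially a mirror of Lemma~\ref{rtail_ub}), but extension to the infimum requires a monotonicity input to reduce the problem to the northeast boundary, together with care that the union bound penalty is polynomial in $N$ rather than exponential in $r$. A secondary technicality is ensuring $\tilde\rho$ remains in a compact subinterval of $(0, \mu)$ uniformly in $N$, which is why the lemma couples the hypotheses $\rho \in [\varepsilon, \mu - \varepsilon]$ and $r \leq c_0 N^{1/3}$.
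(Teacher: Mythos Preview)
Your overall architecture---perturb the boundary parameter by $\Theta(rN^{-1/3})$, show the event is typical under the tilted law, and transfer via Cauchy--Schwarz on the Radon--Nikodym derivative---is exactly the paper's, and your Step~2 computation is correct and matches Proposition~\ref{RNest}. The gap is in Step~1, specifically in how you obtain uniformity over~$x$.

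Two issues. First, the reduction from $x\notin[\![0,v_N]\!]$ to $x\in\partial^{\mathrm{NE}}[\![0,v_N]\!]$ is not a monotonicity fact but a Markov decomposition (any path to $x$ outside the rectangle passes through some $z\in\partial^{\mathrm{NE}}$, and conditioning on that $z$ gives $Q^\rho_{0,x}\{|\tau|>rN^{2/3}\}\ge\min_z Q^\rho_{0,z}\{|\tau|>rN^{2/3}\}$). More seriously, your claim that $x\mapsto Q^\rho_{0,x}\{|\tau|\le m\}$ is monotone is false: Lemma~\ref{polymono} gives monotonicity only for one-sided events $\{\tau\ge k\}$. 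And the fallback union bound over the $\Theta(N)$ vertices of $\partial^{\mathrm{NE}}$ does not close: the single-point failure probability one extracts under the tilt is only $O(r^{-3})$ (this is what the paper obtains from Chebyshev on the stationary free-energy variance; see the bounds on $A_2^c,A_3^c$), which does not survive a sum over $N$ points when $r$ is of constant order.

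The paper avoids this by splitting $\partial^{\mathrm{NE}}$ into a near-corner region $\mathcal D$ (within $qrN^{2/3}$ of $v_N$) and a far region $\mathcal L$. On $\mathcal D$ it handles $\{1\le\tau\le arN^{2/3}\}$ and $\{-arN^{2/3}\le\tau\le-1\}$ separately (which is why both axes are tilted, with parameters $\lambda,\eta$), and the correct one-sided monotonicity of Lemma~\ref{polymono} then reduces each maximum over $\mathcal D$ to a \emph{single} point $w_N=v_N-\lfloor qrN^{2/3}\rfloor e_1$---no union bound needed. On $\mathcal L$ the endpoint is already $\Theta(rN^{2/3})$ off the characteristic direction, so Corollary~\ref{sec3cor} gives $Q^\rho_{0,w_N}\{\tau\ge1\}\le e^{-Cr^2N^{1/3}}$ directly under the original measure. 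A related design choice: the paper tilts only on the interval $[\![arN^{2/3},brN^{2/3}]\!]$ rather than $[\![1,K]\!]$, so that the restricted partition functions $Z(\tau\in[1,arN^{2/3}])$ and $Z(\tau\in[arN^{2/3},brN^{2/3}])$ see disjoint sets of perturbed weights, making the free-energy comparison (your ``deficit'' heuristic) clean via Lemma~\ref{lemmaab}.
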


\subsubsection{Proof of Lemma \ref{rtail_ub}}
We start with two calculations for the shape function $\Lambda$. Their proofs use Taylor expansions and are thus postponed to Appendix \ref{exp_calc}.

The first proposition below captures the loss of free energy due to curvature.
\begin{proposition}\label{far_s}
Fix $\varepsilon\in(0,\mu/2)$. There exist positive constants $C_1$, $N_0$, $c_0$
depending only on $\varepsilon$ such that for each $\rho \in [\varepsilon, \mu- \varepsilon]$, 
$N \geq N_0$, $1 \leq s \leq c_0 N^{1/3}$, we have
$$\Lambda\bigl(v_N -\floor{sN^{2/3}}e_1  + \floor{sN^{2/3}}e_2\bigr) - \floor{ sN^{2/3}} \Psi_0(\mu-\rho) + \floor{sN^{2/3}} \Psi_0(\rho)   - \Lambda(v_N)  \leq -C_1 s^2 N^{1/3}.$$
\end{proposition}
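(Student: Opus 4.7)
The plan is to Taylor-expand $\Lambda$ around the characteristic direction $\xi[\rho]$ and exploit positive homogeneity to transfer the expansion to $v_N \approx N\xi[\rho]$. Once I identify the gradient $\nabla\Lambda(\xi[\rho])$ correctly, the linear order in the expansion will cancel exactly against the subtraction $-\floor{sN^{2/3}}\Psi_0(\mu-\rho) + \floor{sN^{2/3}}\Psi_0(\rho)$ in the statement, and what remains is a strictly negative quadratic term of order $s^2 N^{1/3}$. The key technical point will be showing that the Hessian quadratic form in the antidiagonal direction is bounded away from zero uniformly in $\rho\in[\varepsilon,\mu-\varepsilon]$.

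First I would set $\eta=-e_1+e_2$ and $s'=\floor{sN^{2/3}}$, and use homogeneity plus Lipschitz continuity of $\Lambda$ (a consequence of concavity and $1$-homogeneity) to pass to the base point $\xi[\rho]$:
\[
\Lambda(v_N + s'\eta) = N\,\Lambda\bigl(\xi[\rho] + (s'/N)\eta\bigr) + O(1),\qquad \Lambda(v_N) = N\,\Lambda(\xi[\rho]) + O(1),
\]
with $O(1)$ errors uniform in $\rho\in[\varepsilon,\mu-\varepsilon]$ because $v_N = N\xi[\rho]+O(1)$. Next I would identify $\nabla\Lambda(\xi[\rho]) = \bigl(-\Psi_0(\mu-\rho),\,-\Psi_0(\rho)\bigr)$. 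This is built into the solvable formula for $\Lambda$: the affine function $(x,y)\mapsto -x\Psi_0(\mu-\rho) - y\Psi_0(\rho)$ is the unique tangent functional to the concave graph of $\Lambda$ at $\xi[\rho]$, as one sees from the inf-representation $\Lambda(x,y)=\inf_{r\in(0,\mu)}\{-x\Psi_0(\mu-r)-y\Psi_0(r)\}$ combined with the characterization of $\xi[\rho]$ as the critical point in $r$. Alternatively it can be checked directly against the explicit formula for $\Lambda(\xi[\rho])$ together with Euler's identity.

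A second-order Taylor expansion at $\xi[\rho]$ then yields
\[
\Lambda(\xi[\rho] + t\eta) = \Lambda(\xi[\rho]) + t\bigl[\Psi_0(\mu-\rho)-\Psi_0(\rho)\bigr] + \tfrac{t^2}{2}\,\eta^{\top}H(\rho)\eta + O(t^3),
\]
where $H(\rho)=D^2\Lambda(\xi[\rho])$, with a cubic remainder that is uniform on $[\varepsilon,\mu-\varepsilon]$ because $\Lambda$ is explicitly smooth there. Setting $t=s'/N$, multiplying by $N$, subtracting $\Lambda(v_N)$, and observing that the linear term cancels the $\Psi_0$ subtraction in the proposition gives
\[
\Lambda(v_N+s'\eta) - s'\Psi_0(\mu-\rho) + s'\Psi_0(\rho) - \Lambda(v_N) = \tfrac{(s')^2}{2N}\,\eta^{\top}H(\rho)\eta + O(1) + O\bigl((s')^3/N^2\bigr).
\]

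The main obstacle, and the step that requires the most care, is proving that $\eta^{\top}H(\rho)\eta \le -c(\varepsilon) < 0$ uniformly for $\rho\in[\varepsilon,\mu-\varepsilon]$. Since $\Lambda$ is $1$-homogeneous, Euler's identity differentiated twice gives $H(\rho)\,\xi[\rho]=0$, so only the component of $\eta$ transverse to $\xi[\rho]$ matters; and since $\xi[\rho]\cdot(1,1)\equiv 1$, the derivative $\xi'[\rho]$ is a nonzero scalar multiple of $(1,-1)=-\eta$. Differentiating the gradient identity $\nabla\Lambda(\xi[\rho])=(-\Psi_0(\mu-\rho),-\Psi_0(\rho))$ in $\rho$ yields $H(\rho)\,\xi'[\rho]=(\Psi_1(\mu-\rho),-\Psi_1(\rho))$, from which one computes
\[
\eta^{\top}H(\rho)\eta \;=\; -\,\frac{[\Psi_1(\rho)+\Psi_1(\mu-\rho)]^3}{\Psi_2(\rho)\Psi_1(\mu-\rho)+\Psi_1(\rho)\Psi_2(\mu-\rho)}.
\]
Because $\Psi_1>0$ and $\Psi_2<0$ on $(0,\infty)$, this is strictly negative, and since both polygamma functions are bounded and bounded away from their degeneracies for $\rho\in[\varepsilon,\mu-\varepsilon]$, it is bounded above by some $-c(\varepsilon)<0$. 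Finally, for $s\ge 1$ and $s\le c_0 N^{1/3}$, we have $(s')^2/N \asymp s^2 N^{1/3}$, while $(s')^3/N^2 \le s^3 \le c_0\,s^2 N^{1/3}$ and the $O(1)$ error is absorbed by taking $N_0$ large; choosing $c_0$ small so that the cubic error is dominated by half the quadratic gives the claimed bound with $C_1 = c(\varepsilon)/4$.
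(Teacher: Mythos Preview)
Your proof is correct and follows the same underlying idea as the paper's: a second-order Taylor expansion in which the first-order term cancels the $\Psi_0$ terms and the second-order term is strictly negative of size $s^2N^{1/3}$. The only difference is the choice of expansion variable. The paper reparametrizes the shifted endpoint as $N\xi[\rho+z]$ and expands everything in the scalar $z$, which requires a long list of explicit derivatives of the components of $\xi[\cdot]$ and of $\Lambda(\xi[\cdot])$, followed by a change of variables back to $s$. You instead expand $\Lambda$ directly in space along $\eta=-e_1+e_2$, reading off the gradient from the tangent-line formula and computing the Hessian by differentiating $\nabla\Lambda(\xi[\rho])=(-\Psi_0(\mu-\rho),-\Psi_0(\rho))$ in $\rho$. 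Your route is somewhat more conceptual and shorter; the paper's route is more self-contained because it never invokes smoothness of $\Lambda$ off the curve $\rho\mapsto\xi[\rho]$.

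One small slip: your displayed formula for $\eta^\top H(\rho)\eta$ carries an extra minus sign. Following your own derivation, $\xi'[\rho]=a\eta$ with $a=-\dfrac{\Psi_2(\rho)\Psi_1(\mu-\rho)+\Psi_1(\rho)\Psi_2(\mu-\rho)}{(\Psi_1(\rho)+\Psi_1(\mu-\rho))^2}>0$, and $H(\rho)\xi'[\rho]=(\Psi_1(\mu-\rho),-\Psi_1(\rho))$ gives
\[
\eta^\top H(\rho)\eta=\frac{(\Psi_1(\rho)+\Psi_1(\mu-\rho))^3}{\Psi_2(\rho)\Psi_1(\mu-\rho)+\Psi_1(\rho)\Psi_2(\mu-\rho)},
\]
which is negative because the denominator is negative. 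Your verbal conclusion is correct; only the sign in the display should be fixed. This matches the coefficient the paper obtains after converting its $z$-expansion back to $s$.
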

The second proposition is essentially a bound on the non-random fluctuation when the endpoint varies around $v_N$.
\begin{proposition}\label{close_s}
Fix $\varepsilon\in(0,\mu/2)$. There exist positive constants $C_1$, $N_0$, $c_0$
depending only on $\varepsilon$ such that for each $\rho \in [\varepsilon, \mu- \varepsilon]$,
$N \geq N_0$, $0\leq s \leq 3$, we have 
$$\Big|\Lambda\bigl(v_N -\floor{sN^{2/3}}e_1  + \floor{sN^{2/3}}e_2\bigr) - \floor{ sN^{2/3}} \Psi_0(\mu-\rho) + \floor{sN^{2/3}} \Psi_0(\rho)  - \Lambda(v_N) \Big| \leq C_1 N^{1/3}.$$
\end{proposition}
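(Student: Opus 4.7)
The plan is to prove Proposition~\ref{close_s} by a second-order Taylor expansion of the shape function $\Lambda$ around the characteristic direction $\xi[\rho]$. Proposition~\ref{close_s} and Proposition~\ref{far_s} are two facets of the same expansion: \ref{far_s} quantifies a quadratic-in-$s$ deficit of order $\Theta(s^2 N^{1/3})$ for $s\ge 1$, and \ref{close_s} records the same quantity in the bounded range $0\le s\le 3$, where it becomes $O(N^{1/3})$.

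First I will set up the variational representation of $\Lambda$. From the definition \eqref{char_dir} of $\xi[\rho]$ together with the explicit formula $\Lambda(\xi[\rho])=-\xi[\rho]\cdot e_1\,\Psi_0(\mu-\rho)-\xi[\rho]\cdot e_2\,\Psi_0(\rho)$, convex duality and the positive homogeneity of $\Lambda$ yield, for every $z\in\R^2_{>0}$,
\[
\Lambda(z) \;=\; \inf_{\rho'\in(0,\mu)} g(\rho',z), \qquad g(\rho',z) \;=\; -z\cdot e_1\,\Psi_0(\mu-\rho')\,-\,z\cdot e_2\,\Psi_0(\rho'),
\]
the infimum being attained at the unique $\rho^*(z)$ solving $z\cdot e_1\,\Psi_1(\mu-\rho^*)=z\cdot e_2\,\Psi_1(\rho^*)$. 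The definition of $\xi[\rho]$ gives $\rho^*(\xi[\rho])=\rho$, and the floors in \eqref{v_N} yield $\rho^*(v_N)=\rho+O(N^{-1})$, so a one-step expansion using $\partial_{\rho'}^2 g(\rho,v_N)=\Theta(N)$ produces $\Lambda(v_N)=g(\rho,v_N)+O(N^{-1})$.

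Next, with $t=\floor{sN^{2/3}}$, the upper bound of the proposition is immediate: by linearity of $g$ in $z$,
\[
\Lambda(v_N-te_1+te_2) \;\le\; g(\rho,\,v_N-te_1+te_2) \;=\; g(\rho,v_N)+t\bigl(\Psi_0(\mu-\rho)-\Psi_0(\rho)\bigr),
\]
and after subtracting $\Lambda(v_N)$ the expression in the proposition is bounded above by $O(1)$. For the matching lower bound I would write $\rho^*(v_N-te_1+te_2)=\rho+\delta$, extract from the first-order condition the estimate $\delta=t[\Psi_1(\mu-\rho)+\Psi_1(\rho)]/\partial_{\rho'}^2 g(\rho,v_N)+O(t^2/N^2)$, and then insert this into the second-order Taylor expansion of $g$ in its first argument to obtain
\[
\Lambda(v_N-te_1+te_2) \;=\; g(\rho,v_N-te_1+te_2)\;-\;\frac{t^2\,[\Psi_1(\mu-\rho)+\Psi_1(\rho)]^2}{2\,\partial_{\rho'}^2 g(\rho,v_N)}\;+\;O(t^3/N^2).
\]
Since the second polygamma $\Psi_2$ is negative on $(0,\infty)$ and bounded away from $0$ on $[\varepsilon,\mu-\varepsilon]$, the coefficient $\partial_{\rho'}^2 g(\rho,v_N)=-v_N\cdot e_1\,\Psi_2(\mu-\rho)-v_N\cdot e_2\,\Psi_2(\rho)$ is positive and of exact order $N$, so the quadratic correction is $-\Theta(t^2/N)=-\Theta(s^2 N^{1/3})$. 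For $0\le s\le 3$ the expression in the proposition lies in $[-CN^{1/3},\,CN^{1/3}]$, as claimed.

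The only technical point is uniform control of the polygamma coefficients on $[\varepsilon,\mu-\varepsilon]$: because $\rho$ and $\mu-\rho$ are both bounded away from $0$ by $\varepsilon$, the functions $\Psi_0,\Psi_1,\Psi_2$ are smooth with bounds depending only on $\varepsilon$ and $\mu$, so the Taylor remainders are uniform in $\rho$ in the required range. The main obstacle for the companion Proposition~\ref{far_s} is extending this expansion uniformly up to $s\le c_0 N^{1/3}$ while keeping the direction $(v_N-te_1+te_2)/|v_N|_1$ inside a compact subset of the open first quadrant; the present bounded-$s$ statement avoids that issue entirely and is essentially a corollary of the same expansion restricted to $s\in[0,3]$.
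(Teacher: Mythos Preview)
Your proof is correct and follows the same Taylor-expansion strategy as the paper's Appendix~\ref{exp_calc}, which treats Propositions~\ref{far_s} and~\ref{close_s} together exactly as you anticipate. The one organizational difference is that the paper parametrizes the shifted endpoint by the characteristic-direction parameter, writing $N\xi[\rho+z]=v_N-\floor{sN^{2/3}}e_1+\floor{sN^{2/3}}e_2$, and then expands the explicit formula for $\Lambda(\xi[\rho+z])$ directly in $z$ via a battery of polygamma-derivative computations; you instead invoke the Legendre-dual representation $\Lambda(z)=\inf_{\rho'}g(\rho',z)$, which makes the upper bound immediate and reduces the lower bound to a standard quadratic expansion around the minimizer. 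Both routes land on the same $-\Theta(s^2 N^{1/3})$ correction, and your packaging is arguably cleaner since it avoids writing out the second-derivative identities \eqref{long1} explicitly; the only caveat is that the variational formula, while standard for the inverse-gamma polymer, is not stated in the paper and needs the short convexity check you sketch (that $\partial_{\rho'}^2 g>0$ because $\Psi_2<0$).
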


With these two propositions, we obtain the following estimate for the maximum free energy. 
\begin{proposition} \label{interval_bd}
For each $\varepsilon\in (0, \mu/2)$, there exist positive constants $C_1$, $C_2, N_0, c_0$ depending on $\varepsilon$ such that for each $N \geq N_0$ and $1\leq r\leq c_0 N^{2/3}$, we have 
$$
\mathbb{P}\Big( \max_{k \in [\![0, 3\floor{N^{2/3}}]\!]} \bigl\{\log Z_{0, v_N + (-k,k)} - \Lambda(v_N + (-k,k))\bigr\} \geq C_1rN^{1/3} \Big)  \leq e^{-C_2r^{3/2}}.
$$
\end{proposition}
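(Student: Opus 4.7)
The plan is to reduce to a stationary-polymer single-point upper tail (of the optimal KPZ exponent $3/2$) and then promote to the supremum via a dyadic chaining argument along the antidiagonal.

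First I would set $L(k) := \Lambda(v_N) + k[\Psi_0(\mu-\rho) - \Psi_0(\rho)]$. By Proposition \ref{close_s}, $|\Lambda(v_N+(-k,k)) - L(k)| \leq CN^{1/3}$ uniformly in $k \in [\![0, 3\lfloor N^{2/3}\rfloor]\!]$, so after enlarging $C_1$ it suffices to prove the bound with $L(k)$ in place of $\Lambda(v_N+(-k,k))$. Next, I would couple the bulk environment with independent $\rho$-boundary weights $(I^\rho, J^\rho)$ chosen to dominate the bulk pointwise (possible since $\text{Ga}^{-1}(\mu-\rho)$ and $\text{Ga}^{-1}(\rho)$ stochastically dominate $\text{Ga}^{-1}(\mu)$ when $0 < \rho < \mu$). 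This yields the pathwise comparison $\log Z_{0,w} \leq \log Y_0 + \log Z^\rho_{0,w}$, and by Theorem \ref{stat} we have $\mathbb{E}[\log Z^\rho_{0,v_N+(-k,k)}] = L(k) + O(1)$. Expanding $\log Z^\rho_{0,v_N+(-k,k)}$ along a down-right path through the southwest boundary produces an independent sum of log-inverse-gamma increments plus a bulk partition function, and Cramér-type concentration (of the same flavor as the argument behind Lemma \ref{rtail_ub}) yields the single-point upper tail $\mathbb{P}(\log Z^\rho_{0,v_N+(-k,k)} - L(k) \geq tN^{1/3}) \leq C e^{-c t^{3/2}}$ for $1 \leq t \leq c_0 N^{1/3}$.

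To handle the supremum, I would run a dyadic chaining along the antidiagonal. At level $m \geq 0$, take a grid $\cG_m$ of $2^m$ evenly-spaced points on $[\![0, 3\lfloor N^{2/3}\rfloor]\!]$, and for each $k$ let $k_m \in \cG_m$ be the nearest grid point. The decomposition
\[
\log Z^\rho_{0, v_N+(-k,k)} - L(k) = \bigl(\log Z^\rho_{0, v_N+(-k_0, k_0)} - L(k_0)\bigr) + \sum_{m\geq 1} \Delta_m(k),
\]
with $\Delta_m(k)$ the centered level-$m$ increment, enables the chaining. For each $\Delta_m(k)$ I would telescope through the corner $v_N+(-k_{m-1}, k_m)$: by Theorem \ref{stat}, the vertical sum along the column $x = -k_{m-1}$ and the horizontal sum along the row $y = k_m$ are each sums of $|k_m - k_{m-1}| = O(N^{2/3}/2^m)$ i.i.d.\ centered log-inverse-gamma variables. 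Bernstein's inequality combined with a union bound over $\cG_m$ controls $\max_k |\Delta_m(k)| \leq 2^{-m/3} rN^{1/3}$ with probability $\geq 1 - 2^m\exp(-c\min(r^2 2^{m/3}, 2^{-m/3}rN^{1/3}))$, which is summable in $m$. Combining with the level-$0$ single-point tail yields the claimed $e^{-C_2 r^{3/2}}$ bound on the supremum.

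The hardest part will be the chaining step. Theorem \ref{stat} grants independence only along a single down-right path, so the telescoping through $v_N+(-k_{m-1}, k_m)$ is essential to place each sum along a purely vertical or horizontal segment where the ratio-stationarity applies. Tracking the Bernstein exponents across dyadic levels and verifying that the aggregate tail retains the $r^{3/2}$ exponent inherited from the single-point bound -- rather than being degraded by the $2^m$ union-bound factor at deeper levels -- is the main bookkeeping effort. For very small $r$ of order $1$, the target $e^{-C_2 r^{3/2}}$ is a constant less than $1$, and the level-$0$ single-point bound alone delivers this with constants independent of $N$.
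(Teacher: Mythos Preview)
Your chaining on the increments of $\log Z^\rho$ is sound (indeed overkill: the antidiagonal staircase through the corners $v_N+(-i,i+1)$ is a down-right path, so by Theorem~\ref{stat} the increments $\log Z^\rho_{0,v_N+(-k,k)}-\log Z^\rho_{0,v_N+(-k+1,k-1)}$ form an i.i.d.\ random walk and a single application of Theorem~\ref{max_sub_exp} already controls the running maximum). The real gap is the level-$0$ anchor. After your stochastic-domination comparison $\log Z_{0,w}\le\log Y_0+\log Z^\rho_{0,w}$, you must bound $\mathbb{P}\bigl(\log Z^\rho_{0,v_N+(-k_0,k_0)}-L(k_0)\ge rN^{1/3}\bigr)$ by $e^{-cr^{3/2}}$, i.e.\ you need the upper tail of the \emph{stationary} free energy. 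Your justification via ``expanding along a down-right path'' into ``an independent sum plus a bulk partition function'' is wrong: $\log Z^\rho_{0,w}$ is not a sum of $O(N)$ independent log-inverse-gamma variables (its variance is $\sim N^{2/3}$, not $\sim N$, precisely because the ratios along an up-right path are correlated), so no Cram\'er argument on a putative independent sum gives the $r^{3/2}$ exponent. And the inequality $\log Z\le\log Y_0+\log Z^\rho$ points the wrong way to transfer Proposition~\ref{up_ub} (stated for the bulk) to the stationary anchor.

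The paper circumvents this by anchoring at the \emph{bulk} value $\log Z_{0,v_N}$, where Proposition~\ref{up_ub} applies directly, and then controlling the bulk \emph{increments} $\log Z_{0,v_N+(-k,k)}-\log Z_{0,v_N}$ rather than stationary values. The tool is monotonicity of ratios (Lemma~\ref{mono_ratio}), not stochastic domination of values: placing a stationary polymer at $(-1,-1)$ with perturbed parameter $\lambda=\rho-q_0\sqrt{r}\,N^{-1/3}$, one gets on a high-probability exit event the pathwise bound
\[
\frac{Z_{0,v_N+(-i-1,i+1)}}{Z_{0,v_N+(-i,i)}}\;\le\;\frac{10}{9}\cdot\frac{Z^\lambda_{(-1,-1),v_N+(-i-1,i+1)}}{Z^\lambda_{(-1,-1),v_N+(-i,i)}},
\]
and the right side is a step of an i.i.d.\ walk. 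Your stochastic-domination coupling cannot play this role because from $Z_{0,w}\le Y_0 Z^\rho_{0,w}$ you obtain no inequality whatsoever for the ratio $Z_{0,w}/Z_{0,w'}$. The ratio-monotonicity comparison, together with the $\sqrt{r}$-scale perturbation needed to make the exit event probable, is the missing ingredient in your plan.
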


\begin{proof}
To start, let us separate the probability that we are trying to bound into two parts.
\begin{align}
&\mathbb{P}\Big( \max_{k \in [\![0, 3\floor{N^{2/3}}]\!]} \bigl\{\log Z_{0, v_N + (-k,k)} - \Lambda(v_N + (-k,k))\bigr\}\geq C'rN^{1/3} \Big)\nonumber\\
&\leq \mathbb{P}\Big( \max_{k \in [\![0, 3\floor{N^{2/3}}]\!]} \bigl\{\log Z_{0, v_N + (-k,k)} - \log Z_{0, v_N} -  [\Lambda(v_N + (-k,k)) - \Lambda(v_N)] \bigr\}\geq \tfrac{C'}{2}rN^{1/3} \Big) \label{rw_est} \\
& \qquad + \mathbb{P}\Big(  \log Z_{0, v_N}  - \Lambda(v_N) \geq \tfrac{C'}{2}rN^{1/3} \Big)\label{ptp_est} 
\end{align}
Using Proposition \ref{up_ub}, $\eqref{ptp_est} \leq e^{-Cr^{3/2}}$. To bound \eqref{rw_est} we reformulate the problem into a bound for  running maxima of random walks. First, by Proposition \ref{close_s}, if $C'\ge 4C_1$ and $r\ge1$, then
\begin{equation}\label{rw_est1}
\eqref{rw_est} \leq \mathbb{P}\Big( \max_{k \in [\![0, 3\floor{N^{2/3}}]\!]} \bigl\{\log Z_{0, v_N + (-k,k)} - \log Z_{0, v_N} -  [-k\Psi_0(\mu-\rho) + k \Psi_0(\rho)] \bigr\}\geq \tfrac{C'}{4}rN^{1/3} \Big).
\end{equation}

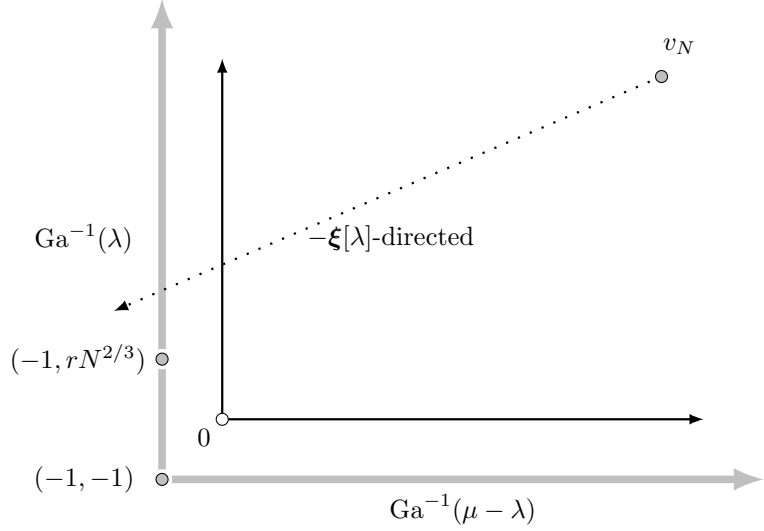
\begin{figure}[t]
\captionsetup{width=0.8\textwidth}
\begin{center}

\begin{tikzpicture}[>=latex, scale=0.8]  

\draw[line width=1mm, lightgray, ->] (-1,-1) -- (-1,7);
\draw[line width=1mm, lightgray, ->] (-1,-1) -- (9,-1);

\draw[line width=0.3mm, ->] (0,0) -- (8,0);
\draw[line width=0.3mm, ->] (0,0) -- (0,6);

\draw[line width=0.3mm, loosely dotted, ->] (6.5+ 0.8,6.5- 0.8) -- (-1.8,1.8);
\node at (2.8,3) {\small$-{\boldsymbol\xi}[\lambda]$-directed};

\draw[ fill=white](0,0)circle(1mm);
\node at (-0.3,-0.3) {\small$0$};

\fill[color=white] (6.5+ 0.8,6.5- 0.8)circle(1.7mm);
\draw[ fill=lightgray](6.5+ 0.8,6.5- 0.8)circle(1mm);
3\node at (6.5+ 0.8+ 0.3,6.5- 0.8+ 0.5) {\small $v_N$};

\fill[color=white] (-1,1)circle(1.7mm);
\draw[ fill=lightgray](-1,1)circle(1mm);
\node at (-2.4,1) {\small $(-1, rN^{2/3})$};

\fill[color=white] (-1,-1)circle(1.7mm);
\draw[ fill=lightgray](-1,-1)circle(1mm);
\node at (-2.3,-1) {\small $(-1,-1)$};

\node at (4,-1.5) {\small $\textup{Ga}^{-1}(\mu - \lambda)$};
\node at (-2.3,3) {\small $\textup{Ga}^{-1}(\lambda)$};

\end{tikzpicture}
	\end{center}	
	\caption{\small The random walk set up in Proposition \ref{interval_bd}.}
	\label{fig_rw}
\end{figure}

Next, we will show that the quantity $\log Z_{0, v_N + (-k,k)} - \log Z_{0, v_N}$ can be compared to a random walk with i.i.d.~steps. To do this, we will place boundary weights on the south-west boundary of $(-1, -1)+\mathbb Z_{\ge0}^2$ with parameters $\lambda = \rho- q_0\sqrt{r}N^{-1/3}$ and $\mu-\lambda$. Here, $q_0$ will be fix large so that the situation from Figure \ref{fig_rw} happens. Then the $c$ from the statement of our theorem can be now fixed sufficiently small so that $\lambda$ stays between $(0, \mu)$. These choices depend only on $\varepsilon$. 

Because $v_N$ 
is far away from $(-1,-1)$ on the scale $N^{2/3}$, by a similar argument to Corollary \ref{sec3cor}, we have 
\begin{equation}\label{exit_rw}
\mathbb{P}(Q^\lambda_{(-1,-1), v_N} \{\tau\geq 1 \} \geq 1/10) \leq e^{-Cr^3}.
\end{equation}
Let us denote the complement of the event above as 
$$A = \big\{Q^\lambda_{(-1,-1), v_N} \{\tau\leq -1 \} \geq 9/10\big\}$$
In the calculation below, let $Z^{\lambda,\textup{W}}_{(-1,0), x}$ denote the partition function for up-right paths from $(-1,0)$ to $x$, which uses the same weights as $Z^{\lambda,}_{(-1,-1),x}$ does on the west boundary but uses the original (bulk) weights on $\Z_{\ge0}^2$. For each $i = 0,1, \dots, 3\floor{N^{2/3}}-1$, we have 
\begin{align*}
&e^{\log Z_{0, v_N + (-i-1, i+1)} - \log Z_{0, v_N + (-i, i)}} \\
&= \frac{Z_{0, v_N + (-i-1, i+1)}}{ Z_{0, v_N + (-i, i)}} \\
&\leq \frac{Z^{\lambda, \textup{W}}_{(-1,0), v_N + (-i-1, i+1)}}{ Z^{\lambda, \textup{west}}_{(-1,0), v_N + (-i, i)}}
 = \frac{Z^{\lambda, \textup{W}}_{(-1,0), v_N + (-i-1, i+1)}}{ Z^{\lambda, \textup{W}}_{(-1,0), v_N + (-i, i)}}\cdot \frac{I^{\lambda}_{[\![(-1,-1), (-1,0)]\!]}}{I^{\lambda}_{[\![(-1,-1), (-1,0)]\!]}} \qquad \textup{by Proposition \ref{mono_ratio}}\\
 &= \frac{Z^{\lambda }_{(-1,-1), v_N + (-i-1, i+1)}(\tau \leq -1)}{ Z^{\lambda}_{(-1,-1),v_N + (-i, i)}(\tau \leq -1)}
= \frac{Q^{\lambda }_{(-1,-1), v_N + (-i, i)}(\tau\leq  -1)}{Q^{\lambda }_{(-1, -1),v_N + (-i, i)}(\tau\leq  -1)}\cdot 
\frac{Z^{\lambda}_{(-1,-1), v_N + (-i-1, i+1)}}{Z^\lambda_{(-1,-1),v_N + (-i, i)}}\\
& \leq \frac{10}{9}\frac{Z^{\lambda }_{(-1, -1), v_N + (-i-1, i+1)}}{Z^{\lambda }_{(-1, -1),v_N + (-i, i)}} \qquad \textup{ on the event $A$ } .
\end{align*}
By Theorem \ref{stat}, we can define $$S^\lambda_k = \sum_{i=1}^{k-1} \log \frac{Z^{\lambda }_{(-1, -1), v_N + (-i-1, i+1)}}{Z^{\lambda }_{(-1, -1),v_N + (-i, i)}}$$ 
which is an i.i.d.~random walk whose step has the same distribution as $\log  G_1-\log G_2$, where $G_1$ and $G_2$ are independent, respectively, $\textup{Ga}(\mu-\lambda)$ and $\textup{Ga}(\lambda)$ random variables. 
And we have
\begin{equation}\label{RW_est2}
\eqref{rw_est1} \leq \mathbb{P}\Big(\max_{k \in [\![0, 3\floor{N^{2/3}}]\!]}\bigl\{ S^\lambda_k -  [k\Psi_0(\mu-\rho) - k \Psi_0(\rho)] \bigr\}\geq \tfrac{C'}{8}rN^{1/3} \Big) + \mathbb{P}(A^c),
\end{equation}
where $\mathbb{P}(A^c) \leq e^{-Cr^3}$. Note $\mathbb{E}[S^\lambda_k] = k\Psi_0(\mu-\lambda) - k \Psi_0(\lambda)$, and using Taylor expansion and the fact that $k \leq 3N^{2/3}$, we have
$$ \Big|\mathbb{E}[S^\lambda_k] - [k\Psi_0(\mu-\rho) - k \Psi_0(\rho)]\Big| \leq  C\sqrt{r}N^{1/3}.$$
Finally, taking $C'\ge16C$, the probability in \eqref{RW_est2} is bounded as follows
\begin{align*}
&\mathbb{P}\Big(\max_{k \in [\![0, 3\floor{N^{2/3}}]\!]} \bigl\{S^\lambda_k -  [k\Psi_0(\mu-\rho) - k \Psi_0(\rho)]\bigr\} \geq \tfrac{C'}{8}rN^{1/3} \Big)\\
&\qquad\leq \mathbb{P}\Big(\max_{k \in [\![0, 3\floor{N^{2/3}}]\!]} \bigl\{S^\lambda_k -  \mathbb{E}[S^\lambda_k] \bigr\}\geq \tfrac{C'}{16}rN^{1/3} \Big) \leq e^{-C''r^{3/2}}
\end{align*}
where the last inequality follows from Theorem \ref{max_sub_exp}.
\end{proof}

With this result, we are ready to prove Lemma \ref{rtail_ub}. The proof uses arguments for a stationary polymer with an antidiagonal boundary instead of a southwest boundary, which we will now define. Let $\mathcal{S}_{(0,0)}$ be the bi-infinite staircase paths (with alternating $e_1$ and $-e_2$ steps) through $(0,0)$ 
\begin{equation}\label{stair_S}
\mathcal{S}_{(0,0)} = \{\dots, (-1, 1), (-1, 0), (0,0), (0,-1), (1,-1), \dots\}.
\end{equation}
Next, we attach boundary weights along $\mathcal{S}_{(0,0)}$, which are all independent. For each horizontal edge to the left and right of $(0,0)$, we attach $\text{Ga}(\mu-\rho)$ and $\text{Ga}^{-1}(\mu-\rho)$ weights. For each vertical edge to the left and  right of $(0,0)$, we attach $\text{Ga}^{-1}(\rho)$ and $\text{Ga}(\rho)$ weights. For $k \in \mathbb{Z}$, let $H_k$ denote the product of the edge weights from  $\mathcal{S}_{(0,0)}$ between $(0,0)$ and $(k, -k)$. 

The partition function for this polymer with antidiagonal boundary is defined by 
$$
Z^{\rho,\textup{dia}}_{0, x} = \sum_{k \in \mathbb{Z}} H_k \cdot \wt{Z}_{(k, -k), x} 
$$
where $\wt Z$ is the point-to-point partition but without using the weight at its starting point. The corresponding polymer measure $Q^{\rho, \textup{dia}}_{0,x}$ is a probability measure on paths that start at $0$, move along the antidiagonal, taking either only $e_1-e_2$ steps or only $e_2-e_1$ steps, and then enter the bulk by taking an $e_1$ or $e_2$ step, after which they only take steps in $\{e_i,i=1,2\}$.
For such a path $\gamma$, we define $\tau^{\textup{dia}}(\gamma)\in\mathbb{Z}\setminus \{0\}$ as the signed number of steps taken before entering the bulk, where the plus sign corresponds to $e_1-e_2$ steps and the minus sign to $e_2-e_1$ steps.  
For $k \in \mathbb{Z}$, let us define the partition function over paths with exit point $k$ as 
\begin{equation}\label{dia_exit}Z^{\rho, \textup{dia}}_{0, x}(\tau^{\textup{dia}} = k) = H_k \cdot \wt{Z}_{(k, -k), x}.
\end{equation}

\begin{proof}[Proof of Lemma \ref{rtail_ub}]
First, by Lemma \ref{dia_vs_sw}, it suffices to prove our estimate for the stationary polymer with the antidiagonal boundary defined above.
By a slight abuse of notation, let us denote $Z^\rho = Z^{\rho, \textup{dia}}$, and $Q^\rho = Q^{\rho, \textup{dia}}$. There is no confusion since we will only be working with the antidiagonal boundary in the remainder of this proof (instead of southwest boundary). 

By a union bound, it suffices to prove that there exist positive constants $C_1, C_2, s_0, c_0$ such that for each $N\geq N_0$ and $s_0 \leq s \leq c_0 N^{1/3}$, we have
$$\mathbb{P}\Big(\max_{kN^{-2/3} \in (s, s+1] }Q^\rho_{0, v_N}\{\tau^{\textup{dia}} = k\} \geq e^{-C_1 s^2 N^{1/3}}\Big) \leq e^{-C_2 s^3}.$$

To show this, we rewrite the quenched probability above in terms of the free energies, 
\begin{align}
&\mathbb{P}\Big(\log Z^\rho_{0, v_N} - \max_{kN^{-2/3} \in (s, s+1] } \log Z^\rho_{0, v_N}\{\tau^{\textup{dia}} = k\}  \leq C' s^2 N^{1/3}\Big)\nonumber\\
& \le \mathbb{P}\Big(\Big[\log Z^\rho_{0, v_N} - \Lambda(v_N)\Big] \label{negative_term}\\
& \qquad\quad - \max_{kN^{-2/3} \in (s, s+1] }\Big[\log Z^\rho_{0, v_N}\{\tau^{\textup{dia}} = k\} - \bigl(\Lambda(v_N + (-k, k)) - k \Psi_0(\mu-\rho) + k\Psi_0(\rho)\bigr)  \Big] \nonumber\\
& \qquad \qquad  \leq C' s^2N^{1/3} +  \max_{kN^{-2/3} \in (s, s+1] }\bigl(\Lambda(v_N + (-k, k)) - k \Psi_0(\mu-\rho) + k\Psi_0(\rho) -  \Lambda(v_N)\bigr) \Big).\nonumber
\end{align}
Applying Proposition \ref{far_s}, if we fix $C'$ in \eqref{negative_term} sufficiently small, then, we may replace the right side of the inequality in \eqref{negative_term} by $ - c's^2 N^{1/3}$ for some small positive constant $c'$. 

Let $\{Z_i\}_{i=1}^\infty$ denote a sequence of i.i.d.\ random variables with the same distribution given by $-\log  G_1 + \log G_2$, where $G_1$ and $G_2$ are independent, respectively, $\textup{Ga}(\mu-\rho)$ and $\textup{Ga}(\rho)$ random variables. The $Z_i$'s will play the role of the boundary weight at $(i,-i)$, $i\ge1$.
Now continuing with a union bond, we have
\begin{align}
\eqref{negative_term} 
&\leq \mathbb{P}\bigl( \log Z^\rho_{0, v_N} - \Lambda(v_N) \leq - \tfrac{1}{5}c's^2 N^{1/3}\bigr)\label{term_1}\\
 & \qquad + \mathbb{P}\Big(\max_{kN^{-2/3} \in (s, s+1]} \Big(\sum_{i=1}^{k} Z_i + k \Psi_0(\mu-\rho) - k\Psi_0(\rho)\Big)  \geq \tfrac{1}{5} c' s^2 N^{1/3} \Big)\label{term_2}\\
 & \qquad \qquad  + \mathbb{P}\Big(\max_{kN^{-2/3} \in (s, s+1]}  \bigl(\log \wt Z_{(-k, k) , v_N} - \Lambda(v_N + (-k, k)\bigr)\geq \tfrac{1}{5} c' s^2 N^{1/3} \Big)\label{term_3},
\end{align}
and  $\eqref{term_1} \leq e^{-Cs^3}$ by Proposition \ref{low_ub}, $\eqref{term_2} \leq e^{-Cs^3}$ by Proposition \ref{Ga_sub_exp} and Theorem \ref{max_sub_exp}, $ \eqref{term_3} \leq e^{-Cs^3}$ by Proposition \ref{interval_bd}.
Finally, we note that even the $\wt Z$ free energy does not use the first weight, but Proposition \ref{interval_bd} (which was originally stated for $Z$ instead of $\wt Z$) still applies since using a union bound we can get $P(\max_{0\leq k \leq 3N^{2/3}}\log Y_{(-k,k)}\ge\varepsilon s^2N^{1/3})\le N^{2/3}e^{-cs^2N^{1/3}}\le Ce^{-s^2N^{1/3}}\le C'e^{-c's^3}$. Also, we are  applying Proposition \ref{interval_bd} by first shifting the picture to move the $v_N$ in \eqref{rw_est1} to the origin, flipping it about the antidiagonal, and then using, in the proposition, a $v_N$ that is not exactly the $v_N$ in the lemma, but rather $v_N-sN^{2/3}(1,-1)$. This is allowed because the proposition is stated uniformly for a whole interval of characteristic directions.
\end{proof}

\subsubsection{Proof of Lemma \ref{rtail_lb}}
To prove Lemma \ref{rtail_lb},
we tilt the probability measure to make the event likely and pay for this with a bound on  the Radon-Nikodym derivative. This argument was introduced in \cite{Bal-Sep-10} in the context of the  asymmetric simple exclusion process and later adapted to lower bound proofs of the  longitudinal  fluctuation exponent \cite{Sep-18} and large exit time probability \cite{Sep-She-20} in the stationary last-passage percolation process. The key idea here is to perturb the parameter ${{\rho}}$ of the stationary polymer model to   ${{\rho}} \pm rN^{-1/3}$. This allows us to control the exit point on the scale $N^{2/3}$.  The general  idea of utilizing perturbations of order $N^{-1/3}$ goes back to the seminal paper \cite{Cat-Gro-06}. 
We now give the details.

For $v\in\mathbb Z^2_{\ge0}$ let $\partial^{\textup{NE}}[\![0,v]\!]$ denote the north-east boundary of the rectangle $[\![0,v]\!]$, i.e.\ the sites $u\in [\![0,v]\!]$ with $u\cdot e_1=v\cdot e_1$ or $u\cdot e_2=v\cdot e_2$.

Note that it is enough to prove the claimed bound with 
$\min_{x\not\in [\![0, v_N]\!]} Q^\rho_{0,x}\{|\tau|> rN^{1/3}\}$ replaced by 
\[\min_{x\in \partial^{\textup{NE}}[\![0, v_N]\!]} Q^\rho_{0, x}\{|\tau| > rN^{2/3}\},\]
since
\begin{align*}
&\min_{x\not\in [\![0, v_N]\!]} Q^\rho_{0,x}\{|\tau|> rN^{2/3}\} \\
&\qquad =\min_{x\not\in [\![0, v_N]\!]} \sum_{z\in \partial^{\textup{NE}} \lzb 0, v_N \rzb} Q^\rho_{0,x}\{|\tau|> rN^{2/3} \text{ and the path passes through z}\} \\
&\qquad =\min_{x\not\in  [\![0, v_N]\!]} \sum_{z\in \partial^{\textup{NE}} [\![0, v_N]\!]} Q^\rho_{0,z}\{|\tau|>rN^{2/3} \} Q_{0,x}^\rho\{\text{path passes through $z$}\}\\
&\qquad \geq\min_{x\not\in [\![0, v_N]\!]} \sum_{z\in \partial^{\textup{NE}} [\![0, v_N]\!]} \Big(\min_{z'\in \partial^{\textup{NE}} \lzb 0, v_N \rzb} Q^\rho_{0,z'}\{|\tau|>rN^{2/3} \}\Big) Q^\rho_{0,x}\{\text{passes through $z$}\} \\
&\qquad=\min_{z'\in \partial^{\textup{NE}} [\![0, v_N]\!]} Q^\rho_{0,z'}\{|\tau|>rN^{2/3}\}.
\end{align*}

Take $c\in(0,\frac{\varepsilon}{4\mu^2}\wedge\frac12]$, with $\varepsilon$ as in the statement of the theorem. Below, we will choose an exact value for $c$, which  will still only depend on $\varepsilon$ (and $\mu$).

Given positive $r$ and $N$, define the perturbed parameters  $\lambda = {{\rho}} + rN^{-1/3}$ and $\eta = {{\rho}} - rN^{-1/3}$.
The choice of $c$ guarantees that if
\begin{equation}\label{fix_r}r \leq c((\mu-{{\rho}})^2\wedge {{\rho}}^2) N^{1/3},\end{equation}
then  $\eta < {{\rho}} <\lambda$ are all contained in  $[\varepsilon/2, \mu-\varepsilon/2]$.


Given positive constants $a<b$, define a new environment $\wt{\mathbb{P}}$ by changing the original boundary weights (whose distribution we will denote by $\mathbb{P}^\rho$) on parts of the axes. Precisely, $\wt{\mathbb{P}}$ is the joint distribution, under $\mathbb{P}^\rho$ of
\begin{align*}
\wt\omega_{ke_1} &\sim \text{Ga}^{-1}(\mu-\lambda)  & &\text{for } k \in [\![\floor{arN^{2/3}}+ 1,  \floor{brN^{2/3}}]\!]\\
\wt\omega_{ke_2} & \sim \text{Ga}^{-1}({\eta}) & &\text{for } k \in [\![\floor{arN^{2/3}} + 1,  \floor{brN^{2/3}}]\!]\\
\wt\omega_{z} &\sim \omega_z & &\text{for all other $z\in \mathbb{Z}^2_{\geq 0}$.}
\end{align*}
The $\wt\w$ weights in the first two lines are all independent and independent of the $\w$ weights.
The exact values of $a$ and $b$ will be determined further down and will only depend on  $\varepsilon>0$ (and $c$). Essentially, they will be chosen so that, 
in the picture in the left panel of 
Figure \ref{fig21}, the two thick dotted lines passing through $v_N$ and having slopes $\xi[\lambda]$ and $\xi[\eta]$ rest inside the highlighted regions on the axes. 
Then, under the new random environment $\wt{\mathbb{P}}$, we will show that there exists some constant $C_1$ such that for $N$ and $r$ large,
\begin{equation}\label{modest}
\wt{\mathbb{P}}\Big(\min_{x\in \partial^{\textup{NE}}[\![0, v_N]\!]} Q_{0, x}\{|\tau| > arN^{2/3}\} \geq 1-e^{-C_1 r^2N^{1/3}}\Big) \geq 1/2. 
\end{equation}
We finish the proof of the theorem, assuming this inequality.
Denote the event inside \eqref{modest} by $S$ and let $f= \frac{d \wt{\mathbb{P}}}{d\mathbb{P}^{\rho}}$,
where $\mathbb P^\rho$ is the marginal of $\mathbb P$, i.e.\ a the probability measure with independent $\rho$-boundary weights and bulk weights. By the Cauchy-Schwarz inequality, we have 
$$1/2 \leq \wt{\mathbb{P}}(S) = \mathbb{E}^{\rho}[\mathbbm{1}_S f] \leq \mathbb{P}^{\rho}(S)^{1/2} \mathbb{E}^{\rho}[f^2]^{1/2} \leq \mathbb{P}^{\rho}(S)^{1/2} e^{Cr^3},$$
where the last bound for the second moment of $f$ follows from Proposition \ref{RNest}. This implies 
\begin{equation}\label{with_a}
\mathbb{P}\Big(\min_{x\in \partial^{\textup{NE}} [\![0, v_N]\!]} Q^{{{\rho}}}_{0, x}\{|\tau| > arN^{2/3}\} \geq 1-e^{-C_1r^2N^{1/3}}\Big) \geq e^{-C_2r^{3}}.
\end{equation}
To recover the statement of our theorem without the constant $a$ in \eqref{with_a}, just modify $C_1$ and $C_2$.

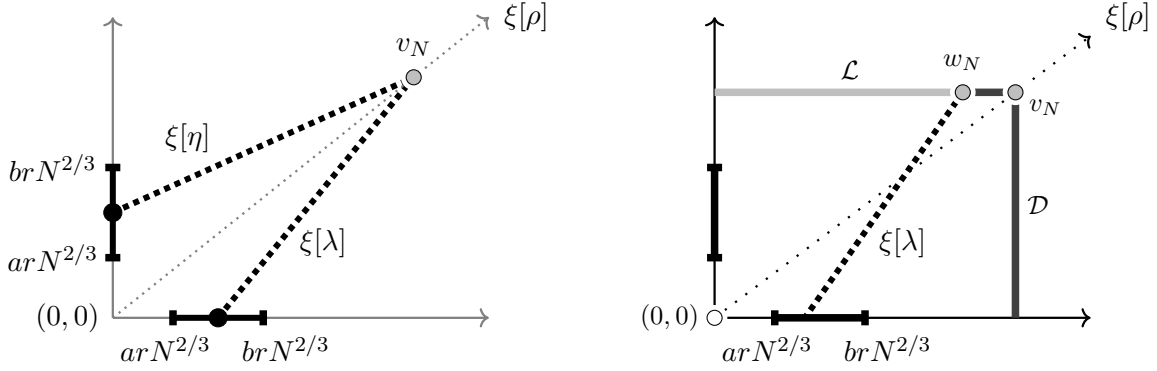
\begin{figure}[t]
\captionsetup{width=0.9\textwidth}
\begin{center}
\begin{tikzpicture}

\draw[gray, line width=0.3mm, ->] (0,0) -- (5,0);
\draw[gray, line width=0.3mm, ->] (0,0) -- (0,4);

\draw[ line width=0.8mm] (0.8,0) -- (2,0);
\draw[ line width=0.8mm] (0,0.8) -- (0,2);

\draw[ line width=1mm] (-0.1,0.8) -- (0.1,0.8);
\draw[ line width=1mm] (-0.1,2) -- (0.1,2);

\draw[ line width=1mm] (0.8,-0.1) -- (0.8,0.1);
\draw[ line width=1mm] (2,-0.1) -- (2,0.1);

\draw[dotted, line width=0.8mm] (1.4,0) -- (0.8*5, 0.8*4);
\draw[dotted, line width=0.8mm] (0,1.4) -- (0.8*5, 0.8*4);

\draw[gray ,dotted, line width=0.3mm, ->] (0,0) -- (5,4) ;
\fill[color=white] (0.8*5, 0.8*4)circle(1.7mm); 
\draw[ fill=lightgray](0.8*5, 0.8*4)circle(1mm);
\node at (0.8*5, 0.8*4+0.4) {$v_N$};

\node at (0.7, -0.4) {$arN^{2/3}$};
\node at (2.3, -0.4) {$brN^{2/3}$};

\node at (-0.8, 0.8) {$arN^{2/3}$};
\node at (-0.8, 2.0) {$brN^{2/3}$};

\node at (2.8,1) {$\xi[\lambda]$};
\node at (1,2.4) {$\xi[\eta]$};

\node at (-0.6, 0) {$(0,0)$};

\draw[ fill=black](1.4, 0)circle(1.2mm);
\draw[ fill=black](0, 1.4)circle(1.2mm);

\node at (5.5, 4) {$\xi[{{\rho}}]$};

----------------------------------------


\draw[line width = 0.3mm, ->] (0+8,0)--(0+8,4);
\draw[line width = 0.3mm, ->] (0+8,0)--(5+8,0);

\draw[line width = 1mm, ] (0.8+8,0)--(2+8,0);
\draw[line width = 1mm, ] (0.8+8,-0.1)--(0.8+8,0.1);
\draw[line width = 1mm, ] (2+8,-0.1)--(2+8,0.1);

\draw[line width = 1mm, ] (0+8,0.8)--(0+8,2);
\draw[line width = 1mm, ] (-0.1+8,0.8)--(0.1+8,0.8);
\draw[line width = 1mm, ] (-0.1+8,2)--(0.1+8,2);

\draw[dotted, line width = 0.8mm, ] (3.3+8,3)--(1.2+8,0);

\draw[color=lightgray, line width = 1mm] (0+8,3) -- (3.3+8, 3);
\draw[ color = darkgray, line width = 1mm] (3.3+8,3) -- (4+8, 3) -- (4+8,0);

\draw[loosely dotted, line width = 0.3mm, ->] (0+8,0) -- (5+8, 3+3/4);

  (2.5+8,3.1) -- (4+8,3.1);

\node at (0.7+8, -0.4) {$arN^{2/3}$};
\node at (2.3+8, -0.4) {$brN^{2/3}$};

\fill[color=white] (3.3+8,3)circle(1.7mm);
\draw[ fill=lightgray](3.3+8,3)circle(1mm);
\node at (3.3+8,3.4) {\small $w_N$};
\node at (1.8+8,3.3) {\small $\cL$};
\fill[color=white] (0+8,0)circle(1.7mm);
\draw[ fill=white](0+8,0)circle(1mm);
\node at (-0.6+8,0) {\small $(0,0)$};

\fill[color=white] (4+8,3)circle(1.7mm);
\draw[ fill=lightgray](4+8,3)circle(1mm);
\node at (4.4+8,2.8) {\small $v_N$};
\node at (4.3+8,1.5) {\small $\cD$};

\node at (5.5+8, 4) {$\xi[{{\rho}}]$};

\node at (2.5+8,1) {$\xi[\lambda]$};

\end{tikzpicture}
\end{center}
\caption{\small {\it Left:} Two dotted lines have slopes $\xi[\lambda]$ and $\xi[\eta]$. \textit{Right:}  
Decomposition of the north and east boundaries of $\lzb 0, v_N\rzb $ into   regions   $\cL$ (light gray)  and  $\cD$ (dark gray). A small perturbation of $v_N$ to $w_N$ keeps the endpoint of the $-\xi[\lambda]$ ray from $w_N$ in the interval $[arN^{2/3}, brN^{2/3}]$.}
\label{fig21}
\end{figure}

Next, we will show \eqref{modest} which will finish the proof of the theorem. 
To do this, we will show that for $r$ and $N$ large, we have
\begin{equation}\label{modestc}
\wt{\mathbb{P}}\Big(\max_{x\in \partial^{\textup{NE}}[\![0, v_N]\!]} Q_{0, x}\{1\leq \tau \leq arN^{2/3}\} < e^{-C'r^2N^{1/3}}\Big) \geq 1- Cr^{-3}.
\end{equation}
This and the similar estimate for the event $\{-1 \geq \tau_{0, x} \geq - arN^{2/3}\}$ imply \eqref{modest} when $r$ is taken large.
Note that here we will pick the values of $a<b$ and $c\in(0,\frac{\varepsilon}{4\mu^2}\wedge\frac12]$ only for the bound \eqref{modestc}. When applying the same argument to the other case, we obtain another set of constants $a'<b'$ and $c'\in(0,\frac{\varepsilon}{4\mu^2}\wedge\frac12]$ which are possibly different. Then, we replace $a$ and $a'$ by $a\wedge a'$, $b$ and $b'$ by $b\vee b'$, and $c$ and $c'$ by $c\wedge c'$.

Recall the perturbed parameter $\lambda = {{\rho}} + rN^{-1/3}$. If $c\in(0,\frac{\varepsilon}{4\mu^2}\wedge\frac12]$ and $r$ and $N$ satisfy condition \eqref{fix_r}, then $\lambda$ satisfies
\beq \label{lambda} \varepsilon/2<{{\rho}}< \lambda  \leq  {{\rho}} + c((\mu-{{\rho}})^2\wedge {{\rho}}^2)\le\mu-\varepsilon/2. \eeq
We estimate the difference of the reciprocal slopes (i.e. $\frac{\text{change of $x$}}{\text{change of $y$}}$) of the vectors $\xi[\lambda]$ and $\xi[{{\rho}}]$. By definition
$$\frac{\xi[\lambda]\cdot e_1}{\xi[\lambda]\cdot e_2} - \frac{\xi[{{\rho}}]\cdot e_1}{\xi[{{\rho}}]\cdot e_2}  =  \frac{\Psi_1({{\rho}} + rN^{-1/3})}{\Psi_1(\mu-{{\rho}} - rN^{-1/3})} - \frac{\Psi_1({{\rho}}) }{\Psi_1(\mu- {{\rho}} )}.
$$
Since $\Psi_1$ is smooth and takes positive values on compact intervals strictly contained inside $(0, \mu)$, we can Taylor expand the quotient $g(z) = \frac{\Psi_1({{\rho}} +z ) }{\Psi_1(\mu- {{\rho}} - z)}$ around $z=0$. 
This gives

\begin{equation}\label{taylor}\Big| \Big(\frac{\xi[\lambda]\cdot e_1}{\xi[\lambda]\cdot e_2} - \frac{\xi[{{\rho}}]\cdot e_1}{\xi[{{\rho}}]\cdot e_2}  \Big) - (-k_1 rN^{-1/3}) \Big| \leq k_2 r^2 N^{-2/3},
\end{equation}
for all $\rho$ and $\lambda$ such that $\varepsilon/2<\rho<\lambda<\mu-\varepsilon/2$.
Here, $k_1$ and $k_2$ are positives constant depending only on $\rho$, $\mu$, and $\varepsilon$. 
Take $c\in(0,\frac{\varepsilon}{4\mu^2}\wedge\frac12]$ to satisfy
\begin{equation}\label{fixc}
c  \leq \frac{1}{100} \frac{k_1}{k_2}\,.
\end{equation}
Then, for $r$ and $N$ satisfying \eqref{fix_r},
\begin{equation}\label{fixc_2}
k_2r^2 N^{-2/3} < \frac{1}{10} k_1rN^{-1/3}.\end{equation} 
And from \eqref{taylor} and \eqref{fixc_2} above, we obtain
\begin{equation}\label{slopeest} -2k_1 rN^{-1/3} \leq  \frac{\xi[\lambda]\cdot e_1}{\xi[\lambda]\cdot e_2} - \frac{\xi[{{\rho}}]\cdot e_1}{\xi[{{\rho}}]\cdot e_2}   \leq -\tfrac{1}{2}k_1 rN^{-1/3}.\end{equation}

Now, start two rays at $(0,0)$ in the directions $\xi[{{\rho}}]$ and $\xi[\lambda]$ and let $u_N$ be the lattice point closest to the $\xi[\lambda]$-directed ray such that $u_N \cdot e_2 = v_N \cdot e_2$. (See the right panel of Figure \ref{fig99}.) Then \eqref{slopeest}  implies that there exist two fixed positive constants $l_1, l_2$ depending only on $\rho$, $\mu$, and $\varepsilon$ such that 
\beq \label{cbound} l_1 rN^{2/3} \leq v_N\cdot e_1 - u_N \cdot e_1 \leq l_2 rN^{2/3}. \eeq
For now, we define 
$$a = \tfrac{1}{10} l_1 \quad\text{ and } \quad b = 10 l_2,$$
and note that the above value of $a$ will be lowered if necessary, later in the argument.

\begin{figure}[t]
\captionsetup{width=0.8\textwidth}
\begin{center}
\begin{tikzpicture}[scale = 1]

\draw[gray, line width=0.3mm, ->] (0+8,0) -- (5+8,0);
\draw[gray, line width=0.3mm, ->] (0+8,0) -- (0+8,4);

\node at (1.5, 2.7) {$\xi[\eta]$};

\node at (3, 1.2) {$\xi[\lambda]$};

\draw[gray ,loosely dotted, line width=0.5mm, ->] (0+8,0) -- (1*5+8, 1*4);

\draw[gray ,loosely dotted, line width=0.5mm, ->] (0+8,0) -- (5-1.4/0.8+8, 4) ;
\fill[color=white] (0.8*5+8, 0.8*4)circle(1.7mm); 
\draw[ fill=lightgray](0.8*5+8, 0.8*4)circle(1mm);
\node at (0.8*5+8, 0.8*4+0.4) {$v_N$};

\fill[color=white] (0.8*5- 1.4+8, 0.8*4)circle(1.7mm); 
\draw[ fill=lightgray](0.8*5- 1.4+8, 0.8*4)circle(1mm);
\node at (0.8*5- 1.5 +8, 0.8*4+0.4) {$u_N$};

\node at (5+0.4+8, 4-0.3) {$\xi[{{\rho}}]$};
\node at (5-1.4/0.8+0.1+8, 4+0.4){$\xi[\lambda]$};

\node at (0-0.6+8, 0) {$(0,0)$};

\draw[black ,dotted, line width=0.3mm] (1.4 +8,0) -- (0.8*5+8, 0.8*4);
\draw[ fill=black](1.4+8, 0)circle(1.3mm);

----------------------------------

\draw[gray, line width=0.3mm, ->] (-0,0) -- (5+0,0);
\draw[gray, line width=0.3mm, ->] (-0,0) -- (0,4);

\draw[gray ,dotted, line width= 1mm] (-0,0)--(1.1,0) --(1.1,0.6)--(1.7,0.6)--(1.7,0.8)--(1.7,1.1) --(2.1,1.1)--(2.1,1.5)--(2.9,1.5) -- (2.9,1.9)--(3.5,1.9)--(3.5,2.7)--(3.5,3.2);

\draw[ line width=1mm] (0.54+0,-0.1) -- (0.54+0,0.1);
\draw[ line width=1mm] (2+0,-0.1) -- (2+0,0.1);

\draw[black ,dotted, line width=0.3mm]  (1.4+0,0) -- (0.8*5+0, 0.8*4);
\draw[black ,dotted, line width=0.3mm] (1.4+0-0.5 ,0) -- (0.8*5+0-0.5, 0.8*4);


\draw[ fill=lightgray](0.8*5+0, 0.8*4)circle(1mm);
\node at (0.8*5.7, 0.8*4) {$v_N$};

\draw[ fill=lightgray](0.8*5+0-0.5, 0.8*4)circle(1mm);
\node at (0.8*5+0-0.6, 0.8*4+0.4) {$w_N$};

\node at (0.1+0, -0.4) {${arN^{2/3}}$};
\node at (2.5+0, -0.4) {${brN^{2/3}}$};


\node at (0-0.6, 0) {$(0,0)$};

\draw[ fill=black](1.4+0, 0)circle(1.3mm);

\draw[ fill=white](0.9+0, 0)circle(1.3mm);

\end{tikzpicture}
\end{center}
\caption{\small \textit{Left:} The dotted lines have characteristic slope $\xi[\lambda]$. Consequently, with high probability, the sampled $\lambda$ polymer from $0$ to $w_N$ exits through the interval $[\![ arN^{2/3}e_1, brN^{2/3}e_1]\!]$.  {\it Right:}  Illustration of estimate \eqref{cbound}.}
\label{fig99}
\end{figure}
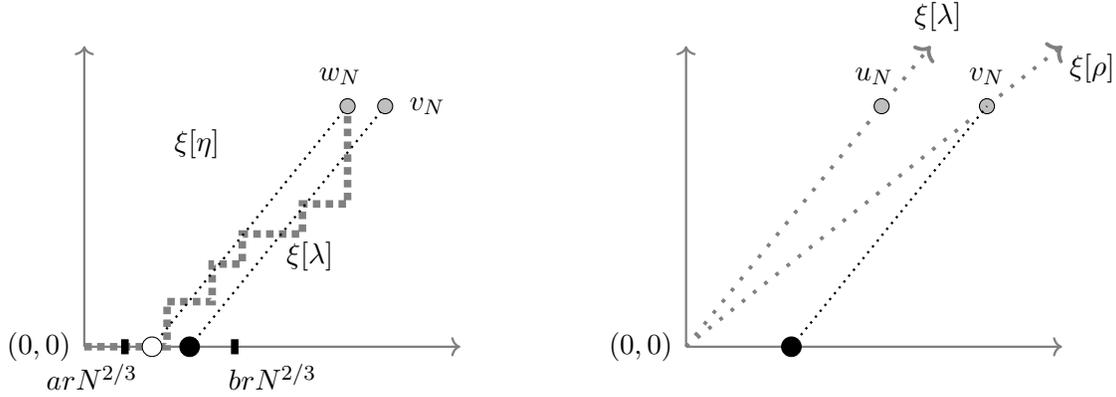

Fix a positive constant $q\leq \tfrac{1}{10}l_1$, let us define 
\begin{equation}\label{define_w_N}
w_N = v_N - \floor{q rN^{2/3}}e_1.
\end{equation}
As shown on the right of Figure \ref{fig21}, the point $w_N$ splits $\partial^{\textup{NE}}[\![0, v_N]\!]$ into  the dark region $\mathcal{D}$ and the light region $\mathcal{L}$.
We will first work with the dark region and show 
\begin{equation}\label{darkest}
\wt{\mathbb{P}}\Big(\max_{x\in \mathcal{D}} Q_{0, x}\{1\leq \tau \leq arN^{2/3}\} \leq e^{-C'r^2N^{1/3}}\Big) \geq 1- Cr^{-3}.
\end{equation}

Let us look at another polymer measure $R_{0,x}$ which is restricted to paths that start with an $e_1$ step from the origin, then
\begin{align*}
R_{0, x}\{1\leq\tau \leq arN^{2/3}\} &= \frac{Z_{0,x}(1\leq \tau\leq \floor{ arN^{2/3}})}{Z_{0,x}(1 \leq \tau)}.
\end{align*}
From the following three facts,
\begin{itemize}
\item $ Q_{0, x}\{1\leq\tau \leq arN^{2/3}\} \leq  R_{0, x}\{1\leq \tau \leq arN^{2/3}\},$
\item $R_{0, x}\{1\leq \tau \leq arN^{2/3}\} + R_{0, x}\{ \tau > arN^{2/3}\} = 1$,
and 
\item by Lemma \ref{2_exit_ineq}, $ R_{0, w_N}\{ \tau > arN^{2/3}\} \leq R_{0, x}\{ \tau > arN^{2/3}\} \text{ for each $x \in \mathcal{D}$}$,
\end{itemize}
we have 
$$Q_{0, x}\{1\leq \tau \leq arN^{2/3}\} \leq R_{0, w_N}\{1\leq \tau\leq arN^{2/3}\}\qquad \text{ for each $x \in \mathcal{D}$}.$$
Thus in order to show \eqref{darkest}, it suffices to show 
\begin{equation} \label{wnest}
\wt{\mathbb{P}}\Big( R_{0, w_N}\{1\leq \tau \leq arN^{2/3}\} \leq e^{-C'r^2 N^{1/3}}\Big) \geq 1- Cr^{-3} .
\end{equation}
To show \eqref{wnest}, we will find a high probability event 
$$A = A_1 \cap A_2 \cap A_3 \cap A_4$$ with $\wt{ \mathbb{P}}(A) \geq 1-Cr^{-3}$ such that on $A$,
\begin{equation}\label{weshow} Z_{0, w_N}(\floor{arN^{2/3}}+1\leq \tau\leq \floor{ brN^{2/3}})\geq e^{C'r^2N^{1/3}} Z_{0, w_N}(1 \leq \tau \leq \floor{arN^{2/3}}),
\end{equation}
as this implies
\begin{align*}
R_{0, w_N}\{\tau > arN^{2/3}\} &\geq \frac{Z_{0, w_N}(\floor{arN^{2/3}}+1\leq \tau \leq  \floor{ brN^{2/3}})}{Z_{0, w_N}(1\leq \tau)} \\
&\geq e^{C'r^2N^{1/3}} \frac{ Z_{0, w_N}(1 \leq \tau \leq \floor{arN^{2/3}})}{Z_{0, w_N}(1 \leq \tau)}\\
&= e^{C'r^2N^{1/3}} R_{0, w_N}\{1\leq \tau \leq arN^{2/3}\},
\end{align*}
which together with
$$ R_{0, w_N}\{1\leq \tau \leq arN^{2/3}\} + R_{0, w_N}\{\tau > arN^{2/3}\} =1 $$
gives
$$ R_{0, w_N}\{1\leq\tau\leq arN^{2/3}\} \leq\frac{1}{1+ e^{C'r^2N^{1/3}}} \leq e^{-C'r^2N^{1/3}}\quad \text{ on $A$}.$$

Next, we define $A_1, A_2, A_3$ and $A_4$ and their intersection gives $A$.
Let $Z^\lambda$ and $Z^{{\rho}}$ denote the partition functions with the $\lambda$- and ${{\rho}}$-boundary weights, and where all boundary weights are independent. Then, the $e_1$-boundary weights from $\wt{\mathbb{P}}$ can be seen as a mixture of these $\lambda$- and $\rho$- weights. The desired inequality \eqref{weshow} (under $\wt{\mathbb{P}}$) can be rewritten as 
\begin{align*}
&\Big(\frac{Z_{0, w_N}^\lambda}{Z^{{\rho}}_{0, w_N}} \prod_{i=1}^{\floor{arN^{2/3}}} \frac{I^{{\rho}}_{(i,0)}}{I^{\lambda}_{(i,0)}} \Big) \frac{Z_{0, w_N}^\lambda (\floor{arN^{2/3}}+1\leq \tau \leq  \floor{brN^{2/3}})}{Z^\lambda_{0, w_N}}\\
& \qquad \qquad \qquad \qquad \qquad \qquad \qquad \qquad \qquad\geq e^{C'r^2 N^{1/3}} \frac{Z^{{\rho}}_{0, w_N} (1 \leq \tau \leq \floor{arN^{2/3}})}{Z^{{\rho}}_{0, w_N}}, 
\end{align*}
which is implied by the inequality  
$$\Big(\frac{Z_{0, w_N}^\lambda}{Z_{0, w_N}^{{\rho}}} \prod_{i=1}^{arN^{2/3}} \frac{I^{{\rho}}_{(i,0)}}{I^{\lambda}_{(i,0)}} \Big) \frac{Z^\lambda_{0, w_N} (\floor{arN^{2/3}} + 1 \leq \tau \leq  \floor{brN^{2/3}})}{Z^\lambda_{0, w_N}} \geq e^{C'r^2 N^{1/3}}.$$

Because $w_N$ is a point of order $rN^{2/3}$ units away from $u_N$ (recall $u_N$ is along the $\xi[\lambda]$-characteristic ray defined above \eqref{cbound}), there is an event $A_1$ with $\mathbb{P}(A_1) \geq 1-e^{-Cr^3}$ such that  the $\lambda$ quenched probability appearing above (i.e.\ the last ratio of partition functions on the left-hand side) satisfies 
$$\frac{Z^\lambda_{0, w_N} (\floor{arN^{2/3}} + 1 \leq \tau \leq \floor{brN^{2/3}})}{Z^\lambda_{0, w_N}} \geq 1/2 \quad \text{ on the event $A_1$}.$$  This is proved as Lemma \ref{lemmaab} at the end of this section, and the idea is  illustrated on the right of  Figure \ref{fig99}.

Once on the event $A_1$, \eqref{weshow} would follow from having 
\begin{equation}\label{simple}
\frac{Z^\lambda_{0, w_N}}{Z^{{\rho}}_{0, w_N}} \prod_{i=1}^{arN^{2/3}} \frac{I^{{\rho}}_{(i,0)}}{I^{\lambda}_{(i,0)}} \geq e^{C'r^2N^{1/3}},
\end{equation}
with possibly a different $C'$.
This inequality should hold with a high probability if $a>0$ is taken sufficiently small. We will work with the logarithmic version of \eqref{simple}
$$\log Z^\lambda_{0, w_N}  - \log Z^{{\rho}}_{0, w_N}  - \Big( \sum_{i=1}^{arN^{2/3}} \log (I^\lambda_{(i,0)}) - \log (I^{{\rho}}_{(i,0)})\Big). 
$$

We start by showing that
\begin{equation}\label{exp_diff}
\mathbb{E}[\log Z^\lambda_{0, w_N} ] - \mathbb{E}[ \log Z^{{\rho}}_{0, w_N} ] \geq c_1 r^2 N^{1/3}
\end{equation}
for some $\varepsilon$-dependent constant $c_1$, and this constant $c_1$ will be used for the rest of the proof. First, note the exact values of the expectations are
\begin{align*}
\mathbb{E}[\log Z^\lambda_{0, w_N} ]  &= \Psi_0(\mu-\lambda)(\floor{\Psi_1({{\rho}}) N} - \floor{qrN^{2/3}}) + \Psi_0(\lambda)\floor{\Psi_1(\mu-{{\rho}}) N}\\
\mathbb{E}[\log Z^{{\rho}}_{0, w_N} ]  &= \Psi_0(\mu-{{\rho}})(\floor{\Psi_1({{\rho}}) N} - \floor{ qrN^{2/3}}) + \Psi_0({{\rho}})\floor{\Psi_1(\mu-{{\rho}}) N }.
\end{align*}
Using a Taylor expansion,
\begin{align*}
\Psi_0(\mu-\lambda) &=  \Psi_0(\mu-{{\rho}}) + \Psi_1(\mu-{{\rho}})(-rN^{-1/3}) + \tfrac{1}{2} \Psi_1'(\mu-{{\rho}})(-rN^{-1/3})^2 + R_1,\\
\Psi_0(\lambda) &=  \Psi_0({{\rho}}) + \Psi_1({{\rho}})(rN^{-1/3}) + \tfrac{1}{2} \Psi_1'({{\rho}})(rN^{-1/3})^2 + R_2.
\end{align*}
Due to condition \eqref{lambda} we have $|R_i|\le C(rN^{-1/3})^3$ for both $i\in\{1,2\}$ and with an $\varepsilon$-dependent constant $C>0$. Plugging these two formulas back into the right side of \eqref{exp_diff}, the linear terms from the expansions cancel out. By further lowering the value of $c$ from \eqref{fix_r} if necessary, $R_1$ and $R_2$ can be absorbed into the  $(rN^{-1/3})^2$ terms, and 
there exist two positive constants $D_1$ and $D_2$ depending only on $\varepsilon$, $\mu$ and $c$ such that 
$$\mathbb{E}[\log Z^\lambda_{0, w_N} ] - \mathbb{E}[ \log Z^{{\rho}}_{0, w_N} ]  \geq D_1 r^2 N^{1/3} - D_2 q r^2 N^{1/3},$$
where the parameter $q$ is from \eqref{define_w_N}. By fixing $q$ sufficiently small, we obtain the desired estimate \eqref{exp_diff}.

Next, with the constant $c_1$ from \eqref{exp_diff}, we define the two events
\begin{align*}
A_2 &= \Bigl\{\log Z^\lambda_{0, w_N}  \geq \mathbb{E}[\log Z^{{\rho}}_{0, w_N} ] + \frac{c_1}2 r^2N^{1/3}\Bigr\},\\
A_3 &= \Big\{\log Z^{{\rho}}_{0, w_N}  \leq \mathbb{E}[ \log Z^{{\rho}}_{0, w_N} ] + \frac{c_1}{10}{r^2N^{1/3}}\Big\},
\end{align*}
and we will show $\mathbb{P}(A_2) \wedge \mathbb{P}(A_3) \geq 1- Cr^{-3}$.

First, we work with $\mathbb{P}(A_2)$. For $\theta, x > 0$, let us define $L(\theta, x)$ as in (3.17) of \cite{Sep-12-corr} ,
$$L(\theta, x) = \int_0^x (\Psi_0(\theta) - \log y) x^{-\theta} y^{\theta-1}e^{x-y} dy.$$
In the next calculation, the first equality is the statement in Theorem 3.7 from \cite{Sep-12-corr},
\begin{align}
\Var[\log Z^{{\rho}}_{0, w_N} ] &= w_N\cdot e_2 \Psi_1({{\rho}}) - w_N\cdot e_1 \Psi_1 (\mu-{{\rho}}) + 2\mathbb{E}\Big[E^{Q^\rho_{0, w_N}} \Big[ \sum_{i=1}^{0 \vee \tau} L(\mu-{{\rho}}, I^\rho_{ie_1})\Big] \Big]\notag\\
&\leq C\Bigl(rN^{2/3} + \mathbb{E} \Big[ E^{Q^\rho_{0, w_N}}\Big[ \tau\mathbbm{1}_{\{\tau \geq 1\}}\Big] \Big] + 1\Bigr)\quad\text{(by Lemma 4.2 of \cite{Sep-12-corr})}\notag\\
&\leq C\Bigl(rN^{2/3} + \mathbb{E} \Big[ E^{Q^\rho_{0, v_N}}\Big[ \tau\mathbbm{1}_{\{\tau \geq 1\}}\Big] \Big] + 1\Bigr)\quad\text{ (by Lemma \ref{polymono})}\notag\\
&\leq CrN^{2/3} + C'N^{2/3}\qquad\qquad\qquad\qquad\qquad\qquad\quad\text{(by (4.32) of \cite{Sep-12-corr})}. \label{var_bound}
\end{align}
Now, we upper bound the compliment
\begin{align*}
\mathbb{P}(A_2^c) &= \mathbb{P}\Bigl\{\log Z^{\lambda}_{0, w_N} < \mathbb{E}[\log Z^{{\rho}}_{0, w_N} ] + \frac{c_1}2 r^2 N^{1/3}\Bigr\}\\
& \leq \mathbb{P}\Bigl\{\log Z^{\lambda}_{0, w_N} < \mathbb{E}[\log Z^\lambda_{0, w_N} ] - \frac{c_1}2 r^2 N^{1/3}\Bigr\}\quad\text{(by \eqref{exp_diff})}\\
& \leq \frac{4}{c_1^2r^4 N^{2/3}} \Var[\log Z^\lambda_{0, w_N} ]\\
& \leq \frac{4}{c_1^2r^4 N^{2/3}} (\Var[\log Z^{{\rho}}_{0, w_N} ] + c_3 rN^{2/3})\qquad\quad\text{(by Lemma 4.1 of \cite{Sep-12-corr})}\\
& \leq Cr^{-3}\qquad\qquad\qquad\qquad\qquad\qquad\qquad\qquad\ \text{(by \eqref{var_bound})}.
\end{align*}
The fact $\mathbb{P}(A_3) \geq 1- Cr^3$ comes from the Markov inequality 
$$\mathbb{P}(A_3^c) = \mathbb{P}\Big\{\log Z^{{\rho}}_{0, w_N}  > \mathbb{E}[ \log Z^{{\rho}}_{0, w_N} ] + \frac{c_1}{10}{r^2N^{1/3}}\Big\} \leq \frac{100}{c_1^2r^4 N^{2/3}} \Var[\log Z^{{\rho}}_{0, w_N} ] \leq Cr^{-3}.$$

Next, we define another high probability event $A_4$ by 
$$A_4 = \Big\{\sum_{i=1}^{arN^{2/3}} \bigl(\log I^\lambda_{(i,0)} - \log I^{{\rho}}_{(i,0)}\bigr) \leq \frac{c_1}{10} r^2N^{1/3}\Big\}.$$
If $a$ is chosen sufficiently small compared to $c_1$, then by Proposition \ref{Ga_sub_exp} and Theorem \ref{max_sub_exp},   
$$\mathbb{P}(A_4) \geq 1- e^{-C r^3}.$$

Finally, on the event 
$$A_1\cap A_2\cap A_3 \cap A_4,$$
our desired estimate \eqref{simple} (after taking logarithm) will hold
$$\log Z^\lambda_{0, w_N}  - \log Z^{{\rho}}_{0, w_N}  - \Big( \sum_{i=1}^{arN^{2/3}} \log I^\lambda_{(i,0)} - \log I^{{\rho}}_{(i,0)}\Big) \geq \frac{c_1}{10} r^2 N^{1/3} \geq C'r^2N^{1/3}.$$
This finishes the argument for the dark region.

For the light region, 
\begin{align*}
&\wt{\mathbb{P}}\Big(\max_{x\in \mathcal{L}} Q_{0, x}\{1\leq \tau \leq arN^{2/3}\} \leq e^{-C'r^2 N^{1/3}}\Big) \\
&\geq {\mathbb{P}}\Big(\max_{x\in \mathcal{L}} Q^\rho_{0, x}\{1\leq \tau \leq arN^{2/3}\} \leq e^{-C'r^2 N^{1/3}}\Big) \\
&\geq {\mathbb{P}}\Big(\max_{x\in \mathcal{L}} Q^\rho_{0, x}\{1\leq \tau \} \leq e^{-C'r^2 N^{1/3}}\Big) \\
 &= {\mathbb{P}}\Big( Q^\rho_{0, w_N}\{1\leq \tau \} \leq e^{-C'r^2 N^{1/3}}\Big) \qquad (\text{by Lemma \ref{polymono}} ) \\
& = 1- \mathbb{P}\Big( Q^\rho_{0, w_N}\{1\leq \tau \} > e^{-C'r^2 N^{1/3}}\Big)\\
\qquad &\geq 1- e^{-Cr^3} \qquad \qquad \quad 
 \; \qquad \qquad \qquad  (\text{by Corollary \ref{sec3cor}}).
\end{align*}
The proof of Lemma \ref{rtail_lb} is complete.\hfill\qed

\medskip

Lemma \ref{lemmaab} below is  an auxiliary estimate for the proof of Lemma \ref{rtail_lb}. 
Recall that $\lambda = {{\rho}} + rN^{-1/3}$ and satisfies the condition \eqref{lambda}. As shown on the right of Figure \ref{fig21},  $u_N$ and $v_N$ on the north boundary satisfies \eqref{cbound}. Using the parameters $l_1$ and $l_2$ in \eqref{cbound}, we fix
\begin{equation} \label{fixabq}
a \leq \tfrac{1}{10} l_1, \quad b \geq 10 l_2, \quad q\leq \tfrac{1}{10} l_1.
\end{equation}
Recall $w_N = v_N - qrN^{2/3}e_1$ is a point on the north boundary of $[\![ 0, v_N]\!]$.
Lemma \ref{lemmaab} shows that for  small enough $a>0$ and large enough $b>0$, the sampled polymer path between the origin and $w_N$ exits the $e_1$-axis   through the interval $[\![ arN^{2/3}e_1, brN^{2/3}e_1]\!]$ with high probability under $\mathbb{P}^\lambda$.  This is illustrated  on the left of Figure \ref{fig99}.

\begin{lemma} \label{lemmaab}  
Fix ${{\rho}}\in (0, \mu)$ and $a, b, q$ as in \eqref{fixabq}. There exist positive constants $C_1, C_2, C_3, r_0$, and $N_0$ that  depend only on ${{\rho}}$ such that, for any $r>r_0$, $N \geq N_0$ with $\lambda = {{\rho}} + rN^{-1/3}$ satisfying \eqref{lambda}, we have
$$\mathbb{P} \Big(Q^\lambda_{0, w_N} \big\{arN^{2/3} \leq \tau\leq brN^{2/3}\big\} \leq 1- e^{-C_1 r^2N^{1/3}} \Big) \leq e^{-C_2r^3}$$
and
$$\mathbb{E} \Big[Q^\lambda_{0, w_N} \big\{arN^{2/3} \leq \tau \leq brN^{2/3}\big\} \Big] \geq 1-e^{-C_3 r^3}.$$ 
\end{lemma}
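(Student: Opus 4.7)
The plan is to control three complementary ``bad'' exit events and show each has small quenched probability with high $\mathbb{P}$-probability: (a) $\{\tau > brN^{2/3}\}$, (b) $\{1 \leq \tau < arN^{2/3}\}$, and (c) $\{\tau \leq -1\}$. The key geometric fact, which follows directly from \eqref{cbound} and the choice \eqref{fixabq}, is that there exists $m^* \in [\tfrac{9l_1}{10}r,\, l_2 r]\cdot N^{2/3}$ such that $w_N - (m^*, 0) = u_N$ lies exactly on the $\xi[\lambda]$-characteristic ray from the origin, and such that $m^* - arN^{2/3} \geq \eta rN^{2/3}$ and $brN^{2/3} - m^* \geq \eta rN^{2/3}$ for some $\eta = \eta(\varepsilon) > 0$. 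Thus in the $\lambda$-stationary polymer from $0$ to $w_N$ the characteristic exit point $m^*$ lies deep inside $[arN^{2/3}, brN^{2/3}]$, with each endpoint of the interval off-characteristic by an amount of order $rN^{2/3}$.

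To bound (a), I would shift the base to $(\lfloor brN^{2/3}\rfloor, 0)$. The shifted target $w_N - (brN^{2/3}, 0) = u_N - (brN^{2/3} - m^*)e_1$ is displaced from the $\xi[\lambda]$-ray by $\geq \eta rN^{2/3}$ in the $-e_1$ direction. Construct an auxiliary stationary $\lambda$-polymer based at $(\lfloor brN^{2/3}\rfloor, 0)$ by retaining the original $e_1$-boundary weights past this point and installing, via Lemma \ref{nestedpoly} combined with Theorem \ref{stat}, an auxiliary $e_2$-boundary of independent $\textup{Ga}^{-1}(\lambda)$ weights compatible with the original environment. In this shifted polymer the event $\{\tau' \geq 1\}$ corresponds exactly to the original $\{\tau > brN^{2/3}\}$ on their quenched probabilities. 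Applying Corollary \ref{sec3cor} with $\rho$ replaced by $\lambda$ and $r$ replaced by a constant multiple of $r$ yields
\[\mathbb{P}\bigl(Q^\lambda_{0, w_N}\{\tau > brN^{2/3}\} \geq e^{-C r^2 N^{1/3}}\bigr) \leq e^{-C r^3}.\]
Event (b) is handled symmetrically by shifting the base to $(\lfloor arN^{2/3}\rfloor, 0)$; the shifted target is displaced in $+e_1$ by $\geq \eta rN^{2/3}$, and the event $\{1 \leq \tau < arN^{2/3}\}$ corresponds to $\{\tau' \leq -1\}$ in the shifted polymer, so the horizontal mirror of Corollary \ref{sec3cor} (valid by the obvious $e_1\leftrightarrow e_2$ symmetry) applies. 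Event (c) is the most direct: since $w_N = u_N + m^* e_1$ is itself displaced from the $\xi[\lambda]$-ray by $m^* \geq \tfrac{9l_1}{10} rN^{2/3}$ in $+e_1$, the mirror of Corollary \ref{sec3cor} applied directly to the original polymer bounds $Q^\lambda_{0, w_N}\{\tau \leq -1\}$ by $e^{-Cr^2 N^{1/3}}$ outside a $\mathbb{P}$-event of probability $e^{-Cr^3}$.

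Combining the three bounds by a union bound gives the first claim. The expectation claim then follows by integrating: on the high-probability event of $\mathbb{P}$-measure $\geq 1 - 3 e^{-Cr^3}$, the integrand is at most $3 e^{-Cr^2 N^{1/3}}$, while on the complement the trivial bound contributes at most $3 e^{-Cr^3}$, and taking $N_0$ large enough allows the first term to be absorbed into the second. The main obstacle is the bookkeeping in the ``shift-and-couple'' step: realizing each auxiliary stationary polymer so that its boundary weights inherit the correct joint $\textup{Ga}^{-1}$ law from the original environment through the ratio-stationarity of Theorem \ref{stat}, while simultaneously ensuring that the quenched event of interest on the shifted polymer coincides with the corresponding event on the original polymer. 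Lemma \ref{nestedpoly} is expected to deliver precisely this identification, as suggested by its use in the caption of Figure \ref{sec3fig4}.
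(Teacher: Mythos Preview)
Your approach is correct and matches the paper's strategy: control the complement $\{\tau \notin [arN^{2/3}, brN^{2/3}]\}$ by nesting the $\lambda$-polymer at a shifted base point and then invoking the already-proven upper tail bound on exit times (Lemma \ref{rtail_ub}, via Corollary \ref{sec3cor} or Theorem \ref{r_up_low} directly). The paper handles $\{\tau < arN^{2/3}\}$ in one step by nesting \emph{below} the axis at $(\lfloor arN^{2/3}\rfloor, -h)$ via Lemma \ref{relatetau}, whereas your nesting on the axis at $(\lfloor arN^{2/3}\rfloor, 0)$ works equally well---note however that $\{\tau' \leq -1\}$ in that nested polymer corresponds to the full event $\{\tau \leq \lfloor arN^{2/3}\rfloor\}$ in the original (not just $\{1\le\tau<arN^{2/3}\}$), which already subsumes your event (c), making its separate treatment redundant but harmless.
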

\begin{proof}
First, note we have the following horizontal distance bound between $w_N$ and $u_N$, where $u_N$ is defined previously above \eqref{cbound}
$$\tfrac{1}{2} l_1 rN^{2/3} \leq w_N\cdot e_1 - u_N \cdot e_1 \leq l_2 rN^{2/3}. $$
Let $z_N$ be the integer point closest to where the $-\xi[\lambda]$-directed ray starting at $w_N$ crosses the $e_1$-axis (illustrated as the white dot in Figure \ref{fig99}), then the distance between the origin and $z_N$ satisfies the same bound
\begin{equation}\label{hbound}
\tfrac{1}{2} l_1 rN^{2/3} \leq z_N \cdot e_1 \leq l_2 rN^{2/3}. 
\end{equation}

In the next part, we will show that sampled polymer path between the origin and $w_N$ will exist on the $e_1$-axis near $z_N$.
More precisely, we show for $r>r_0$ and  $N \geq N_0$ such that \eqref{lambda} holds, then
\begin{align}
&\mathbb{P}\big(Q^\lambda _{0, w_N} \{\tau <a rN^{2/3} \big) \geq e^{-Cr^2N^{1/3}}\big) \leq e^{-C'r^3}, \label{noteasyone}\\
&\mathbb{P} \big(Q^\lambda_{0, w_N} \{\tau >b rN^{2/3} \big) \geq e^{-Cr^2N^{1/3}}\big) \leq e^{-C'r^3}.\label{easyone}
\end{align}

First, we show \eqref{easyone}. In the estimate below, the first inequality follows from Lemma \ref{polymono}; the next equality comes from
Moving the base from the origin to $z_N$ as a nested polymer; the final inequality comes from applying  Theorem \ref{r_up_low} to the nested polymer where the starting and end points are in the $\xi[\lambda]$ direction),
\begin{align*}
& \mathbb{P} \big(Q^\lambda_{0, w_N}\{\tau > brN^{2/3}\} \geq e^{-C_1r^2N^{1/3}} \big)\\
& \leq \mathbb{P} \big(Q^\lambda_{0, v_N}\{\tau > brN^{2/3}\} \geq e^{-C_1r^2N^{1/3}} \big)\\
& = \mathbb{P}\big(Q^\lambda _{z_N, v_N}\{\tau > brN^{2/3} - z_N\cdot e_1\}\geq e^{-C_1r^2N^{1/3}} \big) \\
& = \mathbb{P} \big(Q^\lambda_{z_N, v_N}\{\tau > \tfrac{b}{2}rN^{2/3} \}\geq e^{-C_1r^2N^{1/3}} \big) \\
& \leq e^{-C_2r^3}.
\end{align*}
This proves \eqref{easyone}.

To prove \eqref{noteasyone}   choose $h$  so  that   $(\floor{a rN^{2/3}}, -h)$ is the  closest integer point to the $(-\xi[\lambda])$-directed ray starting at $w_N$  (see the right of Figure \ref{figlemmaab}). Lemma \ref{relatetau} gives 
\begin{align*}
&\mathbb{P}\big(Q^\lambda _{0,w_N}\{\tau \leq  a rN^{2/3}\} \geq e^{-C_1r^2N^{1/3}}\big)  \\
&\qquad \qquad = \mathbb{P} \big(Q^\lambda_{\floor{arN^{2/3}}, -h),w_N}\{\tau < -h \} \geq e^{-C_1r^2N^{1/3}} \big).
\end{align*}
\begin{figure}[t]
\captionsetup{width=0.8\textwidth}
\begin{center}
\begin{tikzpicture}[scale = 0.8]

--------------------------------------------------------------

\draw[gray ,dotted, line width=0.3mm](1,-1)--(4,3);

\draw[gray, line width=0.3mm, ->] (0,0) -- (6,0);
\draw[gray, line width=0.3mm, ->] (0,0) -- (0,4);

\draw[ line width=0.3mm, ->] (1,-1) -- (5,-1);
\draw[ line width=0.3mm, ->] (1,-1) -- (1,2);

\draw[ fill=white](1.75, 0)circle(1.3mm);

\draw[gray ,dotted, line width=1.1mm] (0,0) -- (0.7,0) -- (0.7,0.5) --(1,0.5) ;

\draw[lightgray ,dotted, line width=1.4mm] (1,0.5) --(2,0.5)-- (2,1) -- (4, 1) -- (4,3) ;
\draw[black ,dotted, line width=0.6mm] (1,0)  --(1,0.5) --(2,0.5)-- (2,1) -- (4, 1) -- (4,3) ;

\draw[black ,dotted, line width=1.1mm] (1,-1) -- (1,0.5);

\draw[ fill=lightgray](0,0)circle(1mm);
\node at (-0.3,-0.4) {$(0,0)$};

\draw[ fill=lightgray](1,-1)circle(1mm);
\node at (0.7,-1.4) {$({arN^{2/3}},-h)$};

\fill[color=white] (4,3)circle(1.7mm); 
\draw[ fill=lightgray](4,3)circle(1mm);
\node at (4.3,3.3) {$w_N$};

\draw[ line width=1mm] (3,-0.1) -- (3,0.1);
\node at (3.3, -0.4) {${brN^{2/3}}$};

\node at (2, -0.35) {$z_N$};

\end{tikzpicture}
\end{center}
\caption{The vertex $z_N$ is shown as the white dot. Applying Lemma \ref{relatetau} in the proof of Lemma \ref{lemmaab} to assert that  $Q^\lambda_{0,w_N}\{\tau \leq a rN^{2/3}\} = Q^\lambda_{(\floor{arN^{2/3}}, -h),w_N}\{\tau< -h \},$ which is small. }
\label{figlemmaab}
\end{figure}
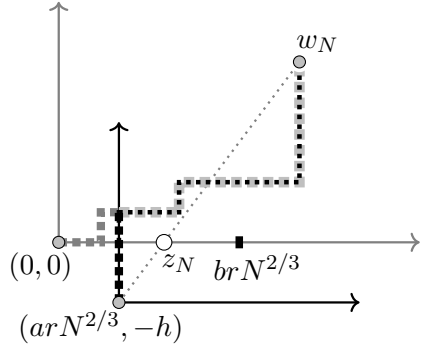
Theorem \ref{r_up_low} states that it is unlikely for the sampled polymer paths from $Q_{(\floor{arN^{2/3}}, -h),w_N}$ to exit  late in the scale $N^{2/3}$  from the $y$-axis because  the direction is the  characteristic one $\xi[\lambda]$. Thus it suffices to show $h$ is bounded below by some $k({{\rho}})rN^{2/3}$. 

Using the lower bound from $\eqref{hbound}$, the distance between $z_N$ and $\floor{arN^{2/3}}e_1$ is bounded below by $4arN^{2/3}$. The slope of the line going through $w_N$ and $z_N$ is roughly $\xi[\lambda]$, because recall $z_N$ is defined to be the closes integer point to the crossing point between the $-\xi[\lambda]$-directed ray from $w_N$ and the $e_1$-axis.
Thus its slope is contained inside a compact interval strictly inside $(0, \mu)$. Thus, we have 
\begin{equation} \label{bound_h}
h \geq k({{\rho}}) rN^{2/3}
\end{equation}
which finishes the proof.
\end{proof}

\subsection{Proof of Theorem \ref{dupper}}

First, note that instead of 
$$\max_{x\not\in [\![0, v_N]\!]} Q^\rho_{0,x}\{|\tau|\leq \delta N^{2/3}\},$$
 it suffices to work with 
 $$\max_{x\in \partial^{\textup{NE}} [\![0, v_N]\!]} Q^\rho_{0,x}\{|\tau|\leq \delta N^{2/3}\}$$
since 
\begin{align}
\begin{split}\label{out_vs_bd}
&\max_{x\not\in [\![0, v_N]\!]} Q^\rho_{0,x}\{|\tau|\leq \delta N^{2/3}\} \\
&\qquad =\max_{x\not\in [\![0, v_N]\!]} \sum_{z\in \partial^{\textup{NE}} \lzb 0, v_N \rzb} Q^\rho_{0,x}\{|\tau|\leq \delta N^{2/3} \text{ and passes through {z}}\}  \\
&\qquad =\max_{x\not\in  [\![0, v_N]\!]} \sum_{z\in \partial^{\textup{NE}} [\![0, v_N]\!]} Q^\rho_{0,z}\{|\tau|\leq \delta N^{2/3} \} Q_{0,x}\{\text{passes through $z$}\} \\
&\qquad \leq \max_{x\not\in [\![0, v_N]\!]} \sum_{z\in \partial^{\textup{NE}} [\![0, v_N]\!]} \Big(\max_{z'\in \partial^{\textup{NE}} \lzb 0, v_N \rzb} Q^\rho_{0,z'}\{|\tau|\leq \delta N^{2/3} \}\Big) Q^\rho_{0,x}\{\text{passes through $z$}\} \\
&\qquad=\max_{z'\in \partial^{\textup{NE}} [\![0, v_N]\!]} Q^\rho_{0,z'}\{|\tau|\leq \delta N^{2/3}\} .
\end{split}
\end{align}

\begin{figure}[t]
\captionsetup{width=0.8\textwidth}
 \begin{center}
\begin{tikzpicture}[>=latex, scale=1]
\draw[line width = 0.3mm, ->] (0,0)--(0,4);
\draw[line width = 0.3mm, ->] (0,0)--(5,0);

\draw[color=lightgray, line width = 1mm] (0,3) -- (2.5, 3);
\draw[ color = darkgray, line width = 1mm] (2.5,3) -- (4, 3) -- (4,1.5);
\draw[color=lightgray, line width = 1mm] (4,1.5)--(4,0);

\draw[loosely dotted, line width = 0.3mm, ->] (0,0) -- (5, 3+3/4);

\draw [decorate,decoration={brace,amplitude=6pt,raise=0ex}]
  (2.5,3.1) -- (4,3.1);

\node at (3.4,3.6) {\small ${qrN^{2/3}}$};

\fill[color=white] (4,1.5)circle(1.7mm);
\draw[ fill=lightgray](4,1.5)circle(1mm);
\node at (4.4,1.5) {\small $w_N^-$};

\fill[color=white] (2.5,3)circle(1.7mm);
\draw[ fill=lightgray](2.5,3)circle(1mm);
\node at (2.4,2.6) {\small $w_N^+$};
\fill[color=white] (0,0)circle(1.7mm);
\draw[ fill=white](0,0)circle(1mm);
\node at (-0.6,0) {\small $(0,0)$};

\fill[color=white] (4,3)circle(1.7mm);
\draw[ fill=lightgray](4,3)circle(1mm);
\node at (4.4,2.8) {\small $v_N$};

\node at (5.5, 4) {$\xi[{{\rho}}]$};

\node at (4.4, 2.2){$\cD$};
\node at (1.2, 3.3){$\mathcal{L}^+$};

\node at (4.4, 0.7){$\mathcal{L}^-$};

\end{tikzpicture}

\end{center}
\caption{\small The north and east boundaries of $\lzb 0, v_N\rzb $ are decomposed into  $\cL^\pm$ (light gray) and $\cD$ (dark gray). The parameter $q$ is less than some small constant that depends only on ${{\rho}}$.}
\label{fig9}
\end{figure}
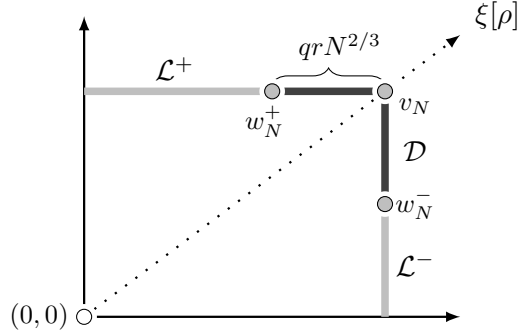

Decompose the northeast boundary $\partial^{\textup{NE}}\lzb 0, v_N \rzb$ into three parts $\cD$ and $\cL^\pm$  as in Figure \ref{fig9}, with
 $$ w_N^+ = v_N - \floor{qrN^{2/3}}e_1 \quad\text{and}\quad 
   w_N^- = v_N - \floor{qrN^{2/3}}e_2$$
where $q \leq 1 $ is a small positive constant to be chosen later above \eqref{lbuw}, and 
$$r = |\log \delta|.$$ The dark gray set $\cD$ comprises the vertices between  $w_N^+$ and $w_N^-$ on the northeast corner of the boundary of the  rectangle $\lzb0, v_N \rzb$. 
Recall that we assume in the theorem that 
\be\label{Nass4}  N>\delta^{-3/2}.\ee
This is natural since otherwise the probability in the statement 
of the theorem would be zero.
Introduce the perturbed parameters
  \be\label{la8} 
  \lambda = {{\rho}} + rN^{-1/3} \quad \text{ and } \quad\eta= {{\rho}} - rN^{-1/3}.
  \ee
We require the following bounds to hold for these two parameters
\be\label{la9} {{\rho}} < \lambda \leq {{\rho}} + \frac{{{\rho}}\wedge(\mu-{{\rho}})}{2} < \mu \quad \text{ and } \quad 0< {{\rho}} - \frac{{{\rho}}\wedge(\mu-{{\rho}})}{2} \leq \eta < {{\rho}}. 
       \ee
The point of  the choice ${{\rho}} \pm \frac{{{\rho}}\wedge(\mu-{{\rho}})}{2}$ is only to bound $\lambda$ and $\eta$ from above and below by two constants strictly inside $(0, \mu)$ and only depending on $\varepsilon$. The above two requirements can be rewritten as
$$
N\geq \Big( \frac{2r}{{{\rho}}\wedge(\mu-{{\rho}})}\Big)^3.
$$
With \eqref{Nass4}, this bound on $N$  is automatically satisfied  as soon as  $  \delta^{-3/2} \geq  \big( \frac{2r}{{{\rho}}\wedge(\mu-{{\rho}})}\big)^3$.
Since $r = |\log \delta |$,  we can ensure this by lowering the value of $\delta_0$.

Now we show that if one takes $q$ and $\alpha$ small enough, 
then the $\xi[\eta]$- and $\xi[\lambda]$-directed rays started at the points $\pm \floor{\alpha rN^{2/3}} e_1$ will avoid $\cD$ as shown in Figure \ref{nest1}. To this end, 
recall $\xi[{{\rho}}]$ defined in \ref{char_dir}. Let $u_N$ be the point where the $\xi[\lambda]$-ray starting from $\floor{\alpha r N^{2/3}}e_1$ crosses the north boundary of $[\![0, v_N ]\!]$. Then the $e_1$-coordinates of $w_N^+$ and $u_N$ can be lower bounded by 
\begin{align}
&\Bigl(\frac{\Psi_1(\lambda)}{\Psi_1(\mu-\lambda)}\cdot\frac{\Psi_1(\mu-\rho)}{\Psi_1(\rho)+\Psi_1(\mu-\rho)}-\frac{\Psi_1(\rho)}{\Psi_1(\rho)+\Psi_1(\mu-\rho)}\Bigr)N- \alpha rN^{2/3} - qrN^{2/3} - 5\notag\\
&\quad=\frac{\Psi_1(\mu-\rho)}{\Psi_1(\rho)+\Psi_1(\mu-\rho)}\cdot\Bigl(\frac{\Psi_1(\lambda)}{\Psi_1(\mu-\lambda)}-\frac{\Psi_1(\rho)}{\Psi_1(\mu-\rho)}\Bigr)N- \alpha rN^{2/3} - qrN^{2/3} - 5\notag\\
&\quad\ge C_1(\varepsilon) rN^{2/3}-\alpha rN^{2/3} - qrN^{2/3} - 5,
\label{lbuw}
\end{align}
where the inequality comes from Taylor's theorem since 
$\Psi_1$ is a smooth function
on a compact interval inside $(0, \mu)$ depending on $\varepsilon$. 
Here, $C_1(\varepsilon)$ is a finite positive constant that only depends on $\varepsilon$. 
The inequality holds provided $rN^{-1/3}\le c(\varepsilon)$ for some positive $c(\varepsilon)$ that only depends on $\varepsilon$ and this can be guaranteed to hold by lowering the threshold $\delta_0$ since 
$$rN^{-1/3} \leq |\log \delta|\delta^{1/2} \leq \delta_0^{1/3}.$$ 
Now choosing 
\begin{equation}\label{q=a}
q = \alpha = C_1({{\varepsilon}})/10,
\end{equation}
we obtain 
\begin{equation}\eqref{lbuw} \geq C_2({{\varepsilon}}) rN^{2/3},\end{equation}
and this  gives us the desired picture for $\xi[\lambda]$ shown in Figure \ref{nest1}. The argument for the $\xi[\eta]$-directed ray is similar. 
For what follows we also want to guarantee that $\delta < \alpha r  = \alpha |\log \delta|$. This can be done by  decreasing the value of $\delta_0$ after having fixed $\alpha$.  This completes the setup described in Figure \ref{nest1}.

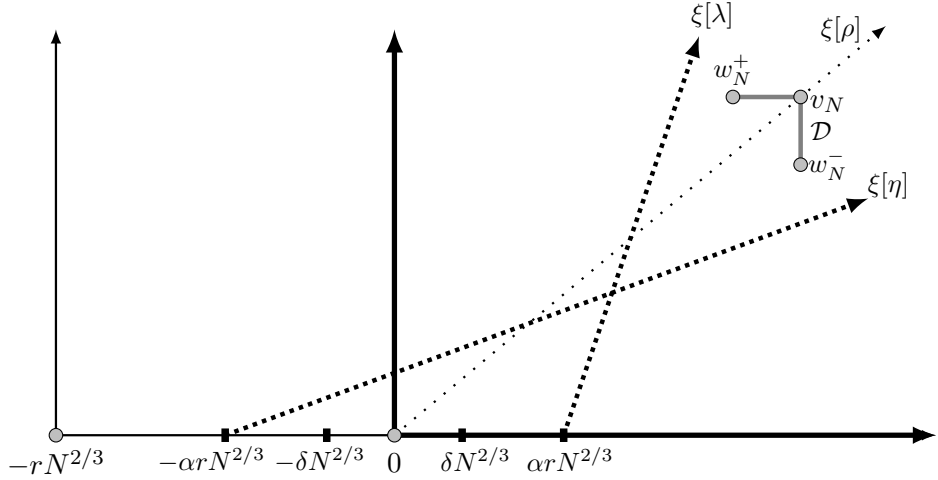
\begin{figure}[t]
\captionsetup{width=0.8\textwidth}
\begin{center}
 
\begin{tikzpicture}[>=latex, scale=0.9]


\draw[ line width = 0.6mm, ->, dotted] (-2.5,0)--(7,3.5);
\draw[ line width = 0.6mm, ->,  dotted] (2.5,0)--(4.5,5.9);
\node at (7.3,3.7) {\small $\xi[\eta]$};
\node at (4.7,6.2) {\small $\xi[\lambda]$};

\draw[ line width = 0.3mm, ->, loosely dotted] (0,0)--(6.6*1.1,5.5*1.1);
\node at (6.6,6) {\small $\xi[{{\rho}}]$};

\node at (6.3, 4.5) {\small $\cD$};
\draw[ line width = 0.6mm, color=gray] (5,5)--(6,5)--(6,4);

\draw[ fill=lightgray](5,5)circle(1mm);
\node at (5,5.4) {\small $w_N^+$};

\draw[ fill=lightgray](6,5)circle(1mm);
\node at (6.4,4.9) {$v_N$};

\draw[ fill=lightgray](6,4)circle(1mm);
\node at (6.4,4) {\small $w_N^-$};

\draw[line width = 0.3mm, ->] (-5,0)--(8,0);
\draw[line width = 0.3mm, ->] (-5,0)--(-5,6);

\draw[line width = 0.7mm, ->] (0,0)--(0,6);
\draw[line width = 0.7mm, ->] (0,0)--(8,0);

\draw[line width = 1mm] (1,-0.1) -- (1,0.1);
\node at (1.2, -0.35) {\small $\delta N^{2/3}$};

\draw[line width = 1mm] (2.5,-0.1) -- (2.5,0.1);
\node at (2.6, -0.35) {\small $\alpha rN^{2/3}$};

\draw[line width = 1mm] (-1,-0.1) -- (-1,0.1);
\node at (-1.1, -0.35) {\small $-\delta N^{2/3}$};

\draw[line width = 1mm] (-2.5,-0.1) -- (-2.5,0.1);
\node at (-2.7, -0.35) {\small $-\alpha rN^{2/3}$};

\draw[ fill=lightgray](-5,0)circle(1mm);
\node at (-5,-0.4) {$-rN^{2/3}$};

\draw[ fill=lightgray](0,0)circle(1mm);
\node at (-0,-0.4) {$0$};

\end{tikzpicture}
 
\end{center}
\caption{\small Illustration of the set $\mathcal {D}$, the nested polymer, and three characteristic directions. The parameters $q=\alpha$ are less than some small constant that depends only on ${{\rho}}$, $\delta$ is a small positive constant in $(0, \delta_0)$, and $r$ is a large constant with $r = |\log \delta|$.} \label{nest1}
\end{figure}

Consider the set $\cD$
shown in Figure \ref{fig9} in dark gray and also in Figure \ref{nest1}. Place the stationary polymer model on $0 + \mathbb{Z}^2_{\geq 0}$ as a nested polymer inside a larger stationary polymer model on the quadrant  $-\floor{rN^{2/3}}e_1 + \mathbb{Z}^2_{\geq 0}$.
From the relation between two nested polymers given by Lemma \ref{nestedpoly}, we have
\begin{align}
\mathbb{P}&\Big(\max_{ z \in \cD} Q^{{\rho}}_{0,z}\{1\leq \tau \leq \delta N^{2/3}\} \geq e^{-|\log \delta|^2 \sqrt{\delta} N^{1/3}} \Big) \label{mainest2} \\
  \leq \mathbb{P} &\Big(\max_{ z \in \cD}Q^{{\rho}}_{-\floor{rN^{2/3}}e_1, z}\{\floor{rN^{2/3}}+1 \leq \tau \leq \floor{rN^{2/3}} + \delta N^{2/3}\} \geq e^{-|\log \delta|^2 \sqrt{\delta} N^{1/3}} \Big) \nonumber\\
 \leq \mathbb{P}& \Big(\max_{ z \in \cD} \frac{Z^{{\rho}}_{-\floor{rN^{2/3}}e_1, z}(\floor{rN^{2/3}}+1  \leq \tau \leq \floor{rN^{2/3}} + \delta N^{2/3})}{Z^{{\rho}}_{-\floor{rN^{2/3}}e_1, z}(\floor{rN^{2/3}} - \alpha rN^{2/3}+1 \leq \tau \leq \floor{rN^{2/3}} + \alpha rN^{2/3})} \geq e^{-|\log \delta|^2 \sqrt{\delta} N^{1/3}}\Big)\nonumber\\
 = \mathbb{P}& \Big(\min_{ z \in \cD} \Big\{\log Z^{{\rho}}_{-\floor{rN^{2/3}}e_1, z}(\floor{rN^{2/3}} - \alpha N^{2/3} +1\leq \tau \leq \floor{rN^{2/3}} + \alpha N^{2/3}) \nonumber\\
& \qquad   - \log {Z^{{\rho}}_{-\floor{rN^{2/3}}e_1, z}(\floor{rN^{2/3}} +1\leq \tau \leq \floor{rN^{2/3}} + \delta N^{2/3})} \Big\}\leq{|\log \delta|^2 \sqrt{\delta} N^{1/3}}\Big)\nonumber\\
\leq \mathbb{P}& \Big(\min_{ z \in \cD}  \Big\{\max_{i \in [\![-\alpha N^{2/3} + 1, \alpha N^{2/3}]\!] }\log Z^{{\rho}}_{-\floor{rN^{2/3}}e_1, z}( \tau = \floor{rN^{2/3}} + i) \nonumber \\
& \qquad   - \log Z^{{\rho}}_{-\floor{rN^{2/3}}e_1, z}( \tau = \floor{rN^{2/3}} )  + \log Z^{{\rho}}_{-\floor{rN^{2/3}}e_1, z}( \tau = \floor{rN^{2/3}}  )\nonumber\\
& \qquad \qquad  - \max_{k   \in [\![1, \delta  N^{2/3}]\!]}\log {Z^{{\rho}}_{-\floor{rN^{2/3}}e_1, z}(\tau = \floor{rN^{2/3}}+k)}\Big\} \leq  2{|\log \delta|^2 \sqrt{\delta}N^{1/3}}\Big).\nonumber\\
\leq \mathbb{P} &\Big(\min_{ z \in \cD}  \Big\{\max_{i \in [\![-\alpha N^{2/3} + 1, \alpha N^{2/3}]\!] }\log Z^{{\rho}}_{-\floor{rN^{2/3}}e_1, z}( \tau = \floor{rN^{2/3}} + i) \nonumber \\
&\qquad   - \log Z^{{\rho}}_{-\floor{rN^{2/3}}e_1, z}( \tau = \floor{rN^{2/3}} ) \Big\}  \leq 3 {|\log \delta|^2 \sqrt{\delta} N^{1/3}}\Big)\label{cont_bd} \\
& \qquad \qquad + \mathbb{P}\Big(\max_{ z \in \cD} \Big\{\max_{k   \in [\![1, \delta N^{2/3}]\!]} \log Z^{{\rho}}_{-\floor{rN^{2/3}}e_1, z}( \tau = \floor{rN^{2/3}} + k  )\nonumber\\
& \qquad \qquad \qquad - \log {Z^{{\rho}}_{-\floor{rN^{2/3}}e_1, z}(\tau  = \floor{rN^{2/3}} )}\Big\} \geq  {|\log \delta|^2 \sqrt{\delta} N^{1/3}}\Big).\nonumber
\end{align}
Before we continue our bound, let us simplify our notation. 
For $z\in \cD$ and $i\in [\![-\floor{\alpha rN^{2/3}}+1, \floor{\alpha rN^{2/3}}]\!]$, define horizontal increments  
$$
\widetilde{I}{}^z_{(i,1)} = \frac{Z_{(i-1,1), z}}{ Z_{(i,1), z} }
$$
which live on the horizontal line $y=1$.  With these increments, define a two-sided multiplicative walk $\{M_n^{z}\}_{n\in[\![-\floor{\alpha rN^{2/3}}+1, \floor{\alpha rN^{2/3}}]\!]}$ by setting  $M_{0}^{z} = 1$ and 
\begin{equation}\label{M_walk}
M_n^{z}/M_{n-1}^{z} = I^\rho_{(n,0)}/\widetilde{I}{}^{z}_{(n, 1)} 
\end{equation}
where $I_{(n,0)}^\rho$ are the boundary weights from the stationary polymer in the quadrant $-\floor{rN^{2/3}}e_1 + \mathbb{Z}^2_{\geq 0}$. 
Note that $n=0$ corresponds to $\tau=\floor{r N^{2/3}}$, which is exit at the origin.
Then, $\eqref{cont_bd}$ can be upper bounded as
\begin{align}
\eqref{cont_bd} &= \mathbb{P} \Big( \min_{z\in\cD}  \max_{n \in [\![-\alpha rN^{2/3}+1, \alpha rN^{2/3} ]\!]} \log M_n^{z} \leq 3|\log \delta|^2 \sqrt{\delta} N^{1/3}\Big)\label{asdf1}\\
& \qquad \qquad + \mathbb{P} \Big( \max_{z\in\cD}  \max_{n \in [\![1, \delta N^{2/3} ]\!]} \log M_n^{z} \geq |\log \delta|^2 \sqrt{\delta} N^{1/3}\Big) \label{asdf2}\\
\begin{split}
& \leq \mathbb{P} \Big(  \Big\{ \min_{z\in \cD} \max_{n\in  [\![1, \floor{\frac{1}{2}\alpha rN^{2/3}} ]\!]} \log M_n^{z }\leq 3|\log \delta|^2 \sqrt{\delta} N^{1/3}\Big\}\\
&\quad \quad \quad\quad \quad \quad \quad \quad \quad  \bigcap  \Big\{  \min_{z\in \cD}  \max_{n\in  [\![- \floor{\frac{1}{2}\alpha rN^{2/3}}, 0]\!] } \log M_n^{z }\leq 3|\log \delta|^2 \sqrt{\delta} N^{1/3}\Big\} \Big)
\end{split}\label{withunion}\\
&\qquad \qquad \qquad  + \mathbb{P} \Big( \max_{z\in\cD}  \max_{n \in [\![1, \floor{\delta N^{2/3}} ]\!]} \log M_n^{z} \geq |\log \delta|^2 \sqrt{\delta} N^{1/3}\Big). \label{run_up}
\end{align}
 
For any $z\in \cD$, Lemma \ref{mono_ratio} gives
\begin{align*}
M_n^{z} \geq M_n^{w_N^+} \quad \text{ for $n \geq 1$}
\quad\text{and}\quad 
M_n^{z} \geq M_n^{w_N^-} \quad \text{ for $n \leq 0$}.
\end{align*}
Therefore, we may bound \eqref{withunion} and \eqref{run_up} by
\begin{align}
\eqref{withunion} +  \eqref{run_up} & \leq \mathbb{P} \Big(  \Big\{\max_{n\in  [\![ 1,  \floor{\frac{1}{2}\alpha rN^{2/3}} ]\!]} \log M_n^{w_N^+}\leq 3|\log \delta|^2 \sqrt{\delta} N^{1/3} \Big\} \label{boundpm}\\
& \qquad \qquad \qquad \qquad \qquad \bigcap  \Big\{  \max_{n\in  [\![- \floor{\frac{1}{2}\alpha rN^{2/3}}, 0 ]\!]} \log M_n^{w_N^-}\leq 3 |\log \delta|^2 \sqrt{\delta} N^{1/3} \Big\} \Big) \nonumber\\
& \qquad \qquad \qquad + \mathbb{P} \Big(  \max_{n \in [\![1, \floor{\delta N^{2/3}} ]\!]} \log M_n^{w_N^-} \geq |\log \delta|^2 \sqrt{\delta} N^{1/3}\Big)\label{boundpm1} 
\end{align}

\begin{figure}[t]
\captionsetup{width=0.8\textwidth}
 \begin{center}
 
\begin{tikzpicture}[>=latex, scale=0.7]

\draw[line width = 0.3mm, ->] (-5,0)--(8,0);
\draw[line width = 0.3mm, ->] (-5,0)--(-5,6);


\draw[line width = 0.3mm, ->] (6.7,5.7)--(-0,5.7);
\draw[line width = 0.3mm, ->] (6.7,5.7)--(6.7,2);



\node at (-1, 2.2) { $\widetilde{I}^z, I^{\lambda, \textup{NE}}$ and $I^{\eta, \textup{NE}}$ };

\draw[line width = 0.3mm, ->] (-1, 1.8) -- (-0.1, 0.9);

\draw[line width = 1mm, dotted] (-2.5,0.7)--(2.5,0.7);


\draw[line width = 1mm] (2.5,-0.1) -- (2.5,0.1);
\node at (2.6, -0.35) {\small $\alpha rN^{2/3}$};

\draw[line width = 0.3mm, ->] (1.9, -1.1) -- (1.1, -0.1);
\node at (2.1, -1.3) { $I^\rho$ };

\draw[line width = 1mm] (-2.5,-0.1) -- (-2.5,0.1);
\node at (-2.7, -0.35) {\small $-\alpha rN^{2/3}$};

\node at (3.5,6.1) {\small $I^{\lambda, \textup{NE}}$ and $I^{\eta, \textup{NE}}$};
\node at (8.7,3.5) {\small $J^{\lambda, \textup{NE}}$ and $J^{\eta, \textup{NE}}$ };


\draw[ fill=lightgray](-5,0)circle(1mm);
\node at (-5,-0.5) {$-rN^{2/3}$};

\draw[ fill=lightgray](0,0)circle(1mm);
\node at (-0,-0.4) {$0$};

\draw[ fill=white](6.7,5.7)circle(1mm);
\node at (8.4,5.9) {$v_N + e_1+e_2$};

\draw[ line width = 0.6mm, color=gray] (5,5)--(6,5)--(6,4);

\draw[ fill=lightgray](5,5)circle(1mm);
\node at (4.5,5) {\small $w_N^+$};

\draw[ fill=lightgray](6,5)circle(1mm);

\draw[ fill=lightgray](6,4)circle(1mm);
\node at (6,3.6) {\small $w_N^-$};

\end{tikzpicture}
 
\end{center}
\caption{\small Setup for the stationary polymer with ratios of partition functions.}
\label{fig11aa}
\end{figure}
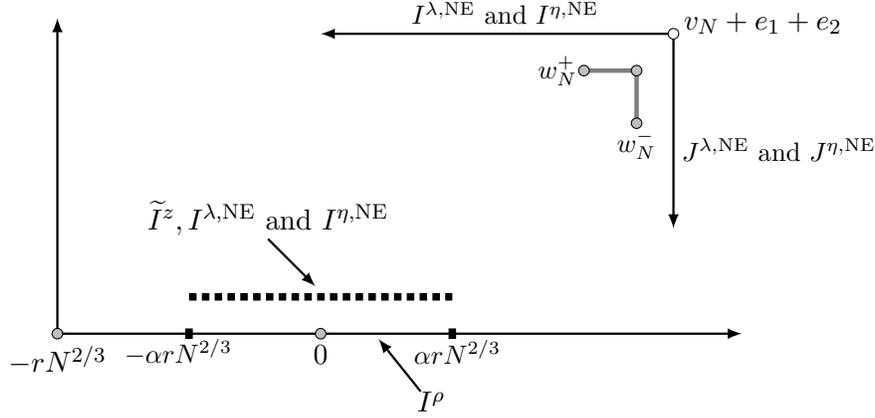

Next, to each edge on the north and east sides of the rectangle $[\![ -\floor{rN^{2/3}}e_1, v_N +e_1+e_2\rzb$, we attach both  $\lambda$- and $\eta$-edge weights, coupled as in Theorem B.4 from \cite{Bus-Sep-22-ejp}. We denote these weights by $I^{\lambda,\textup{NE}}_{v_n+ke_1+e_2}$, $J^{\lambda,\textup{NE}}_{v_n+e_1+ke_2}$, $I^{\eta,\textup{NE}}_{v_n+ke_1+e_2}$, and $J^{\eta,\textup{NE}}_{v_n+e_1+ke_2}$, $k\le1$.   Together with the bulk weights in $[\![ -\floor{rN^{2/3}}e_1+e_2, v_N\rzb$, these define  stationary polymers  with northeast boundary. Let us denote their partition functions by $Z^{\lambda, \textup{NE}}_{x,v_N +e_1+e_2} $ and $Z^{\eta, \textup{NE}}_{x,v_N +e_1+e_2}$ for $x \in \lzb (-\floor{rN^{2/3}}, 1), v_N \rzb$.   The corresponding polymer measures are denoted by $Q^{\lambda, \textup{NE} }_{ x, v_N+e_1+e_2 }$ and $Q^{\eta, \textup{NE} }_{ x, v_N+e_1+e_2 }$, respectively.  This is depicted in Figure \ref{fig11aa}.

On the horizontal line $y=1$, let us also define for $i\in [\![-\floor{\alpha rN^{2/3}}+1, \floor{\alpha rN^{2/3}}]\!]$ 
\be\label{buseincrement}\begin{aligned}
I^{\lambda, \textup{NE}}_{(i,1)} = \frac{Z^{\lambda, \textup{NE}}_{(i-1,1), v_N+ e_1 + e_2}}{ Z^{\lambda, \textup{NE}}_{(i,1), v_N+ e_1 + e_2}}\qquad 
\text{and} \qquad I^{\eta, \textup{NE}}_{(i,1)} = \frac{Z^{\eta, \textup{NE}}_{(i-1,1), v_N+ e_1 + e_2}}{Z^{\eta, \textup{NE}}_{(i,1), v_N+ e_1 + e_2} }.
\end{aligned}\ee

\begin{lemma}\label{lm:A78} 
There exists a positive constant $C$, depending only on $\varepsilon$,
such that for $\alpha$, $r$, $N$ as chosen above, 
and for any integers $a, b\in  [\![-\floor{\alpha rN^{2/3}}+1, \floor{\alpha rN^{2/3}}]\!]$, the event 
\be\label{A78} 
A =\Big\{\tfrac{1}{2} \prod_{i=a}^b I^{\eta, \textup{NE}}_{(i,1)}\leq \prod_{i=a}^b\widetilde{I}{}^{\hspace{1.5pt}w_N^-}_{(i,1)}  \leq \prod_{i=a}^b\widetilde{I}{}^{\hspace{1.5pt}w_N^+}_{(i, 1)} \leq 2\prod_{i=a}^bI^{\lambda, \textup{NE}}_{(i, 1)}\Big\} 
\ee
satisfies $\mathbb{P}(A^c) \leq e^{-Cr^3}$.
\end{lemma}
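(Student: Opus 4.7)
The plan is to telescope each product into a single ratio of partition functions:
\[
\prod_{i=a}^b \widetilde{I}^{\hspace{1.5pt}z}_{(i,1)} \;=\; \frac{Z_{(a-1,1),\,z}}{Z_{(b,1),\,z}},
\qquad
\prod_{i=a}^b I^{\sigma,\textup{NE}}_{(i,1)} \;=\; \frac{Z^{\sigma,\textup{NE}}_{(a-1,1),\,v_N+e_1+e_2}}{Z^{\sigma,\textup{NE}}_{(b,1),\,v_N+e_1+e_2}},\qquad \sigma\in\{\lambda,\eta\},
\]
which reduces the event $A$ to three inequalities between ratios of partition functions, handled separately. The middle inequality, $Z_{(a-1,1),w_N^-}/Z_{(b,1),w_N^-}\le Z_{(a-1,1),w_N^+}/Z_{(b,1),w_N^+}$, holds deterministically by planar monotonicity (Proposition \ref{mono_ratio}): the two starting points lie on the line $y=1$ with $a-1<b$, while $w_N^+$ lies strictly northwest of $w_N^-$ (smaller $e_1$-coordinate, larger $e_2$-coordinate). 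Hence the horizontal increment $\widetilde{I}^{\hspace{1.5pt}z}_{(i,1)}$ is nondecreasing as $z$ moves from $w_N^-$ to $w_N^+$, and this is preserved under products.

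For the outer inequalities I describe the upper one, involving $\lambda$ and $w_N^+$; the lower one involving $\eta$ and $w_N^-$ follows by the symmetric argument on the east side. The key geometric fact, secured in \eqref{lbuw} with $q=\alpha=C_1(\varepsilon)/10$, is that for every $i\in[\![a,b]\!]\subset[\![-\floor{\alpha rN^{2/3}}+1,\floor{\alpha rN^{2/3}}]\!]$ the $\xi[\lambda]$-characteristic ray from $(i,1)$ meets the north side of $\partial^{\textup{NE}}[\![0,v_N]\!]$ at least $C_2(\varepsilon)rN^{2/3}$ units strictly west of $w_N^+$. Therefore the $\lambda$-NE stationary polymer from $(i,1)$ to $v_N+e_1+e_2$ typically enters the north boundary west of $w_N^+$ and then runs east along the boundary through $w_N^+$. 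Applying Corollary \ref{sec3cor} to the $\lambda$-NE stationary polymer, after a planar-monotonicity reduction to the two extremal starting points $(\pm\floor{\alpha rN^{2/3}},1)$, yields an event $B$ with $\mathbb{P}(B)\ge 1-e^{-Cr^3}$ on which, simultaneously for $i\in\{a-1,b\}$, the quenched $\lambda$-NE polymer from $(i,1)$ passes through $w_N^+$ with probability at least $\tfrac{1}{2}$.

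On the event $B$, decomposition at the first visit to $w_N^+$ gives, for each $i\in\{a-1,b\}$,
\[
\frac{Z_{(i,1),w_N^+}\cdot Z^{\lambda,\textup{NE}}_{w_N^+,\,v_N+e_1+e_2}}{Y_{w_N^+}}\;\le\; Z^{\lambda,\textup{NE}}_{(i,1),\,v_N+e_1+e_2}\;\le\;2\,\frac{Z_{(i,1),w_N^+}\cdot Z^{\lambda,\textup{NE}}_{w_N^+,\,v_N+e_1+e_2}}{Y_{w_N^+}},
\]
where the lower bound is deterministic (paths through $w_N^+$ form a subfamily, and the quotient by $Y_{w_N^+}$ corrects the double counting at that site) and the upper bound uses $B$. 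Taking the ratio between $i=a-1$ and $i=b$, the common factor $Z^{\lambda,\textup{NE}}_{w_N^+,\,v_N+e_1+e_2}/Y_{w_N^+}$ cancels and yields $\prod\widetilde{I}^{\hspace{1.5pt}w_N^+}_{(i,1)}\le 2\prod I^{\lambda,\textup{NE}}_{(i,1)}$. The main obstacle is the simultaneous quenched control at both starting points inside a single $\mathbb{P}$-event, since a naive union bound over all of $[\![a,b]\!]$ would be wasteful on the scale $rN^{2/3}$; this is circumvented by first using Proposition \ref{mono_ratio} to reduce exit-point control to the two extremal starts, and then invoking Corollary \ref{sec3cor} twice without degrading the $e^{-Cr^3}$ rate.
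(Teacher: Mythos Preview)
The middle inequality and the exit-point input are handled correctly, but your decomposition at the vertex $w_N^+$ does not work. Recall $w_N^+=v_N-\lfloor qrN^{2/3}\rfloor e_1$ sits at height $v_N\cdot e_2$, one row \emph{below} the north boundary of the $\lambda$-NE polymer (whose base is $v_N+e_1+e_2$, so the north row is $y=v_N\cdot e_2+1$). The exit-point estimate you invoke shows that, typically, the path from $(i,1)$ enters the north boundary far west of the column $x=w_N^+\cdot e_1$ and then runs east along the row $y=v_N\cdot e_2+1$; it therefore passes through $w_N^++e_2$, not through $w_N^+$. In fact, on the very event $B$ you construct, the quenched probability of visiting the single bulk vertex $w_N^+$ is close to $0$, not at least $\tfrac12$, so your displayed upper bound $Z^{\lambda,\textup{NE}}_{(i,1),v_N+e_1+e_2}\le 2\,Z_{(i,1),w_N^+}\,Z^{\lambda,\textup{NE}}_{w_N^+,v_N+e_1+e_2}/Y_{w_N^+}$ fails, and the cancellation you rely on never materializes.

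If you relocate the decomposition to $w_N^++e_2$, the first factor becomes $Z^{\lambda,\textup{N}}_{(i,1),w_N^++e_2}$ (paths that may use north edge weights on their last row), not the bulk $Z_{(i,1),w_N^+}$ that you need. Bridging this gap is precisely the extra step in the paper's proof: Lemma~\ref{mono_ratio}, applied in the environment extended by the north row, gives
\[
\frac{Z_{(a-1,1),w_N^+}}{Z_{(b,1),w_N^+}}\;\le\;\frac{Z^{\lambda,\textup{N}}_{(a-1,1),w_N^++e_2}}{Z^{\lambda,\textup{N}}_{(b,1),w_N^++e_2}}
\;=\;\frac{Z^{\lambda,\textup{NE}}_{(a-1,1),v_N+e_1+e_2}(\tau^{\textup{NE}}\ge\lfloor qrN^{2/3}\rfloor)}{Z^{\lambda,\textup{NE}}_{(b,1),v_N+e_1+e_2}(\tau^{\textup{NE}}\ge\lfloor qrN^{2/3}\rfloor)},
\]
after which the restricted partition functions are compared to the unrestricted ones via the exit-time bound \eqref{lambda_ray}. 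So the missing ingredient is not bookkeeping but the ratio-monotonicity lemma that lets you move the endpoint from $w_N^+$ up to the boundary row.
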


\begin{proof}
Due to  the relative positions of $w_N^\pm$ and $z$,  Lemma \ref{mono_ratio} implies the middle inequality in the definition of $A$.
We will prove the desired bound for the inequality on the right, i.e.
\beq\mathbb{P}\Big( \prod_{i=a}^b\widetilde{I}{}^{\hspace{1.5pt}w_N^+}_i \leq 2 \prod_{i=a}^bI^{\lambda, \textup{NE}}_i \Big)
\geq 1-e^{-Cr^3}.\label{weshow1}\eeq
The argument for the inequality on the left is similar and will be omitted.

Let $\tau^{\textup{NE}}$ be defined similarly to $\tau$, but acting on down-left paths. Namely, it gives the number of steps the path takes before making its first corner. We will again use the convention that $\tau^{\textup{NE}}>0$ if the first step of the path is $-e_1$ and $\tau^{\textup{NE}}<0$ if the first step is $-e_2$.

Our estimate essentially follows from the following two facts. The first fact is that the random variable
$$Q^{\lambda, \textup{NE} }_{ \floor{\alpha rN^{2/3}}e_1 + e_2 , v_N+e_1+e_2 } \{\tau^{\textup{NE}} \geq qrN^{2/3}\}$$
is, almost surely, less than or equal to
$$Q^{\lambda, \textup{NE} }_{ (a,1), v_N+e_1+e_2 } \{\tau^{\textup{NE}}  \geq qrN^{2/3}\} \qquad \forall a \in [\![-\floor{\alpha rN^{2/3}}+1, \floor{\alpha rN^{2/3}}]\!].$$
This follows directly from Lemma \ref{polymono},
although note that here we exit from the \textup{NE} boundary instead of the SW boundary. 
The second fact is that there exist positive constants $C_1$ and $C_2$ such that 
\begin{equation}\label{lambda_ray}
\mathbb{P}\Big(Q^{\lambda, \textup{NE} }_{ \floor{\alpha rN^{2/3}}e_1 + e_2, v_N+e_1+e_2 }\{\tau^{\textup{NE}}  \geq qrN^{2/3}\} \geq 1-e^{-C_1r^2N^{1/3}}\Big) \geq 1- e^{-C_2r^3}.
\end{equation}
To see this, observe that 
$$\mathbb{P}\Big(Q^{\lambda, \textup{NE} }_{ \floor{\alpha rN^{2/3}}e_1 + e_2, v_N+e_1+e_2 }\{\tau^{\textup{NE}} \leq qrN^{2/3}\} \geq e^{-C_1r^2N^{1/3}}\Big) \leq e^{-C_2r^3}$$
is the same as \eqref{noteasyone}, except here we rotate the picture by $180^\circ$. The key idea is illustrated in Figure \ref{fig111}. Note the similarities between   Figures \ref{figlemmaab} and \ref{fig111}. From Figure \ref{fig111}, the calculation $z_N\cdot e_2 - v_N \cdot e_2 - 1 \geq CrN^{2/3}$ is omitted since it is similar to \eqref{bound_h}. 

Let $Z^{\lambda,\textup{N}}_{(b,1),w_N^++e_2}$ denote the partition function for up-right paths from $(b,1)$ to $w_N^++e_2$, which uses the same weights as $Z^{\lambda,\textup{NE}}_{(b,1),w_N^++e_2}$ does on the north boundary but uses the original (bulk) weights on $w_N^++\Z_{\le0}^2$.

On the high probability event 
\begin{equation}\label{high_p_event}
\Big\{Q^{\lambda, \textup{NE} }_{ \floor{\alpha rN^{2/3}}e_1 + e_2, v_N+e_1+e_2 }\{\tau^{\textup{NE}} \geq qrN^{2/3}\} \geq 1-e^{-C_1r^2N^{1/3}}\Big\},
\end{equation}
we have 
\begin{align*} 
\prod_{i=a+1}^b\widetilde{I}{}^{\hspace{1.5pt}w_N^+}_{(i,1)} &= \frac{Z_{(a, 1), w_N^+}}{Z_{(b, 1), w_N^+}}\\
 & \leq \frac{Z^N_{(a, 1), w_N^+ +e_2}}{Z^N_{(b, 1), w_N^+ +e_2}} \qquad \qquad \qquad  (\text{By Lemma \ref{mono_ratio}})\\
 &= \frac{Z^N_{(a, 1), w_N^+ +e_2} \prod_{i=1}^{\floor{qrN^{2/3}}+1}I^{\lambda, \textup{NE}}_{ v_N+e_1+e_2-ie_1} }{Z^N_{(b, 1), w_N^+ +e_2}\prod_{i=1}^{\floor{qrN^{2/3}}+1}I^{\lambda, \textup{NE}}_{ v_N+e_1+e_2-ie_1}}\\
& = \frac{Z^{\textup{NE}}_{(a, 1), v_N+e_1 +e_2}(\tau^{\textup{NE}} \geq \floor{qrN^{2/3}})} {Z^{\textup{NE}}_{(b, 1), v_N+e_1 +e_2}(\tau^{\textup{NE}} \geq \floor{qrN^{2/3}})}\\
& = \frac{Q^{\textup{NE}}_{(a, 1), v_N+e_1 +e_2}(\tau^{\textup{NE}} \geq \floor{qrN^{2/3}})} {Q^{\textup{NE}}_{(b, 1), v_N+e_1 +e_2}(\tau^{\textup{NE}} \geq \floor{qrN^{2/3}})} \prod_{i=a+1}^bI^{\lambda, \textup{NE}}_{(i,1)} \\
\qquad & \leq \frac{1}{1-e^{-C_1r^2N^{1/3}}}\prod_{i=a+1}^bI^{\lambda, \textup{NE}}_{(i,1)} \qquad (\text{on the event \eqref{high_p_event}}).
\qedhere
\end{align*}
\end{proof}

\begin{figure}[t]
\captionsetup{width=0.8\textwidth}
 \begin{center}
 
\begin{tikzpicture}[>=latex, scale=1.2]

\draw[line width = 0.3mm, ->, gray] (7+4.2,3)--(7+4.2-4,3);
\draw[line width = 0.3mm, ->, gray] (7+4.2,3)--(7+4.2,3-3);

\draw[line width = 0.3mm, loosely dotted, ->] (7+1,0.5) -- (7+1+3,3.5+1.5);
\draw[dotted, color=lightgray, line width = 1.3mm] (7+1,0.5) -- (7+1,1) --(7+1.5, 1) -- (7+1.5,1.5) --(7+2,1.5) -- (7+2,2)--(7+3,2)--(7+3,2.5) -- (7+3.76,2.5)-- (7+3.76,3) -- (7+4.2,3)  ;
\draw[dotted, color=darkgray, line width = 1mm](7+3.5,4.26)--(10.5, 2.5);
\draw[dotted, color=black, line width = 0.5mm] (7+1,0.5) -- (7+1,1) --(7+1.5, 1) -- (7+1.5,1.5) --(7+2,1.5) -- (7+2,2)--(7+3,2)--(7+3,2.5) -- (7+3.5,2.5);

\draw[line width = 0.3mm, ->] (7+3.5,4.26)--(7,4.26);
\draw[line width = 0.3mm, ->] (7+3.5,4.26)--(7+3.5,1.3);

\draw[ fill=black](7+3.5,4.26)circle(1mm);

\node at (11.4,4.9) {\small $\xi[\lambda]$};

\draw[ fill=lightgray](7+4.2,3)circle(1mm);
\node at (7+5.7,3) {\small $b_N = v_N^++e_1+e_2$};

\draw[ fill=white](7+2.65,3)circle(1mm);

\draw[ fill=lightgray](7+3.5,3)circle(1mm);
\node at (7+4.8,3.8) {\small $w_N^++e_2$};

\draw[line width = 0.3mm, ->]  (7+4.5,3.55) --(7+3.6,3.1);

\draw[ fill=lightgray](7+1,0.5)circle(1mm);
\node at (7+2.4 , 0.5) {\small $a_N = (\alpha rN^{2/3},1)$};

\node at (10.4, 4.6) {$z_N$};

\end{tikzpicture}
 
\end{center}
\caption{\small  Illustration of \eqref{lambda_ray}. By Lemma \ref{relatetau}, $Q^{\lambda, \textup{NE}}_{a_N, b_N }(\tau^{\textup{NE}}  \leq qrN^{2/3}) = Q^{\lambda, \textup{NE}}_{ a_N, z_N }(\tau^{\textup{NE}}  <-(z_N \cdot e_2 - v_N \cdot e_2 - 1))$, and this is unlikely because $z_N\cdot e_2 - v_N \cdot e_2 - 1 \geq CrN^{2/3}$.}
\label{fig111}
\end{figure}
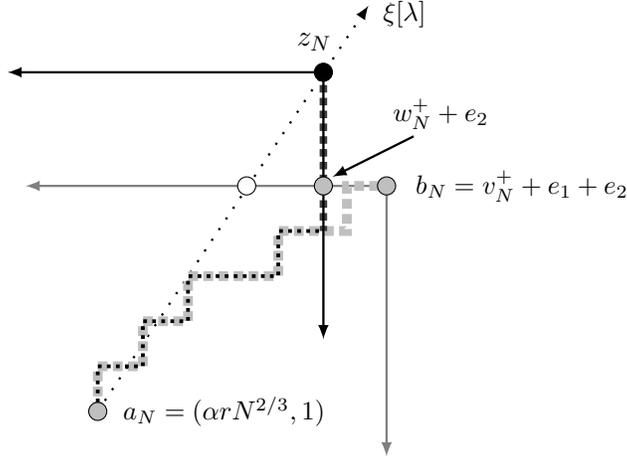

With the new horizontal increments $I^{\lambda,\textup{NE}}$ and $I^{\eta,\textup{NE}}$, define two more two-sided multiplicative random walks $M^{\lambda}_n$ and $M^{\eta}_n$  with  $M^{\lambda}_{0}= M^{\eta}_{0} = 1$,
\begin{align*}
M_n^{\lambda}/M_{n-1}^{\lambda} = I^\rho_{(n,0)}/{I}{}^{\lambda, \textup{NE}}_{(n,1)},\quad\text{and}\quad
M_n^{\eta} /M_{n-1}^{\eta} = I^\rho_{(n, 0)}/{I}{}^{\eta, \textup{NE}}_{(n,1)}.
\end{align*}
On the event $A$ from \eqref{A78}, we get
\begin{align}\label{the_bounds}
\frac{1}{2}M_n^{\lambda} \leq M_n^{w_N^+}  \leq 2M_n^{\eta}  \text{ for $n \geq 1$} 
\quad\text{and}\quad 
\frac{1}{2}M_n^{\eta} \leq M_n^{w_N^-} \leq 2 M_n^{\lambda} \text{ for $n \leq 0$} .
\end{align}
Now we can bound
\begin{align}
\begin{split}
\mathbb{P}(\text{event in }\eqref{boundpm} \cap A)   &\leq  \mathbb{P} \Big(  \Big\{\max_{n\in  [\![ 1,  \floor{\tfrac{1}{2}\alpha rN^{2/3}} ]\!]} \log M_n^{\lambda}\leq 6|\log \delta|^2 \sqrt{\delta} N^{1/3} \Big\}\\
& \qquad \qquad \qquad \bigcap  \Big\{  \max_{n\in  [\![- \floor{\tfrac{1}{2}\alpha rN^{2/3}}, 0 ]\!]} \log M_n^{\eta}\leq 6 |\log \delta|^2 \sqrt{\delta} N^{1/3} \Big\} \Big),
\end{split}\label{boundwithind}\\
\mathbb{P}(\text{event in }\eqref{boundpm1} \cap A) & \leq \mathbb{P} \Big(  \max_{n \in [\![1, \floor{\delta N^{2/3}} ]\!]} \log M_n^{\eta} \geq \tfrac{1}{2}|\log \delta|^2 \sqrt{\delta} N^{1/3}\Big).\label{boundwithind1}
\end{align}
Theorem B.4 from \cite{Bus-Sep-22-ejp} states that the increment  variables $\{I^{\lambda, \textup{NE}}_{(i,1)}\}_{i\geq 1} \cup \{I^{\eta, \textup{NE}}_{(i,1)}\}_{i \leq 0}$ are independent, and these are independent of the boundary weights $\{I^\rho_{(i,0)}\}$ by construction. Thus, we get 
\begin{align}\label{ind_prod}
\begin{split}
\eqref{boundwithind} &\leq \mathbb{P} \Big(\max_{n\in  [\![ 1,  \floor{\tfrac{1}{2}\alpha rN^{2/3}} ]\!]} \log M_n^{\lambda}\leq 6|\log \delta|^2 \sqrt{\delta} N^{1/3} \Big)\\
& \qquad \qquad \qquad \qquad \qquad \times\mathbb{P}\Big(   \max_{n\in  [\![- \floor{\tfrac{1}{2}\alpha rN^{2/3}}, 0 ]\!]} \log M_n^{\eta}\leq 6 |\log \delta|^2 \sqrt{\delta} N^{1/3}  \Big).
\end{split}
\end{align}
The next step is a random walk estimate because the steps of the walks $\log M^\lambda_n$ and $\log M^\eta_n$ are given by the difference of two independent log-gamma random variables, which are sub-exponential random variables.  Using Proposition \ref{rwest}, we see that 
$\text{\eqref{ind_prod}}\leq C |\log \delta|^6 \delta$.
Using Theorem \ref{max_sub_exp}, we also have
$\text{\eqref{boundwithind1}}\leq C \delta.$
 
 To summarize, we have shown 
\begin{align}\label{D_region}
\begin{aligned}
\mathbb{P} (\text{event in \eqref{mainest2}})
&\leq 2\mathbb{P} (A^c) + \mathbb{P}(\text{event in }\eqref{boundpm} \cap A)+ \mathbb{P}(\text{event in }\eqref{boundpm1} \cap A)\\
& \leq 2e^{-C|\log \delta|^3} + C|\log \delta|^6 \delta \\
& \leq  |\log \delta|^{10} \delta.
\end{aligned}
\end{align}
This completes the proof of the desired bound \eqref{dupper:claim1} with the maximum taken over the dark region $\cD\subset\partial^{\textup{NE}}\lzb 0, v_N \rzb$ in Figure \ref{fig9}. 

For the endpoints in $\mathcal{L}^+$, we have the following estimate,  
\begin{align*}
&\mathbb{P} \Big(\max_{ z \in \mathcal{L}^+} Q^\rho_{0,z}\{1\leq \tau \leq \delta N^{2/3}\} \geq e^{-|\log \delta|^2 \sqrt{\delta} N^{1/3}} \Big) \\
 & \leq \mathbb{P} \Big(\max_{ z \in \mathcal{L}^+} Q^\rho_{0,z}\{1\leq \tau \} \geq e^{-|\log \delta|^2 \sqrt{\delta} N^{1/3}} \Big)\\
& \leq \mathbb{P} \Big( Q^\rho_{0,w_N^+}\{1\leq \tau \} \geq e^{-|\log \delta|^2 \sqrt{\delta} N^{1/3}} \Big) \qquad (\text{by Lemma \ref{polymono}})\\
 &\leq e^{-C|\log \delta|^3} \qquad \qquad \qquad \qquad \qquad \qquad \quad \ (\text{by Corollary \ref{sec3cor}}).
\end{align*}

\begin{figure}[t]
\captionsetup{width=0.8\textwidth}
\begin{center}
\begin{tikzpicture}[scale = 1]

\draw[gray ,dotted, line width=0.3mm](1,-1)--(4,3);
\draw[gray ,dotted, line width=0.3mm](0,0)--(4,16/3);

\draw[gray, line width=0.3mm, ->] (0,0) -- (6,0);
\draw[gray, line width=0.3mm, ->] (0,0) -- (0,4);

\draw[ line width=0.3mm, ->] (1,-1) -- (5,-1);
\draw[ line width=0.3mm, ->] (1,-1) -- (1,2);

\draw[ fill=white](1.75, 0)circle(1.3mm);
\draw[ fill=black ](1,0)circle(1.3mm);

\draw[gray ,dotted, line width=1.1mm] (0,0) -- (0.7,0) -- (0.7,0.5) --(1,0.5) ;

\draw[lightgray ,dotted, line width=1.4mm] (1,0.5) --(2,0.5)-- (2,1) -- (4, 1) -- (4,3) ;
\draw[black ,dotted, line width=0.6mm] (1,0)  --(1,0.5) --(2,0.5)-- (2,1) -- (4, 1) -- (4,3) ;

\draw[black ,dotted, line width=1.1mm] (1,-1) -- (1,0.5);

\draw[ fill=lightgray](0,0)circle(1mm);
\node at (-0.3,-0.4) {$(0,0)$};

\draw[ fill=lightgray](1,-1)circle(1mm);
\node at (0.7,-1.4) {$(\delta  N^{2/3},-h)$};

\fill[color=white] (4,3)circle(1.7mm); 
\draw[ fill=lightgray](4,3)circle(1mm);
\node at (4.5,2.8) {$w_N^-$};

\fill[color=white] (4,16/3)circle(1.7mm); 
\draw[ fill=lightgray](4,16/3)circle(1mm);
\node at (4.4,16/3+0.3) {$v_N$};

\draw [decorate,decoration={brace,amplitude=10pt, mirror},  xshift=0pt,yshift=0pt]
(4.2,3) -- (4.2,16/3) ;

\node at (6.1,4.2) {$qrN^{2/3}$};

\end{tikzpicture}
\end{center}
\caption{\small We have $Q_{0,w_N^-}\{\tau\leq \delta  N^{2/3}\}$ $= Q_{(\floor{\delta  N^{2/3}}, -h), w_N^-}\{\tau < -h\}$ which is rare because $h$ is lower bounded by $CrN^{2/3}$. The lower bound on $h$  follows from the fact the vertical distance between $v_N$ and $w_N^-$ is of order $rN^{2/3}$.}
\label{figlminus}
\end{figure}
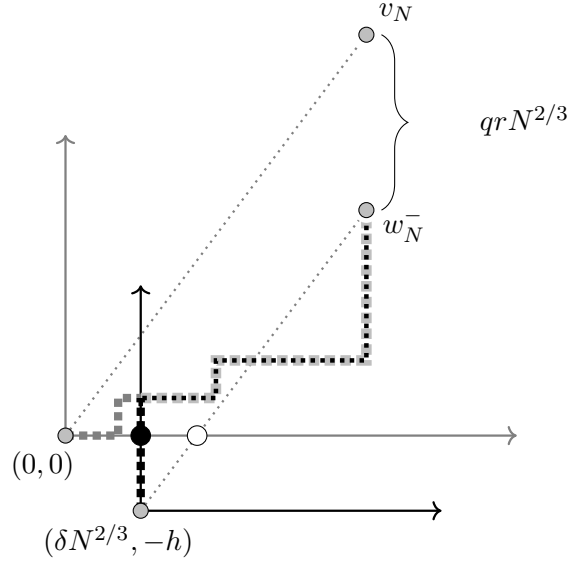

Similarly, for the $\mathcal{L}^-$ region,  we have
\begin{align*}
\mathbb{P} \Big(\max_{ z \in \mathcal{L}^-} Q^\rho_{0,z}\{1\leq \tau \leq \delta N^{2/3}\} \geq  e^{-|\log \delta|^2 \sqrt{\delta} N^{1/3}} \Big)
&\leq \mathbb{P} \Big(\max_{ z \in \mathcal{L}^-} Q^\rho_{0,z}\{\tau \leq \delta N^{2/3}\} \geq  e^{-|\log \delta|^2 \sqrt{\delta} N^{1/3}} \Big)\\
&\leq \mathbb{P} \Big( Q^\rho_{0,w_N^-}\{\tau \leq \delta N^{2/3}\}\geq  e^{-|\log \delta|^2 \sqrt{\delta} N^{1/3}} \Big)\\ &\leq e^{-C|\log \delta|^3}.
\end{align*}
The idea for the last inequality is illustrated in Figure \ref{figlminus}, essentially again following from Lemma \ref{polymono} and Corollary \ref{sec3cor}. This finishes the argument for the $\mathcal{L}^-$ region. The bound \eqref{dupper:claim1} is thus proved.

The probability bound implies the upper bound in \eqref{dupper:claim2}:
$$\mathbb{E}\Big[\max_{z\in \partial^{\textup{NE}} [\![0, v_N]\!]} Q^\rho_{0,z}\{|\tau|\leq \delta N^{2/3}\} \Big] \leq \delta + \mathbb{P}\Big(\max_{z\in \partial^{\textup{NE}}[\![0, v_N]\!]} Q^\rho_{0,z}\{|\tau|\leq \delta N^{2/3}\} \geq \delta \Big) \leq C|\log \delta|^{10}\delta.$$

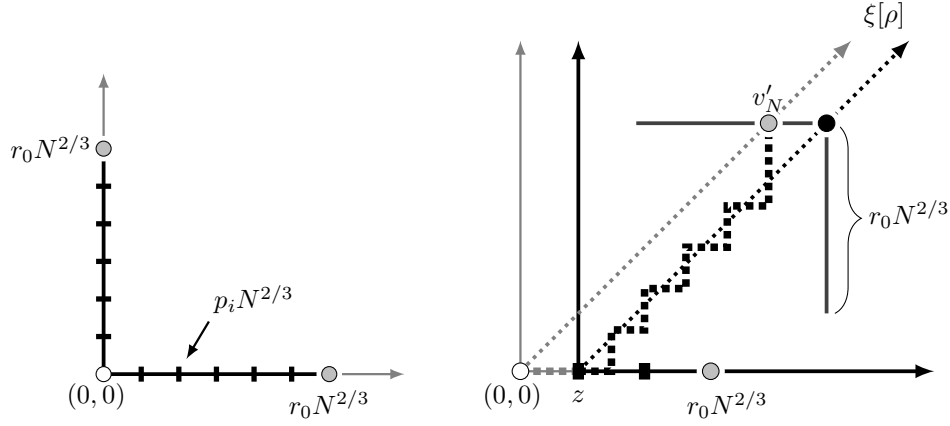
\begin{figure}[t]
\captionsetup{width=0.8\textwidth}
\begin{center}
 
\begin{tikzpicture}[>=latex, scale=1]

\draw[color = gray, line width = 0.3mm, ->] (0,0)--(0,4);
\draw[color = gray, line width = 0.3mm, ->] (0,0)--(4,0);

\draw[color = black, line width = 0.5mm] (0,0)--(0,3);
\draw[color = black, line width = 0.5mm] (0,0)--(3,0);

\draw[color = black, line width = 0.7mm] (0.5,-0.1)--(0.5,0.1);
\draw[color = black, line width = 0.7mm] (1,-0.1)--(1,0.1);
\draw[color = black, line width = 0.7mm] (1.5,-0.1)--(1.5,0.1);
\draw[color = black, line width = 0.7mm] (2,-0.1)--(2,0.1);
\draw[color = black, line width = 0.7mm] (2.5,-0.1)--(2.5,0.1);

\draw[color = black, line width = 0.7mm] (-0.1,0.5)--(0.1,0.5);
\draw[color = black, line width = 0.7mm] (-0.1,1)--(0.1,1);
\draw[color = black, line width = 0.7mm] (-0.1,1.5)--(0.1,1.5);
\draw[color = black, line width = 0.7mm] (-0.1,2)--(0.1,2);
\draw[color = black, line width = 0.7mm] (-0.1,2.5)--(0.1,2.5);

\node at (2,1) {\small $p_iN^{2/3}$};
\draw[color = black, line width = 0.3mm, ->] (1.4,0.7)--(1.1,0.2);

\fill[color=white] (3,0)circle(1.7mm); 
\draw[ fill=lightgray](3,0)circle(1mm);
\node at (3,-0.4) {\small $r_0N^{2/3}$};

\fill[color=white] (0,3)circle(1.7mm); 
\draw[ fill=lightgray](0,3)circle(1mm);
\node at (-0.7,3) {\small $r_0N^{2/3}$};

\draw[ fill=white ](0,0)circle(1mm);
\node at (-0.1,-0.3) {\small $(0,0)$};

\end{tikzpicture}
\qquad
\begin{tikzpicture}[>=latex, scale=1.1]

\draw[color = gray, line width = 0.3mm, ->] (0,0)--(0,4);
\draw[color = gray, line width = 0.3mm] (0,0)-- (0.7,0);
\draw[color = black, line width = 0.5mm, ->] (0.7,0)-- (5,0);

\draw[line width = 0.5mm,  ->] (0.7,0)--(0.7,4);

\draw[color=gray, dotted, line width = 1mm] (0,0) -- (0.7,0);

\draw[color = darkgray,  line width = 0.5mm,] (3.7-2.3,3)--(3.7,3) -- (3.7,3 -2.3);

\draw[color=black, dotted, line width = 1mm]  (0.7,0)--(1.1, 0)--(1.1,0.5) -- (1.5,0.5) -- (1.5,1)--(2,1)--(2,1.5)--(2.5,1.5) -- (2.5,2) -- (3,2) -- (3,3) ;
\draw [decorate,decoration={brace,amplitude=10pt}, xshift=0pt,yshift=0pt]
(3.8,3) -- (3.8,0.7) ;
\node at (4.7,1.9) {\small $r_0 N^{2/3} $};

\draw[color=gray, dotted, line width = 0.5mm, ->]  (0,0) -- (4,4);

\draw[color=black , dotted, line width = 0.5mm, ->]  (0.7,0) -- (4.7,4);

\node at (4.4,4.3 ) {\small $\xi[\rho]$};

\draw[line width = 1.5mm] (0.7,-0.1)--(0.7,0.1);
\draw[line width = 1.5mm] (1.5,-0.1)--(1.5,0.1);

\fill[color=white] (2.3,0)circle(1.7mm); 
\draw[ fill=lightgray](2.3,0)circle(1mm);
\node at (2.5,-0.4) {\small $r_0N^{2/3}$};

\fill[color=white] (3,3)circle(1.7mm); 
\draw[ fill=lightgray](3,3)circle(1mm);

\fill[color=white] (3.7,3)circle(1.7mm); 
\draw[ fill=black](3.7,3)circle(1mm);

\draw[ fill=white ](0,0)circle(1mm);
\node at (-0.1,-0.3) {\small $(0,0)$};

\node at (0.7,-0.3) {\small $z$};

\node at (3,3.3) {\small $v_N'$};

\end{tikzpicture}
 
\end{center}
\caption{\small  Left: Partition for the collection of paths in \eqref{exit47}.   The origin is not necessarily a partition point. Right: An illustration for \eqref{justify}. The nested polymer with its quenched measure $Q^{(0)}_{z, v_N'}$ is shown in black.}
\label{fig13}
\end{figure}

We turn to the lower bound in \eqref{dupper:claim2}. Utilizing  the 
 proof of Lemma \ref{rtail_lb}, we will show that we can fix two constants $r_0$ and  $N_0$ (depending on $\varepsilon$)  such that, for $N\geq N_0$,  
\beq \label{exit47} \mathbb{E} \bigl[ Q^{{\rho}}_{0, v_N+e_1+e_2}\{|\tau | \leq r_0 N^{2/3}\}\bigr]\geq \frac{1}{2}.\eeq
Abbreviate   $v_N'=v_N+e_1+e_2 $.  
Given $\delta\geq N^{-2/3}$,  partition $[-r_0,r_0]$ as 
\[    -r_0=p_0   < p_1 < \dotsm  <  p_{\floor{\frac{2r_0}{\delta}}}< p_{\floor{\frac{2r_0}{\delta}}+ 1} = r_0  \]
with mesh size $p_{i+1}-p_i\le \delta$. 
See the left side of Figure \ref{fig13}.
By \eqref{exit47} 
 there exists an integer $i^\star\in[ 0, \floor{\frac{2r_0}{\delta}}]$ such that 
\beq \label{istar}\mathbb{E} \big[Q^\rho_{0, v_N'}\{   p_{i^\star} N^{2/3} \le \tau \le  p_{i^\star+1}N^{2/3}\} \big] \geq \frac{\tfrac{1}{2} \delta}{2r_0} =  C(\varepsilon) \delta.\eeq

Since we cannot control the exact location of $i^\star$, we compensate by varying the endpoint around $v_N'$. 
Let \[  A_N=\lzb v_N' -r_0 N^{2/3}e_1, v_N'\rzb \cup \lzb v_N'-r_0 N^{2/3}e_2, v_N'\rzb \]
denote   the set of lattice points on the boundary  of the rectangle $\lzb  0, v_N'\rzb $ within distance  $r_0 N^{2/3}$ of the upper right corner $v_N'$.   We claim  that for any integer $i\in [0,  \floor{\frac{2r_0}{\delta}}]$, 
\begin{align}  
\label{alli}
\mathbb{E} \Big[\max_{z \in A_N} Q^\rho_{0, z}\{ |\tau|\leq \delta N^{2/3}\}\Big] 
 \geq \mathbb{E} \big[Q^\rho_{0, v_N'}\{   p_{i^\star} N^{2/3} \le \tau \le  p_{i^\star+1}N^{2/3}\} \big].
\end{align}
Then bounds  \eqref{istar} and  \eqref{alli} imply
\beq\label{newest}\mathbb{E} \Big[\max_{z \in A_N} Q^\rho_{0, z}\{ |\tau|\leq \delta N^{2/3}\}\Big]  \geq C(\rho)\delta,\eeq
and the lower bound in \eqref{dupper:claim2} follows directly from \eqref{newest}.

 
It remains to prove claim \eqref{alli}. If $p_{i^\star}\le 0 \le  p_{i^\star+1}$,  \eqref{alli} is immediate.   We argue the case $p_{i^\star+1} > p_{i^\star} > 0$, the other one being analogous.  
 Set  $z=(\floor{p_{i^\star} N^{2/3}} - 1)e_1$ and 
 apply Lemma \ref{nestedpoly} to the polymer with the nested quenched measure $Q^{(0)}_{z, \,\bbullet}$. See the right side of Figure \ref{fig13}. Then 
 \begin{align}
 &\mathbb{E} \bigl[Q^{{\rho}}_{0, v_N'}\{p_{i^\star} N^{2/3} \le \tau \le  p_{i^\star+1}N^{2/3} \} \bigr]\notag\\
&\le  \mathbb{E} \bigl[Q^{(0)}_{z, v_N'}\{1 \leq \tau \leq \delta N^{2/3}\}\bigr] \label{justify}\\
&= \mathbb{E} \bigl[Q^{{\rho}}_{0, v_N'-(\floor{p_{i^\star} N^{2/3}} - 1)e_1}\{1 \leq \tau\leq \delta N^{2/3}\}\bigr] ]\qquad\text{(by shift-invariance)}\notag\\
 & \leq \mathbb{E}\Big[\max_{z \in A_N}   Q^{{\rho}}_{0, z}\{|\tau| \leq \delta N^{2/3}\}\Big].\notag
\end{align}
Theorem \ref{dupper} is proved.\hfill\qed

\subsection{Coupled polymer measures}
\begin{proof}[Proof of Theorem \ref{joint_r}]
From Theorem \ref{r_up_low}, there exists an  event $A$ with probability at least $e^{-C_1r^3}$ such that on $A$, we have 
\begin{align*}
\min_{x\in \partial^{\textup{NE}}[\![0, v_N]\!]}  Q^{{\rho}}_{0,x} \{|\tau|> rN^{2/3}\} &\geq 1-e^{-C_2r^2 N^{1/3}}.
\end{align*}
By a union bound, on the event $A$ we have 
$$ \wt{Q}^{{\rho}}_{0,\partial^{\textup{NE}}[\![0, v_N]\!]} \Big(\bigcap_{x\in  \partial^{\textup{NE}}[\![0, v_N]\!]}\{|\wt\tau_{0,x}|> rN^{2/3} \}\Big) \geq 1-N e^{-C_2 r^2N^{1/3}} \geq 1-e^{-C_3 r^2N^{1/3}}$$
provided that $r_0, N_0$ are sufficiently large. With this, we have finished the proof of this theorem.

\end{proof}

\begin{proof}[Proof of Theorem \ref{joint_d}]
By Theorem \ref{dupper}, on the high probability event $B$ with probability at least $ 1-C_1\delta |\log \delta|^{10}$, we have 
\begin{align*}
\max_{x\in \partial^{\textup{NE}} [\![0, v_N]\!]} Q^\rho_{0,x}\{|\tau|\leq \delta N^{2/3}\} & \leq e^{-|\log \delta|^2\sqrt{\delta}N^{1/3}}.
\end{align*}
With the assumption that $\sqrt{\delta} N^{1/3} \geq 1$, a union bound implies that on $B$, 
$$\wt{Q}_{0,\partial^{\textup{NE}} [\![0, v_N]\!]}\Big(\bigcup_{x\in \partial^{\textup{NE}}[\![0, v_N]\!]}\{\wt \tau_{0,x} \leq \delta N^{2/3}\}\Big) \leq Ne^{-|\log \delta|^2 \sqrt{\delta}N^{1/3}} \leq \delta.$$
The claim of the theorem follows.
\end{proof}

\section{Coalescence of semi-infinite polymers}\label{dual_coal}

In this section, we will define the semi-infinite polymer measures and prove Theorems \ref{d_coal} and \ref{r_coal} about their coalescence. The proof will use a duality between forward and backward polymer measures, which we describe in Section \ref{intro_dual}.


\subsection{Busemann functions and semi-infinite polymers}\label{Bus+Poly}
Following Theorem 4.1 from \cite{Geo-etal-15}, for any fixed $\rho\in(0,\mu)$, $\mathbb P$-almost surely, the limits 
\begin{align}
&B^\rho(x,y) = \lim_{N\to \infty} \bigl(\log Z_{x, v_N}-\log Z_{y, v_N}\bigr),\label{Bdef}
\end{align}
exist for any $x,y\in\mathbb Z^2$ and satisfy
\[Y_z^{-1}=e^{-B^\rho(z,z+e_1)}+e^{-B^\rho(z,z+e_2)}\]
and 
\[B^\rho(x,y)+B^\rho(y,z)=B^\rho(x,z),\]
for all $x,y,z\in\mathbb Z^2$.
Furthermore, for any $z\in\mathbb Z^2$, $I^{\rho}_z=e^{B^\rho(z-e_1,z)}\sim\text{Ga}^{-1}(\mu-{{\rho}})$, $J^{\rho}_z=e^{B^\rho(z-e_2,z)}\sim\text{Ga}^{-1}(\rho)$, and 
if we fix any vertex $v\in \mathbb{Z}^2$, then the weights 
$Y_z, I^{{\rho}}_{v-ke_1}, J^{{\rho}}_{v-ke_2}$, $z\in v-\mathbb{Z}^2_{>0}$, $k\ge0$, are mutually independent and thus define a stationary polymer with northeast boundary on $v - \mathbb{Z}^2_{\geq 0}$.  The partition function and quenched polymer measure will be denoted by $Z^{{{\rho}}, \textup{NE}}_{\bbullet, v}, Q^{{{\rho}}, \textup{NE}}_{\bbullet, v}$.
Similarly, if we define 
\[\wc Y^\rho_z=\frac1{e^{-B^\rho(z-e_1,z)}+e^{-B^\rho(z-e_2,z)}},\quad z\in\mathbb Z^2\,,\]
then $\wc Y^\rho_z\sim\text{Ga}^{-1}(\mu)$ for all $z\in\mathbb Z^2$, and for any vertex $v\in \mathbb{Z}^2$ the weights 
$\wc Y^{{\rho}}_z, I^{{\rho}}_{v+ke_1}, J^{{\rho}}_{v+ke_2}$, $z\in v+\mathbb Z^2_{>0}$, $k\ge1$, are mutually independent and defined a stationary polymer with southwest boundary on $v + \mathbb{Z}^2_{\geq 0}$.  The partition function and quenched polymer measure will be denoted by $\wc Z^{{{\rho}}, SW}_{v, \bbullet}, \wc Q^{{{\rho}}, SW}_{v, \bbullet}$. 
Thus, for any $v\in\mathbb Z^2$, $\wc Q^\rho_{v,\bbullet}$ has the same distribution as the generic $Q^\rho_{v,\bbullet}$ we introduced in Section \ref{stat_poly} and used in Section \ref{exit_sec}. (This distributional equality is a special feature of the inverse-gamma polymer.)

The $\xi[\rho]$-directed (forward) semi-infinite polymer measure starting at $z$, denoted by $\Pi^{{{\rho}}}_{z}$, is a Markov chain on $\mathbb Z^2$ with transition probabilities 
\begin{align}
\begin{split}
\pi^{{\rho}}(x, x+e_1) 
&= \frac{J^{{\rho}}_{x+e_2}}{I^{{\rho}}_{x+e_1}+ J^{{\rho}}_{x+e_2}}=Y_x\, e^{-B^\rho(x,x+e_1)},\\ 
\pi^{{\rho}}(x, x+e_2) 
&= \frac{I^{{\rho}}_{x+e_1}}{I^{{\rho}}_{x+e_1}+ J^{{\rho}}_{x+e_2}}=Y_x\, e^{-B^\rho(x,x+e_2)}\,. 
\end{split}
\label{polymerRWRE}
\end{align}

The $\xi[\rho]$-directed backward semi-infinite polymer measure starting at $z$, denoted by $\wc\Pi^{{{\rho}}}_{z}$, is a Markov chain on $\mathbb Z^2$ with transition probabilities 
\beq \wc\pi^{{\rho}}(x, x-e_1) = \frac{J^{{\rho}}_{x}}{I^{{\rho}}_{x}+ J^{{\rho}}_{x}}=\wc Y^\rho_x\, e^{-B^\rho(x-e_1,x)}\quad\text{and}\quad \wc\pi^{{\rho}}(x, x-e_2) = \frac{I^{{\rho}}_{x}}{I^{{\rho}}_{x}+ J^{{\rho}}_x}=\wc Y^\rho_x\,e^{-B^\rho(x-e_2,x)}\,. \label{backpolymerRWRE}\eeq

The next proposition relates the semi-infinite polymers to the stationary ones.  
For $u\in\mathbb Z^2$ and $v\in u+\mathbb Z^2_{>0}$ let $\Pi^\rho_{u,v}$ be the distribution of the Markov chain that starts at $u$,  has transition probabilities $\pi^\rho(x,x+e_i)$, $i\in\{1,2\}$, if $x\in [\![u, v-e_1-e_2]\!]$, and when it gets to $v-\Z_{>0}e_i$, $i\in\{1,2\}$, it takes $e_i$ steps to get to $v$ and end there. Similarly, let $\wc\Pi^\rho_{v,u}$ be the distribution of the Markov chain that starts at $v$,  has transition probabilities $\wc\pi^\rho(x,x-e_i)$, $i\in\{1,2\}$, if $x\in [\![u+e_1+e_2, v]\!]$, and when it gets to $u+\Z_{>0}e_i$, $i\in\{1,2\}$, it takes $-e_i$ steps to get to $u$ and end there. 

Define, similarly to $\mathbb{X}_{u,v}$, the set $\mathbb{X}_{v,u}$ of down-left paths starting at $v$ and ending at $u$. For $x_\bbullet\in\mathbb{X}_{u,v}$, respectively $\in\mathbb{X}_{v,u}$, let $\bar x_\bbullet\in\mathbb{X}_{v,u}$, respectively $\in\mathbb{X}_{u,v}$, be the path that traverses $x_\bbullet$ in the reverse direction.


\begin{proposition} \label{stat_iid}
We have $\mathbb P$-almost surely, for any $u\in\mathbb Z^2$ and $v\in u+\mathbb Z^2_{>0}$, for any $x_\bbullet\in\mathbb{X}_{u,v}$, 
$$\Pi^{{\rho}}_{u,v}(x_\bbullet) =  Q^{{{\rho}}, \textup{NE}}_{u, v}(x_\bbullet)
\quad\text{and}\quad\wc\Pi^{{\rho}}_{v,u}(\bar x_\bbullet) =  \wc Q^{{{\rho}}, \textup{SW}}_{u, v}(x_\bbullet).$$
\end{proposition}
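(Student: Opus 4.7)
The plan is to reduce both claimed identities to the single auxiliary fact
\begin{equation}\label{plan:key}
Z^{\rho,\textup{NE}}_{u,v} \;=\; e^{B^\rho(u,v)} \;=\; \wc Z^{\rho,\textup{SW}}_{u,v}\qquad\text{for all } u\le v \text{ in }\Z^2,
\end{equation}
and then to check directly that each admissible path receives the same weight on the two sides of each equality in the proposition.

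The first step is to prove \eqref{plan:key} by induction over $\lzb u,v\rzb$. The case $u=v$ is trivial, and when $\lzb u,v\rzb$ is a line segment the unique admissible path has weight $\prod I^\rho$ or $\prod J^\rho$, which telescopes to $e^{B^\rho(u,v)}$ using the Busemann representations $I^\rho_{z-e_1,z}=e^{B^\rho(z-e_1,z)}$, $J^\rho_{z-e_2,z}=e^{B^\rho(z-e_2,z)}$ and the cocycle identity. When $u$ is strictly interior to the rectangle, the first-step recursion
\[ Z^{\rho,\textup{NE}}_{u,v}=Y_u\bigl(Z^{\rho,\textup{NE}}_{u+e_1,v}+Z^{\rho,\textup{NE}}_{u+e_2,v}\bigr), \]
together with the local identity $Y_u^{-1}=e^{-B^\rho(u,u+e_1)}+e^{-B^\rho(u,u+e_2)}$ and the cocycle $B^\rho(u,v)=B^\rho(u,u+e_i)+B^\rho(u+e_i,v)$, closes the induction after one line of algebra. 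The $\wc Z^{\rho,\textup{SW}}$ identity is obtained by exactly the same pattern with the last-step recursion and the dual local identity $\wc Y^\rho_v=(e^{-B^\rho(v-e_1,v)}+e^{-B^\rho(v-e_2,v)})^{-1}$.

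Next I would compare path weights. Fix $x_\bbullet=(x_0,\dotsc,x_n)\in\mathbb X_{u,v}$ and let $m$ be the smallest index with $x_m$ on the NE boundary of $\lzb u,v\rzb$. Under $\Pi^\rho_{u,v}$ only the first $m$ transitions are random, and by the cocycle
\[ \Pi^\rho_{u,v}(x_\bbullet)=\prod_{i=0}^{m-1}\pi^\rho(x_i,x_{i+1})=\prod_{i=0}^{m-1}Y_{x_i}\cdot e^{-B^\rho(u,x_m)}. \]
On the other hand, in $Q^{\rho,\textup{NE}}_{u,v}(x_\bbullet)$ the factors $\wt Y_{x_i}$ split as bulk factors $Y_{x_i}$ for $i<m$, axis factors $I^\rho$ or $J^\rho$ for $m\le i<n$ that telescope to $e^{B^\rho(x_m,v)}$, and $\wt Y_v=1$. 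Dividing by $Z^{\rho,\textup{NE}}_{u,v}=e^{B^\rho(u,x_m)+B^\rho(x_m,v)}$ yields exactly the expression for $\Pi^\rho_{u,v}(x_\bbullet)$.

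The backward identity is handled by the same bookkeeping applied to the reverse path. Now let $k$ be the first index with $x_k\in\lzb u+e_1+e_2,v\rzb$; the random backward transitions from $v$ down to $x_{k}$ contribute, after cocycle telescoping, $\prod_{i=k}^{n}\wc Y^\rho_{x_i}\cdot e^{-B^\rho(x_{k-1},v)}$, which matches $\wc Q^{\rho,\textup{SW}}_{u,v}(x_\bbullet)$ once the $\wt Y$-weights are unwound and \eqref{plan:key} is applied. The degenerate case in which $x_\bbullet$ lies entirely on a coordinate axis of $\lzb u,v\rzb$ is trivial: both sides equal $1$. The whole argument is bookkeeping; the only substantive input is \eqref{plan:key}, itself a direct consequence of the two Busemann recursions.
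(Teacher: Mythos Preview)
Your proof is correct and follows essentially the same approach as the paper: both reduce the identity to the key fact $Z^{\rho,\textup{NE}}_{u,v}=e^{B^\rho(u,v)}=\wc Z^{\rho,\textup{SW}}_{u,v}$, then match path weights via the cocycle property of $B^\rho$. The only organizational difference is that you prove this partition-function identity first by an induction on the rectangle, whereas the paper obtains it by summing the already-computed path probabilities $\wc\Pi^\rho_{v,u}(\bar x_\bbullet)$ over all paths and using that they sum to one.
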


\begin{proof}
We prove the second claim, the first one being similar. Let $\ell=|v-u|_1$ and index the path $x_\bbullet$ so that $x_0=u$ and $x_\ell=v$. We will consider the case where $x_1=e_1$ and the proof in the other case is identical. Let $k\ge1$ be such that $x_k=u+ke_1$ and $x_{k+1}=u+ke_1+e_2$. Then
\begin{align*}
\wc\Pi^{{\rho}}_{v,u}(\bar x_\bbullet)
&=\prod_{i=k}^{\ell-1}\wc\pi^\rho(x_{i+1},x_i)
=\prod_{i=k}^{\ell-1}\wc Y^\rho_{x_{i+1}}e^{-B^\rho(x_i,x_{i+1})}\\
&=e^{-B^\rho(x_k,v)}\prod_{i=k}^{\ell-1}\wc Y^\rho_{x_{i+1}}
=e^{-B^\rho(u,v)}\prod_{i=1}^{k}I^\rho_{u+ie_1}\prod_{i=k}^{\ell-1}\wc Y^\rho_{x_{i+1}}\,.
\end{align*}
Adding the above over all paths $x_\bbullet\in\mathbb X_{u,v}$ gives 
    \[1=e^{-B^\rho(u,v)}\wc Z^{\rho,\textup{SW}}_{u,v}.\]
Consequently, 
\[\wc\Pi^{{\rho}}_{v,u}(\bar x_\bbullet)
=\frac{\prod_{i=1}^{k}I^\rho_{u+ie_1}\prod_{i=k}^{\ell-1}\wc Y^\rho_{x_{i+1}}}{\wc Z^{\rho,\textup{SW}}_{u,v}}=\wc Q^{\rho,\textup{SW}}_{u,v}(x_\bbullet).\qedhere\]
\end{proof}

\subsection{Coupling the forward and backward semi-infinite polymers}\label{intro_dual}  

We now couple the polymer measures $\{\Pi^\rho_z:z\in\mathbb{Z}^2\}$ following the construction in Appendix A of \cite{Jan-Ras-20-aop}.
To this end, introduce a collection of i.i.d.\ $\text{Uniform}[0,1]$ random variables $\{\theta_z\}_{z\in \mathbb{Z}^2}$ which are also independent of the random environment $\{Y_z:z\in\mathbb Z^2\}$. Let $\mathbf{P}$ denote the distribution of $\theta$.

Define a directed random graph $g^{{\rho}}$ on $\mathbb{Z}^2$, according to the following rule 
\begin{align*}
g^{{\rho}}(x) = \begin{cases} 
      e_1 & \text{ if } \theta_x\leq   \frac{J^{{\rho}}_{x+e_2}}{I^{{\rho}}_{x+e_1}+ J^{{\rho}}_{x+e_2}},\\[10pt]
      e_2 & \text{ if } \theta_x >\frac{I^{{\rho}}_{x+e_1}}{I^{{\rho}}_{x+e_1}+ J^{{\rho}}_{x+e_2}} \,.
   \end{cases}
\end{align*}
From $g^{{\rho}}$, we can construct a semi-infinite path $X^{{{\rho}}, z}_\bbullet$ defined by 
\beq X^{{{\rho}}, z}_0 = z \quad \text{ and }\quad X^{{{\rho}}, z}_{n+1} = X^{{{\rho}}, z}_n + g^{{\rho}}(X^{{{\rho}}, z}_n).\eeq
It is clear from the construction that for $\mathbb P$-almost every $Y_\bbullet$, the distribution of $X^{{{\rho}}, z}_\bbullet$ under $\mathbf P$ is exactly $\Pi^\rho_z$.  Namely, we have $\mathbb P$-almost surely, for any $z\in\mathbb Z^2$ and any finite up-right path $x_\bbullet$ starting at $z$,
\begin{equation}\label{unif_poly}
\mathbf{P}\{X^{{{\rho}}, z}_\bbullet = x_\bbullet\} = \Pi^{{{\rho}}}_{ z}\{x_\bbullet\}.
\end{equation}

We next couple the backward semi-infinite polymer measures together with the forward ones.
To this end, define another (dual) random graph $\wc g^{{\rho}}$ by
$$\wc g^{{\rho}}(x) = \begin{cases} 
      -e_1 & \text{ if }  g^{{\rho}}(x-e_1-e_2) = e_1,\\
      -e_2 & \text{ if } g^{{\rho}}(x-e_1-e_2) = e_2. 
   \end{cases}$$
Define the down-left semi-infinite paths $\wc X^{{{\rho}}, z}$ according to 
\beq\wc X^{{{\rho}}, z}_0 = z \quad \text{ and } \quad \wc{X}^{{{\rho}}, z}_{n+1} = \wc{X}^{{{\rho}}, z}_{n} + \wc g^{{\rho}}(\wc X^{{{\rho}}, z}_{n}).\eeq
By construction, for $\mathbb P$-almost every $Y_\bbullet$, the distribution of $\wc X^{{{\rho}}, z}_\bbullet$ under $\mathbf P$ is that of a Markov chain on $\mathbb Z^2$ with steps in $\{-e_1,-e_2\}$ and  transition probabilities 
    \begin{align*}
        \frac{J^\rho_{x-e_1}}{I^\rho_{x-e_2}+J^\rho_{x-e_2}}
        &=\frac{e^{B^\rho(x-e_1-e_2,x-e_1)}}{e^{B^\rho(x-e_1-e_2,x-e_2)}+e^{B^\rho(x-e_1-e_2,x-e_1)}}
        =\frac{e^{-B^\rho(x-e_1,x)}}{e^{-B^\rho(x-e_2,x)}+e^{-B^\rho(x-e_1,x)}}\\
        &=\frac{e^{B^\rho(x-e_2,x)}}{e^{B^\rho(x-e_1,x)}+e^{B^\rho(x-e_2,x)}}=\wc\pi^\rho(x,x-e_1)
        \end{align*}
to go from $x$ to $x-e_1$ and, similarly,
    \[\frac{I^\rho_{x-e_2}}{I^\rho_{x-e_2}+J^\rho_{x-e_2}}=\wc\pi^\rho(x,x-e_2)\]
to go from $x$ to $x-e_2$.

\begin{remark}\label{duality}
Note that the graph $g^\rho$ and its coupled paths $\{X^{\rho,z}_\bbullet:z\in\mathbb Z^2\}$ are constructed to form a forest that covers all of $\mathbb Z^2$. By Theorem A.2 in \cite{Jan-Ras-20-aop}, this forest is in fact a spanning tree, with probability 1 under $\mathbb P$.
The paths $\{\wc X^{\rho,z}_\bbullet-(e_1+e_2)/2:z\in\mathbb Z^2\}$ form the dual forest that spans the dual lattice $\mathbb Z^2-(e_1+e_2)/2$. Again, by Theorem A.2 in \cite{Jan-Ras-20-aop}, this dual forest is also a spanning forest $\mathbb P$-almost surely.
\end{remark}


For $z\in\mathbb Z^2_{>0}$ let $\wt X^{{{\rho}}, z}_\bbullet \in \mathbb{X}_{z, 0}$ be the random path that follows $\wc X^{{{\rho}}, z}_\bbullet$ from $z$ until the first time it hits the axes $\Z_{>0}e_i$, $i\in\{1,2\}$, and then goes to $0$ taking only $-e_1$ or only $-e_2$ steps. 
For $A\subset \mathbb Z^2_{>0}$ let $\wt Q^\rho_{0,A}$ be the distribution under $\mathbf P$ of the paths $\{\wt X^{{{\rho}}, z}_\bbullet: z\in A\}$. By Proposition \ref{stat_iid}, this is a coupling of the measures $\{\wc Q^{\rho,\textup{SW}}_{0,v}:v\in A\}$ and by their construction, the paths $\{\wt X^{{{\rho}}, z}_\bbullet: z\in A\}$ are $\wt Q^\rho_{0,A}$-almost surely ordered.

\subsection{Proofs of Theorems \ref{d_coal}, \ref{r_coal}, and \ref{r_coal_tail}, and Corollary \ref{h_start}}\label{coal_proof}

We note that the probability $\ncouple^\rho_{a,b}\bigl(\Gamma^{A}\bigr)$ 
is the same as the probability under $\mathbf{P}$  that the coalescence point of the coupled paths $X^{{{\rho}}, a}_\bbullet$ and 
$X^{{{\rho}}, b}_\bbullet$ belongs to $A$. 

\begin{figure}[t]
\captionsetup{width=0.8\textwidth}
\begin{center}
 
\begin{tikzpicture}[>=latex, scale=1.3]

\draw[line width = 0.3mm, ->] (0,0)--(0,4);
\draw[line width = 0.3mm, ->] (0,0)--(5,0);

\draw[color=gray, dotted, line width = 1mm, ->] (1,0)--(1,0.5) -- (1.5,0.5) -- (1.5,1)--(2,1)--(2,1.5)--(2.5,1.5) -- (2.5,3) ;

\draw[color=gray, dotted, line width = 1mm, ->] (0,1)--(1,1) -- (1,1.8) -- (1.7,1.8)--(1.7,3.5) -- (2.6,3.5);

\draw[color=gray, dotted, line width = 0.5mm]  (3,0) -- (3,2)--(0,2);

\draw[ dotted, line width = 1mm] (2.1,2)--(2.1, 1.65) -- (1.6, 1.65) -- (1.6, 1.2) --(1.2,1.2) -- (1.2,0.7) -- (0.4,0.7) -- (0.4, 0.5) -- (-0.1, 0.5)--(-0.1,-0.1);

\draw[ fill=black](2.1,2)circle(1mm);
\node at (2.1,2.25) {\small $x^*$};

\fill[color=white] (0,1)circle(1.7mm); 
\draw[ fill=lightgray](0,1)circle(1mm);
\node at (-0.9,1) {\small $(0, \delta N^{2/3})$};

\fill[color=white] (1,0)circle(1.7mm); 
\draw[ fill=lightgray](1,0)circle(1mm);
\node at (1,-0.4) {\small $(\delta N^{2/3},0)$};

\fill[color=white] (3,2)circle(1.7mm); 
\draw[ fill=lightgray](3,2)circle(1mm);
\node at (3.9,2) {\small $v_N-(\frac12,\frac12)$};

\draw[ fill=black](-0.1,-0.1)circle(1mm);
\node at (-0.4,-0.4) {\small $(-\frac12,-\frac12)$};

\end{tikzpicture}
 
\end{center}
\caption{\small The sampled polymers starting $(\floor{\delta N^{2/3}},0)$ and  $(0,\floor{\delta N^{2/3}})$   (gray dotted lines)  coalesce outside $\lzb0,v_N\rzb$.  Equivalently, some dual point ${x^*}=x-(1/2,1/2)$ outside of $\lzb0,v_N\rzb - (1/2,1/2)$ sends a  dual polymer $\wc X^{\rho,x}_\bullet-(1/2,1/2)$ (black dotted line)  into the rectangle $\lzb (0,0), (\floor{\delta N^{2/3}},\,\floor{\delta N^{2/3}})\rzb $.}
\label{fig12}
\end{figure}

\begin{proof}[Proof of Theorem \ref{d_coal}]
As shown in Figure \ref{fig12}, the duality mentioned in Remark \ref{duality} implies that the sampled polymer paths coalesce outside of the rectangle $[\![0, v_N]\!]$ if and only if there exists some $x$ on  the northeast boundary of $[\![0, v_N]\!]$ 
such that the polymer $\wt X^{\rho,x}_\bbullet$ satisfies $|\tau_{0, x}| \leq \delta N^{2/3}$. 

By this equivalence, the expectation in  Theorem \ref{d_coal} is equal to the expectation in Theorem \ref{joint_d},
\begin{align*}
&\mathbb{E} \Big[ \ncouple^\rho_{\floor{\delta N^{2/3}}e_1,\floor{\delta N^{2/3}}e_2}\Big(\Gamma^{\mathbb{Z}^2 \setminus [\![0, v_N]\!]}\Big) \Big]
 = \mathbb{E}\Big[\wt{Q}^\rho_{0,\partial^{\textup{NE}}[\![0, v_N]\!]}\Big( \bigcup_{x\in\partial^{\textup{NE}}[\![0, v_N]\!]}\{ |\wt \tau_{0,x}| \leq \delta N^{2/3}\}\Big)\Big].
\end{align*}
Finally, for the exit time expectation on the right-hand side, the upper bound follows from Theorem \ref{joint_d}. The lower bound follows from \eqref{out_vs_bd} and \eqref{dupper:claim2} in Theorem \ref{dupper} since the probability of a union of events is bounded below by the maximum of the probabilities of the individual events.
\end{proof}


\begin{figure}[t]
\captionsetup{width=0.8\textwidth}
\begin{center}
\begin{tikzpicture}[>=latex, scale=1.3]
\draw[fill=lightgray ] (1,0) -- (1,1) --(0,1) -- (0,0)-- (1,0);

\draw[gray, line width=0.3mm, ->] (0,0) -- (5,0);
\draw[gray, line width=0.3mm, ->] (0,0) -- (0,4);

\draw[gray, dotted, line width=1mm, ->] (1,0) -- (1.5,0) -- (1.5,1) -- (2,1)-- (2,2) -- (3,2) -- (3,4);
\draw[gray, dotted, line width=1mm] (0,1) -- (0.4,1) -- (0.4,2)--  (1,2) -- (2,2);

\draw[black, dotted, line width=1mm] (2.8,3.2) -- (2.8,2.5) -- (1, 2.5) -- (1,2.3) -- (0.2, 2.3) -- (0.2,1.2) -- (0, 1.2);

\draw[black, dotted, line width=1mm] (3.2,3.2) -- (3.2,0.8)  -- (1.9,0.8) -- (1.9,0);

\draw[black, dotted, line width=1mm,] (1.9,-0.1) -- (-0.1,0-0.1)  -- (-0.1,1.2)  ;

\fill[color=white] (2.8,3.2)circle(1.7mm); 
\draw[ fill=lightgray](2.8,3.2)circle(1mm);

\fill[color=white] (3.2,3.2)circle(1.7mm); 
\draw[ fill=lightgray](3.2,3.2)circle(1mm);

\draw[gray ,dotted, line width=0.3mm, ] (0.8*5,0) -- (0.8*5,0.8*4) --(0,0.8*4) ;

\fill[color=white] (0.8*5, 0.8*4)circle(1.7mm); 
\draw[ fill=lightgray](0.8*5, 0.8*4)circle(1mm);
\node at (0.8*6+0.04, 0.8*4) {\small{$v_N-(\tfrac12,\tfrac12)$}};

\draw[black, line width=1mm] (1, -0.2) -- (1, 0.1);
\draw[black, line width=1mm] (-0.2, 1) -- (0.1, 1);
\node at (1.2, -0.4) {$rN^{2/3}$};

\node at (-0.7, 1) {$rN^{2/3}$};
\node at (-0.8, -0.2) {\footnotesize{$(-\tfrac12,-\tfrac12)$}};

\fill[color=white] (2.8,3.2)circle(1.7mm); 
\draw[ fill=lightgray](2.8,3.2)circle(1mm);
\node at (2.7,3.5) {\small $x^*$};

\fill[color=white] (3.2,3.2)circle(1.7mm); 
\draw[ fill=lightgray](3.2,3.2)circle(1mm);
\node at (3.6,3.5) {\small $x^*+e_1$};

\draw[ fill=black](-0.1,-0.1)circle(1mm);

\end{tikzpicture}
\end{center}
\caption{\small None of the backward polymers (black dotted lines) will enter the gray square because they are shielded away from it by the coalescing forward polymers (gray dotted lines).  }
\label{fig20}
\end{figure}
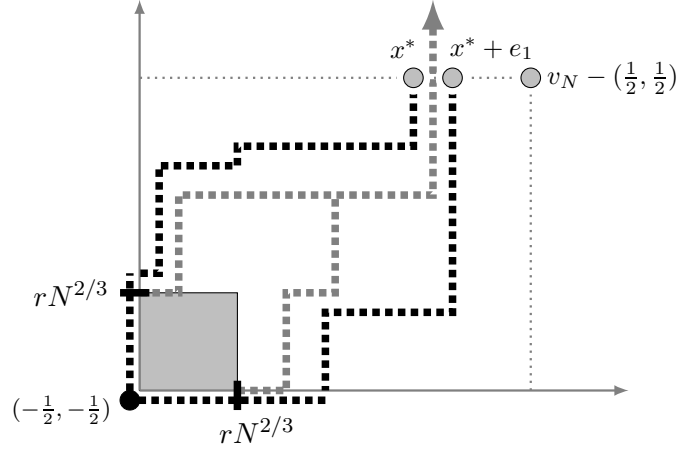

\begin{proof}[Proof of Theorems \ref{r_coal} and \ref{r_coal_tail}]
As shown in Figure \ref{fig20}, if the two sampled forward polymers starting at $(\floor{rN^{2/3}}, 0)$ and $(0,\floor{rN^{2/3}})$ coalesce inside $[\![0, v_N]\!]$, then by duality, this happens if and only for each $x\in\partial^{\textup{NE}}[\![0, v_N]\!]$ the polymer $\wt X^{\rho,x}_\bbullet$ satisfies $|\tau_{0,x}| \geq r N^{2/3}$.
Then, we have 
\begin{align}\label{r_dual_r}
& \ncouple^\rho_{\floor{r N^{2/3}}e_1,\floor{r N^{2/3}}e_2}\Big(\Gamma^{[\![0, v_N]\!]}\Big) 
 \stackrel{d}{=} \wt{Q}^\rho_{0,\partial^{\textup{NE}}[\![0, v_N]\!]}\Big( \bigcap_{x\in\partial^{\textup{NE}}[\![0, v_N]\!]}\{ |\wt \tau_{0,x}| \geq r N^{2/3}\}\Big).
\end{align}
The expectation and the tail probabilities of the right-hand side can be lower  bounded using Theorem \ref{joint_r}. And they are  upper bounded by Theorem \ref{r_up_low} since the probability of an intersection of events is bounded above by the minimum of the probabilities of the individual events.
\end{proof}


\begin{proof}[Proof of Corollary \ref{h_start}]
To prove the first inequality we will lower bound its complement. By duality, it suffices to show that for some small $q$ depending only on $\varepsilon$,
\begin{align}
\mathbb{E} \Big[ \ncouple^\rho_{0,\floor{r N^{2/3}}e_1}\Big(\Gamma^{\mathbb{Z}^2 \setminus [\![0, v_N]\!]}\Big) \Big]
&=\mathbb{E}\Big[\wt{Q}^\rho_{0,\partial^{\textup{NE}}[\![0, v_N]\!]}\Big( \bigcup_{x\in\partial^{\textup{NE}}[\![0, v_N]\!]}\{ 1\leq \wt \tau_{0,x} \leq r N^{2/3}\}\Big)\Big] \nonumber\\
&\geq \mathbb{E}\Big[{Q}^\rho_{0,v_N - q rN^{2/3}e_2}\{ 1\leq \tau \leq r N^{2/3}\}\Big] \nonumber\\
&\geq 1-e^{-Cr^3}. \label{lb_1r}
\end{align}
The last inequality \eqref{lb_1r} follows from an argument similar to the proof of Lemma \ref{lemmaab}. Here, instead of perturbing the directional parameter, we simply perturb our end point from $v_N$ to $v_N - qrN^{2/3}e_2$. Then, as shown in Figure \ref{fig_ab}, if we fix $q$ sufficiently small, then the $-\xi[\rho]$ directed ray starting at $v_N-qrN^{2/3}e_2$ will hit the $e_1$-axis within $[\![arN^{2/3}, brN^{2/3}  ]\!]$, for some $0<a<b<1$. This again just follows from Taylor's theorem and we omit the details. Then the rest of the argument is exactly the same as in Lemma \ref{lemmaab}. 

To prove the second inequality in the claim of the corollary we start with the following calculation, where the first equality comes from duality and the same calculation from \eqref{out_vs_bd} gives us the inequality when we switch from $``\max_{x\in\partial^{\textup{NE}}[\![0, v_N]\!]} \dots"$ to $``\max_{x\not\in[\![0, v_N]\!]}  \dots"$
\begin{align*}
 \mathbb{E} \Big[ \ncouple^\rho_{0,\floor{\delta N^{2/3}}e_1}\Big(\Gamma^{\mathbb{Z}^2\setminus [\![0, v_N]\!]}\Big) \Big] 
 & = \mathbb{E}\Big[\wt{Q}^\rho_{0,\partial^{\textup{NE}}[\![0, v_N]\!]}\Big( \bigcup_{x\in\partial^{\textup{NE}}[\![0, v_N]\!]}\{ 1 \leq \tau_{0,x}\leq \delta N^{2/3}\}\Big)\Big]\\
 & \geq \mathbb{E}\Big[ \max_{x\in\partial^{\textup{NE}}[\![0, v_N]\!]} {Q}^\rho_{0,x}\{1 \leq  \tau_{0,x}\leq \delta N^{2/3}\}\Big]\\
 & \geq \mathbb{E}\Big[ \max_{x\not\in[\![0, v_N]\!]} {Q}^\rho_{0,x}\{1 \leq  \tau_{0,x}\leq \delta N^{2/3}\}\Big].
\end{align*}
The last expectation can be lower bounded by $C\delta$. The proof is very similar to that of the lower bound in \eqref{dupper:claim2}. More precisely, by \eqref{lb_1r}, we can fix two constants $r_0$ and  $N_0$ (depending on $\varepsilon$)  such that, for $N\geq N_0$,  
\beq \label{exit47'} \mathbb{E} \bigl[ Q^{{\rho}}_{0, v_N-qr_0N^{2/3}e_2 +e_1}\{1\leq \tau  \leq r_0 N^{2/3}\}\bigr]\geq \frac{1}{2}.\eeq
Note that using the endpoint $v_N-qr_0N^{2/3}e_2+e_1$ instead of $v_N-qr_0N^{2/3}e_2$ does not change the proof of this lower bound. 

Now, \eqref{exit47'}
replaces the input \eqref{exit47}, and we form our partition $\{p_i\}$ in the range $[1, r_0]$ instead of $[-r_0, r_0]$. Then, the rest of the proof is the same as the lower bound proof in \eqref{dupper:claim2}.
\end{proof}

\begin{figure}[t]
\captionsetup{width=0.8\textwidth}
\begin{center}
\begin{tikzpicture}[scale = 0.8]

--------------------------------------------------------------

\draw[gray ,dotted, line width=0.3mm, <-](1,-1)--(4,3);

\draw[gray, line width=0.3mm, ->] (0,0) -- (6,0);
\draw[gray, line width=0.3mm, ->] (0,0) -- (0,4);

\draw[ fill=white](1.75, 0)circle(1.3mm);

\draw[gray ,dotted, line width=0.3mm, <-](0,0)--(4,16/3);

\draw[ fill=lightgray](0,0)circle(1mm);
\node at (-0.9,-0) {$(0,0)$};

\fill[color=white] (4,16/3)circle(1.7mm); 
\draw[ fill=lightgray](4,16/3)circle(1mm);
\node at (4.5,16/3) {$v_N$};

\fill[color=white] (4,3)circle(1.7mm); 
\draw[ fill=lightgray](4,3)circle(1mm);
\node at (5.8,3) {$v_N-qrN^{2/3}e_2$};

\draw[ line width=1mm] (2.5,-0.1) -- (2.5,0.1);
\node at (2.5, -0.4) {${brN^{2/3}}$};

\draw[ line width=1mm] (1,-0.1) -- (1,0.1);
\node at (0.5, -0.4) {${arN^{2/3}}$};

\node at (3.5, 1.5) {$\xi[\rho]$};
\end{tikzpicture}
\end{center}
\caption{An illustration for the inequality \eqref{lb_1r}. Starting from the point  $v_N-qrN^{2/3}e_2$, the $-\xi[\rho]$-directed ray will hit the $e_1$-axis between $[\![arN^{2/3}, brN^{2/3}  ]\!]$ for some $0<a<b<1$, provided that $q$ is fixed sufficiently small.}
\label{fig_ab}
\end{figure}
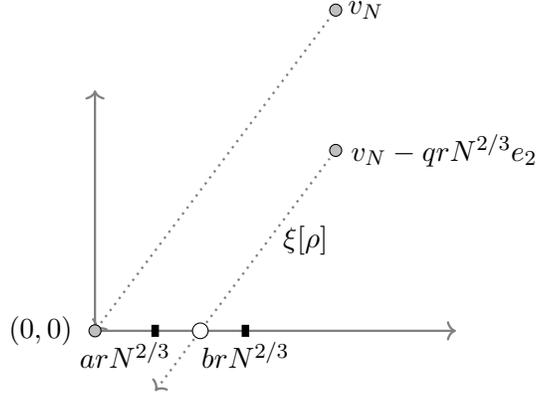

\section{Total variation distance bounds.}\label{TV_dist}


\begin{proof}[Proof of Theorem \ref{tv_d_upper}]
The claim follows from the fact that if $U$ and $V$ are two random variables with distributions $\mu$ and $\nu$, respectively, and if $\mathbf P$ is any coupling of the two random variables, then
\be\label{coupineq} d_{\textup{TV}}(\mu, \nu) \leq \mathbf{P}(U\neq V).\ee 

Consider the paths $X^{\rho,\delta N^{2/3}e_i}_\bbullet$, $i\in\{1,2\}$, defined in Section \ref{intro_dual}. 
Then, $\chiup_N(X^{\rho,\delta N^{2/3}e_1}_\bbullet)\ne \chiup_N(X^{\rho,\delta N^{2/3}e_2}_\bbullet)$ implies the two paths did not coalesce inside $[\![0, v_N]\!]$. Hence, if $\mathbf P$ is the probability measure from Section \ref{intro_dual}, then
$$\mathbf{P}\bigl\{\chiup_N(X^{\rho,\delta N^{2/3}e_1}_\bbullet)\ne \chiup_N(X^{\rho,\delta N^{2/3}e_2}_\bbullet)\bigr\} \leq \ncouple^\rho_{\floor{\delta N^{2/3}}e_1,\floor{\delta N^{2/3}}e_2}\Big(\Gamma^{\mathbb{Z}^2 \setminus [\![0, v_N]\!]}\Big).$$
Now the upper bound claimed in the theorem follows directly from Theorem \ref{d_coal}.
\end{proof}

\begin{proof}[Proof of Theorem \ref{tv_r_upper}]

We will first look at $u$ only in the north boundary of $[\![0, v_N]\!]$, which we denote as $\partial^{\textup{N}}[\![0, v_N]\!]$, and we will show that 
$$\sum_{u\in \partial^{\textup{N}}[\![0, v_N]\!]}
\abs{\Pi_{\floor{r N^{2/3}}e_1}^{{\rho}}(\chiup_N=u)-\Pi_{\floor{r N^{2/3}}e_2}^{{\rho}} (\chiup_N=u)}\qquad
\text{ is close to $1$}.$$

A similar argument can be applied to the east boundary to show that sum is also close to $1$. And combining the two calculations for the north and east boundaries would finish the proof.

From Proposition \ref{stat_iid} and Theorem \ref{r_up_low},
\begin{align*}
&\mathbb{P}\Big(\Pi_{\floor{r N^{2/3}}e_2}^{{\rho}}(\chiup_N\in \partial^N [\![0, v_N]\!]) \geq 1-e^{-cr^{2}N^{1/3}}\Big) \geq 1- e^{-Cr^3},\\
&\mathbb{P}\Big(\Pi_{\floor{r N^{2/3}}e_1}^{{\rho}}(\chiup_N\in \partial^N [\![0, v_N]\!]) \leq e^{-cr^{2}N^{1/3}}\Big) \geq 1-e^{-Cr^3}.
\end{align*}
To finish the proof, on the intersection of the two events above, we have
\begin{align*}
&\sum_{u\in \partial^{\textup{N}}[\![0, v_N]\!]}\abs{\Pi_{\floor{r N^{2/3}}e_1}^{{\rho}}(\chiup_N=u)-\Pi_{\floor{r N^{2/3}}e_2}^{{\rho}} (\chiup_N=u)}\\
&\qquad\qquad \geq \sum_{u\in \partial^{\textup{N}}[\![0, v_N]\!]}\Bigl(\Pi_{\floor{r N^{2/3}}e_1}^{{\rho}}(\chiup_N=u)-\Pi_{\floor{r N^{2/3}}e_2}^{{\rho}} (\chiup_N=u)\Bigr)\\
&\qquad\qquad = \Pi_{\floor{r N^{2/3}}e_2}^{{\rho}}(\chiup_N \in \partial^N [\![0, v_N]\!]) - \Pi_{\floor{r N^{2/3}}e_1}^{{\rho}}(\chiup_N \in \partial^N [\![0, v_N]\!])\\
&\qquad\qquad\geq 1-2e^{-cr^{2}N^{1/3}}.\qedhere
\end{align*}
\end{proof}

\section{Transversal fluctuation lower bound}\label{sec:trans}

In this section, we prove  Theorem \ref{fluc_lb}, but omit some of the details since the whole proof is similar to the proof of the upper bound in Theorem \ref{dupper}. 

First, for $i\in\{1,2\}$, let us define $\{\textsf{mid}_i \leq \delta N^{2/3}\}$ to be  the collection of paths between $-v_N$ and $v_N$ which crosses the segment between $-\delta N^{2/3} e_i$ and $\delta N^{2/3} e_i$. Since $$\{\textsf{mid} \leq \delta N^{2/3}\} \subset \{\textsf{mid}_1 \leq \delta N^{2/3}\}\cup \{\textsf{mid}_2 \leq \delta N^{2/3}\},$$
by a union bound and the symmetry between $i = 1$ and $2$ it suffices to prove that 
$$\mathbb{E}\Big[Q_{-v_N, v_N}\{\textsf{mid}_1\leq \delta N^{2/3}\} \Big] \leq C|\log \delta|^{10} \delta.$$
We prove this by showing that 
\begin{equation}\label{tr_show}
\mathbb{P}\Big(Q_{-v_N, v_N}\{\textsf{mid}_1\leq \delta N^{2/3}\}  \geq e^{-|\log \delta|^2 \sqrt{\delta}N^{1/3}}\Big) \leq C|\log \delta|^{10} \delta.
\end{equation}
Let $r = |\log \delta|$ and fix $\alpha$ sufficiently small (now depending only on $\mu$) as in the proof of Theorem \ref{dupper}. The next calculation follows the same steps as \eqref{mainest2}, except that we now set $\rho = \mu/2$ and consider the dark region $\mathcal{D}$ as a single point $v_N$. 
\begin{align}
&\text{left side of \eqref{tr_show}}\nonumber \\
& =  \mathbb{P}\Big(\log Z_{-v_N, v_N}- \log Z_{-v_N, v_N}\{\textsf{mid}_1\leq \delta N^{2/3}\}  \leq |\log \delta|^2 \sqrt{\delta}N^{1/3}\Big)\nonumber\\
& \leq  \mathbb{P}\Big(\log Z_{-v_N, v_N}\{\textsf{mid}_1\leq r N^{2/3}\} - \log Z_{-v_N, v_N}\{\textsf{mid}_1\leq \delta N^{2/3}\}  \leq |\log \delta|^2 \sqrt{\delta}N^{1/3}\Big)\nonumber\\
& \leq  \mathbb{P}\Big(\max_{|k| \leq \floor{rN^{2/3}}}\Big[\log Z_{-v_N, ke_1} + \log Z_{(k,1), v_N} \Big] \nonumber\\
& \qquad \qquad \qquad  - \max_{|j| \leq \floor{\delta N^{2/3}}} \Big[\log Z_{-v_N, ke_1} + \log Z_{(k,1), v_N} \Big]  \leq 2|\log \delta|^2 \sqrt{\delta}N^{1/3}\Big)\nonumber\\
& =  \mathbb{P}\Big(\max_{|k| \leq \floor{rN^{2/3}}}\Big[\log \frac{Z_{-v_N, ke_1}}{Z_{-v_N, (0,0)}}  + \log \frac{Z_{(k,1), v_N}}{ Z_{e_2, v_N}} \Big] \nonumber\\
& \qquad \qquad   - \max_{1\leq j \leq \floor{\delta N^{2/3}}} \Big[\log \frac{Z_{-v_N, je_1}}{Z_{-v_N, (0,0)}}  + \log \frac{Z_{(j,1), v_N}}{ Z_{e_2, v_N}} \Big] \leq 2|\log \delta|^2 \sqrt{\delta}N^{1/3}\Big)\nonumber\\
& \leq  \mathbb{P}\Big(\max_{|k| \leq \floor{rN^{2/3}}}\Big[\log \frac{Z_{-v_N, ke_1}}{Z_{-v_N, (0,0)}}  + \log \frac{Z_{(k,1), v_N}}{ Z_{e_2, v_N}} \Big] \leq 3|\log \delta|^2 \sqrt{\delta}N^{1/3}\Big)\label{trest1}\\
& \qquad    + \mathbb{P}\Big(\max_{1\leq j \leq \floor{\delta N^{2/3}}} \Big[\log \frac{Z_{-v_N, je_1}}{Z_{-v_N, (0,0)}}  + \log \frac{Z_{(j,1), v_N}}{ Z_{e_2, v_N}} \Big]  \geq |\log \delta|^2 \sqrt{\delta}N^{1/3}\Big).\label{trest2}
\end{align}
Next, let us define 
$$
\widetilde{I}^{v_N}_{(i,1)} = \frac{Z_{(i-1,1), v_N}}{ Z_{(i,1), v_N} }, \qquad \widetilde{I}^{-v_N}_{(i,0)} = \frac{Z_{-v_N, (i,1)}}{Z_{-v_N, (i-1,1)}},
$$
and a two-sided multiplicative walk $\{M_n'\}_{n\in[\![-\floor{\alpha rN^{2/3}}+1, \floor{\alpha rN^{2/3}}]\!]}$ by setting  $M_{0}' = 1$ and 
$$
M_n'/M_{n-1}' = \widetilde{I}^{-v_N}_{(n,0)}/\widetilde{I}{}^{v_N}_{(n, 1)}.
$$
Then, the two probabilities can be rewritten as 
\begin{equation}\label{same_blw}
\begin{aligned}
\eqref{trest1} + \eqref{trest2} &= \mathbb{P} \Big(  \max_{n \in [\![-\alpha rN^{2/3}+1, \alpha rN^{2/3} ]\!]} \log M_n' \leq 3|\log \delta|^2 \sqrt{\delta} N^{1/3}\Big)\\
& \qquad \qquad + \mathbb{P} \Big( \max_{n \in [\![1, \delta N^{2/3} ]\!]} \log M_n'\geq |\log \delta|^2 \sqrt{\delta} N^{1/3}\Big).
\end{aligned}
\end{equation}
Note how the right-hand side is similar to \eqref{asdf1} + \eqref{asdf2}, except for having $M'_n$ instead of $M_n$, and the region $\mathcal{D}$ is reduced to the single vertex $v_N$. 
Next, we give a sketch of how to carry over the estimate from the proof of Theorem \ref{dupper} to the random walk in this proof. The essential step is to upper and lower bound the walk $M_n'$ by two other walks with i.i.d. steps. This was done for $M_n$ previously in  \eqref{the_bounds}. After that, the bound on the two probabilities above comes from the same estimates as in the proof of Theorem \ref{dupper}. 

First, let us summarize how the desired random walk bound was obtained in the proof of Theorem \ref{dupper}.  Recall $\lambda$ and $\mu$, defined in \eqref{la8}. Lemma \ref{lm:A78} showed that with probability at least $1-e^{-Cr^3}$, for each  $a, b\in  [\![-\floor{\alpha rN^{2/3}}+1, \floor{\alpha rN^{2/3}}]\!]$,
$$\tfrac{1}{2} \prod_{i=a}^b I^{\eta, \textup{NE}}_{(i,1)}\leq \prod_{i=a}^b\widetilde{I}^{v_N}_{(i,1)}  \leq 2\prod_{i=a}^b I^{\lambda, \textup{NE}}_{(i, 1)},$$
where $I^{\,\abullet, \textup{NE}}_{(i, 1)} \sim \textup{Ga}^{-1}(\abullet)$.
Furthermore, as stated below \eqref{boundwithind1}, there is a coupling such that the random variables
\begin{equation}\label{abv_ind}
\big\{ I^{\eta, \textup{NE}}_{(i,1)},I^{\lambda, \textup{NE}}_{(j,1)}:i \leq 0,j\ge1\big\}\text{ are independent.}
\end{equation}

By symmetry (or rotating the picture $180^\circ$), the exact same argument can be applied to $\widetilde{I}^{-v_N}_{(i,0)} $, where now these edge weights are calculated to the point $-v_N-(e_1+e_2)$ instead of to $v_N+(e_1+e_2)$. We get that with probability at least $1-e^{-Cr^3}$, for each  $a, b\in  [\![-\floor{\alpha rN^{2/3}}+1, \floor{\alpha rN^{2/3}}]\!]$,
$$\tfrac{1}{2} \prod_{i=a}^b I^{\eta, \textup{SW}}_{(i,0)}\leq \prod_{i=a}^b\widetilde{I}^{-v_N}_{(i,0)}  \leq 2\prod_{i=a}^b I^{\lambda, \textup{SW}}_{(i, 0)},$$
where $I^{\,\abullet, \textup{SW}}_{(i, 0)} \sim \textup{Ga}^{-1}(\abullet)$ are edge weights that are calculated to $-v_{N}-(e_1+e_2)$ and with a boundary placed on the south-west edges of the quadrant $-v_N-(e_1+e_2)+\Z^2_{\ge0}$.
As above, the random variables
\begin{equation}\label{blw_ind}
\big\{ I^{\lambda, \textup{SW}}_{(i,0)},I^{\eta, \textup{SW}}_{(j,0)}:i \leq 0,j\ge0\big\}\text{ are independent.}
\end{equation}
Note how the parameters switched sides, as compared to \eqref{abv_ind}.

Next, define two two-sided multiplicative random walks $M^{+}_n$, $M^{-}_n$  with  $M^{\pm}_{0}= 1$ and 
\begin{align*}
M_n^{+}/M_{n-1}^{ +} &= {I}{}^{\lambda, \textup{SW}}_{(n,0)}/{I}{}^{\eta, \textup{NE}}_{(n,1)}\\
M_n^{ -}/M_{n-1}^{-} &= {I}{}^{\eta, \textup{SW}}_{(n,0)}/{I}{}^{\lambda, \textup{NE}}_{(n,1)}
\end{align*}
We get
\begin{align*}
\frac{1}{2}M_n^{-} \leq M_n'  \leq 2M_n^{+}  \text{ for $n \geq 1$} 
\quad\text{and}\quad 
\frac{1}{2}M_n^{+} \leq M_n' \leq 2 M_n^{-} \text{ for $n \leq 0$} .
\end{align*}
These bounds play the role of \eqref{the_bounds}. With this, go back to \eqref{same_blw} and follow the same argument as the one we used to bound \eqref{asdf1} + \eqref{asdf2}, but with  $M_n$, $M^\lambda_n$, and $M^\mu_n$  replaced by $M_n'$, $M^-_n$, and $M^+_n$, respectively. We should point out that an essential fact that is used in the step analogous to \eqref{ind_prod} is the independence of the walks $\{M^-_n: n \geq 1\}$ and $\{M^+_n: n \leq 0\}$, which follows from \eqref{abv_ind} and \eqref{blw_ind}. We omit the rest of the details.

\appendix
\section{Appendix}

\subsection{Moderate deviation of the bulk free energy}

We present here two estimates that we use in the proof of \eqref{improved}. The first tail bound can be derived for the inverse-gamma polymer by combining Theorem 1.7 of \cite{Bar-COr-Dim-21}, which utilizes integrable probability methods, with Theorem 2.2 of \cite{Geo-Sep-13}. For the O'Connell-Yor polymer, the bound was established in \cite{Lan-Sos-22-b-} as Proposition 2.1 without the use of integrable probability. A proof of the bound for the inverse-gamma polymer, without the use of integrable probability, will appear in \cite{Emr-Jan-Xie-23-}. This result can be found in Theorem 4.3.1 of the Ph.D.\ thesis \cite{Xie-22}.

\begin{proposition}\label{up_ub}
Fix $\varepsilon \in (0, \mu/2)$. There exist positive constants $C, N_0$ depending on $\varepsilon$ such that for each $N\geq N_0$, $t \geq 1$, and each $\rho \in [\varepsilon, \mu-\varepsilon]$, we have
$$\mathbb{P}\bigl(\log Z_{0,  v_N} - \Lambda(v_N) \geq tN^{1/3}\bigr) \leq e^{-C \min\{t^{3/2}\!,\, tN^{1/3}\}}.$$
\end{proposition}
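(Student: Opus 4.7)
The plan is to reduce the bulk upper tail to the corresponding right tail for the stationary free energy at the characteristic direction $\xi[\rho]$ and to invoke a sharp moment bound for the latter.

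First, I would compare $Z_{0,v_N}$ to $Z^\rho_{0,v_N}$. Splitting on the first step from the origin yields $Z_{0,v_N}=Y_0(Z_{e_1,v_N}+Z_{e_2,v_N})$, while restricting stationary paths to those that take a single boundary step before entering the bulk gives the lower bounds $Z^\rho_{0,v_N}\ge I^\rho_{e_1}Z_{e_1,v_N}$ and $Z^\rho_{0,v_N}\ge J^\rho_{e_2}Z_{e_2,v_N}$. Combining these,
\[\log Z_{0,v_N}-\log Z^\rho_{0,v_N}\;\le\;\log(2Y_0)-\log\min(I^\rho_{e_1},J^\rho_{e_2}),\]
and the right-hand side is a finite sum of independent log-gamma random variables, hence has sub-exponential tails that are absorbed into the claimed bound. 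Using Theorem \ref{stat} one also checks that $\mathbb{E}[\log Z^\rho_{0,v_N}]=\Lambda(v_N)+O(1)$, so it is enough to prove the same upper tail bound for the centered random variable $\log Z^\rho_{0,v_N}-\Lambda(v_N)$.

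Second, I would establish the sharp upper tail for the stationary free energy. By the Burke-type property in Theorem \ref{stat}, writing the telescoping sum along the down-right path from $v_N$ through $(v_N\cdot e_1,0)$ to the origin represents $\log Z^\rho_{0,v_N}$ as a sum of $v_N\cdot e_2$ i.i.d.\ $\log\mathrm{Ga}^{-1}(\rho)$ variables plus $v_N\cdot e_1$ i.i.d.\ $\log\mathrm{Ga}^{-1}(\mu-\rho)$ variables. A naive application of Bernstein to this sum would give only an $e^{-Ct^2N^{-1/3}}$ tail, which is weaker than the claimed $e^{-Ct^{3/2}}$ in the moderate deviation regime $t\le c_0N^{1/3}$. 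To recover the KPZ-type $t^{3/2}$ exponent I would invoke the fractional moment bound of the form $\mathbb{E}\bigl[(Z^\rho_{0,v_N}/\mathbb{E}[Z^\rho_{0,v_N}])^\theta\bigr]\le e^{C\theta^3 N}$ valid for $0\le\theta\le c_0N^{1/3}$, which is the content of the inverse-gamma analogue of the O'Connell--Yor estimate from \cite{Lan-Sos-22-b-} (and proved directly for inverse-gamma in \cite{Emr-Jan-Xie-23-}). Applying Markov's inequality at level $a=tN^{1/3}$ and optimizing in $\theta$ produces the $e^{-Ct^{3/2}}$ tail in the range $1\le t\le c_0 N^{1/3}$. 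For $t>c_0 N^{1/3}$, the usual Cram\'er large-deviation bound for the telescoping sum supplies the $e^{-CtN^{1/3}}$ factor, yielding the stated minimum.

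The hard step is the moment inequality with the cubic exponent $\theta^3$, which reflects the exact cancellation that occurs only along the characteristic direction. This is where integrability (the Rains--type identity, Fredholm determinant asymptotics in \cite{Bar-COr-Dim-21}) or, in the integrability-free route of \cite{Emr-Jan-Xie-23-}, a careful inductive control of positive integer moments combined with interpolation must be carried out. The comparison step is straightforward; the entire KPZ content of the proposition lies in proving this sharp moment estimate.
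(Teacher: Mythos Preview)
The paper does not prove this proposition; it states it in the appendix and cites external references (\cite{Bar-COr-Dim-21} combined with \cite{Geo-Sep-13}, or alternatively \cite{Emr-Jan-Xie-23-}, \cite{Xie-22}). So your overall plan---reduce the problem to a sharp estimate that is quoted from the literature---is exactly the spirit of what the paper does. Your comparison step $\log Z_{0,v_N}\le \log Z^\rho_{0,v_N}+O(1)$ is correct and standard, and the centering $\mathbb{E}[\log Z^\rho_{0,v_N}]=\Lambda(v_N)+O(1)$ is right.

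There is, however, a genuine error in your understanding of the stationary structure. The path you describe from $v_N$ through $(v_N\cdot e_1,0)$ to the origin takes $-e_2$ and then $-e_1$ steps; this is \emph{not} a down-right path in the sense of Theorem~\ref{stat} (which requires increments in $\{e_1,-e_2\}$). More importantly, $\log Z^\rho_{0,v_N}$ is \emph{not} a sum of $|v_N|_1$ independent log-inverse-gamma variables. If it were, its variance would be of order $N$; but the exact variance formula (Theorem~3.7 of \cite{Sep-12-corr}, quoted in the paper at \eqref{var_bound}) gives $\mathrm{Var}[\log Z^\rho_{0,v_N}]=O(N^{2/3})$ at the characteristic direction. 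You can telescope $\log Z^\rho_{0,v_N}$ along, say, the south-then-east boundary into a block of $I^\rho$-increments and a block of vertical ratio increments, and each block is internally i.i.d., but the two blocks are strongly negatively correlated. This invalidates your ``Cram\'er bound for the telescoping sum'' in the regime $t>c_0N^{1/3}$, since Cram\'er requires independence.

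What actually lives in the cited references is a direct upper-tail bound for the \emph{bulk} free energy (not a moment bound of the form $e^{C\theta^3N}$ for the stationary one), obtained either via integrable inputs or via a change-of-measure argument that perturbs the boundary parameter by order $N^{-1/3}$. Your reduction to the stationary model is therefore somewhat backward: the references you invoke prove the bulk statement itself, so routing through the stationary tail and then citing them is at best redundant and at worst circular. The honest summary is that this proposition is imported wholesale from the literature, and the KPZ-sharp $t^{3/2}$ exponent comes from those external arguments, not from anything assembled inside this paper.
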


The next tail bound is Proposition 3.8 in \cite{Bas-Sep-She-23-}.  The analogous bound for the O'Connell-Yor polymer appears as Proposition 3.4 in \cite{Lan-Sos-22-b-}.

\begin{proposition}\label{low_ub}
Let $\varepsilon \in (0, \mu/2)$. There exist positive constants $C, N_0 $ depending on $\varepsilon$ such that for each $N\geq N_0$, $t\geq 1$ and and each $\rho\in[\varepsilon, \mu-\varepsilon]$, we have 
$$\mathbb{P}\bigl(\log Z_{0, v_N} - \Lambda(v_N) \leq -tN^{1/3}\bigr) \leq e^{-C \min\{t^{3/2}, \, tN^{1/3}\}}.$$
\end{proposition}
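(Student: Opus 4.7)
The plan is to compare $\log Z_{0,v_N}$ with the stationary polymer free energy $\log Z^\rho_{0,v_N}$, whose mean equals $\Lambda(v_N)+O(1)$ in the characteristic direction by Theorem \ref{stat}, and then to bound the lower tail of $\log Z^\rho_{0,v_N}$ using a change-of-measure argument in the spirit of the proof of Lemma \ref{rtail_lb}. The structure $\min(t^{3/2}, tN^{1/3})$ in the claim reflects a moderate-deviation regime $t\le cN^{2/3}$ and a coarser large-deviation regime beyond.

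For the comparison step, I would decompose
\[
Z^\rho_{0,v_N} = \sum_{k\in\mathbb Z} Z^\rho_{0,v_N}(\tau = k),
\]
so that each summand with $\tau=k$ factors as a product of $|k|$ i.i.d.\ inverse-gamma boundary weights times a bulk partition function $Z_{x,v_N}$ with $x$ on an axis. Theorem \ref{r_up_low} and its consequences place the typical stationary-path exit at $|\tau|\lesssim N^{2/3}$ outside an event of probability $e^{-Ct^{3/2}}$. On this good event, sub-exponential concentration of the boundary sum (Proposition \ref{Ga_sub_exp} combined with Theorem \ref{max_sub_exp}) controls the boundary contribution at scale $N^{1/3}$, so $\log Z^\rho_{0,v_N}$ and $\log Z_{0,v_N}$ differ by $O(N^{1/3})$ up to an $O(1)$ shift in endpoint. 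A lower deviation of $\log Z_{0,v_N}$ of size $tN^{1/3}$ therefore transfers to a comparable lower deviation of $\log Z^\rho_{0,v_N}$ after absorbing constants.

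For the moderate regime, the argument proceeds by tilting the boundary parameter $\rho\mapsto \rho'=\rho\pm sN^{-1/3}$ with $s=C_0\sqrt{t}$ chosen so that $\rho'\in[\varepsilon/2,\mu-\varepsilon/2]$. Because $v_N$ is characteristic for $\rho$, the Taylor expansion of $\mathbb E^{\rho'}[\log Z^{\rho'}_{0,v_N}]=\Lambda^{\rho'}(v_N)$ has a vanishing first-order term in $s$ (this is precisely the definition of the characteristic direction, as used in the calculation around \eqref{exp_diff}) and a second-order term of size $\Theta(s^2 N^{1/3})=\Theta(tN^{1/3})$, shifting the mean in the direction that renders the event $\{\log Z^\rho_{0,v_N}\le\Lambda(v_N)-tN^{1/3}\}$ likely under $\mathbb P^{\rho'}$, provided the sign of $s$ is chosen appropriately. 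Applying Cauchy-Schwarz with the second-moment bound on $d\mathbb P^\rho/d\mathbb P^{\rho'}$ from Proposition \ref{RNest},
\[
\mathbb{E}^{\rho'}\Bigl[\bigl(d\mathbb P^\rho/d\mathbb P^{\rho'}\bigr)^2\Bigr]\le e^{Cs^3}=e^{Ct^{3/2}},
\]
then yields the desired tail $e^{-Ct^{3/2}}$. For the large-deviation regime $t\ge cN^{2/3}$, a direct sub-exponential concentration applied to the $O(N^{2/3})$ boundary log-weights (via Proposition \ref{Ga_sub_exp} and Theorem \ref{max_sub_exp}) produces the $e^{-CtN^{1/3}}$ branch.

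The main obstacle is the interplay between the choice of $s$ and the admissible range in Proposition \ref{RNest}: the perturbation is valid only when $s\le cN^{1/3}$, i.e.\ $t\le cN^{2/3}$, which is precisely the crossover in the claimed bound. A secondary subtlety is that under the tilted measure $\mathbb P^{\rho'}$ the endpoint $v_N$ is only nearly, not exactly, characteristic for $\rho'$; however, since the directional offset $|\rho'-\rho|=sN^{-1/3}$ keeps the typical exit time under $\mathbb P^{\rho'}$ at order $N^{2/3}$, the variance identity of \cite[Thm.~3.7]{Sep-12-corr} used in \eqref{var_bound} still yields $\Var[\log Z^{\rho'}_{0,v_N}]\le CN^{2/3}$, which is all that is needed for the Chebyshev step under the tilt. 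Uniformity in $\rho\in[\varepsilon,\mu-\varepsilon]$ follows from the smoothness of $\Psi_0$ and $\Psi_1$ on compact subintervals of $(0,\mu)$, as in the uniform estimates around \eqref{la8}--\eqref{exp_diff}.
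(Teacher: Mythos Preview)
The paper does not prove this proposition; it is quoted as Proposition 3.8 of \cite{Bas-Sep-She-23-} (with the O'Connell--Yor analogue in \cite{Lan-Sos-22-b-}). So there is no in-paper argument to compare against, and the question is whether your sketch stands on its own.

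It does not, and the central problem is the direction of the Cauchy--Schwarz tilt. The argument you invoke from the proof of Lemma~\ref{rtail_lb} and Proposition~\ref{RNest} produces a \emph{lower} bound on a probability: from $\mathbb{P}^{\rho'}(A)=\mathbb{E}^{\rho}[f\,\mathbf 1_A]\le \mathbb{P}^{\rho}(A)^{1/2}\,\mathbb{E}^{\rho}[f^2]^{1/2}$ one deduces $\mathbb{P}^{\rho}(A)\ge \mathbb{P}^{\rho'}(A)^2\,e^{-Cs^3}$. Choosing $\rho'$ to make the lower-deviation event $A$ likely under $\mathbb{P}^{\rho'}$ therefore yields $\mathbb{P}^{\rho}(A)\ge e^{-Ct^{3/2}}$, i.e.\ the matching \emph{lower} bound on the left tail, not the upper bound claimed in Proposition~\ref{low_ub}. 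There is no Cauchy--Schwarz manipulation that converts ``event is likely under the tilted law'' plus ``$\mathbb{E}[f^2]\le e^{Cs^3}$'' into an upper bound of the form $e^{-Ct^{3/2}}$; the Radon--Nikodym derivative depends only on the boundary weights and is not pointwise controlled on $A$.

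Two further gaps compound this. First, your comparison step runs the wrong way: restricting the stationary partition function to $\tau=\pm1$ already gives $\log Z^{\rho}_{0,v_N}\ge \log Z_{0,v_N}+O(1)$, so a lower deviation of the bulk free energy does not force a lower deviation of the stationary one. The exit-time decomposition expresses $Z^{\rho}_{0,v_N}$ in terms of bulk partition functions $Z_{x,v_N}$ with $x$ on an axis, not $Z_{0,v_N}$, and your asserted $O(N^{1/3})$ match between the two is essentially the conclusion you are trying to prove. Second, there is a circularity: you appeal to Theorem~\ref{r_up_low} for the exit-time control, but in this paper the upper bound of Theorem~\ref{r_up_low} (Lemma~\ref{rtail_ub}) is proved \emph{using} Proposition~\ref{low_ub} (see the bound on \eqref{term_1}). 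Any self-contained argument would have to establish the needed exit-time input independently, which is possible but is not what you have written.
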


\subsection{Proof of Propositions \ref{far_s} and \ref{close_s}}\label{exp_calc}

Let $\varepsilon \in (0, \mu/2)$ and fix $\rho \in [\varepsilon, \mu-\varepsilon]$.
We start with a few derivative calculations.
\begin{align}
&\frac{d}{dz} \frac{\Psi_1(\rho+z)}{\Psi_1(\rho+z) + \Psi_1(\mu - \rho-z)}\bigg|_{z = 0} = \frac{\Psi_2(\rho) \Psi_1(\mu-\rho) +\Psi_1(\rho) \Psi_2(\mu-\rho) }{(\Psi_1(\rho) + \Psi_1(\mu - \rho))^2},\label{long1}\\
&\frac{d}{dz} \frac{\Psi_1(\mu-\rho-z)}{\Psi_1(\rho+z) + \Psi_1(\mu - \rho-z)}\bigg|_{z = 0} = -\frac{\Psi_2(\rho) \Psi_1(\mu-\rho) +\Psi_1(\rho) \Psi_2(\mu-\rho) }{(\Psi_1(\rho) + \Psi_1(\mu - \rho))^2},\nonumber\\
&\frac{d^2}{dz^2} \frac{\Psi_1(\rho+z)}{\Psi_1(\rho+z) + \Psi_1(\mu - \rho-z)}\bigg|_{z = 0} = -\frac{2\Psi_2(\rho)(\Psi_2(\rho)- \Psi_2(\mu-\rho))}{{(\Psi_1(\rho) + \Psi_1(\mu - \rho))^2}} + \frac{\Psi_3(\rho)}{\Psi_1(\rho) + \Psi_1(\mu - \rho)} \nonumber\\
& \qquad \qquad \qquad + \Psi_1(\rho)\Big(\frac{2(\Psi_2(\rho) - \Psi_2(\mu-\rho))^2}{(\Psi_1(\rho) + \Psi_1(\mu - \rho))^3} - \frac{\Psi_3(\mu-\rho) + \Psi_3(\rho)}{(\Psi_1(\rho) + \Psi_1(\mu - \rho))^2} \Big),\nonumber\\
&\frac{d^2}{dz^2} \frac{\Psi_1(\mu-\rho-z)}{\Psi_1(\rho+z) + \Psi_1(\mu - \rho-z)}\bigg|_{z = 0} = \frac{2\Psi_2(\mu-\rho)(\Psi_2(\rho)- \Psi_2(\mu-\rho))}{{(\Psi_1(\rho) + \Psi_1(\mu - \rho))^2}} + \frac{\Psi_3(\mu-\rho)}{\Psi_1(\rho) + \Psi_1(\mu - \rho)} \nonumber\\
& \qquad \qquad \qquad+ \Psi_1(\mu-\rho)\Big(\frac{2(\Psi_2(\rho) - \Psi_2(\mu-\rho))^2}{(\Psi_1(\rho) + \Psi_1(\mu - \rho))^3} - \frac{\Psi_3(\mu-\rho) + \Psi_3(\rho)}{(\Psi_1(\rho) + \Psi_1(\mu - \rho))^2} \Big),\nonumber\\
&\frac{d}{dz}\Big(\frac{\Psi_1(\rho+z)}{\Psi_1(\rho+z) + \Psi_1(\mu - \rho-z)}\Psi_0(\mu-\rho-z) +\frac{\Psi_1(\mu-\rho-z)}{\Psi_1(\rho+z) + \Psi_1(\mu - \rho-z)} \Psi_0(\rho+z) \Big)\bigg|_{z = 0}\nonumber\\
& = \frac{(\Psi_0(\mu-\rho) - \Psi_0(\rho))(\Psi_2(\rho) \Psi_1(\mu-\rho) +\Psi_1(\rho) \Psi_2(\mu-\rho)) }{(\Psi_1(\rho) + \Psi_1(\mu - \rho))^2},\\
&\frac{d^2}{dz^2}\Big(\frac{\Psi_1(\rho+z)}{\Psi_1(\rho+z) + \Psi_1(\mu - \rho-z)}\Psi_0(\mu-\rho-z) +\frac{\Psi_1(\mu-\rho-z)}{\Psi_1(\rho+z) + \Psi_1(\mu - \rho-z)} \Psi_0(\rho+z) \Big)\bigg|_{z = 0}\nonumber\\
& =  \frac{2(\Psi_0(\rho) \Psi_2(\mu-\rho)  - \Psi_2(\rho) \Psi_0(\mu-\rho)) (\Psi_2(\rho) - \Psi_2(\mu-\rho))}{(\Psi_1(\rho) + \Psi_1(\mu - \rho))^2} \nonumber\\
&\quad + \frac{\Psi_3(\rho) \Psi_0(\mu-\rho) + \Psi_0(\rho) \Psi_3(\mu-\rho) - \Psi_2(\rho) \Psi_1(\mu-\rho) - \Psi_1(\rho)\Psi_2(\mu-\rho)}{\Psi_1(\rho) + \Psi_1(\mu - \rho)} \nonumber\\
& \quad + (\Psi_1(\rho) \Psi_0(\mu-\rho) + \Psi_0(\rho) \Psi_1( \mu-\rho))\Big(\frac{2(\Psi_2(\rho) - \Psi_2(\mu-\rho))^2}{(\Psi_1(\rho) + \Psi_1(\mu - \rho))^3} - \frac{\Psi_3(\mu-\rho) + \Psi_3(\rho)}{(\Psi_1(\rho) + \Psi_1(\mu - \rho))^2} \Big).\nonumber
\end{align}

Because of the bijection in \eqref{char_dir}, there exists a $z$ such that 
\begin{align}\label{z-choice}
N\xi[\rho + z] =  v_N - \floor{sN^{2/3}}e_1 + \floor{sN^{2/3}} e_2.
\end{align}
From \eqref{long1} we see that the derivative of $\frac{\Psi_1(\rho+z)}{\Psi_1(\rho+z)+\Psi_1(\mu-\rho-z)}$ at $z=0$ is strictly negative. By continuity, it is also strictly negative on a neighborhood of $0$. This and the mean value theorem imply that 
\begin{align}\label{z_range}
z\in [c_1 sN^{-1/3}, c_2 sN^{-1/3}]
\end{align}
for some positive constant $c_1, c_2$ depending on $\varepsilon$.

The quantity appearing on the left side of  Proposition \ref{far_s} and Proposition \ref{close_s} is essentially the following (we ignore the  integer floor function),
\begin{align*}
&-N\Big[\frac{\Psi_1(\rho +z)}{\Psi_1(\rho +z) + \Psi_1(\mu - \rho -z)} \Psi_0(\mu-\rho-z) + \frac{\Psi_1(\mu- \rho -z)}{\Psi_1(\rho +z) + \Psi_1(\mu - \rho -z)} \Psi_0(\rho+z)\Big] \\
& + N\Big[\frac{\Psi_1(\rho)}{\Psi_1(\rho ) + \Psi_1(\mu - \rho )} \Psi_0(\mu-\rho) + \frac{\Psi_1(\mu- \rho )}{\Psi_1(\rho ) + \Psi_1(\mu - \rho )} \Psi_0(\rho)\Big]\\
& + N\Psi_0(\mu-\rho)\Big[(\frac{\Psi_1(\rho +z)}{\Psi_1(\rho +z) + \Psi_1(\mu - \rho -z)} - \frac{\Psi_1(\rho )}{\Psi_1(\rho) + \Psi_1(\mu - \rho)}\Big] \\
& + N\Psi_0(\rho)\Big[\frac{\Psi_1(\mu-\rho -z)}{\Psi_1(\rho +z) + \Psi_1(\mu - \rho -z)} - \frac{\Psi_1(\mu-\rho )}{\Psi_1(\rho) + \Psi_1(\mu - \rho)}\Big].
\end{align*}
In the above, we used \eqref{z-choice} to write $\floor{sN^{2/3}}=(v_N-N\xi[\rho+z])\cdot e_1=(N\xi[\rho+z]-v_N)\cdot e_2$.

By performing Taylor expansions in $z$ and using the computations presented earlier in this section, we observe a number of cancellations, ultimately turning the above expression into
$$\frac{N}{2} \cdot  \frac{\Psi_1(\rho) \Psi_2(\mu-\rho) + \Psi_2(\rho) \Psi_1(\mu-\rho)}{\Psi_1(\rho) + \Psi_1(\mu-\rho)}z^2 + N\cdot \mathcal{O}(z^3).$$
This and \eqref{z_range} imply the claimed bounds in Propositions \ref{far_s} and \ref{close_s}, provided that a sufficiently small value of $c_0$ is chosen.\hfill\qed

\subsection{Non-random properties}  \label{nonrandom}

The following monotonicity property of the ratios of partition functions is in  \cite[Lemma A.2]{Bus-Sep-22-ejp}.

\begin{lemma} \label{mono_ratio}
Let $x, y, z\in \mathbb{Z}^2$ be such that $x\cdot e_1 \leq y \cdot e_1$, $x\cdot e_2 \geq y \cdot e_2$, and $x, y \leq z$, then \begin{equation}
\frac{Z_{x, z}}{Z_{x, z-e_1}} \leq \frac{Z_{y,z}}{Z_{y, z-e_1}} \qquad \text{ and } \qquad \frac{Z_{x, z}}{Z_{x, z-e_2}} \geq \frac{Z_{y,z}}{Z_{y, z-e_2}}.
\end{equation}
\end{lemma}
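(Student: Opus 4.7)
The plan is to first reduce both claimed inequalities to a single quadrilateral inequality between four partition functions, and then establish that inequality by a classical path-swapping (Lindstr\"om--Gessel--Viennot) argument.

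For the reduction, I apply the recursion $Z_{u,z}=Y_z(Z_{u,z-e_1}+Z_{u,z-e_2})$ with $u\in\{x,y\}$ to rewrite
\[
\frac{Z_{u,z}}{Z_{u,z-e_1}}=Y_z\Bigl(1+\frac{Z_{u,z-e_2}}{Z_{u,z-e_1}}\Bigr),\qquad
\frac{Z_{u,z}}{Z_{u,z-e_2}}=Y_z\Bigl(1+\frac{Z_{u,z-e_1}}{Z_{u,z-e_2}}\Bigr).
\]
A short manipulation then shows that each of the two claimed inequalities is equivalent to
\[
Z_{x,z-e_2}\,Z_{y,z-e_1}\ \leq\ Z_{x,z-e_1}\,Z_{y,z-e_2}.
\]
Boundary cases in which one of these four partition functions vanishes (i.e.\ $x$ or $y$ shares a coordinate with $z$) are trivial, so I may assume $x,y\le z-e_1-e_2$ and all four quantities are strictly positive. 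Writing $a=z-e_1$ and $a'=z-e_2$, we have that $a$ is strictly NW of $a'$, and the target inequality becomes $Z_{x,a'}Z_{y,a}\le Z_{x,a}Z_{y,a'}$, with $x$ also NW of $y$.

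To prove this, I expand both sides as sums over pairs of up-right lattice paths. A left-hand pair $(\pi_1,\pi_2)\in\mathbb{X}_{x,a'}\times\mathbb{X}_{y,a}$ joins an NW corner to an SE corner and, simultaneously, an SE corner to an NW corner; so by a standard discrete planarity argument (on each antidiagonal both paths visit a unique vertex, the signed $e_2$-difference of these visited vertices starts $\geq 0$ and ends $<0$, and changes by at most one per antidiagonal step) the paths necessarily share at least one lattice vertex. Let $p$ be the first such common vertex, chosen by the earliest index along $\pi_1$ with a fixed deterministic tie-breaking rule. I uncross at $p$ to produce
\[
\pi_1'=\pi_1[x\to p]\cdot\pi_2[p\to a]\in\mathbb{X}_{x,a},\qquad
\pi_2'=\pi_2[y\to p]\cdot\pi_1[p\to a']\in\mathbb{X}_{y,a'}.
\]
The multiset of vertex visits along $(\pi_1,\pi_2)$ equals that along $(\pi_1',\pi_2')$, so the product of vertex weights $\prod Y_v$ is preserved. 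This gives a weight-preserving map from $\mathbb{X}_{x,a'}\times\mathbb{X}_{y,a}$ into the intersecting subset of $\mathbb{X}_{x,a}\times\mathbb{X}_{y,a'}$, and summing yields the inequality, with the (generally strict) gap accounted for by non-intersecting right-hand pairs that are not in the image.

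The principal subtlety to verify will be injectivity, namely that $p$ is also the first common vertex of $(\pi_1',\pi_2')$, so that the map is an involution on its image and can thus be inverted. This follows because $\pi_1'$ agrees with $\pi_1$ and $\pi_2'$ with $\pi_2$ up to and including $p$, and no common vertex exists strictly before $p$ by the choice of $p$. The vertex-weight identity must also be checked when $\pi_1$ and $\pi_2$ share vertices beyond $p$, but writing each path as an explicit sequence of vertices makes the multiset equality transparent. No probabilistic input is needed: the lemma is a purely combinatorial property of the weighted up-right lattice.
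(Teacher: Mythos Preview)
Your proof is correct. The reduction via $Z_{u,z}=Y_z(Z_{u,z-e_1}+Z_{u,z-e_2})$ cleanly collapses both claimed inequalities to the single quadrilateral inequality $Z_{x,z-e_2}\,Z_{y,z-e_1}\le Z_{x,z-e_1}\,Z_{y,z-e_2}$, and your path-swapping argument establishes it: the antidiagonal crossing argument is sound (on the common range of levels the $e_2$-difference starts $\ge 0$, ends at $-1$, and changes by at most one per level, so the paths must meet), the swap at the first common vertex preserves the multiset of visited vertices and hence the product of weights, and your injectivity check via the involution property goes through because the initial segments of $(\pi_1',\pi_2')$ up to level $|p|_1$ coincide with those of $(\pi_1,\pi_2)$. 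The tie-breaking clause is unnecessary, since ``earliest index along $\pi_1$'' is already unambiguous for an up-right path.

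The paper does not prove this lemma at all; it simply records it and cites \cite[Lemma~A.2]{Bus-Sep-22-ejp}. Your self-contained Lindstr\"om--Gessel--Viennot argument is the standard route to this monotonicity (an equivalent phrasing is a double induction on $|z-x|_1+|z-y|_1$ using the same recursion), and it fills in what the paper defers. One small remark on the boundary cases: when, say, $y\cdot e_1=z\cdot e_1$, the ratio $Z_{y,z}/Z_{y,z-e_1}$ is formally undefined rather than merely ``trivial''; the lemma is implicitly meant for configurations where all four partition functions are positive, which is how it is applied throughout the paper, so your reduction to $x,y\le z-e_1-e_2$ is the right move.
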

The above lemma implies the following results about the monotonicity between the ratio of partition functions and exit times. 
\begin{lemma}\label{2_exit_ineq}
Let $z\in \mathbb{Z}^2_{\geq 0}$ and let $k,l\in \mathbb{Z}_{\geq 0}$ be such that $l \leq k$. Then
$$\frac{Z_{0, z}(\tau \geq l)}{Z_{0, z-e_1}(\tau\geq l)} \leq \frac{Z_{0, z}(\tau\geq k)}{Z_{0, z-e_1}(\tau\geq k)} \qquad \text{ and } \qquad \frac{Z_{0, z}(\tau\geq l)}{Z_{0, z-e_2}(\tau\geq l)} \geq \frac{Z_{0, z}(\tau\geq k)}{Z_{0, z-e_2}(\tau\geq k)}\,.$$
\end{lemma}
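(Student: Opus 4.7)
The plan is to reduce both inequalities to a direct application of the monotonicity lemma \ref{mono_ratio} after stripping off the deterministic axial weights forced by the event $\{\tau\geq l\}$.

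First I would observe that, for any integer $l\geq 1$, a path with $\tau\geq l$ is exactly an up-right path from $0$ to $z$ whose first $l$ steps are $e_1$-steps. Summing the products of weights over such paths factors cleanly as
\begin{equation*}
Z_{0,z}(\tau\geq l) \;=\; \Big(\prod_{i=0}^{l-1} Y_{i e_1}\Big)\cdot Z_{l e_1,\, z},
\end{equation*}
and analogously $Z_{0,z-e_1}(\tau\geq l)=\bigl(\prod_{i=0}^{l-1}Y_{ie_1}\bigr)\cdot Z_{l e_1,\,z-e_1}$. The axial product cancels in the ratio, so
\begin{equation*}
\frac{Z_{0,z}(\tau\geq l)}{Z_{0,z-e_1}(\tau\geq l)} \;=\; \frac{Z_{l e_1,\,z}}{Z_{l e_1,\,z-e_1}},
\end{equation*}
and the same identity holds with $e_1$ replaced by $e_2$ in the denominator and with $l$ replaced by $k$. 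The case $l=0$ (and $k=0$) is handled separately: since $\tau\in\mathbb Z\setminus\{0\}$, $\{\tau\geq 0\}=\{\tau\geq 1\}$, which reduces to the $l=1$ (respectively $k=1$) case.

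Next I would apply Lemma \ref{mono_ratio} with the two base points $x=l e_1$ and $y=k e_1$. Since $l\leq k$, we have $x\cdot e_1=l\leq k=y\cdot e_1$ and $x\cdot e_2=0=y\cdot e_2$, so the hypothesis $x\cdot e_1\leq y\cdot e_1$ and $x\cdot e_2\geq y\cdot e_2$ of Lemma \ref{mono_ratio} is met (the partition functions are all nonzero provided $k\leq z\cdot e_1-1$, which is implicit in the ratios being well-defined). The lemma then yields
\begin{equation*}
\frac{Z_{l e_1,\,z}}{Z_{l e_1,\,z-e_1}} \;\leq\; \frac{Z_{k e_1,\,z}}{Z_{k e_1,\,z-e_1}} \qquad\text{and}\qquad \frac{Z_{l e_1,\,z}}{Z_{l e_1,\,z-e_2}} \;\geq\; \frac{Z_{k e_1,\,z}}{Z_{k e_1,\,z-e_2}},
\end{equation*}
which is exactly the two claimed inequalities after translating back via the factorization identity.

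There is no real obstacle here; the entire argument is mechanical once one notices that conditioning on $\{\tau\geq l\}$ simply relocates the effective base point of the polymer to $l e_1$ (modulo a deterministic multiplicative factor that cancels). The only thing to be mildly careful about is to verify that Lemma \ref{mono_ratio}'s hypothesis covers the boundary case $x\cdot e_2 = y\cdot e_2$, but that is allowed by the statement of the lemma as written in \cite{Bus-Sep-22-ejp}.
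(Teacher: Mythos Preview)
Your proof is correct and takes essentially the same approach as the paper: both reduce the restricted ratio $\frac{Z_{0,z}(\tau\geq l)}{Z_{0,z-e_1}(\tau\geq l)}$ to the unrestricted ratio $\frac{Z_{le_1,z}}{Z_{le_1,z-e_1}}$ by factoring out the common axial weights, and then invoke Lemma~\ref{mono_ratio} with base points $le_1$ and $ke_1$. Your version is just slightly more explicit about the factorization and the $l=0$ edge case.
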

\begin{proof}
Note that
$\frac{Z_{0, z}(\tau\geq l)}{Z_{0, z-e_1}(\tau\geq l)}  = \frac{Z_{le_1, z}}{Z_{le_1, z-e_1}}$ and $\frac{Z_{0, z}(\tau\geq k)}{Z_{0, z-e_1}(\tau\geq k)} = \frac{Z_{ke_1, z}}{Z_{ke_1, z-e_1}}$. Then Lemma \ref{mono_ratio} gives us the inequality 
$$ \frac{Z_{le_1, z}}{Z_{le_1, z-e_1}}  \leq  \frac{Z_{ke_1, z}}{Z_{ke_1, z-e_1}}.$$
The other inequality with $e_2$ follows from a similar argument.
\end{proof}

The next lemma is an immediate consequence of Lemma \ref{2_exit_ineq}. It suggests that shifting the endpoint to the right or down increases the likelihood of the polymer taking more $e_1$ steps at the beginning.
\begin{lemma} \label{polymono}
For any $k, l, m\in \mathbb{Z}_{\geq 0}$ and $x\in \mathbb{Z}^2_{\geq 0}$ such that $x+le_1-me_2\in\mathbb{Z}^2_{\ge0}$,
$$Q_{0, x} \{\tau \geq k \} \leq Q_{0, x+le_1-me_2} \{\tau \geq k \}.$$
\end{lemma}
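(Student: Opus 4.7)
My plan is to reduce the quenched probability $Q_{0, w}\{\tau \geq k\}$ to a ratio of two partition functions sharing the common endpoint $w$, and then apply Lemma \ref{mono_ratio} directly to handle the $+e_1$ and $-e_2$ shifts separately. This is slightly more efficient than going through Lemma \ref{2_exit_ineq}, since both that lemma and the present one stem from a single application of Lemma \ref{mono_ratio}.

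After disposing of the trivial cases ($k = 0$, or $k > x \cdot e_1$ so that $Q_{0,x}\{\tau \geq k\} = 0$), I would observe that the event $\{\tau \geq k\}$ on $\mathbb X_{0, w}$ consists exactly of paths whose first $k$ steps are $e_1$, which yields the factorization
$$Z_{0, w}(\tau \geq k) = \Bigl(\prod_{j=0}^{k-1} Y_{j e_1}\Bigr) Z_{k e_1,\, w}$$
for any $w$ with $w \cdot e_1 \geq k$. Dividing by $Z_{0, w}$, the quenched probability becomes the product of the $w$-independent prefactor $\prod_{j=0}^{k-1} Y_{je_1}$ and the ratio $Z_{k e_1, w}/Z_{0, w}$. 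Thus the lemma reduces to showing that $w \mapsto Z_{k e_1, w}/Z_{0, w}$ is non-decreasing under each of the elementary shifts $w \mapsto w + e_1$ and $w \mapsto w - e_2$.

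Both of those one-step monotonicity statements rearrange directly into the two inequalities of Lemma \ref{mono_ratio} applied with the pair of base points $(0,\, k e_1)$. The $+e_1$ shift becomes
$$\frac{Z_{0,\, w+e_1}}{Z_{0,\, w}} \;\leq\; \frac{Z_{ke_1,\, w+e_1}}{Z_{ke_1,\, w}}$$
with endpoint $w + e_1$, and the $-e_2$ shift becomes
$$\frac{Z_{ke_1,\, w}}{Z_{ke_1,\, w - e_2}} \;\leq\; \frac{Z_{0,\, w}}{Z_{0,\, w - e_2}}$$
with endpoint $w$. The sign conditions $0 \cdot e_i$ vs.\ $ke_1 \cdot e_i$ for $i \in \{1, 2\}$ hold trivially. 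Iterating along the monotone staircase $x \to x + e_1 \to \cdots \to x + l e_1 \to x + l e_1 - e_2 \to \cdots \to x + l e_1 - m e_2$ then yields the claim. The only bookkeeping is to verify $k e_1 \leq w$ at every intermediate $w$, which follows because $w \cdot e_1$ is non-decreasing along the staircase and starts at $x \cdot e_1 \geq k$, while $w \cdot e_2 \geq 0$ holds throughout thanks to the hypothesis $x + l e_1 - m e_2 \in \mathbb Z^2_{\geq 0}$. I do not anticipate any real obstacle; the only subtlety is recognizing that the factorization isolates all $w$-dependence into the single ratio $Z_{ke_1, w}/Z_{0, w}$, which is already governed by Lemma \ref{mono_ratio}.
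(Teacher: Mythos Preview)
Your proof is correct and follows essentially the same approach as the paper: both arguments rest on the factorization $Z_{0,w}(\tau\geq k) = (\text{$w$-independent prefactor})\cdot Z_{ke_1,w}$ and then invoke Lemma~\ref{mono_ratio} with the base-point pair $(0,ke_1)$ to obtain the one-step monotonicity in $w$, which is iterated. The only organizational difference is that the paper phrases the first step as ``the proof of Lemma~\ref{2_exit_ineq} also gives\ldots'' whereas you unpack the factorization explicitly and go straight to Lemma~\ref{mono_ratio}; this is the same argument, just with the intermediate lemma unwound.
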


\begin{proof}
Note that the proof of Lemma \ref{2_exit_ineq} also gives 
$$\frac{Z_{0, x}}{Z_{0, x-e_1}} \leq \frac{Z_{0, z}(\tau\geq k)}{Z_{0, x-e_1}(\tau\geq k)} \qquad \text{ and } \qquad \frac{Z_{0, x}}{Z_{0, x-e_2}} \geq \frac{Z_{0, x}(\tau\geq k)}{Z_{0, x-e_2}(\tau_{0,x}\geq k)}.$$
Rearrange to get
\begin{equation}\label{ineq_Q_1}Q_{0, x} \{\tau \geq k \}  = \frac{Z_{0,x}(\tau \geq k)}{Z_{0,x}}\leq \frac{Z_{0,x+e_1}(\tau \geq k)}{Z_{0,x+e_1}} = Q_{0, x+e_1}  \{\tau \geq k\}
\end{equation}
and 
\begin{equation}\label{ineq_Q_2}Q_{0, x} \{\tau \geq k \}  \geq Q_{0, x+e_2}  \{\tau \geq k\}.\end{equation}
Applying the two inequalities \eqref{ineq_Q_1} and \eqref{ineq_Q_2} repeatedly gives us the statement of our lemma.
\end{proof}

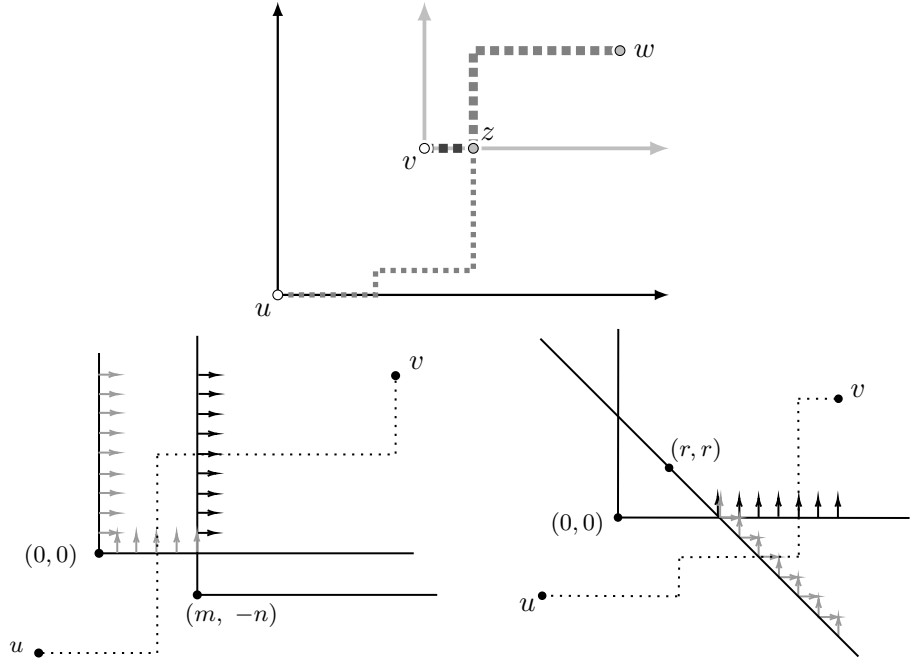
\begin{figure}[t]
\captionsetup{width=0.8\textwidth}
\begin{center}
\begin{tikzpicture}[>=latex, scale=0.65]

\draw[line width=0.3mm, ->] (0,0) -- (8,0);
\draw[line width=0.3mm, ->] (0,0) -- (0,6);
\draw[gray ,dotted, line width=0.7mm] (0,0) -- (2,0) -- (2,0.5) -- (4, 0.5) -- (4,3) ;

\draw[lightgray, line width=0.5mm, ->] (3,3) -- (8,3);

(3,2.8) -- (7.8,2.8) ;

\draw[lightgray, line width=0.5mm, ->] (3,3) -- (3,6);

\draw[darkgray, dotted, line width=1.2mm] (3,3) -- (4,3); 
\draw[gray, dotted, line width=1.2mm]  (4,3) -- (4,5) -- (7,5);
\fill[color=white] (3,3)circle(1.7mm);
\draw[ fill=white](3,3)circle(1mm);
\node at (2.7,2.7) {$v$};

\draw[ fill=white](0,0)circle(1mm);
\node at (-0.3,-0.3) {$u$};

\fill[color=white] (4,3)circle(1.7mm); 
\draw[ fill=lightgray](4,3)circle(1mm);
\node at (4.3,3.3) {$z$};

\fill[color=white] (7,5)circle(1.7mm);
\draw[ fill=lightgray](7,5)circle(1mm);
\node at (7.5,5) {$w$};

\end{tikzpicture}

\tikzset{every picture/.style={line width=0.75pt}} 

\begin{tikzpicture}[x=0.75pt,y=0.75pt,yscale=-1,xscale=1]

\draw   (409.53,160.3) -- (250.8,160.3) -- (250.8,59.2) ;
\draw [color={rgb, 255:red, 155; green, 155; blue, 155 }  ,draw opacity=1 ]   (260,160.17) -- (260,152.57) ;
\draw [shift={(260,150.57)}, rotate = 90] [color={rgb, 255:red, 155; green, 155; blue, 155 }  ,draw opacity=1 ][line width=0.75]    (4.37,-1.32) .. controls (2.78,-0.56) and (1.32,-0.12) .. (0,0) .. controls (1.32,0.12) and (2.78,0.56) .. (4.37,1.32)   ;
\draw [color={rgb, 255:red, 155; green, 155; blue, 155 }  ,draw opacity=1 ]   (279.74,160.17) -- (279.74,152.57) ;
\draw [shift={(279.74,150.57)}, rotate = 90] [color={rgb, 255:red, 155; green, 155; blue, 155 }  ,draw opacity=1 ][line width=0.75]    (4.37,-1.32) .. controls (2.78,-0.56) and (1.32,-0.12) .. (0,0) .. controls (1.32,0.12) and (2.78,0.56) .. (4.37,1.32)   ;
\draw [color={rgb, 255:red, 155; green, 155; blue, 155 }  ,draw opacity=1 ]   (289.94,159.97) -- (289.94,152.37) ;
\draw [shift={(289.94,150.37)}, rotate = 90] [color={rgb, 255:red, 155; green, 155; blue, 155 }  ,draw opacity=1 ][line width=0.75]    (4.37,-1.32) .. controls (2.78,-0.56) and (1.32,-0.12) .. (0,0) .. controls (1.32,0.12) and (2.78,0.56) .. (4.37,1.32)   ;
\draw [color={rgb, 255:red, 155; green, 155; blue, 155 }  ,draw opacity=1 ]   (269.6,160.04) -- (269.6,152.44) ;
\draw [shift={(269.6,150.44)}, rotate = 90] [color={rgb, 255:red, 155; green, 155; blue, 155 }  ,draw opacity=1 ][line width=0.75]    (4.37,-1.32) .. controls (2.78,-0.56) and (1.32,-0.12) .. (0,0) .. controls (1.32,0.12) and (2.78,0.56) .. (4.37,1.32)   ;
\draw [color={rgb, 255:red, 0; green, 0; blue, 0 }  ,draw opacity=1 ]   (300.9,70.24) -- (308.5,70.28) ;
\draw [shift={(310.5,70.29)}, rotate = 180.27] [color={rgb, 255:red, 0; green, 0; blue, 0 }  ,draw opacity=1 ][line width=0.75]    (4.37,-1.32) .. controls (2.78,-0.56) and (1.32,-0.12) .. (0,0) .. controls (1.32,0.12) and (2.78,0.56) .. (4.37,1.32)   ;
\draw [color={rgb, 255:red, 0; green, 0; blue, 0 }  ,draw opacity=1 ]   (300.81,89.98) -- (308.41,90.01) ;
\draw [shift={(310.41,90.02)}, rotate = 180.27] [color={rgb, 255:red, 0; green, 0; blue, 0 }  ,draw opacity=1 ][line width=0.75]    (4.37,-1.32) .. controls (2.78,-0.56) and (1.32,-0.12) .. (0,0) .. controls (1.32,0.12) and (2.78,0.56) .. (4.37,1.32)   ;
\draw [color={rgb, 255:red, 0; green, 0; blue, 0 }  ,draw opacity=1 ]   (300.63,110.18) -- (308.23,110.21) ;
\draw [shift={(310.23,110.22)}, rotate = 180.27] [color={rgb, 255:red, 0; green, 0; blue, 0 }  ,draw opacity=1 ][line width=0.75]    (4.37,-1.32) .. controls (2.78,-0.56) and (1.32,-0.12) .. (0,0) .. controls (1.32,0.12) and (2.78,0.56) .. (4.37,1.32)   ;
\draw [color={rgb, 255:red, 0; green, 0; blue, 0 }  ,draw opacity=1 ]   (300.99,79.84) -- (308.59,79.88) ;
\draw [shift={(310.59,79.89)}, rotate = 180.27] [color={rgb, 255:red, 0; green, 0; blue, 0 }  ,draw opacity=1 ][line width=0.75]    (4.37,-1.32) .. controls (2.78,-0.56) and (1.32,-0.12) .. (0,0) .. controls (1.32,0.12) and (2.78,0.56) .. (4.37,1.32)   ;
\draw [color={rgb, 255:red, 0; green, 0; blue, 0 }  ,draw opacity=1 ]   (300.38,119.91) -- (307.98,119.95) ;
\draw [shift={(309.98,119.96)}, rotate = 180.27] [color={rgb, 255:red, 0; green, 0; blue, 0 }  ,draw opacity=1 ][line width=0.75]    (4.37,-1.32) .. controls (2.78,-0.56) and (1.32,-0.12) .. (0,0) .. controls (1.32,0.12) and (2.78,0.56) .. (4.37,1.32)   ;
\draw [color={rgb, 255:red, 0; green, 0; blue, 0 }  ,draw opacity=1 ]   (300.39,130.24) -- (307.99,130.28) ;
\draw [shift={(309.99,130.29)}, rotate = 180.27] [color={rgb, 255:red, 0; green, 0; blue, 0 }  ,draw opacity=1 ][line width=0.75]    (4.37,-1.32) .. controls (2.78,-0.56) and (1.32,-0.12) .. (0,0) .. controls (1.32,0.12) and (2.78,0.56) .. (4.37,1.32)   ;
\draw [color={rgb, 255:red, 0; green, 0; blue, 0 }  ,draw opacity=1 ]   (300.16,149.98) -- (307.76,150.01) ;
\draw [shift={(309.76,150.02)}, rotate = 180.27] [color={rgb, 255:red, 0; green, 0; blue, 0 }  ,draw opacity=1 ][line width=0.75]    (4.37,-1.32) .. controls (2.78,-0.56) and (1.32,-0.12) .. (0,0) .. controls (1.32,0.12) and (2.78,0.56) .. (4.37,1.32)   ;
\draw [color={rgb, 255:red, 0; green, 0; blue, 0 }  ,draw opacity=1 ]   (300.34,139.84) -- (307.94,139.88) ;
\draw [shift={(309.94,139.89)}, rotate = 180.27] [color={rgb, 255:red, 0; green, 0; blue, 0 }  ,draw opacity=1 ][line width=0.75]    (4.37,-1.32) .. controls (2.78,-0.56) and (1.32,-0.12) .. (0,0) .. controls (1.32,0.12) and (2.78,0.56) .. (4.37,1.32)   ;
\draw [color={rgb, 255:red, 0; green, 0; blue, 0 }  ,draw opacity=1 ]   (300.6,99.91) -- (308.2,99.95) ;
\draw [shift={(310.2,99.95)}, rotate = 180.27] [color={rgb, 255:red, 0; green, 0; blue, 0 }  ,draw opacity=1 ][line width=0.75]    (4.37,-1.32) .. controls (2.78,-0.56) and (1.32,-0.12) .. (0,0) .. controls (1.32,0.12) and (2.78,0.56) .. (4.37,1.32)   ;
\draw   (421.36,181.28) -- (300.36,181.28) -- (300.36,50.52) ;
\draw  [fill={rgb, 255:red, 0; green, 0; blue, 0 }  ,fill opacity=1 ] (218.6,210.62) .. controls (218.6,209.64) and (219.4,208.84) .. (220.38,208.84) .. controls (221.36,208.84) and (222.16,209.64) .. (222.16,210.62) .. controls (222.16,211.6) and (221.36,212.4) .. (220.38,212.4) .. controls (219.4,212.4) and (218.6,211.6) .. (218.6,210.62) -- cycle ;
\draw  [fill={rgb, 255:red, 0; green, 0; blue, 0 }  ,fill opacity=1 ] (398.6,70.62) .. controls (398.6,69.64) and (399.4,68.84) .. (400.38,68.84) .. controls (401.36,68.84) and (402.16,69.64) .. (402.16,70.62) .. controls (402.16,71.6) and (401.36,72.4) .. (400.38,72.4) .. controls (399.4,72.4) and (398.6,71.6) .. (398.6,70.62) -- cycle ;
\draw  [fill={rgb, 255:red, 0; green, 0; blue, 0 }  ,fill opacity=1 ] (298.58,181.28) .. controls (298.58,180.3) and (299.37,179.5) .. (300.36,179.5) .. controls (301.34,179.5) and (302.14,180.3) .. (302.14,181.28) .. controls (302.14,182.26) and (301.34,183.06) .. (300.36,183.06) .. controls (299.37,183.06) and (298.58,182.26) .. (298.58,181.28) -- cycle ;
\draw  [fill={rgb, 255:red, 0; green, 0; blue, 0 }  ,fill opacity=1 ] (249.02,160.3) .. controls (249.02,159.32) and (249.82,158.52) .. (250.8,158.52) .. controls (251.78,158.52) and (252.58,159.32) .. (252.58,160.3) .. controls (252.58,161.28) and (251.78,162.08) .. (250.8,162.08) .. controls (249.82,162.08) and (249.02,161.28) .. (249.02,160.3) -- cycle ;
\draw  [dash pattern={on 0.84pt off 2.51pt}] (222.16,210.62) -- (280.18,210.62) -- (280.18,110.8) ;
\draw [color={rgb, 255:red, 155; green, 155; blue, 155 }  ,draw opacity=1 ]   (300.16,159.58) -- (300.16,151.98) ;
\draw [shift={(300.16,149.98)}, rotate = 90] [color={rgb, 255:red, 155; green, 155; blue, 155 }  ,draw opacity=1 ][line width=0.75]    (4.37,-1.32) .. controls (2.78,-0.56) and (1.32,-0.12) .. (0,0) .. controls (1.32,0.12) and (2.78,0.56) .. (4.37,1.32)   ;
\draw  [dash pattern={on 0.84pt off 2.51pt}] (280.18,110.4) -- (400.38,110.4) -- (400.38,70.62) ;
\draw [color={rgb, 255:red, 155; green, 155; blue, 155 }  ,draw opacity=1 ][fill={rgb, 255:red, 155; green, 155; blue, 155 }  ,fill opacity=1 ]   (250.9,120.24) -- (258.5,120.28) ;
\draw [shift={(260.5,120.29)}, rotate = 180.27] [color={rgb, 255:red, 155; green, 155; blue, 155 }  ,draw opacity=1 ][line width=0.75]    (4.37,-1.32) .. controls (2.78,-0.56) and (1.32,-0.12) .. (0,0) .. controls (1.32,0.12) and (2.78,0.56) .. (4.37,1.32)   ;
\draw [color={rgb, 255:red, 155; green, 155; blue, 155 }  ,draw opacity=1 ][fill={rgb, 255:red, 155; green, 155; blue, 155 }  ,fill opacity=1 ]   (250.81,139.98) -- (258.41,140.01) ;
\draw [shift={(260.41,140.02)}, rotate = 180.27] [color={rgb, 255:red, 155; green, 155; blue, 155 }  ,draw opacity=1 ][line width=0.75]    (4.37,-1.32) .. controls (2.78,-0.56) and (1.32,-0.12) .. (0,0) .. controls (1.32,0.12) and (2.78,0.56) .. (4.37,1.32)   ;
\draw [color={rgb, 255:red, 155; green, 155; blue, 155 }  ,draw opacity=1 ][fill={rgb, 255:red, 155; green, 155; blue, 155 }  ,fill opacity=1 ]   (250.99,129.84) -- (258.59,129.88) ;
\draw [shift={(260.59,129.89)}, rotate = 180.27] [color={rgb, 255:red, 155; green, 155; blue, 155 }  ,draw opacity=1 ][line width=0.75]    (4.37,-1.32) .. controls (2.78,-0.56) and (1.32,-0.12) .. (0,0) .. controls (1.32,0.12) and (2.78,0.56) .. (4.37,1.32)   ;
\draw [color={rgb, 255:red, 155; green, 155; blue, 155 }  ,draw opacity=1 ][fill={rgb, 255:red, 155; green, 155; blue, 155 }  ,fill opacity=1 ]   (250.6,149.91) -- (258.2,149.95) ;
\draw [shift={(260.2,149.95)}, rotate = 180.27] [color={rgb, 255:red, 155; green, 155; blue, 155 }  ,draw opacity=1 ][line width=0.75]    (4.37,-1.32) .. controls (2.78,-0.56) and (1.32,-0.12) .. (0,0) .. controls (1.32,0.12) and (2.78,0.56) .. (4.37,1.32)   ;
\draw [color={rgb, 255:red, 155; green, 155; blue, 155 }  ,draw opacity=1 ][fill={rgb, 255:red, 155; green, 155; blue, 155 }  ,fill opacity=1 ]   (250.75,79.94) -- (258.35,79.97) ;
\draw [shift={(260.35,79.98)}, rotate = 180.27] [color={rgb, 255:red, 155; green, 155; blue, 155 }  ,draw opacity=1 ][line width=0.75]    (4.37,-1.32) .. controls (2.78,-0.56) and (1.32,-0.12) .. (0,0) .. controls (1.32,0.12) and (2.78,0.56) .. (4.37,1.32)   ;
\draw [color={rgb, 255:red, 155; green, 155; blue, 155 }  ,draw opacity=1 ][fill={rgb, 255:red, 155; green, 155; blue, 155 }  ,fill opacity=1 ]   (250.81,99.51) -- (258.41,99.55) ;
\draw [shift={(260.41,99.56)}, rotate = 180.27] [color={rgb, 255:red, 155; green, 155; blue, 155 }  ,draw opacity=1 ][line width=0.75]    (4.37,-1.32) .. controls (2.78,-0.56) and (1.32,-0.12) .. (0,0) .. controls (1.32,0.12) and (2.78,0.56) .. (4.37,1.32)   ;
\draw [color={rgb, 255:red, 155; green, 155; blue, 155 }  ,draw opacity=1 ][fill={rgb, 255:red, 155; green, 155; blue, 155 }  ,fill opacity=1 ]   (250.99,89.38) -- (258.59,89.42) ;
\draw [shift={(260.59,89.43)}, rotate = 180.27] [color={rgb, 255:red, 155; green, 155; blue, 155 }  ,draw opacity=1 ][line width=0.75]    (4.37,-1.32) .. controls (2.78,-0.56) and (1.32,-0.12) .. (0,0) .. controls (1.32,0.12) and (2.78,0.56) .. (4.37,1.32)   ;
\draw [color={rgb, 255:red, 155; green, 155; blue, 155 }  ,draw opacity=1 ][fill={rgb, 255:red, 155; green, 155; blue, 155 }  ,fill opacity=1 ]   (250.6,109.45) -- (258.2,109.48) ;
\draw [shift={(260.2,109.49)}, rotate = 180.27] [color={rgb, 255:red, 155; green, 155; blue, 155 }  ,draw opacity=1 ][line width=0.75]    (4.37,-1.32) .. controls (2.78,-0.56) and (1.32,-0.12) .. (0,0) .. controls (1.32,0.12) and (2.78,0.56) .. (4.37,1.32)   ;
\draw [color={rgb, 255:red, 155; green, 155; blue, 155 }  ,draw opacity=1 ][fill={rgb, 255:red, 155; green, 155; blue, 155 }  ,fill opacity=1 ]   (250.44,70.09) -- (258.04,70.13) ;
\draw [shift={(260.04,70.13)}, rotate = 180.27] [color={rgb, 255:red, 155; green, 155; blue, 155 }  ,draw opacity=1 ][line width=0.75]    (4.37,-1.32) .. controls (2.78,-0.56) and (1.32,-0.12) .. (0,0) .. controls (1.32,0.12) and (2.78,0.56) .. (4.37,1.32)   ;

\draw (204.16,203.64) node [anchor=north west][inner sep=0.75pt]  [font=\footnotesize]  {$u$};
\draw (405.76,59.32) node [anchor=north west][inner sep=0.75pt]    {$v$};
\draw (211.36,154.32) node [anchor=north west][inner sep=0.75pt]  [font=\footnotesize]  {$( 0,0)$};
\draw (292.56,184.12) node [anchor=north west][inner sep=0.75pt]  [font=\footnotesize]  {$( m,\ -n)$};

\end{tikzpicture}
\qquad 
\tikzset{every picture/.style={line width=0.75pt}} 
\begin{tikzpicture}[x=0.75pt,y=0.75pt,yscale=-1,xscale=1]

\draw   (436.21,170.41) -- (289.15,170.18) -- (289.3,75) ;
\draw    (350.4,170.33) -- (350.34,162.31) ;
\draw [shift={(350.32,160.31)}, rotate = 89.54] [color={rgb, 255:red, 0; green, 0; blue, 0 }  ][line width=0.75]    (4.37,-1.32) .. controls (2.78,-0.56) and (1.32,-0.12) .. (0,0) .. controls (1.32,0.12) and (2.78,0.56) .. (4.37,1.32)   ;
\draw    (360.07,169.91) -- (360.07,162.31) ;
\draw [shift={(360.07,160.31)}, rotate = 90] [color={rgb, 255:red, 0; green, 0; blue, 0 }  ][line width=0.75]    (4.37,-1.32) .. controls (2.78,-0.56) and (1.32,-0.12) .. (0,0) .. controls (1.32,0.12) and (2.78,0.56) .. (4.37,1.32)   ;
\draw    (380.32,169.91) -- (380.32,162.31) ;
\draw [shift={(380.32,160.31)}, rotate = 90] [color={rgb, 255:red, 0; green, 0; blue, 0 }  ][line width=0.75]    (4.37,-1.32) .. controls (2.78,-0.56) and (1.32,-0.12) .. (0,0) .. controls (1.32,0.12) and (2.78,0.56) .. (4.37,1.32)   ;
\draw    (370.07,169.76) -- (370.07,162.16) ;
\draw [shift={(370.07,160.16)}, rotate = 90] [color={rgb, 255:red, 0; green, 0; blue, 0 }  ][line width=0.75]    (4.37,-1.32) .. controls (2.78,-0.56) and (1.32,-0.12) .. (0,0) .. controls (1.32,0.12) and (2.78,0.56) .. (4.37,1.32)   ;
\draw    (390.07,169.91) -- (390.07,162.31) ;
\draw [shift={(390.07,160.31)}, rotate = 90] [color={rgb, 255:red, 0; green, 0; blue, 0 }  ][line width=0.75]    (4.37,-1.32) .. controls (2.78,-0.56) and (1.32,-0.12) .. (0,0) .. controls (1.32,0.12) and (2.78,0.56) .. (4.37,1.32)   ;
\draw    (400.07,169.91) -- (400.07,162.31) ;
\draw [shift={(400.07,160.31)}, rotate = 90] [color={rgb, 255:red, 0; green, 0; blue, 0 }  ][line width=0.75]    (4.37,-1.32) .. controls (2.78,-0.56) and (1.32,-0.12) .. (0,0) .. controls (1.32,0.12) and (2.78,0.56) .. (4.37,1.32)   ;
\draw [color={rgb, 255:red, 155; green, 155; blue, 155 }  ,draw opacity=1 ]   (360.07,189.91) -- (360.07,182.31) ;
\draw [shift={(360.07,180.31)}, rotate = 90] [color={rgb, 255:red, 155; green, 155; blue, 155 }  ,draw opacity=1 ][line width=0.75]    (4.37,-1.32) .. controls (2.78,-0.56) and (1.32,-0.12) .. (0,0) .. controls (1.32,0.12) and (2.78,0.56) .. (4.37,1.32)   ;
\draw [color={rgb, 255:red, 155; green, 155; blue, 155 }  ,draw opacity=1 ]   (350.4,179.93) -- (350.4,172.33) ;
\draw [shift={(350.4,170.33)}, rotate = 90] [color={rgb, 255:red, 155; green, 155; blue, 155 }  ,draw opacity=1 ][line width=0.75]    (4.37,-1.32) .. controls (2.78,-0.56) and (1.32,-0.12) .. (0,0) .. controls (1.32,0.12) and (2.78,0.56) .. (4.37,1.32)   ;
\draw [color={rgb, 255:red, 155; green, 155; blue, 155 }  ,draw opacity=1 ]   (370.32,201.01) -- (370.32,193.41) ;
\draw [shift={(370.32,191.41)}, rotate = 90] [color={rgb, 255:red, 155; green, 155; blue, 155 }  ,draw opacity=1 ][line width=0.75]    (4.37,-1.32) .. controls (2.78,-0.56) and (1.32,-0.12) .. (0,0) .. controls (1.32,0.12) and (2.78,0.56) .. (4.37,1.32)   ;
\draw [color={rgb, 255:red, 155; green, 155; blue, 155 }  ,draw opacity=1 ]   (380.07,210.01) -- (380.07,202.41) ;
\draw [shift={(380.07,200.41)}, rotate = 90] [color={rgb, 255:red, 155; green, 155; blue, 155 }  ,draw opacity=1 ][line width=0.75]    (4.37,-1.32) .. controls (2.78,-0.56) and (1.32,-0.12) .. (0,0) .. controls (1.32,0.12) and (2.78,0.56) .. (4.37,1.32)   ;
\draw [color={rgb, 255:red, 155; green, 155; blue, 155 }  ,draw opacity=1 ]   (390.32,221.01) -- (390.32,213.41) ;
\draw [shift={(390.32,211.41)}, rotate = 90] [color={rgb, 255:red, 155; green, 155; blue, 155 }  ,draw opacity=1 ][line width=0.75]    (4.37,-1.32) .. controls (2.78,-0.56) and (1.32,-0.12) .. (0,0) .. controls (1.32,0.12) and (2.78,0.56) .. (4.37,1.32)   ;
\draw [color={rgb, 255:red, 155; green, 155; blue, 155 }  ,draw opacity=1 ]   (400.32,230.01) -- (400.32,222.41) ;
\draw [shift={(400.32,220.41)}, rotate = 90] [color={rgb, 255:red, 155; green, 155; blue, 155 }  ,draw opacity=1 ][line width=0.75]    (4.37,-1.32) .. controls (2.78,-0.56) and (1.32,-0.12) .. (0,0) .. controls (1.32,0.12) and (2.78,0.56) .. (4.37,1.32)   ;
\draw [color={rgb, 255:red, 155; green, 155; blue, 155 }  ,draw opacity=1 ]   (350.32,180.31) -- (358.07,180.31) ;
\draw [shift={(360.07,180.31)}, rotate = 180] [color={rgb, 255:red, 155; green, 155; blue, 155 }  ,draw opacity=1 ][line width=0.75]    (4.37,-1.32) .. controls (2.78,-0.56) and (1.32,-0.12) .. (0,0) .. controls (1.32,0.12) and (2.78,0.56) .. (4.37,1.32)   ;
\draw [color={rgb, 255:red, 155; green, 155; blue, 155 }  ,draw opacity=1 ]   (340.65,170.33) -- (348.4,170.33) ;
\draw [shift={(350.4,170.33)}, rotate = 180] [color={rgb, 255:red, 155; green, 155; blue, 155 }  ,draw opacity=1 ][line width=0.75]    (4.37,-1.32) .. controls (2.78,-0.56) and (1.32,-0.12) .. (0,0) .. controls (1.32,0.12) and (2.78,0.56) .. (4.37,1.32)   ;
\draw [color={rgb, 255:red, 155; green, 155; blue, 155 }  ,draw opacity=1 ]   (370.32,200.31) -- (378.07,200.31) ;
\draw [shift={(380.07,200.31)}, rotate = 180] [color={rgb, 255:red, 155; green, 155; blue, 155 }  ,draw opacity=1 ][line width=0.75]    (4.37,-1.32) .. controls (2.78,-0.56) and (1.32,-0.12) .. (0,0) .. controls (1.32,0.12) and (2.78,0.56) .. (4.37,1.32)   ;
\draw [color={rgb, 255:red, 155; green, 155; blue, 155 }  ,draw opacity=1 ]   (390.57,220.41) -- (398.32,220.41) ;
\draw [shift={(400.32,220.41)}, rotate = 180] [color={rgb, 255:red, 155; green, 155; blue, 155 }  ,draw opacity=1 ][line width=0.75]    (4.37,-1.32) .. controls (2.78,-0.56) and (1.32,-0.12) .. (0,0) .. controls (1.32,0.12) and (2.78,0.56) .. (4.37,1.32)   ;
\draw [color={rgb, 255:red, 155; green, 155; blue, 155 }  ,draw opacity=1 ]   (380.07,210.01) -- (387.82,210.01) ;
\draw [shift={(389.82,210.01)}, rotate = 180] [color={rgb, 255:red, 155; green, 155; blue, 155 }  ,draw opacity=1 ][line width=0.75]    (4.37,-1.32) .. controls (2.78,-0.56) and (1.32,-0.12) .. (0,0) .. controls (1.32,0.12) and (2.78,0.56) .. (4.37,1.32)   ;
\draw [color={rgb, 255:red, 155; green, 155; blue, 155 }  ,draw opacity=1 ]   (360.07,189.91) -- (368.15,190.04) ;
\draw [shift={(370.15,190.08)}, rotate = 180.94] [color={rgb, 255:red, 155; green, 155; blue, 155 }  ,draw opacity=1 ][line width=0.75]    (4.37,-1.32) .. controls (2.78,-0.56) and (1.32,-0.12) .. (0,0) .. controls (1.32,0.12) and (2.78,0.56) .. (4.37,1.32)   ;
\draw    (250,79.92) -- (410.4,240.32) ;
\draw  [dash pattern={on 0.84pt off 2.51pt}]  (250.91,209.66) -- (319.77,209.94) ;
\draw  [dash pattern={on 0.84pt off 2.51pt}]  (319.77,209.94) -- (320.06,190.23) ;
\draw  [dash pattern={on 0.84pt off 2.51pt}]  (380.34,189.94) -- (320.06,190.23) ;
\draw  [dash pattern={on 0.84pt off 2.51pt}]  (380.06,111.94) -- (380.34,189.94) ;
\draw  [dash pattern={on 0.84pt off 2.51pt}]  (380.06,110.23) -- (400.34,110.23) ;
\draw  [fill={rgb, 255:red, 0; green, 0; blue, 0 }  ,fill opacity=1 ] (249.26,209.66) .. controls (249.26,208.74) and (250,208) .. (250.91,208) .. controls (251.83,208) and (252.57,208.74) .. (252.57,209.66) .. controls (252.57,210.57) and (251.83,211.31) .. (250.91,211.31) .. controls (250,211.31) and (249.26,210.57) .. (249.26,209.66) -- cycle ;
\draw  [fill={rgb, 255:red, 0; green, 0; blue, 0 }  ,fill opacity=1 ] (398.69,110.23) .. controls (398.69,109.31) and (399.43,108.57) .. (400.34,108.57) .. controls (401.26,108.57) and (402,109.31) .. (402,110.23) .. controls (402,111.14) and (401.26,111.89) .. (400.34,111.89) .. controls (399.43,111.89) and (398.69,111.14) .. (398.69,110.23) -- cycle ;
\draw  [fill={rgb, 255:red, 0; green, 0; blue, 0 }  ,fill opacity=1 ] (287.49,170.18) .. controls (287.49,169.26) and (288.23,168.52) .. (289.15,168.52) .. controls (290.06,168.52) and (290.81,169.26) .. (290.81,170.18) .. controls (290.81,171.09) and (290.06,171.83) .. (289.15,171.83) .. controls (288.23,171.83) and (287.49,171.09) .. (287.49,170.18) -- cycle ;
\draw  [fill={rgb, 255:red, 0; green, 0; blue, 0 }  ,fill opacity=1 ] (313.26,145.09) .. controls (313.26,144.17) and (314,143.43) .. (314.91,143.43) .. controls (315.83,143.43) and (316.57,144.17) .. (316.57,145.09) .. controls (316.57,146) and (315.83,146.74) .. (314.91,146.74) .. controls (314,146.74) and (313.26,146) .. (313.26,145.09) -- cycle ;
\draw    (339.54,169.4) -- (339.48,161.38) ;
\draw [shift={(339.46,159.39)}, rotate = 89.54] [color={rgb, 255:red, 0; green, 0; blue, 0 }  ][line width=0.75]    (4.37,-1.32) .. controls (2.78,-0.56) and (1.32,-0.12) .. (0,0) .. controls (1.32,0.12) and (2.78,0.56) .. (4.37,1.32)   ;
\draw [color={rgb, 255:red, 155; green, 155; blue, 155 }  ,draw opacity=1 ]   (340.65,170.33) -- (340.66,165.37) -- (340.82,161.94) ;
\draw [shift={(340.91,159.94)}, rotate = 92.66] [color={rgb, 255:red, 155; green, 155; blue, 155 }  ,draw opacity=1 ][line width=0.75]    (4.37,-1.32) .. controls (2.78,-0.56) and (1.32,-0.12) .. (0,0) .. controls (1.32,0.12) and (2.78,0.56) .. (4.37,1.32)   ;

\draw (237.83,209.74) node [anchor=north west][inner sep=0.75pt]    {$u$};
\draw (404.97,101.54) node [anchor=north west][inner sep=0.75pt]    {$v$};
\draw (253.54,165.97) node [anchor=north west][inner sep=0.75pt]  [font=\footnotesize]  {$( 0,0)$};
\draw (313.83,128.74) node [anchor=north west][inner sep=0.75pt]  [font=\footnotesize]  {$( r,r)$};

\end{tikzpicture}

\end{center}
\caption{\small Top: Illustration of Lemma \ref{nestedpoly} in the special case when $\mathcal{Y}_u$ and $\mathcal{Z}_v$ are southwest boundaries. Bottom: Illustration of Lemma \ref{relatetau} and Lemma \ref{dia_vs_sw}. Note that any directed path between $u$ and $v$ goes through a gray edge/arrow if and only if it goes through a black edge/arrow.}
\label{sec3fig2}
\end{figure}

Fix $u\in \mathbb{Z}^2$, we will define a polymer with a general down-right boundary with the base at $u$. Let $\mathcal{Y}_u = \{y_i\}_{i\in \mathbb{Z}}$ be a bi-infinite downright path going through $u$. We use the convention that $y_0 = u$ and $y_i\cdot e_1 \leq y_j\cdot e_1$ if $i \leq j$. 

Next, let us place positive edge weights $\{S_{y_{i-1}, y_i}\}$ along $\mathcal{Y}_u$, and we will define the following function $H$. Let $H_{u, u} = 1$. For each $x_0 = y_m$ for some $m > 0$, define
$$H_{u, x_0} = \prod_{n=1}^m \wt{Y}_{y_{n-1}, y_n} \qquad \text{where } \wt{Y}_{y_{n-1}, y_n}  = \begin{cases}
S_{y_{n-1}, y_n} \quad & \text{if $y_n-y_{n-1} = e_1$,}\\
1/S_{y_{n-1}, y_n} \quad & \text{if $y_n-y_{n-1} = -e_2$.}
\end{cases}$$ 
For each $x_0 = y_{-m}$ for some $m > 0$, define
$$H_{u, x_0} = \prod_{n=0}^{-m+1} \wt{Y}_{y_n, y_{n-1}} \qquad \text{where } \wt{Y}_{y_n, y_{n-1}}  = \begin{cases}
1/S_{y_n, y_{n-1}} \quad & \text{if $y_n-y_{n-1} = e_1$,}\\
S_{y_n, y_{n-1}} \quad & \text{if $y_n-y_{n-1} = -e_2$.}
\end{cases}$$ 

Recall $\mathcal{Y}_u^{\geq} = \cup_n (y_n + \mathbb{Z}_{\geq 0})$ and $\mathcal{Y}_u^{>} = \cup_n (y_n + \mathbb{Z}_{> 0})$. For each $y\in\mathcal{Y}_u$ and $v \in \mathcal{Y}_u^{>}$, 
define the set of paths
    \[\mathbb{X}_{y, v}^{\mathcal{Y}_u}=\{x_{\bbullet} \in \mathbb{X}_{y, v}:x_1\in\mathcal{Y}_u^{>}\}.\]
This set is empty if both $y+e_i$, $i\in\{1,2\}$, are on $\mathcal{Y}_u$.
For $v\in\mathcal{Y}_u^{>}$, define the partition function
$$Z^{\mathcal{Y}_u}_{u, v} = \sum_{y\in\mathcal{Y}_u} \sum_{x_{\bbullet} \in \mathbb{X}_{y, v}^{\mathcal{Y}_u}} H_{u, y}\prod_{i=1}^{|y-v|_1} {Y}_{x_i},$$
where $\{Y_z\}$ are the bulk weights for $z\in \mathcal{Y}_u^{>}$. For $v\in\mathcal{Y}_u$ let $Z^{\mathcal{Y}_u}_{u,v}=H_{u,v}$. The corresponding quenched path measure will be denoted as $Q^{\mathcal{Y}_u}_{u, v}$. Note that these partition functions satisfy the following induction: for $w\in\mathcal{Y}_u^{>}$,
\begin{align}\label{ZY-ind}
Z^{\mathcal{Y}_u}_{u, w}=(Z^{\mathcal{Y}_u}_{u, w-e_1}+Z^{\mathcal{Y}_u}_{u, w-e_2})Y_w.
\end{align}

Given a polymer model defined on $\mathcal{Y}_u^{\geq}$. We fix another bi-infinite down-right path $\mathcal{Z}_v \subset \mathcal{Y}_u^{\geq} $ and define the following nested polymer model rooted at $v$. It has the same bulk weights, and on the new boundary $\mathcal{Z}_v =\{z_n\}$, the weights are given by 
$$S_{z_{n-1}, z_n} = \begin{cases}
\frac{Z^{\mathcal{Y}_u}_{u, z_{n}}}{Z^{\mathcal{Y}_u}_{u, z_{n-1}}} \quad & \text{if $z_n-z_{n-1} = e_1$,}\\[10pt]
\frac{Z^{\mathcal{Y}_u}_{u, z_{n-1}}}{Z^{\mathcal{Y}_u}_{u, z_{n}}} \quad & \text{if $z_n-z_{n-1} = -e_2$}.
\end{cases}$$
We will denote this nested polymer measure by $Q^{\mathcal{Z}_v, (\mathcal{Y}_u)}_{v, \bbullet}$.

\begin{lemma}\label{ratio_agrees}
Fix $u, v \in \mathbb{Z}^2$ and two down-right bi-infinite paths $\mathcal{Y}_u$ and $\mathcal{Z}_v$ with $\mathcal{Z}_v \subset \mathcal{Y}_u^{\geq}.$ Then for $w \in \mathcal{Z}^{\geq 0}_v$,
\begin{align}\label{tempZZY}
Z^{\mathcal{Z}_v, (\mathcal{Y}_u)}_{v, w}=\frac{Z^{ \mathcal{Y}_u}_{u, w}}{Z^{ \mathcal{Y}_u}_{u, v}}\,.
\end{align}
Consequently, for each $w \in \mathcal{Z}^{\geq 0}_v$ and $i\in\{1,2\}$, 
\begin{equation}\label{ratio_same_eq}
\frac{Z^{\mathcal{Y}_u}_{u, w+e_i}}{Z^{ \mathcal{Y}_u}_{u, w}} = \frac{Z^{\mathcal{Z}_v, (\mathcal{Y}_u)}_{v, w+e_i}}{Z^{\mathcal{Z}_v, (\mathcal{Y}_u)}_{v, w}}.
\end{equation}
\end{lemma}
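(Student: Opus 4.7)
The plan is to prove the identity \eqref{tempZZY} by induction on $|w-v|_1$, after which \eqref{ratio_same_eq} drops out for free as the ratio of two instances of \eqref{tempZZY} at $w+e_i$ and $w$ (the $Z^{\mathcal{Y}_u}_{u,v}$ denominators cancel).

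For the base case, take $w \in \mathcal{Z}_v$, so $w = z_m$ for some integer $m$. By definition $Z^{\mathcal{Z}_v,(\mathcal{Y}_u)}_{v, w} = H_{v, w}$, a product of the edge weights $\wt{Y}_{z_{n-1}, z_n}$ along $\mathcal{Z}_v$ between $v = z_0$ and $z_m$. The key observation is that the boundary weights $S_{z_{n-1},z_n}$ have been cooked up so that in \emph{both} cases $z_n - z_{n-1} = e_1$ and $z_n - z_{n-1} = -e_2$, the weight $\wt{Y}_{z_{n-1}, z_n}$ equals $Z^{\mathcal{Y}_u}_{u,z_n}/Z^{\mathcal{Y}_u}_{u,z_{n-1}}$. (In the $e_1$ case $\wt{Y} = S$; in the $-e_2$ case $\wt{Y} = 1/S$ while $S$ is itself the reciprocal ratio. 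An analogous check handles the indexing convention for $m<0$.) Telescoping the product therefore gives $H_{v,w} = Z^{\mathcal{Y}_u}_{u,w}/Z^{\mathcal{Y}_u}_{u,v}$, as required.

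For the inductive step, take $w \in \mathcal{Z}_v^{>}$, so both $w - e_1$ and $w - e_2$ lie in $\mathcal{Z}_v^{\geq}$ with strictly smaller $\ell^1$-distance to $v$. Since $\mathcal{Z}_v \subset \mathcal{Y}_u^{\geq}$, one also has $w \in \mathcal{Y}_u^{>}$, so the recurrence \eqref{ZY-ind} applies simultaneously to both partition functions $Z^{\mathcal{Z}_v,(\mathcal{Y}_u)}_{v,\bbullet}$ and $Z^{\mathcal{Y}_u}_{u,\bbullet}$ at $w$ with the \emph{same} bulk weight $Y_w$. Invoking the inductive hypothesis at $w - e_1$ and $w - e_2$ then yields
\[
Z^{\mathcal{Z}_v,(\mathcal{Y}_u)}_{v,w} \;=\; Y_w \bigl( Z^{\mathcal{Z}_v,(\mathcal{Y}_u)}_{v,w-e_1} + Z^{\mathcal{Z}_v,(\mathcal{Y}_u)}_{v,w-e_2}\bigr) \;=\; \frac{Y_w \bigl(Z^{\mathcal{Y}_u}_{u,w-e_1} + Z^{\mathcal{Y}_u}_{u,w-e_2}\bigr)}{Z^{\mathcal{Y}_u}_{u,v}} \;=\; \frac{Z^{\mathcal{Y}_u}_{u,w}}{Z^{\mathcal{Y}_u}_{u,v}}.
\]

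There is no real obstacle here; the only thing to verify carefully is that the telescoping in the base case works uniformly for the two possible edge types on $\mathcal{Z}_v$ and in both the $m>0$ and $m<0$ indexing directions, and that the recurrence \eqref{ZY-ind} indeed applies to the nested partition function on $\mathcal{Z}_v^{>}$ (which follows because $\mathcal{Z}_v$ is itself a down-right boundary containing $v$). Everything else is bookkeeping, and the statement \eqref{ratio_same_eq} is then an immediate corollary.
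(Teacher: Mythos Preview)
Your proof is correct and follows exactly the same approach as the paper's: verify \eqref{tempZZY} on the boundary $\mathcal{Z}_v$ directly from the definitions (the telescoping product), then propagate into $\mathcal{Z}_v^{>}$ using that both sides satisfy the common recursion \eqref{ZY-ind}. The paper's proof is the two-sentence version of what you wrote.
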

\begin{proof}
When $w\in\mathcal{Z}_v$ the equality \eqref{tempZZY} comes straight from the definitions. Then it follows for $w\in\mathcal{Z}_v^>$ because the two sides satisfy the same induction \eqref{ZY-ind}.
\end{proof}

\begin{lemma} \label{nestedpoly}
Fix $u, v \in \mathbb{Z}^2$ and two down-right bi-infinite paths $\mathcal{Y}_u$ and $\mathcal{Z}_v$ with $\mathcal{Z}_v \subset \mathcal{Y}_u^{\geq}.$ Let $i\in\{1,2\}$ and $z \in \mathcal{Z}_v$ be such that $z+e_i$ is inside $\mathcal{Z}_v^{>0}$.
Then, for each $w\in \mathcal{Z}_v^{>0}$.
$$Q^{\mathcal{Y}_u}_{u, w}\{\text{path goes through $[\![z, z+e_i]\!]$}\} = Q^{\mathcal{Z}_v, (\mathcal{Y}_u)}_{v, w}\{\text{path goes through $[\![z, z+e_{i}]\!]$}\}.$$
\end{lemma}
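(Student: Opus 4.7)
The plan is to express both quenched probabilities as ratios of partition functions by decomposing paths at the vertex $z$, and then to invoke Lemma \ref{ratio_agrees} to identify the resulting ratios.

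First, I would analyze the $\mathcal{Y}_u$-polymer. Any up-right path contributing to $Z^{\mathcal{Y}_u}_{u,w}$ that traverses the edge $[\![z,z+e_i]\!]$ splits naturally at $z$ into two pieces: a first piece from the boundary $\mathcal{Y}_u$ up to the point $z$, and a bulk up-right piece from $z+e_i$ to $w$. The weights collected by these two pieces are disjoint: the first piece carries $H_{u,y}$ together with the bulk weights $Y_{x_j}$ up to and including $Y_z$, while the second piece carries the bulk weights $Y_{z+e_i},\dotsc,Y_w$. Summing over all possible first pieces gives $Z^{\mathcal{Y}_u}_{u,z}$, and summing over all possible second pieces gives the standard bulk partition function $Z_{z+e_i,w}$. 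Dividing by $Z^{\mathcal{Y}_u}_{u,w}$ yields
$$Q^{\mathcal{Y}_u}_{u,w}\bigl\{\text{path through }[\![z,z+e_i]\!]\bigr\}\;=\;\frac{Z^{\mathcal{Y}_u}_{u,z}\cdot Z_{z+e_i,w}}{Z^{\mathcal{Y}_u}_{u,w}}.$$

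Second, I would carry out the analogous decomposition in the $\mathcal{Z}_v$-polymer. Since $\mathcal{Z}_v$ is a down-right path, for any $m\neq n$ the point $z_m$ does not lie in $z_n+\mathbb Z^2_{>0}$, so $\mathcal{Z}_v\cap\mathcal{Z}_v^{>}=\emptyset$; combined with the fact that $\mathcal{Z}_v^{>}$ is closed under $+e_1$ and $+e_2$, this shows that an up-right path cannot re-enter $\mathcal{Z}_v$ once it has entered $\mathcal{Z}_v^{>}$. Because $z\in\mathcal{Z}_v$ while $z+e_i\in\mathcal{Z}_v^{>}$, the edge $[\![z,z+e_i]\!]$ is necessarily the unique bulk-entry edge of any path through it. Thus the first piece walks along $\mathcal{Z}_v$ from $v$ to $z$, contributing the boundary product $H^{\mathcal{Z}_v,(\mathcal{Y}_u)}_{v,z}=Z^{\mathcal{Z}_v,(\mathcal{Y}_u)}_{v,z}$, and the second piece is again a bulk up-right path from $z+e_i$ to $w$ with partition function $Z_{z+e_i,w}$. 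Therefore
$$Q^{\mathcal{Z}_v,(\mathcal{Y}_u)}_{v,w}\bigl\{\text{path through }[\![z,z+e_i]\!]\bigr\}\;=\;\frac{Z^{\mathcal{Z}_v,(\mathcal{Y}_u)}_{v,z}\cdot Z_{z+e_i,w}}{Z^{\mathcal{Z}_v,(\mathcal{Y}_u)}_{v,w}}.$$

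Finally, applying identity \eqref{tempZZY} of Lemma \ref{ratio_agrees} at both $z$ and $w$ (both of which lie in $\mathcal{Z}_v^{\ge 0}$) gives
$$\frac{Z^{\mathcal{Z}_v,(\mathcal{Y}_u)}_{v,z}}{Z^{\mathcal{Z}_v,(\mathcal{Y}_u)}_{v,w}}\;=\;\frac{Z^{\mathcal{Y}_u}_{u,z}/Z^{\mathcal{Y}_u}_{u,v}}{Z^{\mathcal{Y}_u}_{u,w}/Z^{\mathcal{Y}_u}_{u,v}}\;=\;\frac{Z^{\mathcal{Y}_u}_{u,z}}{Z^{\mathcal{Y}_u}_{u,w}},$$
and the common factor $Z_{z+e_i,w}$ cancels, so the two quenched probabilities coincide. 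The only point requiring care is the path decomposition itself: one must verify that weights at $z$ and $z+e_i$ are each counted exactly once and that no path in $\mathcal{Z}_v$-polymer through $[\![z,z+e_i]\!]$ can visit $\mathcal{Z}_v$ after the edge. Once these bookkeeping checks are made, the result follows directly from the partition-function identity of Lemma \ref{ratio_agrees}, with no further estimates needed.
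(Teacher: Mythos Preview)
Your proof is correct and follows essentially the same approach as the paper: decompose the quenched probability on each side as a ratio $\dfrac{Z^{\bullet}_{\bullet,z}\cdot Z_{z+e_i,w}}{Z^{\bullet}_{\bullet,w}}$ and then apply identity \eqref{tempZZY} of Lemma~\ref{ratio_agrees} to match the two ratios. The paper's proof is terser---it simply writes down the ratio for the $\mathcal{Y}_u$-polymer, divides numerator and denominator by $Z^{\mathcal{Y}_u}_{u,v}$, and cites Lemma~\ref{ratio_agrees}---whereas you spell out explicitly why the edge $[\![z,z+e_i]\!]$ is the unique bulk-entry edge in the nested polymer; but the underlying argument is the same.
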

\begin{proof}
We prove the case with $i=2$, the other case being symmetric. Then 
\begin{align*}
&Q^{\mathcal{Y}_u}_{u,w}\{\text{path goes through the edge $[\![z, z+e_2]\!]$}\}
=\frac{Z^{\mathcal{Y}_u}_{u,z} \cdot Z_{z+e_2, w}}{Z^{\mathcal{Y}_u}_{u,w}} = \frac{\frac{Z^{\mathcal{Y}_u}_{u,z}}{Z^{\mathcal{Y}_u}_{u,v}} \cdot Z_{z+e_2, z}}{\frac{Z^{\mathcal{Y}_u}_{u,w}}{Z^{\mathcal{Y}_u}_{u,v}}}\\
&\qquad\qquad= \frac{Z^{\mathcal{Z}_v, (\mathcal{Y}_u)}_{v, z} \cdot Z_{z+e_2, z}}{Z^{\mathcal{Z}_v, (\mathcal{Y}_u)}_{v, w}} \qquad \text{ by Lemma \ref{ratio_agrees}}\\
&\qquad\qquad=  Q^{\mathcal{Z}_v, (\mathcal{Y}_u)}_{v, w}\{\text{path goes through the edge $[\![z, z+e_2]\!]$}\}.
\end{align*}
See the top panel in Figure \ref{sec3fig2} for an illustration.
\end{proof}

Next, we restrict attention to stationary polymers with southwest and antidiagonal boundaries. 
To simplify the notation, we will denote the respective partition functions by $Z_{u, \bbullet}$ and $Z_{u, \bbullet}^\textup{dia}$. The corresponding polymer measures are denoted by $Q_{u, \bbullet}$ and $Q_{u, \bbullet}^\textup{dia}$. For the antidiagonal boundaries, the bi-infinite paths are given by $\mathcal{S}_{u}=u+\mathcal{S}_{(0,0)}$, where $\mathcal{S}_{(0,0)}$ is given in \eqref{stair_S}. For the nested polymers, we will always assume the outer polymer has an antidiagonal boundary, and the nested partition functions with antidiagonal and southwest boundaries are denoted, respectively, by $Z_{v, \bbullet}^{(u), \textup{dia}}$ and $Z_{v, \bbullet}^{(u)}$. The corresponding polymer measures are denoted by $Q^{(u)}_{v,\bbullet}$ and $Q^{(u), \textup{dia}}_{v,\bbullet}$.

The following two lemmas  relate the exit times of two polymer processes with different starting points. They are illustrated on the bottom of Figure \ref{sec3fig2}. 

\begin{lemma}\label{relatetau}  Fix two base points $(0,0)$ and $(m,-n)$ with $m,n > 0$.  Take $u$ with $u\le (0,0)$ and $u\le (m,-n)$.
Let $Z^{(u)}_{0,\,\babullet}$ and $Z^{(u)}_{(m,-n),\,\babullet}$  be the partition functions of the polymers with southwest boundaries, rooted at $(0,0)$ and $(m,-n)$, respectively, nested inside a polymer rooted at $u$ and having antidiagonal boundary $\mathcal{S}_{u}$.   
Then for $v\in ((0,0)+\Z_{>0}^2)\cap((m,-n)+\Z_{>0}^2)$,  
$$Q^{(u)}_{0, v}\{\tau \leq m \}  = Q^{(u)}_{(m,-n), v}\{\tau < -n\}.$$
\end{lemma}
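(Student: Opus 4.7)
The plan is to reduce each side to a probability of an edge-crossing event in the outer $u$-polymer via Lemma \ref{nestedpoly}, and then to verify a purely combinatorial equivalence on individual directed paths.

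First, I would identify each event with a disjoint union of single-edge events. For the polymer rooted at $(0,0)$, the event $\{\tau \leq m\}$ is the same as exiting the southwest boundary through one of the edges in
\[E_A := \{[\![(k, 0), (k, 1)]\!]: 1 \leq k \leq m\} \cup \{[\![(0, j), (1, j)]\!]: j \geq 1\},\]
coming from $\tau = k \in \{1, \dots, m\}$ (exit at $(k,0) \to (k,1)$) and $\tau = -j \leq -1$ (exit at $(0,j) \to (1,j)$). For the polymer rooted at $(m,-n)$, the event $\{\tau < -n\}$ means $\tau = -j$ with $j \geq n+1$ and corresponds to exit through one of the edges in
\[E_B := \{[\![(m, k), (m+1, k)]\!]: k \geq 1\}.\]
Each edge in $E_A$ (resp.\ $E_B$) has its foot on the boundary of the nested polymer at $(0,0)$ (resp.\ $(m,-n)$) and its head in the strict interior, so Lemma \ref{nestedpoly} applies to each such edge. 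Because an up-right path uses at most one edge from $E_A$ (any two would force the path to revisit a boundary ray, impossible by monotonicity), and likewise for $E_B$, summing the nested-polymer equality over $e \in E_A$ and over $e \in E_B$ gives
\begin{align*}
Q^{(u)}_{0, v}\{\tau \leq m\} &= Q^{(u)}_{u, v}\{\gamma \text{ uses some edge of }E_A\},\\
Q^{(u)}_{(m,-n), v}\{\tau < -n\} &= Q^{(u)}_{u, v}\{\gamma \text{ uses some edge of }E_B\}.
\end{align*}

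It then suffices to prove the deterministic claim that for every up-right path $\gamma$ from $u$ to $v$, $\gamma$ uses some edge of $E_A$ if and only if it uses some edge of $E_B$. Both conditions will be shown equivalent to the single geometric condition: $\gamma$ crosses the vertical line $x = m + 1/2$ at height $y^* \geq 1$. The equivalence with $E_B$ is immediate from the definition of $E_B$. For the equivalence with $E_A$, since $v \in \Z_{>0}^2$ and $u \leq (0,0)$, the path $\gamma$ also crosses $y = 1/2$ at a unique vertical edge $[\![(x^*, 0), (x^*, 1)]\!]$. Monotonicity of the two coordinates along $\gamma$ forces $y^* \geq 1$ exactly when $x^* \leq m$. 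When $x^* \in \{1,\dots,m\}$ the edge $[\![(x^*, 0), (x^*, 1)]\!]$ itself lies in $E_A$; when $x^* \leq 0$, after this crossing we have $x \leq 0$ and $y \geq 1$, so $\gamma$ must later cross $x = 1/2$ at some edge $[\![(0, y'), (1, y')]\!]$ with $y' \geq 1$, which also lies in $E_A$. The reverse implication (from $E_A$-usage to $y^* \geq 1$) follows by the same monotonicity: once $\gamma$ uses an edge of $E_A$ it is already at $y \geq 1$ with $x \leq m$, so its later crossing of $x = m + 1/2$ is at height $\geq 1$.

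I anticipate no serious obstacle. The two points needing care are (i) verifying that Lemma \ref{nestedpoly} applies edgewise to both $E_A$ and $E_B$, which holds because each listed edge points from the nested polymer's boundary into its strict interior; and (ii) the case split in the deterministic claim when $\gamma$ first crosses $y = 1/2$ to the left of, or on, the $y$-axis, which is resolved by the subsequent forced crossing of $x = 1/2$ at height $\geq 1$.
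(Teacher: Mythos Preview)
Your proof is correct and follows essentially the same route as the paper's: both sides are reduced via Lemma~\ref{nestedpoly} to edge-crossing probabilities in the outer polymer $Q^{\textup{dia}}_{u,v}$, and then a path-by-path equivalence of the two edge families $E_A$ and $E_B$ is checked. One small notational point: the outer measure should be written $Q^{\textup{dia}}_{u,v}$ rather than $Q^{(u)}_{u,v}$, and its paths walk along the staircase $\mathcal{S}_u$ before turning up-right (not up-right from $u$ itself), but since every edge in $E_A\cup E_B$ lies strictly above the staircase (all endpoints have $x+y\ge 1$ while $\mathcal{S}_u$ sits at $x+y\le u_1+u_2\le -1$), your combinatorial crossing argument goes through unchanged.
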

\begin{proof}
This lemma follows from Lemma \ref{nestedpoly} as we have the equalities
\begin{align*}
&Q^{(u)}_{0, v}\{\tau \leq m  \}\\ 
&\quad= Q^\textup{dia}_{u, v}\{\{\text{path goes through edges $\{[\![a, a+e_2]\!] : 0 < a\cdot e_1 \leq m \text{ and } a\cdot e_2 = 0$}\}\} \cup\\
& \qquad \qquad \qquad  \{\text{path goes through edges $\{[\![a, a+e_1]\!] : 0 < a\cdot e_2 \leq v\cdot e_2 \text{ and } a\cdot e_1 = 0 $}\}\}\\
&\quad= Q^\textup{dia}_{u, v}\{\text{path goes through edges $\{[\![b, b+e_1]\!] : 0 < b\cdot e_2 \leq v\cdot e_2 \text{ and } b\cdot e_1 = m $}\}\\
&\quad=Q^{(u)}_{(m,-n), v}\{\tau < -n\}.\qedhere
\end{align*}
\end{proof}

Recall the exit time from the antidiagonal boundary, defined above \eqref{dia_exit}.

\begin{lemma}\label{dia_vs_sw}
 Fix two base points $(0,0)$ and $(r,r)$ with $r \in \mathbb{Z}_{>0}$. Take $u \in -\mathbb{Z}^2_{> 0}$.
 Let $Z^{(u)}_{0,\,\babullet}$ and $Z^{(u), \textup{dia}}_{(r,r),\,\babullet}$ be the partition functions of the polymers with southwest and antidiagonal boundaries, rooted at $(0,0)$ and $(r,r)$, respectively, nested inside a polymer rooted at $u$ and having  antidiagonal boundary $\mathcal{S}_{u}$.   Then for $v\in (r,r)+\Z_{>0}^2$, 
$$Q^{(u)}_{0, v}\{\tau \geq 2r \}  = Q^{(u), \textup{dia}}_{(r,r), v}\{\tau^{\textup{dia}} \geq r\}.$$
\end{lemma}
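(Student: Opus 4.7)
My plan is to express both sides of the identity as probabilities of specific edge events under the outer antidiagonal polymer $Q^{\textup{dia}}_{u,v}$, and then verify the geometric equivalence of the resulting path events. This follows the template used in the proof of Lemma \ref{relatetau}.

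For the LHS, in the nested southwest polymer rooted at $(0,0)$ (whose boundary $\mathcal{Y}_{(0,0)}$ consists of the positive $e_1$- and $e_2$-axes), the event $\{\tau=k\}$ for $k\geq 1$ is precisely the event that the path uses the edge $\lzb(k,0),(k,1)\rzb$: the path traverses $(0,0)\to(1,0)\to\cdots\to(k,0)$ along the axis and then takes its first $e_2$-step. Since $(k,0)\in\mathcal{Y}_{(0,0)}$ and $(k,1)\in\mathcal{Y}_{(0,0)}^{>}$, Lemma \ref{nestedpoly} applies to each such edge, and summing over $k\geq 2r$ (noting that an up-right path crosses from $\{y=0\}$ to $\{y=1\}$ at most once) gives
\[Q^{(u)}_{0,v}\{\tau\geq 2r\} = Q^{\textup{dia}}_{u,v}\bigl\{\text{path uses some }\lzb(k,0),(k,1)\rzb\text{ with }k\geq 2r\bigr\}.\]
For the RHS, in the nested antidiagonal polymer rooted at $(r,r)$, the event $\{\tau^{\textup{dia}}=j\}$ for $j\geq 1$ corresponds to the path following the staircase $\mathcal{S}_{(r,r)}$ from $(r,r)$ to the antidiagonal vertex $(r+j,r-j)$ and entering the bulk via an $e_1$- or $e_2$-step. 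Both first-bulk edges have endpoints $(r+j,r-j)\in\mathcal{S}_{(r,r)}$ and opposite endpoint in $\mathcal{S}_{(r,r)}^{>}$ (a brief check using the two families of staircase vertices), so Lemma \ref{nestedpoly} applies to each, yielding
\[Q^{(u),\textup{dia}}_{(r,r),v}\{\tau^{\textup{dia}}\geq r\} = Q^{\textup{dia}}_{u,v}\bigl\{\text{path first enters }\mathcal{S}_{(r,r)}^{>}\text{ at some }(r+j,r-j)\text{ with }j\geq r\bigr\}.\]

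The remaining combinatorial fact is that, for any up-right path $\gamma$ from $u$ to $v$, $\gamma$ uses $\lzb(k,0),(k,1)\rzb$ for some $k\geq 2r$ if and only if $\gamma$ first enters $\mathcal{S}_{(r,r)}^{>}$ at some antidiagonal vertex $(r+j,r-j)$ with $j\geq r$. The key observation is that the staircase $\mathcal{S}_{(r,r)}$ meets the $x$-axis exactly at $(2r,0)$, and its intermediate vertices $(r+i,r-i-1)$ have both up-right neighbors on the staircase, so a path can enter $\mathcal{S}_{(r,r)}^{>}$ only via an $e_1$- or $e_2$-step from an antidiagonal vertex. For the forward direction, if $\gamma$ uses $\lzb(k,0),(k,1)\rzb$ with $k\geq 2r$, trace backward from $(k,0)$ to locate the first crossing; this crossing emanates from an antidiagonal vertex $(r+j,r-j)$, and since $\gamma$ then proceeds up-right to reach height $y=0$, the crossing height $r-j$ must satisfy $r-j\leq 0$, forcing $j\geq r$. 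For the reverse direction, an entry at $(r+j,r-j)$ with $j\geq r$ places $\gamma$ at height at most $0$ (if entry was via $e_1$) or, in the special case $j=r$ with $e_2$ entry, the entry edge is itself $\lzb(2r,0),(2r,1)\rzb$; in either situation, $\gamma$ crosses from $\{y=0\}$ to $\{y=1\}$ via some $\lzb(k,0),(k,1)\rzb$ with $k\geq 2r$. Combining this equivalence with the two displays finishes the proof.

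The main obstacle is the combinatorial verification in the previous paragraph, which relies on careful bookkeeping of the two types of staircase vertices: the antidiagonal ones $(r+j,r-j)$, from which one can enter the strict interior, versus the intermediate ones $(r+j,r-j-1)$, whose up-right neighbors are also on the staircase. Once this bookkeeping is in place, the identity follows by pairing up the two edge events under the outer polymer measure, and the bottom panel of Figure \ref{sec3fig2} makes the visual equivalence transparent.
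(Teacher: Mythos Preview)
Your proposal is correct and follows essentially the same approach as the paper: both sides are rewritten via Lemma \ref{nestedpoly} as edge-crossing events under the outer antidiagonal polymer $Q^{\textup{dia}}_{u,v}$, and the equality follows from the geometric equivalence of those events. Your combinatorial verification is more detailed than the paper's (which simply writes the chain of equalities and defers to Figure \ref{sec3fig2}); the only phrasing that could be tightened is the forward direction, where it is cleanest to argue by contrapositive---if the first entry into $\mathcal{S}_{(r,r)}^{>}$ occurs at $(r+j,r-j)$ with $j<r$, then the path is at height $\geq 1$ immediately after entry and hence never visits $(k,0)$ with $k\geq 2r$, while before entry it cannot reach such a point since $(k,0)\in\mathcal{S}_{(r,r)}^{>}$ for $k>2r$ and $(2r,0)$ is itself an antidiagonal vertex with $j=r$.
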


\begin{proof}
This lemma again follows from Lemma \ref{nestedpoly} as we have the equalities
\begin{align*}
&Q^{(u)}_{0, v}\{\tau \ge 2r \}  \\ 
&\quad= Q^\textup{dia}_{u, z}\{\text{path goes through edges $\{[\![a, a+e_2]\!] : 2r \leq a\cdot e_1 \leq v\cdot e_1 \text{ and } a\cdot e_2 = 0 $}\}\}\\
&\quad= Q^\textup{dia}_{u, z}\Big\{\{\text{path goes through edges $\{[\![(b,2r-b), (b,2r-b)+e_1]\!] : 2r \leq b < v\cdot e_1$}\}\}\cup\\
& \qquad \qquad \qquad \{\text{path goes through edges $\{[\![(b,2r-b), (b,2r-b)+e_2]\!] : 2r \leq b \leq v\cdot e_1$}\}\}\Big\}\\
&\quad=Q^{(u),\textup{dia}}_{(r,r), v}\{\tau^{\textup{dia}} \geq r\}.\qedhere
\end{align*}
\end{proof}

\subsection{Radon-Nikodym derivative calculation}\label{RNsec}

Given $a>0$, $N\in\mathbb Z_{>0}$, and $\rho>0$, let $P^{{\rho}}$ denote the probability distribution on the product space $\Omega = \mathbb{R}^{\floor{aN^{2/3}}}$ under which the coordinates $X_i(\omega) = \omega_i$ are i.i.d.\ $\mathrm{Ga}^{-1}({{\rho}})$ random variables. 

\begin{proposition}\label{RNest}
Fix $\mu>0$ and $\varepsilon\in(0,\mu/2)$. There exists a positive constant $C$ that only depends on $\varepsilon$ and $\mu$ and such that 
the following holds. Take any $a>0$, $b\in \mathbb{R}$, and $N \in \mathbb{Z}_{>0}$, and any  ${{\rho}} \in [\varepsilon, \mu-\varepsilon]$.
Take $|b|\le\frac14\varepsilon N^{1/3}$ and 
let $f$ denote the Radon-Nikodym derivative 
$$f  =  \frac{dP^{{{\rho}}+bN^{-1/3}}}{dP^{{\rho}}}\,.$$
Then 
    $$E^{P^{{\rho}}}[f^2] \leq e^{C ab^2}.$$
\end{proposition}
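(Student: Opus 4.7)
The plan is to just compute $E^{P^\rho}[f^2]$ exactly in terms of gamma functions and then Taylor-expand. Since the coordinates are i.i.d.\ $\mathrm{Ga}^{-1}(\rho)$ with density $f_\rho(x)=\frac{1}{\Gamma(\rho)}x^{-1-\rho}e^{-1/x}\mathbbm{1}_{x>0}$, the single-coordinate Radon-Nikodym derivative is
\[\frac{f_{\rho+\delta}(x)}{f_\rho(x)}=\frac{\Gamma(\rho)}{\Gamma(\rho+\delta)}\,x^{-\delta},\qquad \delta=bN^{-1/3},\]
and $f$ is just the product of these over $i=1,\dots,n$ with $n=\floor{aN^{2/3}}$. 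First I would verify that, because $X_1\sim\mathrm{Ga}^{-1}(\rho)$ gives $1/X_1\sim\mathrm{Ga}(\rho)$, we have the closed form $E^{P^\rho}[X_1^{-s}]=\Gamma(\rho+s)/\Gamma(\rho)$ for any $s>-\rho$.

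Applying this with $s=2\delta$ (which is allowed since the hypothesis $|b|\le\tfrac14\varepsilon N^{1/3}$ together with $\rho\ge\varepsilon$ forces $|2\delta|\le\varepsilon/2<\rho$) and using independence,
\[E^{P^\rho}[f^2]=\left(\frac{\Gamma(\rho)^2}{\Gamma(\rho+\delta)^2}\cdot\frac{\Gamma(\rho+2\delta)}{\Gamma(\rho)}\right)^{\!n}=\left(\frac{\Gamma(\rho)\Gamma(\rho+2\delta)}{\Gamma(\rho+\delta)^2}\right)^{\!n}.\]
So the whole problem reduces to showing that the symmetric second difference $g(0)+g(2\delta)-2g(\delta)$ of $g(t):=\log\Gamma(\rho+t)$ is $O(\delta^2)$ with an $\varepsilon$-dependent constant, and then multiplying by $n$.

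For the Taylor step I would use the second-difference integral identity $g(2\delta)+g(0)-2g(\delta)=\delta^2\int_{-1}^{1}(1-|u|)\,g''(\delta+u\delta)\,du$, noting that $g''=\Psi_1$. By the hypothesis on $b$, the argument $\rho+\delta(1+u)$ stays in the compact interval $[\varepsilon/2,\mu-\varepsilon/2]\subset(0,\mu)$ for all $u\in[-1,1]$, so $\Psi_1$ is uniformly bounded there by some constant $C(\varepsilon,\mu)$. Since $\int_{-1}^{1}(1-|u|)du=1$, we get
\[0\le g(0)+g(2\delta)-2g(\delta)\le C\delta^2,\]
with the lower bound coming from convexity of $\log\Gamma$. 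Taking logs in the product formula gives
\[\log E^{P^\rho}[f^2]=n\bigl[g(0)+g(2\delta)-2g(\delta)\bigr]\le Cn\delta^2\le C\,(aN^{2/3})\cdot b^2N^{-2/3}=Cab^2,\]
which is exactly the bound we want.

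There is no real obstacle here; the calculation is entirely routine once one writes down the density and uses the $\mathrm{Ga}$/$\mathrm{Ga}^{-1}$ moment formula. The only minor care point is bookkeeping the range of validity: one needs $2\delta>-\rho$ so that $E[X_1^{-2\delta}]$ is finite and one needs $\rho+\delta(1+u)$ to stay in a fixed compact subinterval of $(0,\mu)$ on which $\Psi_1$ is bounded, both of which are guaranteed by the explicit hypothesis $|b|\le\tfrac14\varepsilon N^{1/3}$ and $\rho\in[\varepsilon,\mu-\varepsilon]$.
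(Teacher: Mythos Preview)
Your proof is correct and follows essentially the same route as the paper's: both compute $E^{P^\rho}[f^2]=\bigl(\Gamma(\rho)\Gamma(\rho+2\delta)/\Gamma(\rho+\delta)^2\bigr)^n$ exactly and then Taylor-expand the log. Your use of the second-difference integral identity for $\log\Gamma$ is a clean variant of the paper's Taylor-plus-remainder argument, and your range checks (that $2\delta>-\rho$ and that $\rho+\delta(1+u)\in[\varepsilon/2,\mu-\varepsilon/2]$) are exactly what is needed.
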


\begin{proof}
Let us denote $\lambda = {{\rho}}+ bN^{-1/3}$.
From a direct computation, we obtain
\begin{align}\label{rnd1}
E^{P^{{\rho}}}[f^2] & = \int \Big(\prod_{i=1}^{\floor{aN^{2/3}}}\frac{\frac{1}{\Gamma(\lambda)}\frac{1}{\omega_i^{\lambda+1}} e^{-\frac{1}{\omega_i}}}{\frac{1}{\Gamma({{\rho}})}\frac{1}{\omega_i^{{{\rho}}+1}} e^{-\frac{1}{\omega_i}}} \Big)^2 P(d\omega) \nonumber \\
&= \Big(\frac{\Gamma({{\rho}})^2}{\Gamma(\lambda)^2}\frac{1}{\Gamma({{\rho}})} \int_0^\infty \frac{1}{x^{2\lambda - {{\rho}} + 1}} e^{-\frac{1}{x}} dx\Big)^{\floor{aN^{2/3}}}\nonumber \\
& =  \Big(\frac{\Gamma({{\rho}})\Gamma(2\lambda - {{\rho}})}{\Gamma(\lambda)^2}\Big)^{\floor{aN^{2/3}}}\,.
\end{align}

We continue by taking the logarithm of \eqref{rnd1},
$$\log \eqref{rnd1} =  \floor{aN^{2/3}}\Big( \log\Gamma({{\rho}}) +  \log\Gamma(2\lambda-{{\rho}}) -2\log\Gamma(\lambda) \Big).$$
Note that ${{\rho}} = \lambda - bN^{-1/3}$ and $2\lambda - {{\rho}} = \lambda + bN^{-1/3}$. We can thus assume that $b>0$, the other case being symmetric.
Next, note that if we Taylor expand 
\begin{equation}\label{3_log_gamma}\log\Gamma({{\rho}}) +  \log\Gamma(2\lambda-{{\rho}}) -2\log\Gamma(\lambda),
\end{equation} 
then both the zeroth and the first derivative terms cancel out. 

The assumption $0<b\le \frac14\varepsilon N^{1/3}$ implies that
$$0<\varepsilon\le {{\rho}} < \lambda < 2\lambda - {{\rho}} \le \mu-\frac\varepsilon2<\mu.$$
In addition, $\log\Gamma(\abullet)$ is a smooth function on $\mathbb{R}_{>0}$.
Thus, the second derivative term and the remainder from the expansion can be upper bounded using a constant $C'$ depending only on $\varepsilon$ and $\mu$ and we get
$$\eqref{3_log_gamma} \leq  C' b^2N^{-2/3} + C' b^3N^{-1}.$$
Again, by the assumption on $b$, $C' b^2N^{-2/3} + C' b^3N^{-1} \leq (1+\varepsilon/4)C' b^2N^{-2/3}$. The claim follows with $C=(1+\varepsilon/4)C'$.
\end{proof}

\subsection{Sub-exponential random variables}

Let $\{X_i\}$ be a sequence of i.i.d.\ sub-exponential random variables with parameters $K_0>0$ and $\lambda_0>0$. This means 
\begin{align}\label{subexp}
\mathbb{E}[e^{\lambda (X_1-\mathbb{E}[X_1])}] \leq e^{K_0 \lambda^2} \quad \textup{ for all $\lambda \in [0, \lambda_0]$}.
\end{align}

Define $S_0 = 0$ and $S_k = X_1 + \dots + X_k - k\mathbb{E}[X_1]$ for $k\geq 1$. The following theorem captures the right tail behavior of the running maximum.

\begin{theorem}\label{max_sub_exp}
Assume \eqref{subexp}. Then
$$\mathbb{P} \Big(\max_{0\leq k \leq n} S_k \geq t\sqrt{n}\Big) \leq 
\begin{cases}
e^{-t^2/(4K_0)} \quad  & \textup{if $t \leq 2\lambda_0 K_0 \sqrt n$}\,, \\
e^{-\frac{1}{2}\lambda_0 t\sqrt{n}} \quad  & \textup{if $t \geq 2\lambda_0 K_0 \sqrt n$\,.}
\end{cases}
$$ 
\end{theorem}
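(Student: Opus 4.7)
The plan is a standard Chernoff--Doob argument applied to the exponential submartingale associated with $S_k$, followed by optimization over the Chernoff parameter $\lambda$ in the admissible range $[0,\lambda_0]$ dictated by the sub-exponential hypothesis \eqref{subexp}.

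First I would verify that for each $\lambda\in[0,\lambda_0]$ the process $M_k^\lambda=e^{\lambda S_k}$ is a nonnegative submartingale with respect to the natural filtration of $\{X_i\}$. Indeed, by independence and Jensen's inequality, $\mathbb{E}[M_k^\lambda\mid\mathcal{F}_{k-1}]=M_{k-1}^\lambda\,\mathbb{E}[e^{\lambda(X_k-\mathbb{E}X_1)}]\ge M_{k-1}^\lambda$, while \eqref{subexp} gives the deterministic upper bound $\mathbb{E}[M_n^\lambda]\le e^{K_0\lambda^2 n}$. Doob's maximal inequality then yields, for every $\lambda\in[0,\lambda_0]$,
\[
\mathbb{P}\Bigl(\max_{0\le k\le n}S_k\ge t\sqrt n\Bigr)=\mathbb{P}\Bigl(\max_{0\le k\le n}M_k^\lambda\ge e^{\lambda t\sqrt n}\Bigr)\le e^{-\lambda t\sqrt n}\,\mathbb{E}[M_n^\lambda]\le \exp\bigl(-\lambda t\sqrt n+K_0\lambda^2 n\bigr).
\]

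Next I would optimize the exponent $-\lambda t\sqrt n+K_0\lambda^2 n$ over $\lambda\in[0,\lambda_0]$. The unconstrained minimizer is $\lambda^\star=\tfrac{t}{2K_0\sqrt n}$, and the threshold $\lambda^\star\le \lambda_0$ is equivalent to $t\le 2\lambda_0 K_0\sqrt n$. In this first regime, plugging $\lambda^\star$ into the Chernoff bound yields exactly $\exp(-t^2/(4K_0))$, which is the first case of the theorem.

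In the second regime $t>2\lambda_0 K_0\sqrt n$, the optimum lies outside the admissible range, so I would instead choose $\lambda=\lambda_0$. This gives
\[
\mathbb{P}\Bigl(\max_{0\le k\le n}S_k\ge t\sqrt n\Bigr)\le \exp\bigl(-\lambda_0 t\sqrt n+K_0\lambda_0^2 n\bigr),
\]
and the condition $t\ge 2\lambda_0 K_0\sqrt n$ is precisely what is needed to absorb $K_0\lambda_0^2 n\le \tfrac12\lambda_0 t\sqrt n$, which produces the bound $\exp(-\tfrac12\lambda_0 t\sqrt n)$. There is no substantial obstacle here; the only subtlety is remembering to restrict the Chernoff parameter to $[0,\lambda_0]$, since \eqref{subexp} is only a local assumption on the moment generating function, and then to handle the two resulting cases separately.
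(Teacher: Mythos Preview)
Your proposal is correct and follows essentially the same approach as the paper: both use Doob's maximal inequality for the exponential submartingale $e^{\lambda S_k}$, then optimize the Chernoff exponent over $\lambda\in[0,\lambda_0]$, splitting into the same two cases according to whether the unconstrained minimizer $\lambda^\star=t/(2K_0\sqrt n)$ lies in the admissible range.
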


\begin{proof}
Since $S_k$ is a mean zero random walk,  $e^{\lambda S_k}$ is a non-negative sub-martingale for $\lambda \geq 0$. By Doob's maximal inequality, 
\begin{align*}
\mathbb{P} \Big(\max_{0\leq k \leq n} S_k \geq t\sqrt{n}\Big)  &= \mathbb{P} \Big(\max_{0\leq k \leq n} e^{\lambda S_k} \geq e^{\lambda t\sqrt{n}}\Big)
\leq \frac{\mathbb{E}[e^{\lambda S_n}]}{e^{\lambda t\sqrt{n}}}
 = \frac{\mathbb{E}[e^{\lambda(X_1-\mathbb E[X_1])}]^n}{e^{\lambda t\sqrt{n}}}\le e^{nK_0\lambda^2-\lambda t\sqrt n},
\end{align*}
where in the last inequality we applied \eqref{subexp}, for which we now assume $\lambda \in [0, \lambda_0]$. On this interval, 
the exponent $h(\lambda) = nK_0 \lambda^2 -  \lambda t\sqrt{n}$ is minimized at 
$\lambda_t = \min\{\lambda_0, \frac{t}{2K_0 \sqrt{n}}\}$ and 
$$h(\lambda_t) =  
\begin{cases}
-\frac{t^2}{4K_0} \quad  & \textup{if $t \leq 2\lambda_0K_0 \sqrt{n}$\,,}\\
nK_0\lambda_0^2 - \lambda_0 t\sqrt{n}  \leq -\frac{1}{2}\lambda_0 t\sqrt{n} \quad  & \textup{if $t \geq 2\lambda_0K_0 \sqrt{n}$\,.}
\end{cases}$$ 
The proof is complete.
\end{proof}

Next, we verify that log gamma and log inverse gamma random variables are sub-exponential. Recall that if $X\sim\textup{Ga}(\alpha)$, then $\mathbb E[\log X]=\Psi_0(\alpha)$, where $\Psi_0$ is the digamma function, i.e.\ $\Psi_0(\alpha)=(\log\Gamma(\alpha))'$.

\begin{proposition}\label{Ga_sub_exp}
Fix $\varepsilon \in (0, \mu/2)$. There exist positive constants $K_0, \lambda_0$ depending on $\varepsilon$ such that for each $\alpha \in [\varepsilon, \mu-\varepsilon]$ and $X\sim \textup{Ga}(\alpha)$, we have
$$
\mathbb{E}[e^{\lambda(\log X - \Psi_0(\alpha))}]\leq e^{K_0 \lambda^2} \qquad \textup{ for all $\lambda \in [-\lambda_0, \lambda_0]$}.
$$
\end{proposition}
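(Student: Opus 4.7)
The plan is to compute the moment generating function of $\log X$ explicitly and then Taylor-expand, exploiting the fact that the trigamma function $\Psi_1$ is bounded on compact subsets of $(0,\infty)$.

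First I would use the density of $X\sim\textup{Ga}(\alpha)$ to write, for any $\lambda>-\alpha$,
\[
\mathbb{E}[X^\lambda]=\frac{1}{\Gamma(\alpha)}\int_0^\infty x^{\alpha+\lambda-1}e^{-x}\,dx=\frac{\Gamma(\alpha+\lambda)}{\Gamma(\alpha)}\,.
\]
Taking logarithms, the claimed bound reduces to showing
\[
\log\Gamma(\alpha+\lambda)-\log\Gamma(\alpha)-\lambda\Psi_0(\alpha)\le K_0\lambda^2
\]
uniformly in $\alpha\in[\varepsilon,\mu-\varepsilon]$ and $\lambda\in[-\lambda_0,\lambda_0]$.

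Next I would fix $\lambda_0=\varepsilon/2$, which ensures that for every $\alpha\in[\varepsilon,\mu-\varepsilon]$ and every $\lambda\in[-\lambda_0,\lambda_0]$ the entire line segment between $\alpha$ and $\alpha+\lambda$ lies inside the compact interval $[\varepsilon/2,\mu-\varepsilon/2]\subset(0,\infty)$. By Taylor's theorem with Lagrange remainder applied to the smooth function $z\mapsto\log\Gamma(z)$, there exists $\theta\in[0,1]$ (depending on $\alpha$ and $\lambda$) such that
\[
\log\Gamma(\alpha+\lambda)-\log\Gamma(\alpha)-\lambda\Psi_0(\alpha)=\tfrac{\lambda^2}{2}\,\Psi_1(\alpha+\theta\lambda).
\]
Since $\Psi_1$ is continuous on $(0,\infty)$, the supremum $K_0:=\tfrac12\sup_{z\in[\varepsilon/2,\,\mu-\varepsilon/2]}\Psi_1(z)$ is a finite constant that depends only on $\varepsilon$ and $\mu$. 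This yields the desired bound.

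No serious obstacle is expected; the only point requiring a little care is keeping the argument of $\Psi_1$ bounded away from $0$ (and from $\infty$, which is automatic here) uniformly in $\alpha$ and $\lambda$, which is what dictates the choice $\lambda_0\le\varepsilon/2$. The analogous statement for $\log X$ when $X\sim\textup{Ga}^{-1}(\alpha)$ follows immediately by replacing $X$ with $1/X$ and flipping the sign of $\lambda$, so the same constants $K_0,\lambda_0$ work in that case as well.
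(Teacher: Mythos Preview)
Your proof is correct and follows essentially the same approach as the paper: compute $\mathbb{E}[X^\lambda]=\Gamma(\alpha+\lambda)/\Gamma(\alpha)$, take logarithms, and Taylor-expand $\log\Gamma$ to second order. Your version is in fact slightly cleaner, since you use the Lagrange form of the remainder and give explicit choices $\lambda_0=\varepsilon/2$ and $K_0=\tfrac12\sup_{[\varepsilon/2,\mu-\varepsilon/2]}\Psi_1$, whereas the paper writes the expansion as $\Psi_1(\alpha)\lambda^2/2+o(\lambda^2)$ and appeals to smoothness of $\Psi_1$ without specifying the constants.
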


\begin{proof}
First, note that $\mathbb{E}[X^\lambda] = \frac{\Gamma(\alpha+\lambda)}{\Gamma(\alpha)}$, provided that $\alpha+\lambda > 0$. This last condition can be guaranteed for all $\alpha>\varepsilon$ by taking $\lambda_0$ small enough (depending on $\varepsilon$).
Then, by Taylor's theorem,
\begin{align*}
\log\mathbb{E}[e^{\lambda(\log X - \Psi_0(\alpha))}] & = \log(\mathbb{E} [X^{\lambda}]e^{-\lambda \Psi_0(\alpha)})
  = \log\Gamma(\alpha +\lambda) -\log\Gamma(\alpha) - \lambda \Psi_0(\alpha)\\
& =\Psi_1(\alpha)\frac{\lambda^2}2 + o(\lambda^2)
 \leq K_0 \lambda^2,
\end{align*}
provided $\lambda_0$ is taken sufficiently small depending on $\varepsilon$. The constant $K_0$ can be chosen to not depend on $\alpha\in[\varepsilon,\mu-\varepsilon]$ because $\Psi_1$ is a smooth function on $\mathbb{R}_{>0}$.
\end{proof}

\subsection{Random walk estimates}\label{sec_rw}
Let $\{X_i\}_{i\in \mathbb{Z}_{>0}}$ be an i.i.d.~sequence of random variables with
$$\mathbb{E}[X_i] = \mu, \quad \Var[X_i] = 1 \quad \text{and} \quad \mathbb{E}[|X_i-\mu|^3] = c_3 < \infty.$$ Define $S_k = \sum_{i=1}^k X_i$ for $k \geq 1$. 
We have the following proposition which bounds the probability that the running maximum of a random walk is small. 
\begin{proposition}\label{rwest} There exists a positive  constant $C$ such that for any $l >0$, we have
\beq\mathbb{P}\Big(\max_{1\leq k \leq N} S_k < l\Big) \leq C(c_3l+c_3^2)(|\mu| + 1/\sqrt N).\eeq
\end{proposition}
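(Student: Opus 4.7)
The plan is to combine the Berry--Esseen theorem with a first-upcrossing-time decomposition, handling the qualitatively distinct regimes $\mu\ge 0$ and $\mu<0$ separately. Let $\tau:=\inf\{k\ge 1:S_k\ge l\}$, so that $\{M_N<l\}=\{\tau>N\}$, and note the trivial inclusion $\{M_N<l\}\subseteq\{S_N<l\}$. Berry--Esseen then gives
\[\mathbb{P}(M_N<l)\le\mathbb{P}(S_N<l)\le\Phi\!\bigl(\tfrac{l-N\mu}{\sqrt N}\bigr)+\frac{Cc_3}{\sqrt N}.\]
When $\mu\ge0$ and $N\mu\gg l$, the Gaussian tail $\Phi(-t)\le e^{-t^2/2}/(t\sqrt{2\pi})$ makes this exponentially small; a case check verifies that the claimed bound $C(c_3l+c_3^2)(|\mu|+N^{-1/2})$ dominates in every regime where this estimate alone is insufficient (e.g.\ where $|\mu|\sqrt N$ is of order one), matching the trivial bound $\mathbb{P}(M_N<l)\le 1$ when the claimed bound exceeds one.

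The more delicate regime is $\mu\le 0$, where $\mathbb{P}(S_N<l)$ is close to $1$. Here I would apply the strong Markov property at $\tau$: conditional on $\{\tau=k\}$, the increments $(X_{k+1},\ldots,X_N)$ form an independent copy $S'_{N-k}$, and $S_N=S_\tau+S'_{N-k}$ with $S_\tau\ge l$. This yields, for any $a>0$,
\[\mathbb{P}\bigl(S_N\in[l,l+a]\bigr)\ge\sum_{k\le N}\mathbb{P}(\tau=k)\,\mathbb{P}\bigl(S'_{N-k}\in[-O_k,a-O_k]\bigr),\]
where $O_k:=S_\tau-l\ge0$ is the overshoot (whose mean is controlled in terms of $c_3$). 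Esseen's concentration function inequality
\[\sup_{b\in\R}\mathbb{P}\bigl(S_m\in[b,b+a]\bigr)\le\frac{C(a+c_3)}{\sqrt m},\qquad m\ge1,\]
applied both to the left-hand side and to each summand lets us solve the resulting inequality for $\mathbb{P}(\tau\le N)$ and hence bound $\mathbb{P}(M_N<l)=1-\mathbb{P}(\tau\le N)$. The $N^{-1/2}$ contribution to the bound stems from this step. The $|\mu|$ contribution is extracted by exponential tilting: if $\theta^*>0$ solves $\mathbb{E}[e^{\theta^*X_1}]=1$ (which exists when $\mu<0$), Taylor expansion around $\theta=0$ gives $\theta^*=2|\mu|/\sigma^2+O(c_3\mu^2)$, and the Wiener--Hopf identity produces $\mathbb{P}(M_\infty\ge l)\ge Ce^{-\theta^*l}$, i.e., $\mathbb{P}(M_\infty<l)\le Cl|\mu|$. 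Together with a tail bound on $\{\tau>N\}\cap\{\tau<\infty\}$ (using that the conditional distribution of $\tau$ given $\tau<\infty$ has an exponential tail) this converts the $M_\infty$ estimate into the desired $M_N$ estimate.

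The main obstacle is the crossover regime $|\mu|\sim N^{-1/2}$, where neither the diffusive scale $1/\sqrt N$ nor the drift scale $|\mu|$ dominates; the product form $|\mu|+N^{-1/2}$ in the bound reflects the need to balance the two contributions. A second technical obstacle is tracking the $c_3$-dependence carefully throughout: the $c_3l+c_3^2$ factor arises from the concentration-function estimate, where the interval width is effectively $l+O(c_3)$ because of the overshoot at $\tau$, and the Berry--Esseen error contributes an extra $c_3/\sqrt m$ at each application. I would expect the cleanest packaging to proceed via conditioning on $S_N$ and invoking a quantitative ballot/bridge-survival estimate for the random-walk bridge from $0$ to $S_N$ (yielding the conditional bound $\mathbb{P}(M_N<l\mid S_N=s)\lesssim(l+c_3)(l-s+c_3)/N$), then integrating against the local-limit-theorem density of $S_N$ whose displacement from $l$ by $N\mu$ produces the $|\mu|$ factor directly.
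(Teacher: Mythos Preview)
The paper does not prove this from scratch: it simply quotes two lemmas from Nagaev (1970), namely that $\mathbb{P}(M_N<l)-\mathbb{P}(M_N<0)\le C(c_3l+c_3^2)(|\mu|+N^{-1/2})$ and $\mathbb{P}(M_N<0)\le Cc_3^2(|\mu|+N^{-1/2})$, and adds them. All the work is in Nagaev's paper.

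Your proposal attempts a self-contained argument, but several steps do not close. First, the exponential-tilting step posits a $\theta^*>0$ with $\mathbb{E}[e^{\theta^*X_1}]=1$; the hypothesis here is only $\mathbb{E}|X_1-\mu|^3<\infty$, and no exponential moment is assumed, so $\theta^*$ need not exist. Second, in the strong-Markov step you write that Esseen's concentration inequality ``applied both to the left-hand side and to each summand lets us solve the resulting inequality for $\mathbb{P}(\tau\le N)$.'' Esseen's bound is an \emph{upper} bound on concentration; applied to the summands $\mathbb{P}(S'_{N-k}\in[-O_k,a-O_k])$ it gives an upper bound, and combined with the displayed \emph{lower} bound on $\mathbb{P}(S_N\in[l,l+a])$ this yields no control on $\mathbb{P}(\tau\le N)$ in either direction. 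You would need a pointwise \emph{lower} bound on the summands (a local limit theorem, uniform in the random overshoot $O_k$), which is a substantially harder input than Esseen's inequality. Third, the case $\mu=0$ is exactly where the bound is most delicate: the target is $C(c_3l+c_3^2)/\sqrt N$, the tilting argument degenerates ($\theta^*=0$), and the Berry--Esseen bound on $\mathbb{P}(S_N<l)$ only gives $\Phi(l/\sqrt N)+Cc_3/\sqrt N\approx\tfrac12$. So everything for small $|\mu|$ rests on the concentration-function step, which as written does not work.

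The bridge/ballot idea you mention at the end is closer in spirit to how such results are actually proved, and indeed to what Nagaev does; but a quantitative survival estimate for the bridge under a bare third-moment assumption, with the correct $(l+c_3)$-dependence, is essentially the content of the proposition rather than a black box you can invoke.
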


This result follows directly from the following two results from  \cite{Nag-70}.
 
\begin{lemma}[\cite{Nag-70} Lemma 5]
There exists an absolute constant $C$ such that for any $l>0$
\beq\mathbb{P}\Big(\max_{1\leq k \leq N} S_k < l\Big) - \mathbb{P}\Big(\max_{1\leq k \leq N} S_k < 0\Big) \leq C(c_3l+c_3^2)(|\mu| + 1/\sqrt N).\eeq
\end{lemma}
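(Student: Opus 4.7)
Write $\overline{S}_N := \max_{1 \le k \le N} S_k$. The first step is trivial: since $\{\overline{S}_N < 0\} \subset \{\overline{S}_N < l\}$ for $l > 0$, the left-hand side equals
\[
\mathbb{P}(0 \le \overline{S}_N < l).
\]
So the task is to prove a small-interval probability bound for the law of $\overline{S}_N$ near zero, and the right-hand side has the characteristic form ``interval width times density bound'', suggesting a local-limit-theorem style argument for the distribution of the running maximum at $0$.

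My plan is to reduce to a one-dimensional density estimate through a first-passage decomposition. Let $\tau = \min\{k \ge 1 : S_k \ge 0\}$ be the first weak ascending ladder epoch. On $\{0 \le \overline{S}_N < l\}$ we must have $\tau \le N$, and because $S_\tau \le \overline{S}_N$ and $S_\tau \ge 0$ we get $0 \le S_\tau < l$. Hence
\[
\mathbb{P}(0 \le \overline{S}_N < l) \;\le\; \sum_{t=1}^{N} \mathbb{P}(\tau = t,\ 0 \le S_t < l),
\]
reducing the running-max estimate to estimates for the overshoot of the walk at its first upward crossing.

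Next, I would split the summation at a threshold $t_0 \asymp 1 \vee |\mu|^{-2}$ (capped at $N$). For $t \le t_0$ (the short-time regime), the strong moment hypothesis on the increments controls the small-ball probability of $S_t$ in $[0,l)$ through a concentration function bound of the type $\mathbb{P}(S_t \in [a,a+l)) \le C(c_3 l + c_3^2)/\sqrt{t}$, obtained from a single convolution step combined with a Berry-Esseen density bound; dropping the stay-negative restriction $\{S_1, \dots, S_{t-1} < 0\}$ loses only a constant. For $t > t_0$ (the long-time regime), Berry-Esseen directly gives
\[
\mathbb{P}(0 \le S_t < l) \;\le\; C\left(\tfrac{l}{\sqrt t}\, \phi(\mu\sqrt t) + \tfrac{c_3}{\sqrt t}\right),
\]
and the stay-negative constraint is now what must be used to kill the $c_3/\sqrt t$ remainder: $\mathbb{P}(S_1<0,\dots,S_{t-1}<0)$ is exponentially small in $t\mu^2$ when $\mu > 0$, and when $\mu \le 0$ the geometric-like tail of the single ladder height already supplies the needed decay. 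Summing both regimes produces a total of order $(c_3 l + c_3^2)(|\mu| + 1/\sqrt N)$, the $|\mu|$ contribution coming from short times and the $1/\sqrt N$ from $t \asymp N$.

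\emph{The main obstacle} I anticipate is uniformity: combining the overshoot estimate (which wants to use $t$ small so that the ladder step is essentially a single $X_i$) with the stay-negative constraint (which helps only for $t$ large and gives different bounds for $\mu>0$ versus $\mu<0$) in a way that produces clean absolute constants and the product form $(c_3 l + c_3^2)(|\mu| + 1/\sqrt N)$. The natural framework for doing this cleanly is the Wiener--Hopf factorization or a direct Sparre Andersen cyclic argument, which recasts $\mathbb{P}(0 \le \overline{S}_N < l)$ as a convolution of ladder laws and isolates the density-at-zero of the first ladder height; a quantitative version of that factorization, together with Berry-Esseen, then yields the stated bound.
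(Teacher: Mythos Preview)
The paper does not prove this lemma. It is stated in Appendix~A.5 purely as a citation: ``This result follows directly from the following two results from \cite{Nag-70}'', after which Lemmas~5 and~7 of Nagaev are quoted verbatim with no argument. So there is no in-paper proof for your proposal to be compared against.

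As for your sketch itself: the reduction to $\mathbb{P}(0\le\overline S_N<l)$ and the idea of a first-ladder-epoch decomposition are the natural starting points, and Nagaev's original argument is indeed of this flavor (concentration-function bounds combined with Berry--Esseen). But what you have written is a plan with a self-identified gap rather than a proof: you correctly flag that stitching the short-time overshoot bound to the long-time stay-negative bound so as to produce the clean product $(c_3 l+c_3^2)(|\mu|+1/\sqrt N)$ with \emph{absolute} constants is the nontrivial step, and your proposal does not actually carry it out. In particular, the claimed concentration bound $\mathbb{P}(S_t\in[a,a+l))\le C(c_3 l+c_3^2)/\sqrt t$ for small $t$ is asserted without justification, and the treatment of the Berry--Esseen remainder $c_3/\sqrt t$ in the long-time regime (where you invoke exponential decay of the stay-negative probability for $\mu>0$ but only a vague ``geometric-like tail'' for $\mu\le 0$) is not worked through. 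If you want a complete argument, you should consult Nagaev's paper directly; the authors of the present paper did not intend for the reader to reprove it.
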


\begin{lemma}[\cite{Nag-70} Lemma 7]
There exists an absolute constant $C$ such that 
\beq\mathbb{P}\Big(\max_{1\leq k \leq N} S_k < 0\Big) \leq Cc^2_3(|\mu| + 1/\sqrt N).\eeq
\end{lemma}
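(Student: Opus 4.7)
The plan is to treat this lemma as a Berry–Esseen-type statement combined with a fluctuation/duality identity for random walks. A first useful reduction: by Lyapunov's inequality, $1 = \mathbb{E}[(X_1-\mu)^2] \leq \mathbb{E}[|X_1-\mu|^3]^{2/3} = c_3^{2/3}$, so $c_3 \geq 1$. Consequently $Cc_3^2(|\mu| + 1/\sqrt N) \geq C$ whenever $|\mu| \geq 1/c_3^2$, and the claim is trivial (for $C \geq 1$) in that regime. I henceforth assume $|\mu|c_3^2 \leq 1$, so all the real work is in the small-drift regime.

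The mean-zero subcase $\mu = 0$ is the heart of the argument. Using a time reversal $\tilde S_j = S_N - S_{N-j}$, the event $\{\max_{1 \leq k \leq N}S_k<0\}$ coincides with the event that $\tilde S_N$ is the strict minimum of $\tilde S_0,\dots,\tilde S_N$ (this is the identity computed in the same way as Lemma~\ref{relatetau} earlier in the paper). I would then invoke a Sparre–Andersen-style cyclic/rotation identity to express this probability as an average involving the overshoot of $S_N$ below zero, and control the resulting expression by the anti-concentration of $S_N$: a Berry–Esseen estimate gives $\mathbb{P}(S_N \in [-h,0]) \leq C(h+c_3)/\sqrt N$, and a ladder-height decomposition then yields $\mathbb{P}(\max_{k\leq N}S_k<0) \leq Cc_3^2/\sqrt N$ when $\mu=0$.

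For general $\mu$, I would split into two regimes. When $\mu \geq 0$, the event $\{S_k<0 \;\forall k\}$ is contained in $\{S_k - k\mu < 0\; \forall k\}$, so the mean-zero bound applies directly and is dominated by the right-hand side of the claim. When $\mu < 0$, I would truncate at the epoch $T = \lfloor 1/\mu^2\rfloor \wedge N$; on $[1,T]$ the drift contribution $k|\mu|$ satisfies $k|\mu| \leq \sqrt{k}$ and hence is subdominant to typical fluctuations, so an Esseen smoothing argument applied to the centered walk $S_k - k\mu$ delivers $\mathbb{P}(\max_{k \leq T}S_k < 0) \leq Cc_3^2/\sqrt{T} \leq Cc_3^2|\mu|$. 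The contribution from times $k > T$ is then absorbed into the existing estimate since the drift has carried the walk well below zero by time $T$. Combining the two regimes gives the desired $Cc_3^2(|\mu| + 1/\sqrt N)$.

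The main obstacle is the uniformity of the Sparre–Andersen-type cyclic identity for possibly lattice-valued increments, where partial sums can coincide with positive probability and the strict-minimum interpretation must be handled with care. The standard remedy is a perturbation by a small continuous mollifier and bookkeeping of the resulting error via the third moment $c_3$, which is precisely what fits into the $c_3^2$ prefactor of the final bound; this is the technical heart of Nagaev's original argument.
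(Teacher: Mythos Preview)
The paper does not supply its own proof of this lemma; it is quoted verbatim as Lemma~7 of Nagaev~\cite{Nag-70} and used as a black box to derive Proposition~\ref{rwest}. So there is no in-paper argument to compare your proposal against, and I will evaluate your sketch on its own terms.

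Your reductions are sound: the observation that $c_3\ge 1$ (Lyapunov) makes the bound trivial unless $|\mu|c_3^2\le 1$, and the containment $\{S_k<0\ \forall k\}\subset\{S_k-k\mu<0\ \forall k\}$ when $\mu\ge 0$ is correct and reduces that case to the mean-zero one. The truncation at $T=\lfloor 1/\mu^2\rfloor\wedge N$ for $\mu<0$ is also the right scale.

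The genuine gap is in the mean-zero case. You correctly identify, via time reversal, that $\mathbb{P}(\max_{1\le k\le N}S_k<0)$ equals the probability that $N$ is the strict location of the minimum of the reversed walk. But the sentence ``a ladder-height decomposition then yields $\mathbb{P}(\max_{k\le N}S_k<0)\le Cc_3^2/\sqrt N$'' is the entire content of the lemma and is asserted, not proved. The passage from anti-concentration of $S_N$ on intervals of length $O(c_3)$ to a bound on the probability that a \emph{specific} index realizes the minimum is not automatic: Sparre--Andersen relates the law of the argmin to the number of negative partial sums, not to a uniform distribution, and there is no cyclic-shift argument available here because the walk is not a bridge. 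Nagaev's actual proof proceeds through a careful Wiener--Hopf/ladder-epoch analysis combined with Berry--Esseen control on the ladder heights, and the $c_3^2$ prefactor arises from applying Berry--Esseen twice (once for the walk, once for the renewal structure). Your sketch gestures at the right objects but does not supply this mechanism.

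A minor point: the reference to Lemma~\ref{relatetau} is misplaced. That lemma is a deterministic identity about polymer exit times between two nested quadrants and has nothing to do with time reversal of random walks; the duality you want is the elementary one $\tilde S_j=S_N-S_{N-j}$, which needs no citation.
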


\setcounter{secnumdepth}{0}
\bibliographystyle{aop-no-url}
\bibliography{Exit}

\end{document}